\newcommand*\circled[1]{\tikz[baseline=(char.base)]{
  \node[shape=circle,draw,inner sep=1pt] (char) {#1};}}
\numberwithin{equation}{section}
\newtheorem{theorem}{Theorem}[section]
\newtheorem{proposition}[theorem]{Proposition}
\newtheorem{lemma}[theorem]{Lemma}
\newtheorem{corollary}[theorem]{Corollary}
\theoremstyle{definition}
\newtheorem{definition}[theorem]{Definition}
\newtheorem{remark}[theorem]{Remark}
\theoremstyle{plain}
\newtheorem{claim}[theorem]{Claim}
\numberwithin{equation}{section}
\newcommand{\R}{\mathbb{R}}  
\newcommand{\C}{\mathbb{C}}  
\newcommand{\Sph}{\mathbb{S}}  
\DeclareMathAlphabet{\mathbbold}{U}{bbold}{m}{n}
\newcommand*{\1}{\mathbbold{1}}
\DeclareMathOperator{\dist}{dist} 
\DeclareMathOperator{\Id}{Id}
\DeclareMathOperator{\Int}{Int}
\DeclareMathOperator{\ind}{ind}
\DeclareMathOperator{\nul}{nul}
\DeclareMathOperator{\supp}{supp}
\DeclareMathOperator{\Area}{Area}
\DeclarePairedDelimiter\floor{\lfloor}{\rfloor}
\DeclarePairedDelimiterX\set[2]{\{}{\}}{\,#1 \;\delimsize\vert\; #2\,}
\newcommand*{\dv}[1]{dv_{#1}} 
\newcommand*{\G}{\Gamma} 
\newcommand*{\ds}[1]{ds_{#1}} 
\newcommand*{\abs}[1]{\left|#1\right|} 
\newcommand*{\norm}[1]{\left\|#1\right\|} 
\newcommand*{\brr}[1]{\left(#1\right)} 
\newcommand*{\brs}[1]{\left[#1\right]} 
\newcommand*{\brc}[1]{\left\{#1\right\}} 
\newcommand*{\brt}[1]{\left\langle #1\right\rangle } 
\mathchardef\mhyphen="2D 
\newcommand{\mres}{\mathbin{\vrule height 1.6ex depth 0pt width
0.13ex\vrule height 0.13ex depth 0pt width 1.3ex}} 
\renewcommand{\epsilon}{\varepsilon}
\renewcommand{\phi}{\varphi}
\newcommand\quot[2]{
  \mathchoice
      {
          \text{\raisebox{.2em}{$#1$}\Big/\raisebox{-.2em}{$#2$}}%
      }
      {
          #1/#2
      }
      {
          #1\,/\,#2
      }
      {
          #1\,/\,#2
      }
}
\newcommand*{\oset}[3][0.45ex]{%
  \mathrel{\mathop{#3}\limits^{
    \vbox to#1{\kern-2\ex@
    \hbox{$\scriptstyle#2$}\vss}}}}
\begin{document}

\author{Denis Vinokurov}
\title{Conformal optimization of eigenvalues on surfaces with symmetries}
\date{}
\maketitle
\begin{abstract}
  Given a conformal action of a discrete group on a Riemann surface,
  we study the maximization of Laplace and Steklov
  eigenvalues within a conformal class, considering metrics invariant under the group action.
  We establish natural conditions for the existence and regularity
  of maximizers. Our method simplifies the previously
  known techniques for proving existence and regularity results
  in conformal class optimization.
  Finally, we provide a complete solution to the equivariant
  maximization problem for Laplace eigenvalues on the
  sphere and Steklov eigenvalues on the disk,
  resolving open questions posed by Arias-Marco et al.~(2024)
  regarding the sharpness of the Hersch–Payne–Schiffer inequality and the maximization of Steklov eigenvalues by the standard
  disk among planar simply connected domains with
  $n\text{-rotational}$ symmetry.
\end{abstract}

\tableofcontents

\section{Introduction and main results}
Let $(\Sigma, [g])$ be a compact surface with a conformal class
$[g]$. The Laplacian $\Delta_g\colon C^\infty(\Sigma) \to C^\infty(\Sigma)$
of the metric $g$ is defined by the formula
\begin{equation}
  \Delta_g \phi = -\operatorname{div}_g \operatorname{grad}_g \phi.
\end{equation}
When $\Sigma$ is closed, i.e.
 $\partial \Sigma = \varnothing$, we consider the classical
 eigenvalue problem,
 \begin{equation}
  \Delta_g \phi = \lambda \phi.
 \end{equation}
 If the boundary is nonempty, we replace it with the Steklov problem,
 \begin{equation}
  \begin{cases}
    \Delta_g \phi = 0
    \\ \partial_\nu \phi = \sigma \phi.
  \end{cases}
 \end{equation}
 In both cases, the eigenvalues are real nonnegative numbers
converging to $\infty$, so they can be written in nondecreasing
order, counting multiplicities:
 \begin{gather}
  0 = \lambda_0(g) < \lambda_1(g) \leq
  \lambda_2(g) \leq \cdots\nearrow \infty,
  \\ 0 = \sigma_0(g) < \sigma_1(g) \leq
  \sigma_2(g) \leq \cdots\nearrow \infty.
 \end{gather}
 Consider the following
 two optimization problems,
 \begin{align}
  \Lambda_k(\Sigma, [g]) &= \sup_{\tilde{g}\in[g]}
  \overline{\lambda}_k(\tilde{g}),
  \ &\text{where } \
  \overline{\lambda}_k(g) &:=
  \lambda_k(g) \operatorname{Area}_{g}(\Sigma),
  \\ \mathfrak{S}_k(\Sigma, [g]) &= \sup_{\tilde{g}\in[g]}
  \overline{\sigma}_k(\tilde{g}),
  \ &\text{where } \
  \overline{\sigma}_k(g) &:=
  \sigma_k(g) \operatorname{Length}_{g}(\partial\Sigma).
 \end{align}
 These are two classical examples of geometric eigenvalue
 optimization problems. We refer to~\cite{Karpukhin-Nadirashvili-Penskoi-Polterovich:2022:existence,
 Karpukhin:2021:index-of-min-shperes, Colbois-Girouard-Gordon-Sher:2024:recent-develop-on-steklov}
 and references therein
 for a detailed survey of recent developments on the problems,
 see  also \cite*{Kim:2022:second-sphere-eigenval,
 Karpukhin-Metras-Polterovich:2024:dirac-eigenval-opt,
 Perez-Ayala:2021:conf-laplace-sire-xu-norm,
 Perez-Ayala:2022:extr-metrics-paneitz-operator,
 Gursky-Perez-Ayala:2022:2d-eigenval-conform-laplacian}
 for some related eigenvalue optimization results.

 It is known (e.g.,~\cite{Karpukhin-Nadirashvili-Penskoi-Polterovich:2022:existence,
 Petrides:2019:max-steklov-eigenval-on-surfaces}) that:
 \begin{proposition} Given a compact connected surface $(\Sigma, [g])$, there exists a smooth harmonic map
  $\Phi\colon \Sigma \to \mathbb{S}^n$ (the case $\Sigma = \varnothing$),
  or a smooth free boundary harmonic map
  $\Psi\colon \Sigma \to \mathbb{B}^n$ (the case $\Sigma \neq \varnothing$)
  such that one of the following equalities holds
  \begin{equation}
    \Lambda_k(\Sigma, [g]) = \max_{b \leq k}
    \brc{\overline{\lambda}_{k-b}(\abs{d\Phi}_g^2 g) + 8\pi b}
  \end{equation}
  or
  \begin{equation}
    \mathfrak{S}_k(\Sigma, [g]) = \max_{b \leq k}
    \brc{\overline{\sigma}_{k-b}(\abs{\partial_\nu\Psi} g) + 2\pi b},
  \end{equation}
  where $\abs{\partial_\nu\Psi}$ is extended arbitrarily in the
  interior. The quantities $8\pi b$ and $2\pi b$ appear as a result
  of attaching $b$ spheres or disks.
\end{proposition}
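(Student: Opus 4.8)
The plan is to recast the eigenvalue functionals as functionals on the space of probability measures in the conformal class, run a concentration--compactness argument to produce a maximizing measure, and then read the harmonic map off the Euler--Lagrange equations of that maximizer. First I would fix a reference metric $g \in [g]$ normalized to unit area (unit boundary length in the Steklov case) and, for a Radon probability measure $\mu$ on $\Sigma$ (supported on $\partial\Sigma$ in the Steklov case), define the generalized eigenvalue
\begin{equation}
  \lambda_k(\mu) = \inf_{\substack{E \subset W^{1,2}(\Sigma) \\ \dim E = k+1}} \ \sup_{u \in E \setminus \{0\}} \frac{\int_\Sigma \abs{du}_g^2 \, \dv{g}}{\int_\Sigma u^2 \, d\mu},
\end{equation}
where in the Steklov problem $u$ is taken harmonic in the interior and the numerator is its Dirichlet energy. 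Since the Dirichlet energy is conformally invariant in dimension two, one has $\overline{\lambda}_k(\rho g) = \lambda_k(\rho \, \dv{g})$ after normalization, so $\Lambda_k(\Sigma,[g])$ is the supremum of $\lambda_k(\mu)$ over absolutely continuous probability measures, and similarly for $\mathfrak{S}_k$. The first point to establish is that this supremum coincides with the supremum of $\lambda_k$ over \emph{all} probability measures and that the latter is attained.

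For that I would take a maximizing sequence $\mu_i$ and pass to a weak-$*$ limit $\mu_\infty = \mu_{\mathrm{ac}} + \sum_{j=1}^N m_j \delta_{p_j}$, with only finitely many atoms above a fixed mass threshold and the diffuse remainder absorbed into the absolutely continuous part. The functional $\mu \mapsto \lambda_k(\mu)$ is only upper semicontinuous along weak-$*$ convergence, and the estimate one needs is a decomposition
\begin{equation}
  \limsup_{i} \lambda_k(\mu_i) \leq \max \brc{ \lambda_{k - \sum_j b_j}(\mu_{\mathrm{ac}}) + c \sum_j b_j },
\end{equation}
the maximum over admissible splittings of nonnegative integers $b_j$ among the atoms, with $c = 8\pi$ in the closed case and $c = 2\pi$ in the Steklov case. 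The constant $c$ comes from a bubble-extraction step: conformally dilating a small geodesic disk around an atom $p_j$ onto all of $\mathbb{S}^2$ (resp.\ $\mathbb{B}^2$), the concentrated mass organizes into a harmonic map of the sphere (resp.\ a free boundary harmonic map of the disk) contributing exactly $\Lambda_1(\mathbb{S}^2) = 8\pi$ (Hersch), resp.\ $\mathfrak{S}_1(\mathbb{B}^2) = 2\pi$ (Weinstock), per absorbed dimension. This produces the $8\pi b$ and $2\pi b$ terms and reduces matters to finding a maximizer of $\lambda_{k - b}$ realized by an absolutely continuous measure with no further concentration; which term of the outer maximum is actually active is then decided by whether the strict gap $\Lambda_k(\Sigma,[g]) > \Lambda_{k-1}(\Sigma,[g]) + 8\pi$ (or its Steklov counterpart) holds or fails.

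Assuming $\mu = \rho \, \dv{g}$ maximizes $\lambda := \lambda_{k-b}$ among probability measures, I would let $E$ realize the minimax, so $\int_\Sigma \abs{du}_g^2 \, \dv{g} = \lambda \int_\Sigma u^2 \, d\mu$ for $u \in E$, and then differentiate $t \mapsto \lambda((1 + t\varphi)\mu)$ at $t = 0$ along the linear structure of measures. Maximality forces a convex combination of squares of eigenfunctions to be constant on $\supp\mu$, so there are $u_1, \dots, u_{n+1} \in E$ with $\sum_\ell u_\ell^2 \equiv 1$ there; the map $\Phi = (u_1, \dots, u_{n+1})$ then satisfies $\Delta_g \Phi = \lambda \rho\, \Phi$ and $\abs{\Phi}^2 \equiv 1$ on $\supp\mu$, which together with $\rho = \lambda^{-1}\abs{d\Phi}_g^2$ is exactly the harmonic map equation $\Delta_g \Phi = \abs{d\Phi}_g^2 \Phi$ into $\mathbb{S}^n$, the maximizing metric being conformal to $\abs{d\Phi}_g^2 g$. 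The Steklov case runs in parallel and produces a free boundary harmonic map $\Psi\colon \Sigma \to \mathbb{B}^n$ with $\partial_\nu \Psi = \abs{\partial_\nu\Psi}\,\Psi$ on $\partial\Sigma$ and boundary density proportional to $\abs{\partial_\nu\Psi}$. Finally I would upgrade $\Phi$ (resp.\ $\Psi$) from its a priori $W^{1,2}$ regularity to smoothness via $\varepsilon$-regularity for weakly harmonic maps from surfaces --- Hélein's theorem in the interior, its boundary version for the free boundary condition --- so that $\Phi$ is smooth off finitely many points, and a Sacks--Uhlenbeck removable-singularity argument shows it extends smoothly across those points except where genuine bubbles separate; smoothness and positivity of $\rho = \lambda^{-1}\abs{d\Phi}_g^2$ away from the finitely many branch points then follow.

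The hard part will be the concentration--compactness step: showing that concentration of mass costs exactly $8\pi$ (resp.\ $2\pi$) per absorbed eigenvalue dimension and no more, that the diffuse part of the weak-$*$ limit can indeed be absorbed into $\mu_{\mathrm{ac}}$, and that the limiting object is a genuine (possibly branched) harmonic map --- that is, the quantitative bubble-tree analysis together with the removable-singularity statement it calls for. By contrast, the Euler--Lagrange computation is essentially formal, modulo the care required when eigenvalues are degenerate, and the final regularity step is standard harmonic-map theory.
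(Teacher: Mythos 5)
There is a genuine gap, and it sits at the pivot of your argument: the sentence ``Assuming $\mu = \rho\, \dv{g}$ maximizes $\lambda := \lambda_{k-b}$ among probability measures'' assumes exactly what the proposition is designed to circumvent. Weak$^*$ limits of maximizing sequences of absolutely continuous probability measures need not be admissible: besides the atoms you extract, the diffuse part of the limit can be singular continuous, and nothing in upper semicontinuity tells you that $\lambda_{k-b}(\mu_{\mathrm{ac}})$ is actually attained within the class over which $\Lambda_{k-b}$ is defined, nor that $\mu_{\mathrm{ac}}$ is regular enough to justify the first-variation computation (note also that $\lambda_k$ is not differentiable at multiple eigenvalues, so ``differentiate $t \mapsto \lambda((1+t\varphi)\mu)$'' needs the Clarke subdifferential or a Hahn--Banach convexity argument even when a maximizer exists). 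The established proofs, and the machinery this paper uses to reprove the statement (Proposition~\ref{prop:max-sequence}, Lemmas~\ref{lem:strong-conv} and~\ref{lem:regularity-in-good-pt}), run the logic in the opposite order: one maximizes within the truncated classes $\norm{\rho}_{L^\infty} \leq \epsilon^{-1}$, where a maximizer $\mu_\epsilon$ exists by weak$^*$ compactness and upper semicontinuity, extracts from Clarke's multiplier rule eigenmaps $\Phi_\epsilon$ with $\abs{\Phi_\epsilon} = 1$ off a set of measure at most $\epsilon$, proves strong $H^1$ convergence away from finitely many unstable points, and identifies the weak limit $\Phi$ as a smooth (free boundary) harmonic map; the continuous part of the limiting measure is then $\abs{d\Phi}^2 \dv{g}$ (resp.\ $\abs{\partial_\nu\Phi}\ds{g}$) \emph{by construction}, so its regularity is an output rather than an input. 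Your scheme, which first posits a nice maximizing measure and then builds the map from it, skips the step where all the work lies.

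A secondary but real issue is the bubble bound. A single atom of the limit measure can absorb several eigenvalue indices, so the per-bubble contribution is $8\pi k_i$ (resp.\ $2\pi k_i$), and this requires the sharp bounds $\overline{\lambda}_{k_i}(\Sph^2,\tilde{\mu}_i) \leq 8\pi k_i$ and $\overline{\lambda}_{k_i}(\mathbb{D}^2,\tilde{\mu}_i) \leq 2\pi k_i$ for the blown-up measures --- i.e.\ the full $\Lambda_k(\Sph^2) = 8\pi k$ theorem and the Hersch--Payne--Schiffer inequality, not just Hersch and Weinstock --- and, moreover, these bounds only apply once one shows the blown-up measures are admissible (the paper proves they are $L^1$ in a separate claim, using that the blow-ups inherit the properties of the maximizing sequence). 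Your phrase ``the concentrated mass organizes into a harmonic map of the sphere'' asserts a regularity of the bubble limits that itself needs the same convergence machinery you have not supplied.
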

In particular, the suprema are achieved
on a metric possibly with a finite number of conical singularities,
or they are achieved on $\Sigma$ together with several
spheres $\Sph^2$ or disks $\mathbb{D}^2$ attached. Such a phenomenon
is called \emph{bubbling}, and the corresponding
$\Sph^2$ and $\mathbb{D}^2$ are referred as \emph{bubbles}.
Since bubbles produce an obstruction for the existence of a
maximizer, one can try to deal with them
by adding a conformal action of a finite group $\G$ and
consider only $\G$-invariant metrics. Then bubbles can only
appear as whole orbits of $\G$, which makes it more unlikely
to happen.

In the paper~\cite{Karpukhin-Kusner-McGrath-Stern:2024:equiv-eigenval-optim},
equivariant optimization was used
to produce new examples of minimal surfaces embedded in $\Sph^3$ and
to show that every compact oriented surface with boundary
can be realized as an embedded free boundary minimal surface in $\mathbb{B}^3$,
a problem that had remained open before.

Let us denote the corresponding $\G$-invariant suprema
as $\Lambda_k(\Sigma, [g]; \G)$ and
$\mathfrak{S}_k(\Sigma, [g];\G)$. In the present paper, we
formulate the conditions for the existence of such maximizers.
When it happens, we prove that the metric comes from a (smooth)
$\G$-equivariant harmonic map to a sphere $\Sph^n$
in the closed case,
and in the Steklov case, it comes from a (smooth) free boundary
$\G$-equivariant harmonic map to a ball $\mathbb{B}^n$. As an application,
we compute $\Lambda_k(\Sph^2, [g_{\Sph^2}];\G)$ and
$\mathfrak{S}_k(\mathbb{D}^2, [g_{\R^2}];\G)$ for all the possible
actions of finite groups $\G$.

\subsection{Regularity and existence of the maximizers}

It is natural that the set of
bubble points is preserved by the group. Let us denote by $\G \!\cdot p$ the orbit of a point
$p \in \Sigma$, and
introduce the semigroup of natural numbers
\begin{equation}\label{eq:N_G}
  \mathbb{N}_\G(\Sigma) = \brc{0}\cup\brc{\abs{\G \!\cdot\! p_1}+\cdots+\abs{\G \!\cdot\! p_n}\colon p_i\in\Sigma}
\end{equation}
generated by all possible cardinalities $\abs{\G \!\cdot\! p}$ of the orbits under the action of $\G$.
We say that a map $\Phi\colon \Sigma \to \R^n$ is $\G$-equivariant
if there exists a right representation $r\colon\G \to \mathrm{O}\brr{\mathbb{R}^n}$
such that $\gamma^*\Phi = r(\gamma)\Phi$ for any $\gamma \in \G$. Let us define
\begin{equation}
  \Lambda_k(\Sigma, [g];\G)
  := \sup \set*{\overline{\lambda}_k(\tilde{g})}{\tilde{g} \in [g],\
  \tilde{g} \text{ is } \G\text{-invariant}}.
\end{equation}
\begin{theorem}\label{thm:main}
  Let $\brr{\Sigma, [g]}$ be a closed connected surface with a conformal
  action of a discrete group $\G$. Then
  \begin{equation}
    \Lambda_k(\Sigma, [g]; \G) \geq \max_{
      \substack{b\in\mathbb{N}_\G(\Sigma) \\ 1\leq b \leq k}}
    \brc{\Lambda_{k-b}(\Sigma, [g]; \G) + 8\pi b}.
  \end{equation}
  If the inequality is strict, there exists a nontrivial smooth $\G$-equivariant
  harmonic map $\Phi\colon \Sigma \to\mathbb{S}^n$
  such that $\lambda_{k}(\abs{d\Phi}_g^2 g) = 1$ and
  \begin{equation}
    \Lambda_k(\Sigma, [g]; \G) = \overline{\lambda}_{k}(\abs{d\Phi}_g^2 g)
    = \int_\Sigma \abs{d\Phi}_g^2 dv_g.
  \end{equation}
\end{theorem}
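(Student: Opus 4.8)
I would prove the two assertions separately: the lower bound by an explicit $\G$-equivariant degeneration, and the existence/regularity statement under the strict gap by a concentration–compactness analysis of a maximizing sequence of eigenmaps, in which the strict gap is precisely what prevents bubbling. This is the $\G$-equivariant analogue of the (non-equivariant) Proposition recalled above, so the proof follows the same scheme, streamlined, with the $\G$-action tracked throughout.

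For the lower bound, fix $b\in\mathbb{N}_\G(\Sigma)$ with $1\le b\le k$ and write $b=\abs{\G\!\cdot\!p_1}+\dots+\abs{\G\!\cdot\!p_n}$, so that $\G\!\cdot\!p_1\cup\dots\cup\G\!\cdot\!p_n$ consists of $b$ points. Given $\varepsilon>0$, choose a $\G$-invariant $h\in[g]$ with $\overline\lambda_{k-b}(h)>\Lambda_{k-b}(\Sigma,[g];\G)-\varepsilon$, rescaled so that $\lambda_{k-b}(h)=1$. Working in $\G$-equivariant conformal coordinates around this orbit of $b$ points (at a point with nontrivial stabilizer one uses that the stabilizer acts by rotations on a coordinate disk), perform the standard Cheeger-dumbbell degeneration that attaches, through thin necks, a round $2$-sphere of area $8\pi$ (hence of first eigenvalue $1$) at each of the $b$ points, carried out simultaneously and compatibly along each orbit so that the family $h_t\in[g]$ stays $\G$-invariant. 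As $t\to0$ the operator $\Delta_{h_t}$ decouples into $\Delta_h$ and the $b$ round-sphere Laplacians, and a count of eigenvalues in $(0,1)$ (which is empty for the round-sphere spectrum) gives $\liminf_{t\to0}\lambda_k(h_t)\ge1$ while $\Area_{h_t}(\Sigma)\to\overline\lambda_{k-b}(h)+8\pi b$. Hence $\Lambda_k(\Sigma,[g];\G)\ge\overline\lambda_{k-b}(h)+8\pi b$, and letting $\varepsilon\to0$ proves the inequality.

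Now assume the inequality is strict. Since the eigenspaces of any $\G$-invariant metric are $\G$-modules, a Hersch-type renormalization produces a maximizing sequence of the form $g_j=\abs{d\Phi_j}_g^2 g$ with $\Phi_j\colon(\Sigma,g)\to\mathbb{S}^n$ (a fixed $n$) $\G$-equivariant for orthogonal representations $r_j$, the components of $\Phi_j$ spanned by low eigenfunctions, $\lambda_k(g_j)\to1$, and $\int_\Sigma\abs{d\Phi_j}_g^2\,dv_g\to\Lambda_k(\Sigma,[g];\G)$. The energies being uniformly bounded, after passing to a subsequence $r_j\to r$ (the target orthogonal group is compact, so discreteness of $\G$ is no obstruction) and $\Phi_j\rightharpoonup\Phi$ weakly in $W^{1,2}$ with $\Phi$ $\G$-equivariant. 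Run the bubble-tree analysis for (almost) harmonic maps from surfaces: up to a further subsequence the convergence is strong in $W^{1,2}_{\mathrm{loc}}$ away from a finite $\G$-invariant set of concentration points, at each of which a bubble tree of nonconstant harmonic $2$-spheres forms, and a concentrating eigenfunction at one point of an orbit forces one at every point of that orbit; hence the resulting index shift $b$ is a sum of orbit sizes, so $b\in\mathbb{N}_\G(\Sigma)$ and $b\le k$. If $b\ge1$, Hersch's inequality gives $\overline\lambda_1\le8\pi$ for each bubble, so each bubble contributes at most $8\pi$ to the $k$-th normalized eigenvalue in the degenerate limit while the residual part is controlled by the $\G$-invariant metric $\abs{d\Phi}_g^2g\in[g]$; this yields $\Lambda_k(\Sigma,[g];\G)\le\Lambda_{k-b}(\Sigma,[g];\G)+8\pi b$, contradicting the strict gap. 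Therefore $b=0$: no energy concentrates, $\Phi_j\to\Phi$ strongly in $W^{1,2}$, $\Phi$ is nonconstant (its energy is $\Lambda_k>0$), and passing to the limit in the eigenvalue equation shows the components of $\Phi$ are $\lambda_k$-eigenfunctions of $\tilde g:=\abs{d\Phi}_g^2g$ with $\lambda_k(\tilde g)=1$ and $\overline\lambda_k(\tilde g)=\int_\Sigma\abs{d\Phi}_g^2\,dv_g=\Lambda_k(\Sigma,[g];\G)$. Finally, since $\tilde g$ is the energy metric of $\Phi$ we have $\abs{d\Phi}_{\tilde g}^2\equiv1$, so the eigenvalue equation is exactly the weak harmonic-map equation $\Delta_{\tilde g}\Phi=\abs{d\Phi}_{\tilde g}^2\Phi$ into $\mathbb{S}^n$; by Hélein's regularity theorem $\Phi$ is smooth, and $\G$-equivariance passes to the limit, which completes the proof.

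The delicate step is the $\G$-equivariant bubble analysis in the last paragraph: one must show concentration occurs only along $\G$-orbits and quantify the eigenvalue index lost to bubbles as $8\pi b$ with $b\in\mathbb{N}_\G(\Sigma)$, $1\le b\le k$, so that the strict gap can be contradicted. This is where the paper's simplified treatment of the Nadirashvili–Petrides–Karpukhin–Nadirashvili–Penskoi–Polterovich machinery is invoked, adapted to carry the $\G$-action; by contrast the reduction to eigenmaps and the regularity of the limiting map are comparatively routine once non-concentration is established.
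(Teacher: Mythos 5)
Your lower bound argument (equivariant neck-attachment of round spheres of area $8\pi$ along orbits) is fine and matches the paper's gluing construction. The gaps are in the second half. The most serious one is the bubble accounting: you bound each bubble's contribution by $8\pi$ via Hersch's inequality and identify the index shift $b$ with the number of concentration points ("a sum of orbit sizes"). But a blow-up limit at a single concentration point is a measure $\mu_i$ on $\mathbb{S}^2$ that may absorb several eigenvalue indices $k_i\ge 2$, and its contribution to the limiting normalized eigenvalue is $\overline{\lambda}_{k_i}(\mathbb{S}^2,\mu_i)$, which Hersch does not control; one needs $\overline{\lambda}_{k_i}(\mathbb{S}^2,\mu_i)\le \Lambda_{k_i}(\mathbb{S}^2)=8\pi k_i$, i.e.\ the full sphere maximization theorem of Karpukhin--Nadirashvili--Penskoi--Polterovich, which the paper invokes explicitly, together with a proof that the blow-up measures are admissible for that theorem (the paper's Claim in Section 3.3 and Appendix A exist precisely to show they are $L^1$). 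Relatedly, what must lie in $\mathbb{N}_\G(\Sigma)$ is the total number of indices lost to bubbles, $\sum_i k_i$, not the number of bubbles; this requires arguing that the blow-up measures along one orbit are isometric copies of each other so the lost indices can be grouped orbitwise. You assert this in one clause, but it is exactly the delicate point, and with only "$\overline{\lambda}_1\le 8\pi$ per bubble" the contradiction with the strict gap does not follow whenever some bubble absorbs more than one index.

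The second gap is the starting point: a maximizing sequence of metrics of the form $|d\Phi_j|_g^2\,g$ with $\Phi_j$ a $\G$-equivariant eigenmap into a sphere of fixed dimension is not produced by a "Hersch-type renormalization" (that is the center-of-mass trick for $\lambda_1$ on $\mathbb{S}^2$); it is the heart of the method. In the paper this is Proposition 3.1, obtained by Clarke subdifferential calculus at maximizers of the truncated problems $\|\rho\|_{L^\infty}\le \epsilon^{-1}$ combined with a maximum-principle argument, and its output is not an energy-density metric but an $L^\infty$ measure $\mu_\epsilon$ with an eigenmap $\Phi_\epsilon$ satisfying $|\Phi_\epsilon|\le 1$ and $|\Phi_\epsilon|=1$ off a set of measure at most $\epsilon$; this approach "from below" is what makes the short $H^1$-convergence lemma (Lemma 3.5) and hence the regularity of the limit work, and the fixed target dimension $n$ comes from multiplicity bounds. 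Note also that this reduction precedes and is independent of any non-concentration statement, so it is not "routine once non-concentration is established." If you replace Hersch by the sphere theorem for all $k$, justify the orbitwise index bookkeeping, and either cite or reproduce the subdifferential construction of the eigenmap sequence, your outline becomes essentially the paper's proof (with the difference that you phrase the blow-up as a harmonic-map bubble tree, whereas the paper blows up the limiting measure directly, which is what lets it avoid almost-harmonic bubble-tree analysis altogether).
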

\begin{remark}
  Since $\Phi$ is harmonic, the metric
  $|d\Phi|_g^2 g$
  is smooth away from a finite set of conical singularities~\cite{Helein:2002:harm-maps-and-moving-frames}.
  The eigenvalues $\lambda_k(|d\Phi|_g^2 g)$ are interpreted in a measure-theoretical
  sense~\eqref{eq:eigenval-of-meas}.
\end{remark}
\begin{corollary}\label{cor:non-babl}
  If $\abs{\G \!\cdot\! p} > k$ for any $p \in \Sigma$,
  there exists a nontrivial smooth $\G$-equivariant harmonic map $\Phi\colon \Sigma \to
    \mathbb{S}^n$ such that $\lambda_{k}(\abs{d\Phi}_g^2 g) = 1$ and
  \begin{equation*}
    \Lambda_k(\Sigma, [g];\G) = \overline{\lambda}_k(\abs{d\Phi}_g^2 g).
  \end{equation*}
\end{corollary}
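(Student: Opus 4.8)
The plan is to deduce Corollary~\ref{cor:non-babl} directly from Theorem~\ref{thm:main}: under the hypothesis $\abs{\G\!\cdot\! p} > k$ for every $p\in\Sigma$, the bubbling term on the right-hand side of the inequality in Theorem~\ref{thm:main} becomes vacuous, so only the ``non-bubbling'' alternative can occur.

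First I would observe that every nonzero element of the semigroup $\mathbb{N}_\G(\Sigma)$ of~\eqref{eq:N_G} is, by definition, a sum of one or more terms of the form $\abs{\G\!\cdot\! p_i}$ with $p_i\in\Sigma$, and each such term exceeds $k$ by hypothesis; hence every nonzero element of $\mathbb{N}_\G(\Sigma)$ exceeds $k$. In particular $\mathbb{N}_\G(\Sigma)\cap\brc{1,\dots,k}=\varnothing$, so the index set $\set{b\in\mathbb{N}_\G(\Sigma)}{1\leq b\leq k}$ over which the maximum in Theorem~\ref{thm:main} is taken is empty.

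Next I would note that $\Lambda_k(\Sigma,[g];\G)$ is a nonnegative real number, being a supremum of positive quantities over the nonempty class of $\G$-invariant metrics in $[g]$. Since the right-hand side of the inequality in Theorem~\ref{thm:main} is a maximum over the empty set, hence equal to $-\infty$, the inequality is automatically strict. Applying the second assertion of Theorem~\ref{thm:main} then produces a nontrivial smooth $\G$-equivariant harmonic map $\Phi\colon\Sigma\to\Sph^n$ with $\lambda_k(\abs{d\Phi}_g^2 g)=1$ and $\Lambda_k(\Sigma,[g];\G)=\overline{\lambda}_k(\abs{d\Phi}_g^2 g)$, which is precisely the assertion of the corollary.

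The argument presents no real obstacle; the only point worth a word is the bookkeeping convention $\max\varnothing=-\infty$. If one would rather not invoke it, the same conclusion follows by inspecting the dichotomy inside the proof of Theorem~\ref{thm:main}: the alternative to producing the harmonic map is the formation of at least one bubble, and — the bubble set being preserved by the group action — a bubble must occur along a complete orbit $\G\!\cdot\! p$, thereby contributing an integer $b=\abs{\G\!\cdot\! p}\in\mathbb{N}_\G(\Sigma)$ with $b\leq k$, which is impossible since $\abs{\G\!\cdot\! p}>k$.
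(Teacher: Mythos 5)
Your argument is correct and is exactly how the paper intends the corollary to be read: since every nonzero element of $\mathbb{N}_\G(\Sigma)$ exceeds $k$, the maximum in Theorem~\ref{thm:main} is over an empty set, so the inequality is automatically strict and the second assertion of the theorem yields the harmonic map. This matches the paper's (implicit) proof, including the careful handling of the vacuous maximum.
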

\medskip
In the case of a nonempty boundary, we define
\begin{equation}
  \mathfrak{S}_k(\Sigma,[g];\G)
  := \sup \set*{\overline{\sigma}_k(\tilde{g})}{\tilde{g} \in [g],\
  \tilde{g} \text{ is } \G\text{-invariant}}.
\end{equation}
\begin{theorem}\label{thm:main-stek}
  Let $\brr{\Sigma, [g]}$ be a compact connected surface with
  boundary $\partial \Sigma \neq \varnothing$ and a conformal
  action of a discrete group $\G$. Then
  \begin{equation}
    \mathfrak{S}_k(\Sigma, [g];\G) \geq \max_{
      \substack{b\in\mathbb{N}_\G(\partial\Sigma) \\ 1\leq b \leq k}}
    \brc{\mathfrak{S}_{k-b}(\Sigma, [g];\G) + 2\pi b}.
  \end{equation}
  If the inequality is strict, there exists a nontrivial smooth $\G$-equivariant
  free boundary harmonic map $\Phi\colon \Sigma \to\mathbb{B}^n$
  such that $\sigma_k(\abs{\partial_\nu \Phi}^2 g) = 1$ and
  \begin{equation}
    \mathfrak{S}_{k}(\Sigma, [g];\G) = \overline{\sigma}_k(\abs{\partial_\nu \Phi}^2 g)
     = \int_\Sigma \abs{d\Phi}_g^2 dv_g,
  \end{equation}
  where $\abs{\partial_\nu \Phi}^2g$ is positively extended to a metric
  on the whole $\Sigma$.
\end{theorem}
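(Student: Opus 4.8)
The plan is to adapt the method behind Theorem~\ref{thm:main} to the Steklov setting, replacing measures on $\Sigma$ by measures on $\partial\Sigma$ and spherical bubbles by disk (half-disk) bubbles. First I would fix a $\G$-invariant background metric $g_0\in[g]$, obtained by averaging any metric of the conformal class over the group action (which has finite orbits). Writing a $\G$-invariant competitor as $\tilde g=e^{2\omega}g_0$ and using that the Dirichlet energy is conformally invariant in dimension two, one has
\[
  \sigma_k(\tilde g)=\sigma_k(\mu):=\inf_{E}\ \sup_{0\neq u\in E}\ \frac{\int_\Sigma\abs{du}_{g_0}^2\,\dv{g_0}}{\int_{\partial\Sigma}u^2\,d\mu},\qquad \mu:=e^{\omega}\,\ds{g_0}\mres\partial\Sigma,
\]
the infimum over $(k{+}1)$-dimensional subspaces $E\subset H^1(\Sigma)$. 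Thus the equivariant Steklov problem becomes the maximization of $\sigma_k(\mu)\,\mu(\partial\Sigma)$ over $\G$-invariant positive measures $\mu$ on $\partial\Sigma$; after normalizing $\mu(\partial\Sigma)=1$, the relevant class is weak-$*$ precompact among $\G$-invariant probability measures, which is what makes existence tractable.

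For the lower bound, fix $b\in\mathbb{N}_\G(\partial\Sigma)$ with $1\le b\le k$ and pick $\G$-orbits on $\partial\Sigma$ of total cardinality $b$. To a near-maximizer of $\mathfrak{S}_{k-b}(\Sigma,[g];\G)$ I would glue, $\G$-equivariantly, one shrinking Steklov bubble (a half-disk model contributing $2\pi$ to the normalized spectrum, exactly as in the Proposition) at each of these $b$ boundary points. As the bubbles degenerate, the Steklov spectrum of the glued $\G$-invariant metric converges to the union of the spectrum of the near-maximizer and of $b$ copies of the bubble spectrum, so $\overline\sigma_k\to\overline\sigma_{k-b}(\tilde g)+2\pi b$; passing to the supremum and then to the maximum over $b$ gives the stated inequality.

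Suppose now the inequality is strict. I would take a $\G$-invariant maximizing sequence $\mu_i$ of probability measures on $\partial\Sigma$ and pass to a weak-$*$ limit $\mu$, still a $\G$-invariant probability measure. The core step is a concentration--compactness dichotomy for the $\mu_i$: because each $\mu_i$ is $\G$-invariant, the atomic part of $\mu$ and any mass escaping into bubbles are automatically organized into complete $\G$-orbits, of some total size $b'$. If $b'=0$ and no energy is lost, $\mu$ is a genuine maximizer, $\sigma_k(\mu)=\mathfrak{S}_k(\Sigma,[g];\G)$ after scaling. If $b'\ge1$, then carefully accounting for the Steklov eigenvalue carried off by the shrinking bubbles at the $b'$ concentration points yields $\mathfrak{S}_k(\Sigma,[g];\G)\le\mathfrak{S}_{k-b}(\Sigma,[g];\G)+2\pi b$ for some $b\in\mathbb{N}_\G(\partial\Sigma)$ with $1\le b\le k$, contradicting strictness; hence $\mu$ maximizes. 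I expect this equivariant concentration--compactness analysis — identifying the defect measures, verifying that they respect $\G$, and locking the eigenvalue loss to the quantum $2\pi\cdot\abs{\G\!\cdot\!p}$ — to be the main obstacle, and presumably the place where the paper's streamlined argument enters.

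Finally, from a maximizing $\G$-invariant measure $\mu$ (scaled so $\sigma_k(\mu)=1$) I would extract the harmonic map. A first-variation argument — perturb $\mu$ by $\G$-invariant signed measures of vanishing total mass, then separate by Hahn--Banach — produces $\sigma_k(\mu)$-eigenfunctions $u_1,\dots,u_m$ with $\sum_i u_i^2\equiv1$ $\mu$-a.e.; averaging these over $\G$ turns them into a $\G$-equivariant map $\Phi\colon\Sigma\to\R^n$ with $\Delta_{g_0}\Phi=0$ in $\Sigma$, $\partial_\nu\Phi\,\ds{g_0}=\Phi\,d\mu$ weakly on $\partial\Sigma$, and $\abs{\Phi}^2\equiv1$ $\mu$-a.e. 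Subharmonicity of $\abs{\Phi}^2$ together with the Hopf lemma on $\partial\Sigma\setminus\supp\mu$ forces $\Phi(\Sigma)\subset\overline{\mathbb{B}^n}$, so $\Phi$ is a weakly free boundary harmonic map; strictness again rules out interior energy concentration, and the boundary regularity theory for free boundary harmonic maps then gives smoothness away from a finite set, at which $\abs{d\Phi}_{g_0}^2 g_0$ develops conical singularities. Consequently $\mu=\abs{\partial_\nu\Phi}_{g_0}\,\ds{g_0}$ is absolutely continuous, the metric $\abs{\partial_\nu\Phi}^2 g$ (extended positively to the interior, which is immaterial by conformal invariance of the energy) has $\sigma_k=1$, and integrating by parts with $\Delta_g\Phi=0$ and $\partial_\nu\Phi=\abs{\partial_\nu\Phi}\,\Phi$ on $\partial\Sigma$,
\[
  \int_\Sigma\abs{d\Phi}_g^2\,\dv g=\int_{\partial\Sigma}\langle\partial_\nu\Phi,\Phi\rangle\,\ds g=\int_{\partial\Sigma}\abs{\partial_\nu\Phi}\,\ds g=\operatorname{Length}_{\abs{\partial_\nu\Phi}^2 g}(\partial\Sigma)=\overline\sigma_k(\abs{\partial_\nu\Phi}^2 g)=\mathfrak{S}_k(\Sigma,[g];\G),
\]
which is the asserted identity.
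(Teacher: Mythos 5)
Your overall outline (boundary measures, equivariant gluing for the lower bound, a concentration dichotomy, extraction of a free boundary harmonic map) matches the paper's strategy, but the place where you defer the real work --- ``perturb the limiting maximizing measure $\mu$ and extract eigenfunctions by a first variation'' --- is exactly where the argument breaks, and the paper's order of operations is the opposite of yours. You first pass to a weak$^*$ limit of an \emph{arbitrary} maximizing sequence and only then vary; but the limit is merely a $\G$-invariant Radon measure, and nothing guarantees it is admissible, has discrete spectrum near $\lambda_k$, or is regular enough for $\lambda_k^{-1}$ to be Lipschitz in any norm in which the constraint set is a neighborhood-friendly convex body, so neither the Hahn--Banach separation nor the eigenfunction equation $\partial_\nu\Phi\, ds_g=\Phi\,d\mu$ is available at that stage (weak$^*$ upper semicontinuity alone does not make $\mu$ ``a genuine maximizer'' in the class over which $\mathfrak{S}_k$ is defined). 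The paper instead performs the variational step \emph{before} taking limits: for each $\epsilon$ it maximizes over the $L^\infty$ ball $B(\epsilon^{-1})\cap\mathcal{C}^\partial_\G$, where a maximizer exists by weak$^*$ compactness, and applies the Clarke subdifferential calculus (Lemma~\ref{lem:subdiff-calc}, Proposition~\ref{prop:multip-rule}) together with a Zaremba/Hopf maximum-principle argument to produce eigenmaps $\Phi_\epsilon$ with $\abs{\Phi_\epsilon}\le 1$ and $s_g\brr{\brc{\abs{\Phi_\epsilon}<1}}\le\epsilon$ on $\partial\Sigma$ (Proposition~\ref{prop:max-sequence}). This quantitative ``from below'' structure is not cosmetic: it is what makes the simple $H^1$-convergence proof away from unstable points work (Lemma~\ref{lem:strong-conv}), it forces $\abs{\Phi}\equiv 1$ \emph{$s_g$-a.e.\ on all of} $\partial\Sigma$ --- your variation would only give $\abs{\Phi}^2=1$ $\mu$-a.e., which is too weak to obtain the free boundary condition on arcs where $\mu$ carries no mass or to rule out a singular continuous part of $\mu$ and identify $\mu^c=\abs{\partial_\nu\Phi}\,ds_g$ (Lemma~\ref{lem:regularity-in-good-pt}) --- and it is also what shows the blown-up bubble measures are $L^1$ on $\partial\mathbb{D}^2$, without which the Hersch--Payne--Schiffer bound $\overline{\lambda}_{k_i}(\mathbb{D}^2,\tilde\mu_i)\le 2\pi k_i$ used in the concentration accounting (Corollary~\ref{cor:st-union-ineq}) is not justified for the defect measures you merely describe as ``half-disk models contributing $2\pi$.''

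A secondary, related gap is the quantization step itself: you assert that the mass escaping at concentration points ``locks'' the eigenvalue loss to $2\pi\abs{\G\!\cdot\!p}$ per orbit, but this requires the blow-up construction of Appendix~\ref{app:blow-up} (finiteness of the bubble tree via index/capacity counting, the $\G$-orbit structure of the atom set, and the comparison $\mathfrak{S}_k(\Sigma,[g];\G)\le\overline{\lambda}_{k_0}\brr{[g],\abs{\partial_\nu\Phi}ds_g}+\sum_i\overline{\lambda}_{k_i}\brr{\mathbb{D}^2,\tilde\mu_i}$), all of which again feeds on the regularity of the specific maximizing sequence from Proposition~\ref{prop:max-sequence}. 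Your lower-bound sketch and the final integration by parts computing $\int_\Sigma\abs{d\Phi}^2_g\,dv_g$ are fine and agree with the paper; the missing idea is that the Clarke-subdifferential/maximum-principle construction of the approximate maximizers must come first, since every subsequent regularity and bubbling estimate in the proof is extracted from it rather than from the limiting measure.
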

\begin{remark}
  Since $\Phi$ is a free boundary harmonic map, one has
  $|\partial_\nu \Phi|^2 > 0$ on the boundary, see~\cite[Lemma~2]{Karpukhin-Metras:2022:higher-dim}.
  At the same time, the Steklov problem has the property that
  the eigenvalues $\overline{\sigma}_k(|\partial_\nu \Phi|^2 g)$ do not
  depend on a positive extension of $|\partial_\nu \Phi|^2$ in the interior of $\Sigma$.
\end{remark}
\begin{corollary}\label{cor:stek-non-babl}
  If $\abs{\G \!\cdot\! p} > k$ for any $p \in \partial\Sigma$,
  there exists a nontrivial, smooth $\G$-equivariant free boundary harmonic map
  $\Phi\colon \Sigma \to \mathbb{B}^n$ such that $\sigma_k(\abs{\partial_\nu \Phi}^2 g) = 1$ and
  \begin{equation*}
    \mathfrak{S}_k(\Sigma, [g];\G) = \overline{\sigma}_k(\abs{\partial_\nu \Phi}^2 g).
  \end{equation*}
\end{corollary}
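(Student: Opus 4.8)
The plan is to obtain Corollary~\ref{cor:stek-non-babl} as an immediate consequence of Theorem~\ref{thm:main-stek}: under the stated hypothesis the inequality in that theorem is automatically strict, so its second conclusion applies verbatim.

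First I would examine the semigroup $\mathbb{N}_\G(\partial\Sigma)$ from~\eqref{eq:N_G}. Every nonzero element of it is of the form $\abs{\G\!\cdot\!p_1}+\cdots+\abs{\G\!\cdot\!p_n}$ with all $p_i\in\partial\Sigma$. The hypothesis $\abs{\G\!\cdot\!p}>k$ for every $p\in\partial\Sigma$ forces each such sum to be $>k$; hence $\mathbb{N}_\G(\partial\Sigma)\cap\brc{1,\dots,k}=\varnothing$. In particular the index set $\set*{b\in\mathbb{N}_\G(\partial\Sigma)}{1\leq b\leq k}$ over which the maximum on the right-hand side of Theorem~\ref{thm:main-stek} is taken is empty.

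With an empty index set the maximum is $-\infty$ (the empty maximum), so the inequality $\mathfrak{S}_k(\Sigma,[g];\G)>\max_{\varnothing}$ holds trivially, once one knows that $\mathfrak{S}_k(\Sigma,[g];\G)$ is a finite real number. The latter is established along the way in the proof of Theorem~\ref{thm:main-stek} (alternatively it follows from the standard a priori bound for Steklov eigenvalues on surfaces, which already bounds the non-equivariant, hence also the equivariant, quantity). We are therefore in the ``strict inequality'' branch of Theorem~\ref{thm:main-stek}, which produces exactly the claimed nontrivial smooth $\G$-equivariant free boundary harmonic map $\Phi\colon\Sigma\to\mathbb{B}^n$ with $\sigma_k(\abs{\partial_\nu\Phi}^2 g)=1$ and $\mathfrak{S}_k(\Sigma,[g];\G)=\overline{\sigma}_k(\abs{\partial_\nu\Phi}^2 g)$.

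Since the whole argument is one short observation about orbit sizes followed by a citation of the main theorem, there is no genuine obstacle; the only point worth spelling out is the empty-maximum convention (and, if a fully self-contained statement is wanted, recording the finiteness of $\mathfrak{S}_k(\Sigma,[g];\G)$). The same reasoning, applied to $\mathbb{N}_\G(\Sigma)$ and Theorem~\ref{thm:main}, yields Corollary~\ref{cor:non-babl}.
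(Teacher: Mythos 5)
Your proposal is correct and is exactly the argument the paper intends: the hypothesis makes every nonzero element of $\mathbb{N}_\G(\partial\Sigma)$ exceed $k$, so the maximum in Theorem~\ref{thm:main-stek} is over an empty set, the inequality is vacuously strict (finiteness of $\mathfrak{S}_k$ being guaranteed by the eigenvalue bounds of Proposition~\ref{prop:upper-cont-and-bound}), and the theorem's second conclusion yields the map $\Phi$. The paper gives no separate proof precisely because this observation is immediate, so there is nothing to add.
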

\begin{remark}
  Our results remain valid even in the absence of a group action on $\Sigma$
  (i.e. when $\G = \brc{e}$). This leads to simplified proofs of previously known existence and regularity results for
  $\Lambda_k$ and $\mathfrak{S}_k$ (cf.~\cite{Karpukhin-Nadirashvili-Penskoi-Polterovich:2022:existence,
  Petrides:2018:exist-of-max-eigenval-on-surfaces,
  Petrides:2019:max-steklov-eigenval-on-surfaces}). These simplifications
  mainly concern the bubbling-tree construction
  (see Remark~\ref{rem:bubbling-cons}) and a shorter proof of $H^1$-convergence away from
  $\emph{unstable}$ (a.k.a., $\emph{bad}$) points
  (see Remark~\ref{rem:h2-conv-simple}).
\end{remark}

\subsection{Applications}
We are going to compute $\Lambda_k(\Sph^2,[g];\G)$ and
$\mathfrak{S}_k(\mathbb{D}^2,[g];\G)$ for any finite group
admitting a conformal action on these surfaces.
Up to a diffeomorphism, both $\Sph^2$ and $\mathbb{D}^2$ each
have only one conformal class, so the dependency of $\Lambda_k$ and
$\mathfrak{S}_k$ on $[g]$ can be omitted.

Let $g_{\Sph^2}$ denote the standard round metric on $\Sph^2$ and
$G$ be a finite subgroup of $SO(\R^3)$. Then all such groups
are classified as follows,
\begin{equation}\label{eq:classif-of-so3}
  G \in \brc{\mathbb{Z}_n, D_n, T, O, I},
\end{equation}
where $\mathbb{Z}_n$ is the group of rotations of order $n$,
$D_n$ is the dihedral group of a regular $n$-gon,
and the last three are the (rotational) tetrahedral,
octahedral and icosahedral groups correspondingly.
\begin{proposition}
  Let $\G$ be a finite subgroup of the conformal transformations
  of $\Sph^2$, i.e. $[\gamma^* g_{\Sph^2}] = [g_{\Sph^2}]$. Then
  up to a conformal automorphism of the sphere,
  $\G$ is a finite subgroup of $O(\R^3)$. In particular,
  one of the following situations is realized:
  \begin{itemize}
    \item $\G \approx G$, $\pi \in \hom(\G, \mathbb{Z}_2)$, and
    $\Id\times\pi\colon\G \hookrightarrow SO(\R^3)\times\mathbb{Z}_2$,
    \item $\G = G \times \mathbb{Z}_2 \subset SO(\R^3)\times\mathbb{Z}_2$,
  \end{itemize}
  where $G$ is from~\eqref{eq:classif-of-so3},
  $O(\R^3) \approx SO(\R^3)\times\mathbb{Z}_2$ and
  $(\Id, -1)$ corresponds to the antipodal map $x \mapsto -x$.
\end{proposition}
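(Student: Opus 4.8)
The plan is to first conjugate $\G$ by an element of $\mathrm{Conf}(\Sph^2)$ into $O(\R^3)$, and then carry out an elementary group-theoretic case analysis using the splitting $O(\R^3)\approx SO(\R^3)\times\mathbb{Z}_2$ together with the classification~\eqref{eq:classif-of-so3} of finite subgroups of $SO(\R^3)$.

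For the first step I would produce a $\G$-invariant round metric in $[g_{\Sph^2}]$. Averaging, $\bar g := \frac{1}{\abs{\G}}\sum_{\gamma\in\G}\gamma^* g_{\Sph^2}$ is a smooth $\G$-invariant metric, and it still lies in $[g_{\Sph^2}]$ because each $\gamma$ being conformal means $\gamma^* g_{\Sph^2}$ is a positive multiple of $g_{\Sph^2}$, and an average of such multiples is again positive. By the Myers--Steenrod theorem $\mathrm{Isom}(\Sph^2,\bar g)$ is a \emph{compact} subgroup of the Lie group $\mathrm{Conf}(\Sph^2)$ (every $\bar g$-isometry is conformal for $[g_{\Sph^2}]=[\bar g]$) containing $\G$. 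Since $\mathrm{Conf}(\Sph^2)\cong\mathrm{Isom}(\mathbb{H}^3)$ has finitely many components and $O(\R^3)=\mathrm{Isom}(\Sph^2,g_{\Sph^2})$ is the stabilizer of a point of $\mathbb{H}^3$, hence a maximal compact subgroup, all maximal compact subgroups are conjugate, so $\mathrm{Isom}(\Sph^2,\bar g)$ --- and thus $\G$ --- is conjugate by some $\psi\in\mathrm{Conf}(\Sph^2)$ into $O(\R^3)$. (Equivalently: $\G$ acts isometrically on the Hadamard manifold $\mathbb{H}^3\cong\mathrm{Conf}(\Sph^2)/O(\R^3)$, so by Cartan's fixed point theorem it fixes a point, which gives $\psi$; or one runs the normalized Ricci flow on $\Sph^2$ started at $\bar g$, whose limit is a $\G$-invariant constant curvature metric in $[g_{\Sph^2}]$. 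For orientation-preserving $\G$ one may instead lift to $SL(2,\C)$ and average a Hermitian inner product to land in $SU(2)$, the double cover of $SO(\R^3)$.)

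After replacing $\G$ by $\psi\G\psi^{-1}$ we may assume $\G\subset O(\R^3)\approx SO(\R^3)\times\mathbb{Z}_2$; let $p_1,p_2$ be the two projections, with $(\Id,-1)$ the antipodal map. The dichotomy is whether $(\Id,-1)\in\G$. If not, then $\ker(p_1|_\G)=\G\cap(\{\Id\}\times\mathbb{Z}_2)$ is trivial, so $p_1$ identifies $\G$ with a finite subgroup $G:=p_1(\G)$ of $SO(\R^3)$, which is one of the groups in~\eqref{eq:classif-of-so3}; writing $\pi:=p_2|_\G$ as a homomorphism on $\G\approx G$ we get $\G=(id\times\pi)(\G)$, the first alternative (with $\pi$ trivial precisely when $\G\subset SO(\R^3)$). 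If $(\Id,-1)\in\G$, put $G:=p_1(\G)\in\{\mathbb{Z}_p,D_p,T,O,I\}$; for $g\in G$ choose $(g,\epsilon)\in\G$, so $(g,1)=(g,\epsilon)(\Id,\epsilon)\in\G$ (both $(\Id,1)$ and $(\Id,-1)$ lie in $\G$) and then $(g,-1)=(g,1)(\Id,-1)\in\G$; hence $G\times\mathbb{Z}_2\subseteq\G$, and the reverse inclusion is clear, so $\G=G\times\mathbb{Z}_2$, the second alternative. The only nontrivial ingredient here is the reduction to $O(\R^3)$ --- the classical fact that a finite (indeed compact) subgroup of $\mathrm{Conf}(\Sph^2)$ is conjugate into $O(\R^3)$ --- which I expect to be the step worth presenting carefully (conformal invariance of $\bar g$ plus maximal compactness / the fixed-point argument), while the classification~\eqref{eq:classif-of-so3} and the case analysis above serve as bookkeeping.
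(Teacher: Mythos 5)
Your proof is correct and follows essentially the same route as the paper: both reduce to the classical fact that a finite subgroup of $\mathrm{Conf}(\Sph^2)\cong\mathrm{Isom}(\mathbb{H}^3)$ is conjugate into the point stabilizer $O(\R^3)$ (your parenthetical Cartan fixed-point argument is literally the paper's averaging construction in $\mathbb{H}^3$, and your invariant-metric/maximal-compact variant is an equivalent phrasing), followed by the case analysis of subgroups of $SO(\R^3)\times\mathbb{Z}_2$. You spell out the dichotomy on whether $(\Id,-1)\in\G$ explicitly, which the paper leaves as routine bookkeeping.
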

\begin{proof}
  The group $\G$ is a subgroup of the
  Möbius group $M(\Sph^2)$ generated by
  inversions in $\Sph^2 \approx \mathbb{C}\cup\brc{\infty}$.
  Since $\G$ is finite, it is conjugate to a subgroup of $O(\R^3)$; see, e.g.,~\cite[Theorem~5.5.2]{Ratcliffe:2019:hyperbolic-manifolds}.
  A sketch of the proof goes as follows.
  The Möbius transformations of $\R^3\cup\brc{\infty}$ that fix
  $\mathbb{B}^3$ are uniquely determined by their action on
  $\Sph^2$, and any element of $M(\Sph^2)$ extends to
  a Möbius transformation of $\R^3$. Thus, we have the following isomorphisms,
  $M(\Sph^2) \approx M(\mathbb{B}^3) \approx \operatorname{Iso}(\mathbb{H}^3)$,
  where the last one is simply the Poincaré model of the hyperbolic space
  $\mathbb{H}^3$. Then it is easy to see that the finite group $\G$
  has a fixed point $\frac{v}{\norm{v}_{\mathbb{H}}}$, where
  $v = \sum_{\gamma \in \G} \gamma(v_0)$ for some $v_0\in \mathbb{H}^3$.
  The stabilizer of a point under the action of $M(\mathbb{B}^3)$
  is conjugated to $O(\R^3)$.

  Finally, the proposition follows after classifying
  all the subgroups of the direct product
  $G_1\times G_2 = SO(\R^3)\times\mathbb{Z}_2$.
\end{proof}
Let $\G_+ = \G \cap SO(\R^3)$ be the orientation
preserving transformations of $\G$.
\begin{corollary}
  Let $\G$ be a finite subgroup of the conformal transformations
  of $\mathbb{D}^2$. Then either $\G = \mathbb{Z}_n \subset SO(\R^2)$
  is the group of rotations of order $n$,
  or $\G = D_n \subset O(\R^2)$ is the plane Dihedral group.
\end{corollary}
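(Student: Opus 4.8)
The plan is to identify the group of conformal diffeomorphisms of $\mathbb{D}^2$ with the isometry group of the hyperbolic plane, and then repeat the fixed-point argument from the proof of the previous proposition one dimension lower. I would first recall that every conformal diffeomorphism of $\mathbb{D}^2$ is a Möbius transformation: an orientation-preserving one is a holomorphic automorphism $z \mapsto e^{i\theta}\frac{z-a}{1-\bar a z}$, and an orientation-reversing one is the composition of such a map with the conjugation $z \mapsto \bar z$. Hence the conformal transformations of $\mathbb{D}^2$ form exactly the group of Möbius transformations of $\Sph^2 = \C \cup \brc{\infty}$ that preserve the disk $\mathbb{D}^2$, which in the Poincaré disk model is the full isometry group $\mathrm{Iso}(\mathbb{H}^2)$. (By Schwarz reflection each $\gamma \in \G$ also extends to a conformal automorphism of $\Sph^2$ preserving the equatorial circle $\partial\mathbb{D}^2$ and the hemisphere $\mathbb{D}^2$, so one could alternatively deduce the corollary from the preceding proposition; the hyperbolic picture is shorter, so I would take that route.)

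Next I would show that the finite group $\G \subset \mathrm{Iso}(\mathbb{H}^2)$ has a fixed point, using verbatim the argument already given for the sphere: in the hyperboloid model $\mathbb{H}^2 \subset \R^{2,1}$ the vector $v = \sum_{\gamma \in \G}\gamma(v_0)$ (for any chosen $v_0 \in \mathbb{H}^2$) is future-pointing and timelike, the isometries act linearly on $\R^{2,1}$, and therefore $v/\norm{v}_{\mathbb{H}} \in \mathbb{H}^2$ is fixed by every element of $\G$ --- or, if one prefers, invoke the Cartan fixed-point theorem for the $\mathrm{CAT}(0)$ space $\mathbb{H}^2$ applied to the circumcenter of an orbit. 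Conjugating by a Möbius transformation carrying this fixed point to the center $0 \in \mathbb{D}^2$, I may assume $\G$ fixes $0$. But the stabilizer of $0$ inside the conformal group of $\mathbb{D}^2$ consists precisely of the linear maps $z \mapsto e^{i\theta}z$ and $z \mapsto e^{i\theta}\bar z$; that is, it equals $O(\R^2)$ acting on $\mathbb{D}^2 \subset \R^2$ by restriction.

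Finally I would classify the finite subgroups of $O(\R^2)$. If $\G \subset SO(\R^2)$, then $\G$ is a finite group of rotations, hence cyclic, and $\G = \mathbb{Z}_p$. Otherwise $\G$ contains a reflection; putting $H = \G \cap SO(\R^2)$ (cyclic of some order $p \geq 1$) one has $[\G : H] = 2$, and $\G$ is generated by $H$ together with any reflection in $\G \setminus H$, so $\G \cong D_p \subset O(\R^2)$ (with $D_1 \cong \mathbb{Z}_2$). This exhausts the cases and proves the corollary, up to conjugation in the conformal group of $\mathbb{D}^2$ as in the proposition. I do not anticipate a genuine obstacle: the content is essentially the sphere proposition reproduced in one lower dimension, and the only points needing a little care are the identification of the conformal group of the disk with $\mathrm{Iso}(\mathbb{H}^2)$ --- in particular keeping the orientation-reversing maps, which are exactly what produce the dihedral groups --- and verifying that the list $\mathbb{Z}_p$, $D_p$ of finite subgroups of $O(\R^2)$ is complete.
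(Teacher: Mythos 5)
Your argument is correct, and it takes a route different from the paper's. The paper treats this corollary as a quick consequence of the preceding proposition: implicitly extending each conformal map of $\mathbb{D}^2$ to the sphere (identifying $\mathbb{D}^2$ with a half-sphere), it invokes the classification of finite subgroups of the conformal group of $\Sph^2$ as subgroups of $SO(\R^3)\times\mathbb{Z}_2$, observes that among the groups in~\eqref{eq:classif-of-so3} only $\mathbb{Z}_p$ can occur as $\G_+$ (the others, e.g. $D_p\subset SO(\R^3)$, interchange the two hemispheres) and that the antipodal element is likewise excluded, and then reads off the two possibilities $\G=\mathbb{Z}_p$ or $\G=D_p$ from the $\mathbb{Z}_2$-extension data $\pi\in\hom(\G,\mathbb{Z}_2)$. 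You instead work intrinsically: you identify the conformal group of the disk with $\operatorname{Iso}(\mathbb{H}^2)$, rerun the fixed-point (averaging/Cartan) argument one dimension lower to conjugate $\G$ into the stabilizer of the center, identify that stabilizer with $O(\R^2)$, and finish with the elementary classification of finite subgroups of $O(\R^2)$ as cyclic or dihedral. Your version is self-contained and does not rely on the full $O(\R^3)$ classification, at the cost of redoing the fixed-point step; the paper's version is shorter given that the sphere proposition has already been established, which is precisely the alternative you mention via Schwarz reflection. Both are complete; the only points you rightly flag — that orientation-reversing maps are included in $\operatorname{Iso}(\mathbb{H}^2)$ (equivalently, that they act linearly in the hyperboloid model) and that the list $\mathbb{Z}_p$, $D_p$ exhausts the finite subgroups of $O(\R^2)$ — are standard and pose no obstacle.
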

\begin{proof}
  In the case of the closed unit disk, the classification~\eqref{eq:classif-of-so3}
  implies $\G_+ = \mathbb{Z}_n$ since the other cases do not preserve
  $\mathbb{D}^2$ (e.g.,
  $D_n \subset SO(\R^3)$ interchanges the half-spheres
  if the disk is identified with one of them). By the same reason,
  $(\Id, -1) \notin \G$. So, $\mathbb{Z}_n = \ker \pi$, where
  $\pi \colon\G\to\mathbb{Z}_2$, and
  either $\G = \mathbb{Z}_n$ and
  $\pi$ is trivial, or
  $\G = \brc{(g,\pi(g))\in D_{n} \times \mathbb{Z}_2}$, which is just
  the plane Dihedral group (still denoted by $D_{n}$).
\end{proof}

\subsubsection{Conformal actions on \texorpdfstring{$\mathbb{D}^2$}{Lg}}

Recall that the Steklov spectrum of the unit disk looks as follows
  \begin{equation}\label{eq:disk-spec}
    \brc{0, 1, 1, 2, 2, 3, 3, \cdots }.
  \end{equation}
  So, $\overline{\sigma}_k(g_{\R^2}) = 2\pi\floor*{\frac{k+1}{2}}$.
\begin{theorem}\label{thm:symmetric-disk}
  Let $\G$ be the either the (plane) Dihedral group $D_n$ or the
  cyclic group $\mathbb{Z}_n$ generated by rotations of order $n$. Then
  \begin{equation}
    \mathfrak{S}_k(\mathbb{D}^2;\G) = 2\pi\brr{mn
    + \floor*{\frac{r+1}{2}}}
    = mn\,\overline{\sigma}_1(g_{\R^2}) + \overline{\sigma}_r(g_{\R^2}),
  \end{equation}
  where $k = mn + r$, $r < n$ and $\floor{x}$ denotes the
  greatest integer, lower than $x$. The supremum is
  attained on a disjoint union of $mn + \1_{r \neq 0}$ Euclidean disks.
\end{theorem}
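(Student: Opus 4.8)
\section*{Proof proposal}

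The plan is to prove matching upper and lower bounds, everything organized around the \emph{base case} $k=r<p$. Two reductions come first. Since $\G\subset\mathrm{O}(2)$ acts on $\Sph^1$, every $\G$-orbit on $\partial\mathbb{D}^2=\Sph^1$ has cardinality $p$ or $2p$, so $\mathbb{N}_\G(\partial\mathbb{D}^2)=p\,\mathbb{Z}_{\ge 0}$; and since on a surface the Steklov spectrum of a metric in a fixed conformal class depends only on the induced boundary density, while $\mathbb{Z}_p\le\G$, the problem is equivalent to bounding weighted Steklov eigenvalues of the disk for $2\pi/p$-periodic weights $\rho$ on $\Sph^1$. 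Write $L=\int_{\Sph^1}\rho\,d\theta$ and $k=mp+r$ with $0\le r<p$; note $\overline{\sigma}_k(\tilde g)=\sigma_k(\rho)L$. Finally, $g_{\R^2}$ is $\G$-invariant, hence always an admissible competitor.

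For the lower bound I would iterate the bubbling inequality of Theorem~\ref{thm:main-stek} with $b=p\in\mathbb{N}_\G(\partial\mathbb{D}^2)$, getting $\mathfrak{S}_k(\mathbb{D}^2;\G)\ge\mathfrak{S}_{k-p}(\mathbb{D}^2;\G)+2\pi p\ge\cdots\ge\mathfrak{S}_r(\mathbb{D}^2;\G)+2\pi mp$, and then use the round disk to bound $\mathfrak{S}_r(\mathbb{D}^2;\G)\ge\overline{\sigma}_r(g_{\R^2})=2\pi\floor{\tfrac{r+1}{2}}$. Unwinding this exhibits the extremal degenerate competitor explicitly: the disjoint union of $mp+\1_{r\ne 0}$ Euclidean disks, the $mp$ ``bubble'' disks all of one radius and (when $r\ne0$) the remaining disk $\floor{\tfrac{r+1}{2}}$ times larger, arranged so that the base disk's $\floor{\tfrac{r+1}{2}}$-th Steklov eigenvalue coincides with the bubbles' first one; a direct computation of the limiting spectrum then gives $\overline{\sigma}_k=2\pi\brr{mp+\floor{\tfrac{r+1}{2}}}$ for this configuration, which also proves attainment.

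The base case $r<p$ of the upper bound is the clean model for everything else. Because $\rho$ is $\mathbb{Z}_p$-invariant, the Dirichlet-to-Neumann operator commutes with the $\mathbb{Z}_p$-action, so $L^2(\Sph^1,\rho\,d\theta)$ decomposes orthogonally into isotypic subspaces $H_0,\dots,H_{p-1}$, which are moreover mutually orthogonal for the harmonic-extension Dirichlet energy $u\mapsto\int_{\mathbb{D}^2}|\nabla\hat u|^2$. For $1\le l\le p-1$ the harmonic functions $z^l$ and $\bar z^{\,p-l}$ lie in $H_l$, have modulus $1$ on $\Sph^1$, and have Dirichlet energies $2\pi l$ and $2\pi(p-l)$, so the smaller of the two has Rayleigh quotient $2\pi\min(l,p-l)/L$. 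Taking the span of $1\in H_0$ together with the $r$ such monomials of smallest Rayleigh quotient --- which, because $r<p$, lie in pairwise distinct isotypes --- mutual orthogonality yields $\sigma_r(\rho)\le\max R=\tfrac{2\pi}{L}\floor{\tfrac{r+1}{2}}$, i.e.\ $\overline{\sigma}_r(\rho)\le 2\pi\floor{\tfrac{r+1}{2}}$; combined with the lower bound, $\mathfrak{S}_r(\mathbb{D}^2;\G)=2\pi\floor{\tfrac{r+1}{2}}$.

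For general $k$ I would induct on $k$ via the dichotomy in Theorem~\ref{thm:main-stek}: if $\mathfrak{S}_k(\mathbb{D}^2;\G)=\max\set{\mathfrak{S}_{k-b}(\mathbb{D}^2;\G)+2\pi b}{b\in p\mathbb{Z}_{>0},\ b\le k}$, then every term equals $2\pi\brr{mp+\floor{\tfrac{r+1}{2}}}$ by the inductive hypothesis and we are done; otherwise the supremum is attained by a smooth $\G$-equivariant free boundary harmonic map $\Phi\colon\mathbb{D}^2\to\mathbb{B}^n$ and we must bound $\overline{\sigma}_k(|\partial_\nu\Phi|^2 g)=\overline{\sigma}_k(\rho)$ for the corresponding $2\pi/p$-periodic $\rho$. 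Here I would use the isotypic splitting again: via the degree-$p$ branched cover $z\mapsto z^p$, the subspace $H_0$ reduces to the ordinary weighted Steklov problem on the disk, and each nontrivial $H_l$ to the weighted Steklov problem on the disk twisted by the character $e^{2\pi\mathrm{i}l/p}$; applying the Hersch--Payne--Schiffer estimate ($\overline{\sigma}_i\le 2\pi i$) on the quotient disk in each case, the $i$-th eigenvalue $\nu_i$ of each isotype obeys $\nu_i\le\tfrac{2\pi i p}{L}$. Since there are $p$ isotypes, at least $p(m+1)$ Steklov eigenvalues are $\le\tfrac{2\pi mp}{L}\le\tfrac{2\pi}{L}\brr{mp+\floor{\tfrac{r+1}{2}}}$, and $p(m+1)\ge mp+r+1=k+1$ precisely because $r<p$; hence $\overline{\sigma}_k(\rho)\le 2\pi\brr{mp+\floor{\tfrac{r+1}{2}}}$. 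The main obstacle I anticipate is exactly the twisted Hersch--Payne--Schiffer estimate for the nontrivial isotypes --- equivalently, ruling out that an equivariant free boundary harmonic disk beats the stated value once $k\ge p$; the remainder is orbit-counting and elementary Rayleigh-quotient estimates. Finally $\G=D_p$ follows at once: every $D_p$-invariant metric is $\mathbb{Z}_p$-invariant, so $\mathfrak{S}_k(\mathbb{D}^2;D_p)\le\mathfrak{S}_k(\mathbb{D}^2;\mathbb{Z}_p)$, while $g_{\R^2}$ is $D_p$-invariant and $\mathbb{N}_{D_p}(\partial\mathbb{D}^2)=p\mathbb{Z}_{\ge 0}$, which gives the matching lower bound by the same bubbling iteration.
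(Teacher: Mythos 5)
Your lower bound and your base case $k=r<p$ are fine, and the base case is in fact a nice alternative to the paper's route: the isotypic decomposition of $L^2(\Sph^1,\rho\,d\theta)$ under $\mathbb{Z}_p$, orthogonality in both the weighted $L^2$ form and the Dirichlet form, and the test functions $z^l,\bar z^{\,p-l}$ (modulus $1$ on the boundary, energies $2\pi l$, $2\pi(p-l)$) give $\overline{\sigma}_r(\rho)\le 2\pi\floor*{\frac{r+1}{2}}$ for every invariant density directly, without invoking Corollary~\ref{cor:stek-non-babl} or any regularity. The inductive frame via the dichotomy of Theorem~\ref{thm:main-stek} is also the same as the paper's.

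The gap is exactly where you flag it, and it is worse than a missing lemma: the ``twisted Hersch--Payne--Schiffer'' estimate $\nu_i^{(l)}\le \frac{2\pi i p}{L}$ for the nontrivial isotypes, applied as you apply it (to an arbitrary $\frac{2\pi}{p}$-periodic density, with no use of the harmonic-map structure), is false. Your counting would give $\overline{\sigma}_k(\rho)\le 2\pi mp$ for every invariant $\rho$, which for $r\ge 1$ contradicts the lower bound you yourself prove: near-maximizing invariant densities have $\overline{\sigma}_k(\rho)$ arbitrarily close to $2\pi\brr{mp+\floor*{\frac{r+1}{2}}}>2\pi mp$ (already for $p=2$, $k=3$ the bound $\nu_1^{(1)}L\le 4\pi$ fails along the concentrating sequence, while the untwisted bound on $H_0$ is a theorem, so the failure sits precisely in the twisted isotype). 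Hence the estimate could at best hold for the special density $\abs{\partial_\nu\Phi}$ of the equivariant free boundary harmonic map produced in the strict-inequality branch, and proving that is the actual content of the theorem. The paper closes this branch differently: by Proposition~\ref{prop:disk-and-its-cover} (Fraser--Schoen) any such $\Phi$ is a branched cover of the flat disk, i.e.\ a Blaschke product, and Lemma~\ref{lem:best-cover-disk} (an index count using subproducts $\Phi'\subset\Phi$) shows $\overline{\sigma}_k(\abs{\partial_\theta\Phi})\le\overline{\sigma}_k(1)=2\pi\floor*{\frac{k+1}{2}}$, which is $\le 2\pi\brr{mp+\floor*{\frac{r+1}{2}}}$ and yields the contradiction. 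Some replacement for this rigidity input (or a correct proof of your per-isotype bound using harmonicity of $\Phi$) is needed; as written, the argument for $k\ge p$ does not go through.
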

\begin{remark}
  If $k \geq n$ and $k > 1$, one has a strict inequality $\bar{\sigma}_k(g)
  < \mathfrak{S}_k(\mathbb{D}^2;\G)$ for any $\G$-invariant
  metric $g$.
  In particular, when $n=1$, we obtain the strict inequality
  $\bar{\sigma}_k(g) < \mathfrak{S}_k(\mathbb{D}^2)$.
  The proof is essentially based on~\cite[Theorem~2.1]{Fraser-Schoen:2015:unique-free-disks}
  and the fact that the standard disk is the only maximizer for
  $\overline{\sigma}_k(g)$ among its branched coverings (see
  Lemma~\ref{lem:best-cover-disk}).
  The strictness of the inequality was already stated in~\cite[Theorem~2.3]{Fraser-Schoen:2020:steklov-unions}.
  Here we add the details for the case of a branched covering of the disk.
\end{remark}
\begin{remark}
  Notice that the configuration of $mn + 1$ disks is not unique. For example,
  in the case when $n = 3$ and $k = 3 + 2$, we can rescale
  two disjoint disks to obtain
  $\overline{\sigma}_3(g_{\R^2}) + \overline{\sigma}_2(g_{\R^2})
  = 3\,\overline{\sigma}_1(g_{\R^2}) + \overline{\sigma}_2(g_{\R^2})$
\end{remark}

Some properties of $\mathfrak{S}_k(\mathbb{D}^2;\mathbb{Z}_n)$
were studied in~\cite{Hassannezhad:2024:possibly-hidden-symmetry-steklov}.
As a consequence of Theorem~\ref{thm:symmetric-disk},
we answer \cite[Question~4.4 and Question~4.5]{Hassannezhad:2024:possibly-hidden-symmetry-steklov}.
\begin{corollary}
For an integer $n \geq 1$, the standard disk realizes $\mathfrak{S}_k(\mathbb{D}^2;\mathbb{Z}_n)$
if and only if $k \leq n-1$.
\end{corollary}
\begin{corollary}
For a planar simply connected domain with $n$-rotational symmetry,
the Hersch–Payne–Schiffer inequality is sharp
only when $k \equiv 0$ or $k \equiv 1 \ mod \ n$, i.e.
\begin{equation}
  \mathfrak{S}_k(\mathbb{D}^2;\mathbb{Z}_n) < 2\pi k
  \quad \text{if} \ (k \ mod \ n) \notin \brc{0,1}.
\end{equation}
\end{corollary}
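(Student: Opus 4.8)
The plan is to derive this directly from Theorem~\ref{thm:symmetric-disk} together with an elementary estimate on the floor function; the heavy lifting has already been done, so only a short arithmetic argument remains. Writing $k = mp + r$ with $0 \le r < p$ as in that theorem, we have
\begin{equation*}
  \mathfrak{S}_k(\mathbb{D}^2;\mathbb{Z}_p) = 2\pi\brr{mp + \floor*{\frac{r+1}{2}}}.
\end{equation*}
The Hersch--Payne--Schiffer inequality gives $\overline{\sigma}_k(g) \le 2\pi k$ for every metric $g$ conformal to the flat disk, in particular for every $p$-rotationally symmetric planar domain. Hence the assertion $\mathfrak{S}_k(\mathbb{D}^2;\mathbb{Z}_p) < 2\pi k$ is equivalent to the strict inequality $mp + \floor*{\frac{r+1}{2}} < mp + r$, i.e.\ to $\floor*{\frac{r+1}{2}} < r$.

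First I would verify this last inequality. For $r \ge 2$ one has $\floor*{\frac{r+1}{2}} \le \frac{r+1}{2} < r$, where the final step holds because $r + 1 < 2r$ precisely when $r > 1$; therefore $\floor*{\frac{r+1}{2}} < r$ whenever $r \ge 2$, which is exactly the condition $(k \bmod p) \notin \brc{0,1}$. This yields the stated strict inequality. For completeness I would also record the converse: if $r \in \brc{0,1}$ then $\floor*{\frac{r+1}{2}} = r$, so $\mathfrak{S}_k(\mathbb{D}^2;\mathbb{Z}_p) = 2\pi k$ and the Hersch--Payne--Schiffer bound is sharp, the supremum being attained on the disjoint union of $mp + \1_{r \neq 0}$ Euclidean disks as in Theorem~\ref{thm:symmetric-disk}. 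Thus $k \equiv 0$ or $k \equiv 1 \bmod p$ is both necessary and sufficient for sharpness.

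The only remaining point is the passage between the two formulations, ``$p$-rotationally symmetric planar domain'' and ``$\mathbb{Z}_p$-invariant metric in $[g_{\R^2}]$''. By the Riemann mapping theorem a simply connected planar domain $\Omega$ with a $p$-fold rotational symmetry is conformally the disk, and by the classification of finite conformal actions on $\mathbb{D}^2$ obtained above the induced conformal $\mathbb{Z}_p$-action is conjugate to the standard rotation action; consequently $\overline{\sigma}_k(\Omega)$ equals $\overline{\sigma}_k$ of a $\mathbb{Z}_p$-invariant metric in $[g_{\R^2}]$, so $\overline{\sigma}_k(\Omega) \le \mathfrak{S}_k(\mathbb{D}^2;\mathbb{Z}_p)$. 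I do not anticipate any genuine obstacle here: the entire content sits in Theorem~\ref{thm:symmetric-disk}, and the corollary is a one-line consequence of the inequality $\floor*{\frac{r+1}{2}} < r$ for $r \ge 2$.
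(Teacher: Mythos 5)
Your argument is correct and is exactly the route the paper intends: the corollary is an immediate consequence of Theorem~\ref{thm:symmetric-disk}, since writing $k=mp+r$ with $0\le r<p$ gives $\mathfrak{S}_k(\mathbb{D}^2;\mathbb{Z}_p)=2\pi\brr{mp+\floor*{\frac{r+1}{2}}}$, and $\floor*{\frac{r+1}{2}}<r$ precisely when $r\ge 2$. Your closing remarks on the converse case $r\in\brc{0,1}$ and on transferring from symmetric planar domains to $\mathbb{Z}_p$-invariant metrics on the disk are also consistent with the paper's setup.
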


\subsubsection{Conformal actions on \texorpdfstring{$\mathbb{S}^2$}{Lg}}
The cases when $\G_+ \in \brc{\mathbb{Z}_n, D_n}$ are not very interesting
since the group $\G$ fixes the axis of rotation, so $\mathbb{N}_\G \supset 2\mathbb{N}$
and $\Lambda_k(\Sph^2; \G) = \Lambda_k(\Sph^2) = 8\pi k$.

If $\G_+ \in \brc{T, O, I}$, we notice that $\pi \in \hom(\G, \mathbb{Z}_2)$
is nontrivial only when $\G_+ = T \subset O \approx \G$ because neither $T$, nor $I$ have subgroups
of index $2$. In this case, $\G = T_d$ is the full tetrahedral group.
Thus, it remains to consider the following options:
\begin{enumerate}
  \item $\G \in \brc{T, T_d} \approx \brc{A_4, S_4}$,
  where $T_d$ is the full tetrahedral group;
  \item $\G \in \brc{T_h, O, O_h} \approx
  \brc{A_4\times\mathbb{Z}_2, S_4, S_4\times\mathbb{Z}_2}$,
  where $T_h$ is the pyritohedral group, and $O_h$ is the full
  octahedral group.
  \item $\G \in \brc{I, I_h} \approx
  \brc{A_5, A_5\times\mathbb{Z}_2}$,
  where $I_h$ is the full icosahedral group.
\end{enumerate}
Recall that
$\overline{\lambda}_k(g_{\Sph^2}) \in \set*{4m(m+1)\pi}{m \in \mathbb{N}}$. Namely,
\begin{equation}\label{eq:spher-harmonics}
  Spec(\Delta_{g_{\Sph^2}}) = \set*{m(m+1)}{m \in \mathbb{N}}
  \ \text{and} \
  \operatorname{mult}\brs{m(m+1)} = 2m+1.
\end{equation}
The following theorem basically says that there are
only 2 possibilities to realize $\Lambda_k(\Sph^2; \G)$. It happens
either on the round metric or on a disjoint union of those.

\begin{theorem}\label{thm:equiv-eigenval-sphere}
  Let $\G$ be as above and conformally act on $\Sph^2$. Then
  \begin{equation}
    \Lambda_k(\Sph^2; \G) = 8\pi k' + \overline{\lambda}_s(g_{\Sph^2}),
  \end{equation}
  where $k = k' + s$ and $k' = \max \brr{\mathbb{N}_\G \cap [0, k]}$.
  The supremum is attained on a disjoint union of $k' + \1_{s \neq 0}$
  spheres. If $k' = 0$, the equality $\overline{\lambda}_k(g) = \Lambda_k(\Sph^2; \G)$
  is achieved on a $\G$-invariant metric $g$ possibly with conical singularities
  if and only if $g = g_{\Sph^2}$. For $k' \geq 1$,
  there is no maximizing metric.

  In particular,
  if $\G \in \brc{T, T_d}$, then
  \begin{align}
    \Lambda_k(\Sph^2; \G) &= 8\pi  &\text{ when } &k \in \brc{1,2,3},
    \\ \Lambda_k(\Sph^2; \G) &= 8\pi k &\text{ when } &k \geq 4;
  \end{align}
  if $\G \in \brc{T_h, O, O_h}$, then
  \begin{align}
    \Lambda_k(\Sph^2; \G) &= \overline{\lambda}_k(g_{\Sph^2}) &\text{ when } &1\leq k \leq 5,
    \\ \Lambda_k(\Sph^2; \G) &= 8\pi\cdot9 &\text{ when } &k \in \brc{10,11},
    \\ \Lambda_k(\Sph^2; \G) &= 8\pi k &\text{ when } &k \in [6,9] \cup [12, \infty);
  \end{align}
  If $\G \in \brc{I, I_h}$, then $\mathbb{N}_\G = \set*{12\alpha+20\beta+30\gamma}
  {\alpha,\beta,\gamma \in \mathbb{N}}$ and
  \begin{align}
    \Lambda_k(\Sph^2; \G) &= \overline{\lambda}_k(g_{\Sph^2}) &\text{ when } &1\leq k \leq 11,
    \\ \Lambda_k(\Sph^2; \G) &= 8\pi k &\text{ when } &k \in \mathbb{N}_\G \cup [60,\infty).
  \end{align}
\end{theorem}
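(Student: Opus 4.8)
The plan is to bootstrap from the bubbling estimate of Theorem~\ref{thm:main} and a rigidity statement for $\G$-equivariant harmonic spheres in spheres; the groups $T,O,I$ (and their extensions by $-\Id$) are large enough to destroy the ``exotic'' critical metrics that appear in the non-equivariant problem. Set $m_\G:=\min(\mathbb{N}_\G(\Sph^2)\setminus\{0\})$. A check of stabilizers gives $m_\G=4,6,12$ for $\G_+=T,O,I$, shows that all $\G$-orbits on $\Sph^2$ have even size (so $\mathbb{N}_\G(\Sph^2)\subseteq 2\mathbb{N}$), and that none of this changes upon adjoining reflections. With $k'=\max(\mathbb{N}_\G(\Sph^2)\cap[0,k])$ and $s=k-k'$: if $s\ge m_\G$ then $k'+m_\G\in\mathbb{N}_\G$ exceeds $k'$, which is absurd, so always $s<m_\G$; in particular $k'=0$ iff $k<m_\G$, iff no $\G$-invariant bubbling is possible. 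Recall $\overline{\lambda}_j(g_{\Sph^2})=4\pi m(m+1)$ with $m^2\le j\le m^2+2m$, so $\overline{\lambda}_j(g_{\Sph^2})\le 8\pi j$ with equality iff $j\le 1$, and $\overline{\lambda}_j(g_{\Sph^2})\le 8\pi(j-1)$ for $j\ge 2$. The lower bound $\Lambda_k(\Sph^2;\G)\ge 8\pi k'+\overline{\lambda}_s(g_{\Sph^2})$ is then immediate by applying Theorem~\ref{thm:main} with $b=k'$ and using that $g_{\Sph^2}$ is $\G$-invariant; moreover the bubble-tree construction in the proof of that theorem realizes this value along a $\G$-invariant degenerating family whose limit is $k'$ round spheres (forming orbits of total size $k'$) together with, when $s\ne 0$, one base sphere carrying $g_{\Sph^2}$, and a disjoint-union spectrum computation confirms the normalized $k$-th eigenvalue there equals $8\pi k'+\overline{\lambda}_s(g_{\Sph^2})$ — this gives the statement about $k'+\1_{s\ne 0}$ Euclidean spheres.

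For the upper bound I would induct on $k$ (base $k=0$ trivial). If $k'=0$, then $\mathbb{N}_\G\cap[1,k]=\varnothing$, so the estimate of Theorem~\ref{thm:main} is (vacuously) strict and yields a nontrivial smooth $\G$-equivariant harmonic map $\Phi\colon\Sph^2\to\Sph^n$ with $\lambda_k(\tilde g)=1$ and $\Lambda_k(\Sph^2;\G)=\overline{\lambda}_k(\tilde g)=\int_{\Sph^2}|d\Phi|_g^2\,dv_g$, where $\tilde g=|d\Phi|_g^2g$; Step~3 below identifies $\Phi$, up to isometry and domain homothety, as a Veronese map, forcing $\tilde g=g_{\Sph^2}$ up to scale and $\Lambda_k(\Sph^2;\G)=\overline{\lambda}_k(g_{\Sph^2})$. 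If $k'\ge 1$, one first checks the finite (in view of the structure of $\mathbb{N}_\G$) numerical fact that $8\pi k'+\overline{\lambda}_s(g_{\Sph^2})>\overline{\lambda}_k(g_{\Sph^2})$; together with Step~3 this rules out the strict alternative (which would give $\Lambda_k(\Sph^2;\G)=\overline{\lambda}_k(g_{\Sph^2})$, contradicting the lower bound), so Theorem~\ref{thm:main} holds with equality: $\Lambda_k(\Sph^2;\G)=\Lambda_{k-b}(\Sph^2;\G)+8\pi b$ for some admissible $b\in\mathbb{N}_\G\cap[1,k]$. The inductive hypothesis rewrites the right-hand side as $8\pi B+\overline{\lambda}_{k-B}(g_{\Sph^2})$ with $B=b+(k-b)'\in\mathbb{N}_\G\cap[m_\G,k']$, and it remains to prove $\overline{\lambda}_{s+t}(g_{\Sph^2})-\overline{\lambda}_s(g_{\Sph^2})\le 8\pi t$ where $s=k-k'$ and $t=k'-B$. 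Here $t$ is even (a difference of elements of $2\mathbb{N}$) and $s<m_\G\le 12$ takes only finitely many values, so this is a short computation from the formula for $\overline{\lambda}_j(g_{\Sph^2})$ (using $\overline{\lambda}_j(g_{\Sph^2})\le 8\pi(j-1)$, $j\ge 2$); the parity $2\mid t$ is essential, since without it the inequality can fail, e.g.\ $\overline{\lambda}_4(g_{\Sph^2})-\overline{\lambda}_3(g_{\Sph^2})=16\pi>8\pi$.

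The crux is the rigidity in Step~3. A harmonic map out of $\Sph^2$ is weakly conformal (its Hopf differential is a holomorphic quadratic differential on $\Sph^2$, hence zero), so $\Phi$ is a branched minimal immersion; by Calabi's classification it is linearly full in a totally geodesic $\Sph^{2p}\subseteq\Sph^n$ for some $p\ge 1$, with $\int|d\Phi|_g^2\,dv_g\ge 4\pi p(p+1)$ and equality precisely when $\Phi$ is, up to a target isometry and a domain homothety, the standard Veronese map $V_p$ (for which $\tilde g=p(p+1)g_{\Sph^2}$; $V_1$ is the totally geodesic $\Sph^2\hookrightarrow\Sph^n$). Two inputs pin $\Phi$ down. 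First, if $\Phi$ factored through a branched covering $\pi\colon\Sph^2\to\Sph^2$ onto a great sphere, then $\pi$ would conjugate the $\G$-action on the domain to the one on the target great sphere; but $T,O,I$ and their $-\Id$-extensions fix a \emph{unique} point of $\mathbb{H}^3$, so their normalizer in the conformal group is the corresponding point stabilizer $\cong\mathrm{O}(3)$ (this is the mechanism behind the classification of conformal actions on $\Sph^2$ above), whence $\pi\in\mathrm{O}(3)$ and $\deg\pi=1$. Second, for $p\ge 2$ we have $\int|d\Phi|_g^2\,dv_g\ge 4\pi p(p+1)\ge 24\pi$, while the known value $\Lambda_k(\Sph^2;\G)\le\Lambda_k(\Sph^2)=8\pi k<8\pi m_\G$ forces $\int|d\Phi|_g^2\,dv_g<8\pi m_\G$; only finitely many pairs $(p,k)$ survive, and a check of the $\G$-action on Calabi's moduli of harmonic spheres of the corresponding energies — the non-Veronese ones excluded via the irreducibility of the ambient $\G$-representations and the absence of small $\G$-orbits — leaves only $\Phi=V_p$ with $\lambda_k(\tilde g)=1$, i.e.\ $p^2\le k\le p^2+2p$. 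In every case $\tilde g=g_{\Sph^2}$ up to scale, giving $\Lambda_k(\Sph^2;\G)=\overline{\lambda}_k(g_{\Sph^2})$; this occurs exactly for $k'=0$, completing the induction, and — since any maximizing conical metric is $\lambda_k$-extremal, hence of the form $|d\Phi|_g^2g$ with $\Phi$ a ($\G$-equivariant) harmonic map — yielding the uniqueness claim ``$g=g_{\Sph^2}$'' when $k'=0$. For $k'\ge 1$ the estimate of Theorem~\ref{thm:main} was just seen to be an equality, which by the dichotomy built into that theorem (strict $\Leftrightarrow$ a maximizing metric exists) means no maximizing metric exists.

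The remaining displayed formulas are bookkeeping. The special orbits on $\Sph^2$ have sizes $\{4,4,6\}$, $\{6,8,12\}$, $\{12,20,30\}$ for $\G_+=T,O,I$ (vertices, edge midpoints, face centres of the associated Platonic solid), with generic orbit of even size $12,24,60$, all unchanged by reflections; hence $\mathbb{N}_T=\{0\}\cup\{2j:j\ge 2\}$, $\mathbb{N}_O=\{0,6,8\}\cup\{2j:j\ge 6\}$, and $\mathbb{N}_I=\{12\alpha+20\beta+30\gamma:\alpha,\beta,\gamma\in\mathbb{N}\}$, which contains every even integer $\ge 60$ (the Frobenius number of $\{6,10,15\}$ being $29$). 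Substituting into $\Lambda_k(\Sph^2;\G)=8\pi k'+\overline{\lambda}_{k-k'}(g_{\Sph^2})$ and using $\overline{\lambda}_s(g_{\Sph^2})=8\pi s$ iff $s\in\{0,1\}$ — so that $\Lambda_k(\Sph^2;\G)=8\pi k$ iff $k-k'\le 1$ — produces all the listed identities; for $\G_+=I$, for instance, one reads off $\overline{\lambda}_k(g_{\Sph^2})$ for $1\le k\le 11$ (where $k'=0$) and $8\pi k$ for $k$ even $\ge 60$ or $k$ odd $\ge 61$, i.e.\ for all $k\in\mathbb{N}_I\cup[60,\infty)$. The one genuinely hard point is step (ii) of Step~3: squeezing Calabi's classification of harmonic $2$-spheres in $\Sph^n$ against the $\G$-equivariance to conclude that, below the first bubbling threshold, the only equivariant extremal metrics are the round ones — everything else is manipulation of $\mathbb{N}_\G$ and of the numbers $\overline{\lambda}_j(g_{\Sph^2})$.
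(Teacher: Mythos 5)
Your overall scaffolding (lower bound via Theorem~\ref{thm:main}, induction on $k$, arithmetic of $\mathbb{N}_\G$) matches the paper, but the crux of your argument --- ``Step~3'' rigidity --- has two genuine gaps, and as written it fails. First, your mechanism (i) for ruling out higher-degree maps into a great sphere is wrong: $\G$-equivariance means $\Phi\circ\gamma=r(\gamma)\circ\Phi$, which does \emph{not} make $\Phi$ a conjugating element of the M\"obius group, so the normalizer argument does not force $\deg\Phi=1$. In fact such maps exist: the paper exhibits explicit $\G$-equivariant holomorphic self-maps of $\Sph^2$ of degree $4$ (for $T,O$) and degree $7$ (for $I$) in Lemma~\ref{lem:sph-corner-cases}, and they are eliminated as candidate maximizers only by computing their spectral index ($7$ and $13$ respectively), not by nonexistence. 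Second, your claim that minimal energy ``precisely'' characterizes the Veronese map up to target isometry and domain homothety is false: by Proposition~\ref{prop:min-degree-maps} the linearly full harmonic spheres of minimal degree in $\Sph^{2m}$ form an $SO(\C^{2m+1})$-family with many mutually non-isometric members inducing non-round metrics. Excluding these is exactly the content of the paper's representation-theoretic work (Lemma~\ref{lem:rep-info} via the McKay correspondence, the correspondence with self-adjoint $D\in SO(\C^{2m+1})\cap\operatorname{End}(r_0)$, and Corollary~\ref{cor:no-new-metrics-S4}); your one-line appeal to ``irreducibility of the ambient $\G$-representations'' gestures at this but proves nothing, and irreducibility alone is not the mechanism --- one needs the multiplicity-free decomposition into pairwise non-isomorphic summands plus Schur's lemma and positivity to force $D=\Id$.

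Two further problems. You never use an index lower bound such as Proposition~\ref{prop:ind-est}, but your substitute (energy $<8\pi m_\G$ below the bubbling threshold) is too crude: e.g.\ for $\G_+=O$ and $k=5$ it does not exclude $(m,d)=(1,5)$, which the paper kills with $\ind\Phi\geq 2d+2-\floor{\sqrt{8d+1}}_{odd}=7>5$; moreover Lemma~\ref{lem:equiv-degree} (the congruence $2d\in m(m+1)+\mathbb{N}_\G$ coming from $\G$-invariance of the ramification divisor) is also needed to trim the list of $(m,d)$, and your proposal has no analogue of it. Likewise, your non-existence argument for $k'\geq 1$ invokes a ``dichotomy'' (equality in Theorem~\ref{thm:main} $\Leftrightarrow$ no maximizer) that the theorem does not assert; the paper instead assumes a maximizer exists, shows it comes from a harmonic map with $\Lambda_k=8\pi d$ and $\ind\Phi\leq k$, and derives a contradiction from the index estimate (or from beating it by a disjoint union). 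Finally, in your inductive numerical step the blanket claim ``$s<m_\G$ and $t$ even imply $\overline{\lambda}_{s+t}(g_{\Sph^2})-\overline{\lambda}_s(g_{\Sph^2})\leq 8\pi t$'' is false as stated (take $s=8$, $t=2$: the difference is $24\pi>16\pi$); it only survives because such pairs $(s,t)$ do not actually arise from the gap structure of $\mathbb{N}_\G$, a case analysis you would still have to carry out, as the paper does for $\G\in\brc{I,I_h}$.
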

\begin{remark}
  As in the case of $\mathbb{D}^2$, the configuration of $k' + 1$
  spheres might not be unique. For example, if $\G \in \brc{T_h, O, O_h}$,
  then $\Lambda_{10}$ and $\Lambda_{11}$ have exactly two maximizing
  configurations consisting either of $9$ or $7$ spheres,
  \begin{gather}
    \Lambda_{10}(\Sph^2; \G) = 8\,\overline{\lambda}_1(g_{\Sph^2})+\overline{\lambda}_2(g_{\Sph^2})
  = 6\,\overline{\lambda}_1(g_{\Sph^2})+ \overline{\lambda}_4(g_{\Sph^2}),
  \\
  \Lambda_{11}(\Sph^2; \G) = 8\,\overline{\lambda}_1(g_{\Sph^2})+\overline{\lambda}_3(g_{\Sph^2})
  = 6\,\overline{\lambda}_1(g_{\Sph^2})+ \overline{\lambda}_5(g_{\Sph^2}).
  \end{gather}
\end{remark}

\subsection{Ideas of the proof of existence and regularity}
The proof of the existence is a simplified version of the one presented
in~\cite{Karpukhin-Nadirashvili-Penskoi-Polterovich:2022:existence}
and basically consist of the following three steps.
\begin{enumerate}
  \item Firstly, we make use of
  the Clarke subdifferential calculus, to obtain a maximizing
  sequence similar to that of~\cite{Karpukhin-Nadirashvili-Penskoi-Polterovich:2022:existence}.
  The maximizing sequence generates maps of eigenfunctions
  $\Phi_\epsilon\colon\Sigma\to\R^n$ to some
  $\R^n$, where $n$ does not depend on the sequence
  because of multiplicity bounds in $\dim \Sigma = 2$.
  \item Then there exists a finite set of points where
  the limiting measure may have atoms, potentially leading to bubbling.
  On its complement,
  we provide a simple proof of the $H^1$-convergence (see Lemma~\ref{lem:strong-conv})
  of the corresponding eigenmaps.
  The limiting map
  is shown to be weakly (free boundary) harmonic, which, in dimension $2$,
  automatically implies smoothness.
  \item Finally, we deal with the bubbles.
  In Appendix~\ref{app:blow-up}, it is shown that the atoms
  of the limiting measure can be blown up into
  some continuous measures on $\R^2$. In dimension two
  that allows us to lift the maximizing sequence
  onto several copies of $\Sph^2$ or $\mathbb{D}^2$. The
  regularity of the blown-up measures can be derived from
  step 2 above. Once we know that the measures are regular
  enough, the limiting spectrum
  is estimated through the spectrum of the disjoint union,
  and $\Lambda_k(\Sph^2)$ and $\mathfrak{S}_k(\mathbb{D}^2)$
  are known.
\end{enumerate}

\subsection{Acknowledgements}
This paper is part of the author's PhD project under the supervision
of Mikhail Karpukhin and Iosif Polterovich. The author
thanks Mikhail Karpukhin for suggesting the problem
and Daniel Stern for sharing
his notes on the regularity and existence results,
which were useful in proving Lemma~\ref{lem:strong-conv}.
The author is partially supported by the ISM Scholarship.

\section{Preliminaries}\label{sec:prelim}

\subsection{Eigenvalues of measures}

The measure-theoretic approach to the eigenvalues firstly
appeared in~\cite{Grigoryan-Netrusov-Yau:2004:eignval-of-ellipt-oper} and was
developed further in~\cite*{Kokarev:2014:measure-eigenval, Girouard-Karpukhin-Lagace:2021:continuity-of-eigenval} in the context
of eigenvalue optimization.
Following~\cite{Kokarev:2014:measure-eigenval}, given a
positive Radon measure $\mu$ on a compact surface $\Sigma$,
one defines the variational eigenvalues $\lambda_k(\mu) \in [0,\infty]$,
$k\in\mathbb{N}$, of that measure by the formula
\begin{equation}\label{eq:variat-character}
  \lambda_k(\mu) = \inf_{F_{k+1}} \sup_{\phi \in F_{k+1}\setminus\brc{0}}
    \frac{\int\abs{d\phi}^2_g dv_g}{\int\phi^2 d\mu},
\end{equation}
where $F_{k+1} \subset C^\infty(\Sigma)$ runs over all $(k+1)$-dimensional
subspaces. If $\mu = \dv{g}$ is the volume measure of the metric $g$,
we obtain the classical eigenvalues $\lambda_k(g)$ of
the Laplacian $\Delta_g$.
If $\mu = \ds{g}$ is the boundary measure, we obtain the Steklov
eigenvalues $\sigma_k(g)$.

Let $H^{k,p}(\Sigma)$, also denoted by $W^{k,p}(\Sigma)$, be the corresponding Sobolev spaces on $\Sigma$,
and $H^k := H^{k,2}$.
If the measure $\mu$ is sufficiently regular, e.g.,
$\mu \in H^{1,p}(\Sigma)^*$ for some $1\leq p < 2$ (see~\cite{Girouard-Karpukhin-Lagace:2021:continuity-of-eigenval}), then
$\lambda_k(\mu)$ is an eigenvalue of the problem
\begin{equation}\label{eq:eigenval-of-meas}
  \Delta \phi = \lambda \phi \mu
\end{equation}
in the weak sense, i.e. for any test function $\psi \in C^\infty(\Sigma)$, one has
\begin{equation}\label{eq:weak-solution}
  \int \brt{d\phi, d\psi} dv_g = \lambda\int \phi \psi d\mu.
\end{equation}

Let $\rho^2 g \in [g]$ be another metric from the same conformal
class. Since $\dim \Sigma = 2$, the expression $|d\phi|^2_g dv_g$
does not depend on the particular choice of the metric, and the
volume forms will change accordingly, $\dv{\rho^2 g} = \rho^2 \dv{g}$
and $\ds{\rho^2 g} = \rho\ds{g}$. Thus,
$\lambda_k(\mu) = \lambda_k([g],\mu)$ depends only on the conformal
class of the metric $g$. We will omit the conformal class parameter
when it doesn't cause any confusion. From Proposition~1.1 and Theorem~$A_k$ in~\cite{Kokarev:2014:measure-eigenval},
one has
\begin{proposition}\label{prop:upper-cont-and-bound}
  The functional $\mu \mapsto \lambda_k(\mu)$ is weakly$^*$ upper semi-continuous
  on the space of (positive) Radon measures $\mathcal{M}^+(\Sigma)$, i.e.
  \begin{equation}
    \limsup_n \lambda_k(\mu_n) \leq \lambda_k(\mu),
    \quad\text{where } \mu_n \overset{w^*}{\to} \mu \in \mathcal{M}^+(\Sigma).
  \end{equation}
  The functional  $\mu \mapsto \overline{\lambda}_k(\mu) := \lambda_k(\mu) \mu(\Sigma)$
  is bounded on the space of \emph{continuous} Radon measures $\mathcal{M}^+_c(\Sigma)$.
  Namely, there exists a constant $C = C([g])$ such that
  \begin{equation}
    \overline{\lambda}_k(\mu) = \overline{\lambda}_k([g],\mu)
    \leq Ck, \quad \mu \in \mathcal{M}^+_c(\Sigma).
  \end{equation}
\end{proposition}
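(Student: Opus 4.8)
The plan is to read both assertions off the variational characterization~\eqref{eq:variat-character}, following Kokarev~\cite{Kokarev:2014:measure-eigenval} (Proposition~1.1 and Theorem~$A_k$ there).

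\emph{Upper semicontinuity.} Suppose $\mu_n \overset{w^*}{\to} \mu$; we may assume $\lambda_k(\mu) < \infty$. Fix $\varepsilon > 0$ and choose a $(k+1)$-dimensional subspace $F \subset C^\infty(\Sigma)$ with $\sup_{\phi\in F\setminus\brc{0}} \int\abs{d\phi}_g^2\dv{g}/\int\phi^2\,d\mu \leq \lambda_k(\mu)+\varepsilon$; finiteness of this supremum forces $\int\phi^2\,d\mu > 0$ for every $\phi \in F\setminus\brc{0}$. Fix a basis $\phi_1,\dots,\phi_{k+1}$ of $F$ and set $(A_n)_{ij} := \int_\Sigma\phi_i\phi_j\,d\mu_n$, $(A)_{ij} := \int_\Sigma\phi_i\phi_j\,d\mu$, and $(B)_{ij} := \int_\Sigma\brt{d\phi_i,d\phi_j}_g\dv{g}$ (the last independent of $n$). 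Since each product $\phi_i\phi_j$ is continuous and $\Sigma$ is compact, $A_n \to A$, and $A$ is positive definite; hence $A_n \geq (1-\delta_n)A$ as quadratic forms with $\delta_n \to 0$. For $n$ large $A_n$ is positive definite, so $F$ is an admissible test space for $\mu_n$, and writing $\phi = \sum a_i\phi_i$,
\[ \lambda_k(\mu_n) \;\leq\; \sup_{a\neq0}\frac{a^\top B a}{a^\top A_n a} \;\leq\; \frac{1}{1-\delta_n}\sup_{a\neq0}\frac{a^\top B a}{a^\top A a} \;\leq\; \frac{\lambda_k(\mu)+\varepsilon}{1-\delta_n}. \]
Letting $n\to\infty$ and then $\varepsilon\to0$ gives $\limsup_n\lambda_k(\mu_n)\leq\lambda_k(\mu)$.

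\emph{The bound $\overline{\lambda}_k(\mu) \leq Ck$.} Fix a background metric $g_0\in[g]$; in dimension two the Dirichlet integral $\int_\Sigma\abs{d\psi}_g^2\dv{g}$ is conformally invariant, so it may be evaluated with $g_0$. The plan is to exhibit an explicit $(k+1)$-dimensional test space by the Korevaar / Grigor'yan--Netrusov--Yau decomposition method, run exactly as in Kokarev's proof of Theorem~$A_k$: using that $(\Sigma,g_0)$ has controlled isoperimetric (capacity) constants and that the non-atomicity of $\mu$ upgrades, on the compact surface, to uniform smallness of $\mu$ on small balls, one produces $k+1$ pairwise disjoint open sets $U_0,\dots,U_k$ and smooth cutoffs $\psi_i$ with $\supp\psi_i\subset U_i$ such that $\int_\Sigma\abs{d\psi_i}_{g_0}^2\dv{g_0}\leq C_0([g])$ and $\int\psi_i^2\,d\mu\geq c_0\,\mu(\Sigma)/(k+1)$. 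Because the supports are disjoint, for $\phi=\sum a_i\psi_i$ both the numerator and denominator of the Rayleigh quotient split over $i$, so the quotient is at most $\max_i \int_\Sigma\abs{d\psi_i}_g^2\dv{g}/\int\psi_i^2\,d\mu$. Feeding $F = \operatorname{span}\brc{\psi_0,\dots,\psi_k}$ into~\eqref{eq:variat-character} gives $\lambda_k(\mu)\leq (k+1)C_0/(c_0\,\mu(\Sigma))$, i.e. $\overline{\lambda}_k(\mu)\leq Ck$ with $C = C([g])$.

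The main obstacle is the covering step: guaranteeing, \emph{uniformly over all continuous $\mu$}, that the $k+1$ pieces each carry mass $\gtrsim \mu(\Sigma)/k$ while the cutoffs keep bounded Dirichlet energy. This is exactly where non-atomicity is indispensable (an atom of mass $>\mu(\Sigma)/(k+1)$ would obstruct it), and it is the technical heart of Kokarev's Theorem~$A_k$; on $\Sph^2$ and $\mathbb{D}^2$ the constant $C([g])$ can moreover be taken explicit. By contrast the upper semicontinuity is a soft finite-dimensional argument with no real difficulty.
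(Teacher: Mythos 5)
Your proposal is correct and takes essentially the same route as the paper, which gives no argument of its own and simply quotes Proposition~1.1 and Theorem~$A_k$ of~\cite{Kokarev:2014:measure-eigenval}: your finite-dimensional test-space computation is the standard proof of the semicontinuity half, and your second half reduces $\overline{\lambda}_k(\mu)\leq Ck$ to exactly the Grigor'yan--Netrusov--Yau/Korevaar decomposition underlying Kokarev's Theorem~$A_k$, which is what the paper cites. The only small imprecision is the phrase about smallness of $\mu$ on balls ``uniformly over all continuous $\mu$'': no such uniformity holds or is needed --- the constants come from the covering/capacity properties of $(\Sigma,[g])$ alone, while non-atomicity is used for each fixed $\mu$ separately to obtain the $k+1$ pieces of comparable mass.
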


It turns out
that $\Lambda_k(\Sigma, [g])$ and $\mathfrak{S}_k(\Sigma, [g])$
can be thought of as the suprema of $\overline{\lambda}_k(\mu)$ over some
subsets of measures $\mathcal{C}$ and $\mathcal{C}^\partial$ to be
specified later,
\begin{equation}
  \Lambda_k(\Sigma, [g]) = \sup_{\mu \in \mathcal{C}}\overline{\lambda}_k([g],\mu),
  \quad
  \mathfrak{S}_k(\Sigma, [g]) = \sup_{\mu \in \mathcal{C}^\partial}\overline{\lambda}_k([g],\mu).
\end{equation}
It is clear
from Lemma~\ref{eq:variat-character-reversed} (see also~\cite*{Girouard-Karpukhin-Lagace:2021:continuity-of-eigenval})
that one can choose $\mathcal{C} = L^\infty(\Sigma)$ and
$\mathcal{C}^\partial = L^\infty(\partial\Sigma)$. Now, let $\mu = \rho \dv{g}$ or
$\mu = \rho \ds{g}$, where $\rho \in L^1$. If we set
$\mu_n := \min\brc{\mu, n}$, it is clear from the variational
characterization that
\begin{equation}
  \lambda_k(\mu) \leq \lambda_k(\mu_n).
\end{equation}
Moreover, $\mu_n(\Sigma) \to \mu(\Sigma)$, so $\overline{\lambda}_k(\mu)
\leq \liminf_{n} \overline{\lambda}_k(\mu_n)$. Since
$L^\infty \subset L^1$, we obtain that
\begin{equation}\label{eq:L1-densities}
  \Lambda_k(\Sigma,[g]) = \sup_{\mu \in L^1(\Sigma)}
  \overline{\lambda}_k([g],\mu),
  \quad
  \mathfrak{S}_k(\Sigma,[g]) = \sup_{\mu \in L^1(\partial\Sigma)}
  \overline{\lambda}_k([g],\mu).
\end{equation}

Let a discrete group $\G$ act by conformal transformations on $\brr{\Sigma, [g]}$.
We choose the reference metric $g$ to be invariant under
the action of $\Gamma$. In other words, $\G$ begins acting
by isometries on $\brr{\Sigma, g}$.
Denote by $\mathcal{P}(\Sigma) \subset \mathcal{M}^+(\Sigma)$ the space of all
Radon probability measures and define
\begin{gather}
  \label{eq:Lambda-class}
  \mathcal{C}_\G :=
  \set{\mu \in L^\infty(\Sigma)\cap \mathcal{P}(\Sigma) }{
  \gamma_* \mu = \mu, \ \forall\gamma\in\G},
  \\ \label{eq:Steklov-class}
  \mathcal{C}^\partial_\G :=
  \set{\mu \in L^\infty(\partial\Sigma)\cap\mathcal{P}(\Sigma)}
  {\gamma_* \mu = \mu, \ \forall\gamma\in\G}.
\end{gather}
From the discussion above, the following suprema corresponds
to the symmetric version of $\Lambda_k$ and $\mathfrak{S}_k$,
\begin{equation}\label{eq:meas-def}
  \Lambda_k(\Sigma, [g];\G)
  = \sup_{\mu \in \mathcal{C}_\G}\lambda_k([g], \mu),
  \quad
  \mathfrak{S}_k(\Sigma,[g];\G)
  = \sup_{\mu \in \mathcal{C}_\G^\partial}\lambda_k([g], \mu).
\end{equation}
Note that we could have chosen $L^1$ measures instead of $L^\infty$,
but they are less convenient in practice, since the
spectrum may not be discrete.

\subsection{Clarke subdifferentials}
Let $E$ be a Banach space, $U \subset E$ be an open subset,
and $f\colon U \to \mathbb{R}$ be a locally Lipschitz function.
One defines \emph{Clarke directional derivative} of $f$ at
$x \in U$ in the direction $v \in E$ as follows,
\begin{equation}
 f^{\circ}(x, v) = \limsup_{\tilde{x} \to x, \tau\downarrow 0}
 \frac{1}{\tau} \brs{f(\tilde{x} + \tau v) - f(\tilde{x})}.
\end{equation}
Since $f^{\circ}(x, v)$ is a sublinear, Lipschitz-continuous function of $v$, it is
a support functional of a convex closed subset of $E^*$, the dual of $E$.
The Clarke subdifferential of $f$ at the point $x$ is defined as
\begin{equation}
  \partial_{\circ} f(x) = \set*{\xi \in E^*}{\brt{\xi, v}
  \leq f^{\circ}(x, v) \ \forall v \in E}.
\end{equation}
The following properties of
$\partial_{\circ} f(x)$ can be found, e.g., in.~\cite[Section~7.3]{Schirotzek:2007:nonsmooth-analysis}:
\begin{itemize}
  \item $\partial_{\circ} f(x) \neq \varnothing$;
  \item $\partial_{\circ} f(x)$ is closed, convex, and bounded set hence weakly$^*$ compact;
  \item $0 \in \partial_{\circ} f(x)$ if $x$ is a local maximum/minimum.
\end{itemize}
The Clarke subdifferential
generalizes the notion
of the derivative in the case of locally Lipschitz functions
and captures many useful properties of the derivative that
are used in optimization problems.
Different applications of the Clarke subdifferential to critical metrics
and eigenvalue optimization problems can be found in~\cite{Petrides-Tewodrose:2024:eigenvalue-via-clarke}.

\begin{proposition}
  Let $g \colon F \to E$ be a continuously Fréchet-differentiable map of Banach spaces defined
  on a  neighborhood of $x\in F$. If
  $f\colon E \to \mathbb{R}$ is a Lipschitz function
  on a neighborhood of $g(x) \in E$, one has
  \begin{equation}
    \partial_{\circ} (g^*f)(x) \subset \set*{(d_x g)^*\xi}{\xi\in \partial_{\circ} f(g(x))}.
  \end{equation}
  Thus, the subdifferential of a pullback is contained in the pullback of the subdifferential.
\end{proposition}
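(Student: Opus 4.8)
The plan is to reduce the statement to the chain rule for Clarke subdifferentials of a Lipschitz function composed with a smooth map, which is a standard result in nonsmooth analysis (see, e.g., \cite[Section~7.3]{Schirotzek:2007:nonsmooth-analysis}); the only point requiring care is bookkeeping between the Clarke directional derivative of the pullback and that of $f$ itself. First I would fix $v \in F$ and estimate $(g^*f)^{\circ}(x,v)$ from above. By definition,
\begin{equation}
  (g^*f)^{\circ}(x,v) = \limsup_{\tilde{x}\to x,\ \tau\downarrow 0}
  \frac{1}{\tau}\brs{f(g(\tilde{x}+\tau v)) - f(g(\tilde{x}))}.
\end{equation}
Since $g$ is continuously Fréchet differentiable near $x$, we may write
$g(\tilde{x}+\tau v) = g(\tilde{x}) + \tau\, d_{\tilde{x}}g(v) + o(\tau)$ uniformly for $\tilde{x}$ near $x$, and moreover $d_{\tilde{x}}g(v) \to d_x g(v)$ as $\tilde{x}\to x$ by continuity of the derivative. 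Using the local Lipschitz bound on $f$ near $g(x)$ to absorb the $o(\tau)$ term and the error $d_{\tilde{x}}g(v) - d_x g(v)$, I would conclude
\begin{equation}
  (g^*f)^{\circ}(x,v) \leq \limsup_{y\to g(x),\ \tau\downarrow 0}
  \frac{1}{\tau}\brs{f(y+\tau\, d_x g(v)) - f(y)} = f^{\circ}(g(x),\, d_x g(v)),
\end{equation}
where passing from the variable $\tilde{x}$ to $y = g(\tilde{x})$ is legitimate because $g$ is continuous, so $\tilde{x}\to x$ forces $y\to g(x)$ (and the $\limsup$ only grows when we enlarge the index set).

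Having established $(g^*f)^{\circ}(x,v) \leq f^{\circ}(g(x), d_x g(v))$ for all $v$, the inclusion follows by duality. Indeed, take any $\eta \in \partial_{\circ}(g^*f)(x)$; then for every $v\in F$,
\begin{equation}
  \brt{\eta, v} \leq (g^*f)^{\circ}(x,v) \leq f^{\circ}(g(x), d_x g(v)).
\end{equation}
The functional $\xi \mapsto f^{\circ}(g(x),\xi)$ on $E$ is the support function of the convex weakly$^*$-compact set $\partial_{\circ}f(g(x))$, and the inequality above says that $(d_x g)^*$-pullback of that support function dominates $\eta$. A Hahn–Banach / separation argument — precisely, the fact that $\eta \in (d_x g)^*(\partial_{\circ}f(g(x)))$ iff $\brt{\eta,v}$ is bounded above by the value at $d_x g(v)$ of the support function of $\partial_{\circ}f(g(x))$, using that the latter set is convex and weakly$^*$-compact so its image under the bounded operator $(d_x g)^*$ is again convex and weakly$^*$-compact — yields $\eta = (d_x g)^*\xi$ for some $\xi\in\partial_{\circ}f(g(x))$, which is exactly the claimed containment.

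I expect the main obstacle to be the uniformity in the first step: one must justify that the Fréchet remainder $g(\tilde{x}+\tau v) - g(\tilde{x}) - \tau\, d_{\tilde{x}}g(v)$ is $o(\tau)$ \emph{uniformly as $\tilde{x}$ ranges near $x$}, and that $d_{\tilde{x}}g(v)$ converges to $d_x g(v)$ uniformly enough to be swallowed by the Lipschitz constant of $f$; both follow from the hypothesis that $g$ is \emph{continuously} differentiable (so $\tilde{x}\mapsto d_{\tilde{x}}g$ is norm-continuous into $\mathcal{L}(F,E)$) together with the mean value inequality, but this is the step where the regularity assumption is genuinely used. The duality/separation step is then routine convex analysis.
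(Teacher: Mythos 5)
Your proof is correct, and the first half — the estimate $(g^*f)^{\circ}(x,v) \leq f^{\circ}(g(x), d_x g(v))$ via the mean value inequality, continuity of $\tilde{x}\mapsto d_{\tilde{x}}g$, the local Lipschitz constant of $f$, and the change of variables $y = g(\tilde{x})$ — is essentially the paper's argument verbatim. Where you diverge is the duality step. The paper, given $\zeta \in \partial_{\circ}(g^*f)(x)$, defines a linear functional on the subspace $d_x g(F) \subset E$ by $d_x g(v) \mapsto \brt{\zeta, v}$ (well-defined and continuous thanks to the two-sided bound $\pm\brt{\zeta,v} \leq f^{\circ}(g(x), \pm d_x g(v)) \leq C\norm{d_x g(v)}$) and then invokes the Hahn--Banach \emph{extension} theorem with the sublinear majorant $w \mapsto f^{\circ}(g(x), w)$ to produce $\xi \in \partial_{\circ}f(g(x))$ with $(d_xg)^*\xi = \zeta$. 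You instead work in $F^*$: you identify $v \mapsto f^{\circ}(g(x), d_x g(v))$ as the support function of $(d_x g)^*\brs{\partial_{\circ}f(g(x))}$ and conclude membership of $\eta$ by Hahn--Banach \emph{separation}, which requires the observation (correct, since adjoints are weak$^*$-to-weak$^*$ continuous) that this image set is convex and weak$^*$-compact, hence weak$^*$-closed. The two routes are dual to each other and cost about the same: the paper must check well-definedness on the range of $d_x g$, while you must check weak$^*$-closedness of the image; your version is perhaps slightly more in the spirit of "support function calculus," the paper's slightly more hands-on. Either way the conclusion and the use of continuous differentiability of $g$ are identical.
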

\begin{proof}
  Since
  \begin{equation}
    \frac{1}{\tau}\norm{g(\tilde{x} + \tau v) - g(\tilde{x}) - d_{x} g(\tau v)}
    \leq \sup_{0\leq \theta \leq \tau} \norm{d_{\tilde{x} + \theta v} g(v) - d_{x} g(v)}
  \end{equation}
  and $d_{\tilde{x} + \theta v} g(v) \to d_x g(v)$ as $\tilde{x} \to x$, $\tau\downarrow 0$,
  one easily concludes that
  \begin{align}
    (g^*f)^{\circ}(x, v) &= \limsup_{\tilde{x} \to x, \tau\downarrow 0}
    \frac{1}{\tau} \brs{f(g(\tilde{x} + \tau v)) - f(g(\tilde{x}))}
    \\ &\leq \limsup_{\tilde{x} \to x, \tau\downarrow 0}
    \frac{1}{\tau} \brs{f(g(\tilde{x} + \tau v)) - f(g(\tilde{x})+ d_{x} g(\tau v))}
    \\ &+ \limsup_{\tilde{x} \to x, \tau\downarrow 0}
    \frac{1}{\tau} \brs{f(g(\tilde{x})+ \tau d_{x}g(v)) - f(g(\tilde{x}))}
    \\ & \leq f^{\circ}(g(x), d_x g(v)).
  \end{align}
  Then the linear map $d_x g(v) \mapsto \brt{\zeta,v}$,
  $\zeta \in \partial_{\circ} (g^*f)(x)$, is well defined and continuous
  since
  \begin{equation}
    \pm\brt{\zeta,v} \leq f^{\circ}(g(x), \pm d_x g(v)) \leq C \norm{g(v)}.
  \end{equation}
  By Hahn–Banach theorem, it can be extended to a map $\xi \in E^*$ such that
  $\brt{\xi,w} \leq f^{\circ}(g(x), w)$, $w\in E$. Thus, $\xi \in \partial_{\circ} f(g(x))$
  and $(d_x g)^*\xi = \zeta$.
\end{proof}
The following proposition is a simplified version of~\cite[Theorem~12.4.1]{Schirotzek:2007:nonsmooth-analysis}
sufficient for our purposes.
\begin{proposition}[Clarke’s Multiplier Rule]\label{prop:multip-rule}
  Let $f\colon U \to \mathbb{R}$ be a locally Lipschitz function defined
  on an open set $U$ of a Banach space $E$. Suppose that $U$ contains
  a closed convex set $C$, $x\in C$, and $f(x) = \min_{\tilde{x} \in C} f(\tilde{x})$.
  Then
  \begin{equation}
    \exists \xi \in \partial_{\circ} f(x)\colon
    \brt{\xi, \tilde{x}-x} \geq 0 \ \forall \tilde{x} \in C.
  \end{equation}
\end{proposition}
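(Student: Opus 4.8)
The plan is to reduce the constrained minimization on the convex set $C$ to the unconstrained statement that $0 \in \partial_\circ h(x)$ for a suitable locally Lipschitz function $h$, and then to read off a subgradient of $f$ via subdifferential calculus. The natural candidate is $h = f + \iota_C$ (plus a smoothing), but since $f$ is only Lipschitz and $\iota_C$ is not, the cleanest route is to work with the \emph{distance function} to $C$ in place of the indicator. Concretely, first I would invoke the standard penalization fact: because $f$ is Lipschitz on a neighborhood of $x$ with some constant $L$, there is a penalty constant $K > L$ such that $x$ is a local minimum of the \emph{unconstrained} functional
\begin{equation}
  h(\tilde x) := f(\tilde x) + K\,\dist(\tilde x, C)
\end{equation}
on $U$. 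Indeed, for $\tilde x$ near $x$ pick $c \in C$ with $\|\tilde x - c\|$ close to $\dist(\tilde x, C)$; then $f(\tilde x) \geq f(c) - L\|\tilde x - c\| \geq f(x) - L\|\tilde x - c\|$, and adding $K\dist(\tilde x,c)$ with $K > L$ gives $h(\tilde x) \geq f(x) = h(x)$, using $c \in C$ so $\dist(c,C)=0$.

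Next, since $x$ is a local minimum of the locally Lipschitz function $h$, the basic property of the Clarke subdifferential listed in the excerpt gives $0 \in \partial_\circ h(x)$. Now I would apply the Clarke sum rule (the inclusion $\partial_\circ(f_1+f_2)(x) \subset \partial_\circ f_1(x) + \partial_\circ f_2(x)$, which follows from subadditivity of the Clarke directional derivative in exactly the same way the pullback inclusion was proved above): there exist $\xi \in \partial_\circ f(x)$ and $\eta \in \partial_\circ\big(K\dist(\cdot,C)\big)(x) = K\,\partial_\circ \dist(\cdot, C)(x)$ with $\xi + \eta = 0$. It remains to identify the subgradients of the distance-to-a-convex-set function at a point of $C$. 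For a closed convex set, $\dist(\cdot, C)$ is itself convex, so its Clarke subdifferential coincides with the convex subdifferential, and at $x \in C$ this is the intersection of the normal cone $N_C(x) = \{\,\eta : \langle \eta, \tilde x - x\rangle \leq 0 \ \forall \tilde x \in C\,\}$ with the dual unit ball; in particular any $\eta \in \partial_\circ\dist(\cdot,C)(x)$ satisfies $\langle \eta, \tilde x - x\rangle \leq 0$ for all $\tilde x \in C$. Therefore $\xi = -\eta$ satisfies $\langle \xi, \tilde x - x\rangle \geq 0$ for all $\tilde x \in C$, which is the assertion.

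The step I expect to be the main (only real) obstacle is the identification of $\partial_\circ \dist(\cdot, C)(x)$ with a subset of the normal cone $N_C(x)$ — one must be a little careful that we only need the \emph{one-sided} conclusion $\langle \eta, \tilde x - x\rangle \le 0$, which follows directly from convexity of $\dist(\cdot,C)$ together with $\dist(x,C)=0 \le \dist(\tilde x, C)$, so that any convex subgradient $\eta$ obeys $\dist(\tilde x, C) \ge \dist(x,C) + \langle \eta, \tilde x - x\rangle$, i.e. $\langle \eta, \tilde x - x\rangle \le 0$ for $\tilde x \in C$; and for convex Lipschitz functions the Clarke subdifferential agrees with the convex one. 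Everything else — the penalization estimate and the sum rule — is routine and parallels the pullback proposition already established above. (Alternatively, if one prefers to avoid the sum rule, one can run the penalization with a smooth exterior penalty or appeal directly to the cited Theorem~12.4.1 of~\cite{Schirotzek:2007:nonsmooth-analysis}, of which this is the stated special case; the argument above is essentially the specialization of that proof to a single convex constraint and no equality constraints.)
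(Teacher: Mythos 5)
Your proof is correct, but it is worth noting that the paper does not prove this proposition at all: it simply states it as a special case of \cite[Theorem~12.4.1]{Schirotzek:2007:nonsmooth-analysis} (a general multiplier rule with equality and inequality constraints) and moves on. Your exact-penalization argument is the standard self-contained route for the single convex constraint: the penalization estimate is sound (the only unspoken point is that the near-minimizer $c\in C$ of $\dist(\tilde x,C)$ lies in the Lipschitz neighbourhood of $x$, which follows from $\norm{c-x}\leq 2\norm{\tilde x - x}+\epsilon$ since $x\in C$, and that $f(c)\geq f(x)$ uses only that $x$ minimizes $f$ over all of $C$); the sum rule $\partial_\circ(f_1+f_2)(x)\subset\partial_\circ f_1(x)+\partial_\circ f_2(x)$ does follow from subadditivity of $(\cdot)^\circ(x,v)$, though strictly speaking one also needs that the sum of two weak$^*$-compact convex sets is weak$^*$-closed so that the support-function characterization applies — this is standard and parallels the Hahn--Banach step in the pullback proposition, but deserves a word; and the identification step can be made even lighter than you state, since for the convex function $d=\dist(\cdot,C)$ the monotonicity of difference quotients gives $d^\circ(x,v)\leq d(x+v)-d(x)$ directly, so any $\eta\in\partial_\circ d(x)$ satisfies $\brt{\eta,\tilde x - x}\leq d(\tilde x)-d(x)=0$ for $\tilde x\in C$ without invoking the theorem that Clarke and convex subdifferentials coincide. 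In short, what your argument buys is a proof of exactly the special case the paper needs from the three basic properties of $\partial_\circ$ already listed in the paper, whereas the paper's choice buys brevity by outsourcing the result to the cited reference; there is no gap in your proposal beyond these routine facts that should be cited or spelled out.
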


\subsection{Measures as bilinear forms}\label{subsec:meas-bilin}
Let us recall the weak eigenfunction equation~\eqref{eq:weak-solution}.
So it is natural to expect the bilinear form
$\brr{\phi, \psi} \mapsto \int \phi \psi d\mu$ to be
continuous as a form $H^1(\Sigma) \times H^1(\Sigma) \to \R$.
If it really is, one can reformulate
equation~\eqref{eq:weak-solution} and
variational characterization~\eqref{eq:variat-character}
in a more convenient form. It is clear that
$\lambda_0(\mu)$ is always $0$ and the corresponding
eigenfunctions are just constants provided $\Sigma$ is connected.
Let us denote by $X$ the Hilbert space
\begin{equation}
  X := \set*{\phi \in H^1}{\int \phi dv_g = 0},
  \quad
  \brr{\phi, \psi}_X := \int \brt{d\phi, d\psi} dv_g,
\end{equation}
and by $[\phi]_\mu$ the average
\begin{equation}
  [\phi]_\mu := \frac{1}{\mu(\Sigma)}\int \phi d\mu.
\end{equation}
For $k \geq 1$, we see that
\begin{equation}\label{eq:variat-character-reversed}
  \lambda_k^{-1}(\mu) = \sup_{
    \substack{F \subset X
    \\ \dim F = k}}
    \inf_{\phi \in F}
    \frac{\int \brr{\phi - [\phi]_\mu}^2 d\mu}{\int\abs{d\phi}^2 dv_g},
\end{equation}
and equation~\eqref{eq:weak-solution} is equivalent to
\begin{equation}
  \lambda^{-1} \brr{\phi, \psi}_X = Q(\mu)\brs{\phi, \psi}
\end{equation}
for any $\psi \in X$, where
\begin{equation}
  Q(\mu)\brs{\phi, \psi} := \int \brr{\phi - [\phi]_\mu}\brr{\psi - [\psi]_\mu} d\mu.
\end{equation}
Moreover, $\phi \in X$ is an eigenfunction corresponding to $\lambda_k^{-1}(\mu)$
if and only if $\phi - [\phi]_{\mu} \in H^1$ is an eigenfunction corresponding
to $\lambda_k(\mu)$.

Slightly abusing the notation in the next lemma, we write $\phi \mapsto \int \phi d\mu$
for functionals even if they are not induced by a measure.
\begin{lemma}
  Let $1\leq p < 2$ and $\mu \in H^{1,p}(\Sigma)^*$ such that
  $\brt{\mu, 1} \neq 0$. Then the map $\mu \mapsto Q(\mu)$ is
  continuously Fréchet differentiable as a map from
  a neighborhood $U \subset (H^{1,p})^*$ of $\mu$ into
  the space of continuous bilinear forms,
  $U \to \mathcal{B}[X]$. Its derivative
  at $\mu$ is
  \begin{equation}
    \nu \mapsto \int \brr{\phi - [\phi]_\mu}\brr{\psi - [\psi]_\mu} d \nu,
  \end{equation}
  where $\nu \in (H^{1,p})^*$.
  The bounded linear operator $T(\mu) \in \mathcal{L}[X]$
  induced by $Q(\mu)$ is self-adjoint and compact.
\end{lemma}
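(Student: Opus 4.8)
The plan is to first put $Q(\mu)$ into a form in which both assertions become transparent. Expanding the definition and using $\int\phi\,d\mu=[\phi]_\mu\,\mu(\Sigma)$, one gets the centered-second-moment identity
\[
  Q(\mu)[\phi,\psi]=\int\phi\psi\,d\mu-\frac{1}{\mu(\Sigma)}\Bigl(\int\phi\,d\mu\Bigr)\Bigl(\int\psi\,d\mu\Bigr),
\]
with the convention $\mu(\Sigma)=\int 1\,d\mu=\brt{\mu,1}\neq0$. The only analytic ingredient is the two-dimensional Sobolev multiplication estimate: since $\dim\Sigma=2$, for every $1\le p<2$ the pointwise product is a bounded bilinear map $H^1(\Sigma)\times H^1(\Sigma)\to H^{1,p}(\Sigma)$, because $\norm{\phi\psi}_{L^p}\le\norm{\phi}_{L^{2p}}\norm{\psi}_{L^{2p}}$ and $\norm{d(\phi\psi)}_{L^p}\le\norm{\phi}_{L^q}\norm{d\psi}_{L^2}+\norm{\psi}_{L^q}\norm{d\phi}_{L^2}$ with $\tfrac1q=\tfrac1p-\tfrac12\in(0,\tfrac12]$, and $H^1\hookrightarrow L^s$ for all finite $s$ in dimension two. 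Combined with the Poincaré inequality (which makes $\norm{\cdot}_{H^1}$ and $\norm{\cdot}_X$ equivalent on $X$) and $\abs{\int\phi\,d\mu}\le\norm{\mu}_{(H^{1,p})^*}\norm{\phi}_{H^{1,p}}\lesssim\norm{\mu}_{(H^{1,p})^*}\norm{\phi}_X$, this shows that $Q(\mu)\in\mathcal{B}[X]$ is well defined, with norm locally bounded on the open set $U=\set{\nu}{\brt{\nu,1}\neq0}$ containing $\mu$, and symmetric by inspection.

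For the $C^1$ (in fact $C^\infty$) dependence on $\mu$, I would read the smoothness off the identity above. The first term is the bounded \emph{linear} map $\nu\mapsto\bigl((\phi,\psi)\mapsto\int\phi\psi\,d\nu\bigr)$ into $\mathcal{B}[X]$; the second term is the product of the scalar $\nu\mapsto 1/\brt{\nu,1}$, which is $C^\infty$ on $U$, with the bounded \emph{quadratic} map $\nu\mapsto\bigl((\phi,\psi)\mapsto(\int\phi\,d\nu)(\int\psi\,d\nu)\bigr)$, itself the composition of the bounded linear map $\nu\mapsto\brt{\nu,\cdot}\big|_X\in X^*$ with a bounded bilinear map into $\mathcal{B}[X]$. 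Hence $Q$ is $C^\infty$ on $U$, and differentiating term by term one obtains
\[
  dQ(\mu)[\nu][\phi,\psi]=\int\phi\psi\,d\nu-\frac{(\int\phi\,d\nu)(\int\psi\,d\mu)+(\int\phi\,d\mu)(\int\psi\,d\nu)}{\mu(\Sigma)}+\frac{(\int\phi\,d\mu)(\int\psi\,d\mu)}{\mu(\Sigma)^2}\int 1\,d\nu,
\]
which, re-expanded, is exactly $\int(\phi-[\phi]_\mu)(\psi-[\psi]_\mu)\,d\nu$, the asserted derivative.

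It remains to treat the operator $T(\mu)$ defined by $(T(\mu)\phi,\psi)_X=Q(\mu)[\phi,\psi]$. Self-adjointness is immediate from the symmetry of $Q(\mu)$. For compactness I would split, using the centered identity, $T(\mu)=S(\mu)+K(\mu)$, where $K(\mu)$ is induced by $-(\int\phi\,d\mu)(\int\psi\,d\mu)/\mu(\Sigma)$ and has rank $\le 1$, hence is compact, while $S(\mu)$ is induced by $(\phi,\psi)\mapsto\int\phi\psi\,d\mu$. Both are self-adjoint, so it suffices to check that the quadratic form of $S(\mu)$ is sequentially weakly continuous: if $\phi_n\rightharpoonup0$ in $X$, then $\phi_n\rightharpoonup0$ in $H^1$ (Poincaré), Rellich gives $\norm{\phi_n}_{L^s}\to0$ for every finite $s$, and the multiplication estimate yields $\norm{\phi_n^2}_{H^{1,p}}\le\norm{\phi_n}_{L^{2p}}^2+2\norm{\phi_n}_{L^q}\norm{d\phi_n}_{L^2}\to0$, whence $(S(\mu)\phi_n,\phi_n)_X=\int\phi_n^2\,d\mu\to0$. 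By the standard characterization of compact self-adjoint operators (namely, that their essential spectrum is $\{0\}$), $S(\mu)$, and therefore $T(\mu)$, is compact.

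The genuinely delicate point is this last step. The bilinear form $(\phi,\psi)\mapsto\int\phi\psi\,d\mu$ does \emph{not} factor through a compact map $H^1\to(H^1)^*$: in $d(\phi\psi)=\phi\,d\psi+\psi\,d\phi$ the cross term, which contributes $\int\psi\,d\phi\,d\mu$, remains merely bounded, not small, when $\phi_n\rightharpoonup0$ while $\psi$ runs over the unit ball, so a naive factorization argument for the compactness of $T(\mu)$ fails. What rescues the argument is that this obstruction disappears on the diagonal, where $d(\phi^2)=2\phi\,d\phi$ is (small)$\cdot$(bounded); this is why one passes through the quadratic form rather than the bilinear form, and it is the one place where $\dim\Sigma=2$ is used in an essential way. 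Everything else is routine Sobolev bookkeeping, the only care being uniformity across $1\le p<2$ (at $p=1$ one has $q=2$, and all estimates still hold).
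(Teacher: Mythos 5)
Your proof is correct, and the differentiability half is essentially the paper's argument: both reduce $Q(\mu)$ to a rational expression in the linear maps $\mu\mapsto\int\phi\psi\,d\mu$ and rest on the bound $\abs{\int\phi\psi\,d\mu}\leq c\,\norm{\phi}_{H^1}\norm{\psi}_{H^1}\norm{\mu}_{(H^{1,p})^*}$ together with Poincar\'e; the paper simply cites this estimate (Lemma~4.9 of Girouard--Karpukhin--Lagac\'e) where you derive it from the two-dimensional product estimate $H^1\times H^1\to H^{1,p}$, and you additionally write out the derivative formula explicitly, which the paper leaves implicit. Where you genuinely diverge is compactness. The paper notes that $L^\infty$ measures are dense in $(H^{1,p})^*$, that for $\mu=\rho\,dv_g\in L^\infty$ one has $T(\mu)=i^*\tilde{T}(\mu)i$ with $i\colon X\hookrightarrow L^2$ compact, and then (using the operator-norm continuity of $\mu\mapsto T(\mu)$ just established) concludes that $T(\mu)$ is a norm limit of compact operators. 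You instead prove compactness intrinsically for an arbitrary $\mu\in(H^{1,p})^*$: the rank-one split plus the observation that $\phi_n\rightharpoonup 0$ in $X$ forces $\norm{\phi_n^2}_{H^{1,p}}\to 0$ (Rellich plus the same product estimate, applied on the diagonal where $d(\phi_n^2)=2\phi_n\,d\phi_n$ is small times bounded), so the quadratic form is weakly sequentially continuous, and a self-adjoint operator with weakly continuous quadratic form is compact. That last inference is stated a bit tersely (via the essential-spectrum characterization; one should add the Weyl-sequence step, or the polarization argument showing $\norm{T(\mu)\phi_n}\to 0$), but it is a standard fact and the gap is minor. Your route buys a direct proof that needs neither the density of $L^\infty$ images in $(H^{1,p})^*$ nor the approximation step, at the cost of an extra functional-analytic lemma; the paper's route is shorter given the continuity of $\mu\mapsto T(\mu)$ and the norm-closedness of the compact operators. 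Also note that your rank-one splitting is not strictly needed, since $\brr{\int\phi_n\,d\mu}^2\to 0$ as well for weakly null $\phi_n$, so the full form $Q(\mu)$ is already weakly sequentially continuous.
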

\begin{proof}
  Notice that the quadratic form
  \begin{equation}
    \int \brr{\phi - [\phi]_\mu}^2 d\mu = \int \phi^2 d\mu -
    \brr{\frac{1}{\brt{\mu, 1}}\int \phi d\mu}^2
  \end{equation}
  depends rationally on the expressions of the form
  $\int \phi\psi d\mu$, where $\phi,\psi \in H^1$. So, it is enough
  to prove the continuity of $\mu \mapsto \int \phi\psi d\mu$,
  which follows from the estimate, see~\cite[Lemma~4.9]{Girouard-Karpukhin-Lagace:2021:continuity-of-eigenval},
  \begin{equation}
    \abs{\int \phi\psi d\mu} \leq
    c_1 \norm{\phi}_{H^1} \norm{\psi}_{H^1} \norm{\mu}_{(H^{1,p})^*}
  \end{equation}
  and the fact that $H^1$-norm is an equivalent
  norm on $X$ by the Poincaré inequality.

  The compactness of $T(\mu)$ follows from the facts that the
  images of $L^\infty$
  measures are dense in $(H^{1,p})^*$, and for
  $\mu = \rho \dv{g} \in L^\infty$, one has
  $T(\mu) = i^* \tilde{T}(\mu) i$, where
  $i\colon X \hookrightarrow  L^2$ is
  compact.
\end{proof}
Let $E_k(\mu) \subset H^1(\Sigma)$ be
the space of eigenfunctions corresponding
to $\lambda_k(\mu)$.
\begin{lemma}\label{lem:subdiff-calc}
  Let $1\leq p < 2$ and $\mathcal{P} = \mathcal{P}(\Sigma)$
  be the space of probability measures. Then
  the functional $\lambda_k^{-1} \colon \mathcal{P}
  \cap (H^{1,p})^* \to \R$ has a locally
  Lipschitz extension on a neighborhood of
  $\mathcal{P} \cap (H^{1,p})^* \subset (H^{1,p})^*$.
  Its Clarke subdifferential can be described by the inclusion
  \begin{equation}
    \partial_\circ \lambda_k^{-1}(\mu) \subset co\
    \set*{\phi^2 }{
      \phi \in E_k(\mu),\ \int \abs{d\phi}^2 dv_g = 1
    },
  \end{equation}
  where $\text{co}\,K$ denotes the convex hull of the set $K$.
\end{lemma}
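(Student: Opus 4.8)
The plan is to realize $\lambda_k^{-1}$, near a probability measure $\mu$, as the $k$-th eigenvalue (from the top, with multiplicity) of the compact self-adjoint operator $T(\mu)$ supplied by the previous lemma; to strip off the multiplicity of that eigenvalue by a Riesz/Kato spectral-projection argument, so that all non-smoothness is concentrated in a finite-dimensional symmetric operator; and then to push the classical finite-dimensional Clarke subdifferential of the eigenvalue functional back through the pullback inclusion established above.

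First I would set up the Lipschitz extension. Since $\langle\mu,1\rangle=\mu(\Sigma)=1\ne0$, the previous lemma gives a neighbourhood $U\subseteq(H^{1,p})^*$ of $\mu$, which I take inside $\{\langle\nu,1\rangle\ne0\}$, on which $\nu\mapsto T(\nu)$ is $C^1$ into the real Banach space $\mathcal L_{\mathrm{sa}}$ of bounded self-adjoint operators on $X$, with $\langle d_\mu T(\nu)\phi,\psi\rangle_X=\int(\phi-[\phi]_\mu)(\psi-[\psi]_\mu)\,d\nu$, and $T(\mu)$ compact. By~\eqref{eq:variat-character-reversed}, $\lambda_k^{-1}(\nu)=\tau_k(T(\nu))$ for $\nu\in\mathcal P\cap U$, where $\tau_k(A):=\max_{\dim F=k}\min_{0\ne\phi\in F}\langle A\phi,\phi\rangle_X/\norm{\phi}_X^2$; this is defined for every $A\in\mathcal L_{\mathrm{sa}}$ and is $1$-Lipschitz in the operator norm, so $\nu\mapsto\tau_k(T(\nu))$ is the desired locally Lipschitz extension of $\lambda_k^{-1}$ on $U$. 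For $1<p<2$ the space $H^{1,p}$ is reflexive, so $\partial_\circ$ of this extension at $\mu$ lies in $H^{1,p}$, which contains $\phi^2$ for every $\phi\in H^1$ (as $d(\phi^2)=2\phi\,d\phi\in L^r$ for all $r<2$ in dimension two), making the claimed inclusion meaningful; the case $p=1$ I would reduce to some $q\in(1,2)$ by composing with the continuous inclusion $(H^{1,1})^*\hookrightarrow(H^{1,q})^*$, which changes neither $\lambda_k^{-1}$ nor the right-hand side.

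Next comes the spectral reduction. Put $A:=T(\mu)$ and $\tau:=\tau_k(A)=\lambda_k^{-1}(\mu)>0$; then $V:=\ker(A-\tau)$ is finite-dimensional, and under the shift $\phi\mapsto\phi-[\phi]_\mu$ identifying $X$-eigenvectors of $T(\mu)$ with $H^1$-eigenfunctions it corresponds to $E_k(\mu)$. With $m=\dim V$, let $j<k\le j+m$ satisfy $\tau_j(A)>\tau_{j+1}(A)=\dots=\tau_{j+m}(A)>\tau_{j+m+1}(A)$. For $B$ near $A$ in $\mathcal L_{\mathrm{sa}}$ the orthogonal projection $P(B)=\frac1{2\pi i}\oint_\Gamma(z-B)^{-1}\,dz$, with $\Gamma$ a fixed small circle around $\tau$, is real-analytic in $B$ and has rank $m$, and Kato's transformation function gives a real-analytic family of unitaries $W(B)$ with $W(A)=\Id$ and $W(B)P(A)W(B)^*=P(B)$. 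Then $\beta(B):=\bigl(W(B)^*BW(B)\bigr)\big|_V$ is real-analytic into the self-adjoint operators on $V$, with $\beta(A)=\tau\Id_V$, $d_A\beta(H)=P(A)HP(A)\big|_V$, and $\tau_k(B)$ equal to $\tau^V_{k-j}(\beta(B))$, where $\tau^V_l$ denotes the $l$-th largest eigenvalue functional on the self-adjoint operators on $V$. Thus, on a neighbourhood of $\mu$, $\lambda_k^{-1}=\tau^V_{k-j}\circ G$ where $G:=\beta\circ T$ is $C^1$ with $G(\mu)=\tau\Id_V$ and $d_\mu G(\nu)=P(A)\,d_\mu T(\nu)\,P(A)\big|_V$.

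Finally I would transport the subdifferential. On the finite-dimensional space of self-adjoint operators on $V$, the functional $\tau^V_l$ is differentiable exactly where its $l$-th eigenvalue is simple, with gradient the rank-one orthogonal projection $w\otimes w$ onto the unit eigenvector; since $\beta(A)=\tau\Id_V$ makes every unit $w\in V$ a limit of such eigenvectors, Rademacher's theorem yields $\partial_\circ\tau^V_l(\tau\Id_V)\subseteq\operatorname{co}\{w\otimes w:w\in V,\ \norm{w}_X=1\}$. Applying the pullback inclusion to $\lambda_k^{-1}=\tau^V_{k-j}\circ G$, using linearity of $(d_\mu G)^*$, and computing for $w\in V$ and $\nu\in(H^{1,p})^*$,
\[ (d_\mu G)^*(w\otimes w)(\nu)=\langle P(A)\,d_\mu T(\nu)\,P(A)w,\,w\rangle_X=\langle d_\mu T(\nu)w,\,w\rangle_X=\int(w-[w]_\mu)^2\,d\nu, \]
I get $(d_\mu G)^*(w\otimes w)=\phi^2$ with $\phi:=w-[w]_\mu\in E_k(\mu)$ and $\int\abs{d\phi}^2\,dv_g=\norm{w}_X^2=1$, hence $\partial_\circ\lambda_k^{-1}(\mu)\subseteq\operatorname{co}\{\phi^2:\phi\in E_k(\mu),\ \int\abs{d\phi}^2\,dv_g=1\}$. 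The hard part is the spectral reduction: $\tau_k$ is non-differentiable exactly at the multiple eigenvalue $\tau_k(T(\mu))$, i.e.\ at the very point where the subdifferential is needed, and it is the Riesz-projection trick that removes the multiplicity and makes Rademacher's theorem applicable; the remaining ingredients — the $C^1$-dependence of $T$, the min--max Lipschitz bound, and the pullback of subdifferentials — are routine given the previous lemma and the Clarke calculus recalled above.
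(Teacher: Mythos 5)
Your proposal is correct, and its skeleton coincides with the paper's: both proofs exploit the $C^1$ dependence $\nu\mapsto T(\nu)$, isolate the eigenvalue $\tau=\lambda_k^{-1}(\mu)$ by a Riesz spectral projection, transport everything to the fixed finite-dimensional eigenspace by a $C^1$ family of isometries, and read off the subdifferential there. The differences are in the devices. Where you use Kato's transformation function $W(B)$, the paper orthonormalizes $P(\tilde\mu)e_i$ by Gram--Schmidt -- a cosmetic distinction. The substantive divergence is the endgame: the paper never invokes the chain-rule proposition in its proof of the lemma; instead it bounds the Clarke directional derivative $(\lambda_k^{-1})^{\circ}(\mu,\nu)$ directly, using Lidskii's theorem to dominate the difference quotient of $\lambda_k^{-1}$ by that of the quadratic form $\brr{T_0(\cdot)\phi,\phi}_X$ over unit $\phi$ in the eigenspace, and then concludes via the support-function characterization of the convex hull. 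You instead write $\lambda_k^{-1}=\tau^V_{k-j}\circ G$ locally, quote the finite-dimensional Clarke subdifferential of the $l$-th eigenvalue at $\tau\Id_V$ (via the limiting-gradient/Rademacher representation over matrices with simple spectrum -- note that the inclusion only needs that limits of gradients $w_i\otimes w_i$ have the form $w\otimes w$ with $w\in V$, not that every such $w$ arises, and the word ``exactly'' in your differentiability claim is immaterial), and then push through the paper's pullback proposition. Your route is more modular and, usefully, more explicit on two points the paper glosses over: the actual Lipschitz extension (the $1$-Lipschitz max--min functional $\tau_k$ composed with $T$, which is what the multiplier rule is later applied to) and the identification of $\partial_\circ\lambda_k^{-1}(\mu)$ with elements of $H^{1,p}$ (reflexivity for $1<p<2$, reduction for $p=1$). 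The paper's route is shorter at the final step, trading the finite-dimensional eigenvalue-subdifferential fact and the chain rule for a single perturbation inequality.
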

\begin{proof}
  Let us fix $\mu$ and take a look at $\lambda_k^{-1}(\mu)$.
  It is an isolated point of the spectrum
  $Spec\,[T(\mu)]$
  with the multiplicity $m = \dim E'_k(\mu)$,
  where $E'_k(\mu) \subset X$ is the space of eigenfunctions
  corresponding to $\lambda_k^{-1}(\mu)$.
  By the previous lemma,
  the map $\tilde{\mu} \mapsto T(\tilde{\mu})$
  is of class $C^1$. Hence, the holomorphic
  functional calculus gives us a neighborhood $W$
  of $\mu$ and the $C^1(W, X)$ family
  of orthogonal projectors $\tilde{\mu} \mapsto P(\tilde{\mu})$
  of rank $m$ onto the space of eigenvalues of
  $T(\tilde{\mu})$ that are close to $\lambda_k^{-1}(\mu)$
  so that $E'_k(\tilde{\mu}) \subset im\,P(\tilde{\mu})$

  Fix an orthonormal basis $\brc{e}_{i=1}^m$ of $E'_k(\mu)$.
  After possibly shrinking the neighborhood $W$,
  we may suppose that $\brc{P(\tilde{\mu})e_i}_{i=1}^m$ is
  still a basis of $im\,P(\tilde{\mu})$ for any
  $\tilde{\mu} \in W$. Then one can apply
  the Gram-Schmidt process to obtain a $C^1$-family
  of orthogonal transformations
  $U(\tilde{\mu})\colon E_k(\mu) \to im\,P(\tilde{\mu})$ so that
  \begin{equation}
    \lambda_k^{-1}(\tilde{\mu}) \in Spec\,[T_0(\tilde{\mu})],
  \end{equation}
  where $T_{0}(\tilde{\mu}) :=
  U(\tilde{\mu})^*T(\tilde{\mu})U(\tilde{\mu})$ is
  a self-adjoint operator on a finite-dimensional
  space, $T_{0}(\tilde{\mu}) \in \text{End}[E'_k(\mu)]$.

  Let us compute the Clarke directional derivative
  $(\lambda_k^{-1})^\circ(\mu, \nu).$ By a
  theorem of Lidskii, see~\cite[Theorem~III.6.10]{Kato:1995:perturbation-theory}, the difference
  of $\lambda_i(B) - \lambda_i(A)$ eigenvalues lies in the convex hull
  of $Spec\,(B-A)$. In particular, one has
  \begin{equation}
    \tau^{-1}\brs{\lambda_k^{-1}(\tilde{\mu} + \tau \nu) -
    \lambda_k^{-1}(\tilde{\mu})} \leq \sup_{\substack{
      \phi \in E'_k(\mu) \\ \norm{\phi} = 1
    }} \brr{\tau^{-1}[T_{0}(\tilde{\mu} + \tau \nu) -
    T_{0}(\tilde{\mu})]\phi, \phi}_X.
  \end{equation}
  Thus,
  \begin{equation}
    \begin{aligned}
      (\lambda_k^{-1})^\circ(\mu, \nu) &=
      \limsup_{\tilde{\mu}\to \mu,\, \tau\downarrow 0}
      \tau^{-1}\brs{\lambda_k^{-1}(\tilde{\mu} + \tau \nu) -
      \lambda_k^{-1}(\tilde{\mu})}
      \\ &\leq \sup_{\substack{
        \phi \in E'_k(\mu) \\ \norm{\phi} = 1
      }} \frac{d}{d\tau}\biggr|_{\tau = 0} \brr{T_{0}(\mu + \tau \nu)\phi, \phi}_X,
    \end{aligned}
  \end{equation}
  and
  \begin{equation}
   \begin{aligned}
    \frac{d}{d\tau}\biggr|_{\tau = 0}\brr{T_{0}(\mu + \tau \nu)\phi, \phi}_X
    &=\frac{d}{d\tau}\biggr|_{\tau = 0} Q(\mu + \tau \nu)
    \brs{U(\mu + \tau \nu)\phi, U(\mu + \tau \nu)\phi}
    \\ &=\frac{d}{d\tau}\biggr|_{\tau = 0} Q(\mu + \tau \nu)
    \brs{\phi, \phi}
    = \int \brr{\phi - [\phi]_\mu}^2 d \nu
   \end{aligned}
  \end{equation}
  since $\frac{d}{d\tau}|_{\tau = 0}
  U(\mu + \tau \nu)\phi \ \bot\ \phi$.
  After replacing $\phi$ by
  $\tilde{\phi} := \phi - [\phi]_\mu \in E_k(\mu)$,
  we see that
  \begin{equation}
    (\lambda_k^{-1})^\circ(\mu, \nu) \leq \sup_{\psi \in K} \brt{\psi, \nu}
  \end{equation}
  where
  \begin{equation}
    K := \set*{\phi^2 }
    {\phi \in E_k(\mu),\ \int \abs{d\phi}^2 dv_g = 1}.
  \end{equation}
  it is clear from the definition of the Clarke subdifferential
  that
  \begin{equation}
    \partial_\circ \lambda_k^{-1}(\mu) \subset
    \set{\xi \in H^{1,p}}{\forall \nu \in (H^{1,p})^*\colon \brt{\xi, \nu} \le
    \sup_{\psi \in K} \brt{\psi, \nu}},
  \end{equation}
  and it can happen if and only if $\xi \in co\, K$. In fact,
  $\nu \mapsto \sup_{\psi \in K} \brt{\psi, \nu}$ is the support
  functional of the set $co\, K$.
\end{proof}

\section{Proof of existence and regularity}\label{sec:exis-and-reg}

We adopt the convention that $c, c_1, c_2, \cdots$ are
some positive constants. The dependencies
will be specified if necessary. By $\epsilon \in \brc{\epsilon_i}$ we
denote a sequence of positive numbers, and $o(1)$ denotes
any sequence such that
$o(1) \to 0$ as $\epsilon \to 0$.

\subsection{Constructing a maximizing sequence}

Recall the definitions~\eqref{eq:Lambda-class} of $\mathcal{C}_{\G}$
and~\eqref{eq:Steklov-class} of $\mathcal{C}_{\G}^\partial$.
Let $\mathcal{C}^\star_{\G} \in \brc{\mathcal{C}_{\G}, \mathcal{C}_{\G}^\partial}$
be either of them.
By the trace embedding, $\mathcal{C}^\star_{\G} \subset (H^{1,1})^*$, and therefore,
one can use the theory presented in Section~\ref{subsec:meas-bilin}.
\begin{proposition}\label{prop:max-sequence}
  Let $\lambda := \sup_{\mu\in \mathcal{C}^\star_{\G}} \lambda_k(\mu)$ and
  $(\Sigma,[g])$ be a compact connected surface with a conformal
  action of a discrete group $\G$. Then there
  exist a number $n \in \mathbb{N}$, a sequence of
  probability measures $\mu_\epsilon \in \mathcal{C}^\star_{\G}$ with eigenvalues
  $\lambda_\epsilon:= \lambda_k(\mu_\epsilon) \to \lambda$
  as $\epsilon \to 0$, and a
  sequence of $\G$-equivariant maps $\Phi_\epsilon\in
  H^1(\Sigma, \R^n)$, i.e. $\gamma^* \Phi = r(\gamma) \Phi$
  for some (right) representation $r\colon \G \to O(\R^n)$,
  such that the following conditions are satisfied:
  \begin{enumerate}
    \item $\Delta \Phi_\epsilon = \lambda_\epsilon \Phi_\epsilon
    \mu_\epsilon$ weakly;
    \item $\Phi_\epsilon\colon \Sigma \to \mathbb{B}^n$, i.e. $\abs{\Phi_\epsilon}\leq 1$,
    and $\norm{d\Phi_\epsilon}_{L^2(\Sigma)} \leq C$ uniformly;
    \item\label{it:almost-one} $\epsilon \norm{\mu_\epsilon}_{L^\infty} \leq 1 $ and either
    \begin{itemize}
      \item $v_g\brr{\brc{x \in \Sigma \colon \abs{\Phi_\epsilon(x)} < 1}} \leq \epsilon$
      if $\partial \Sigma = \varnothing$, or
      \item $s_g\brr{\brc{x \in \partial\Sigma\colon\abs{\Phi_\epsilon(x)} < 1}} \leq \epsilon$
      if $\partial \Sigma \neq \varnothing$.
    \end{itemize}
  \end{enumerate}
\end{proposition}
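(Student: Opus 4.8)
The plan is to run a regularized maximization scheme à la Nadirashvili--Petrides--Karpukhin, but carried out inside the $\G$-invariant convex set $\mathcal{C}^\star_\G$ and using the Clarke subdifferential machinery of Lemma~\ref{lem:subdiff-calc} in place of the more delicate analytic-perturbation arguments. Fix a small parameter $\epsilon>0$. Instead of maximizing $\lambda_k$ directly over $\mathcal{C}^\star_\G$ (where a maximizer need not exist), I would maximize a penalized functional. Concretely, set
\[
  \mathcal{C}^\star_{\G,\epsilon} := \set*{\mu\in\mathcal{C}^\star_\G}{\norm{\mu}_{L^\infty}\le 1/\epsilon},
\]
which is a weakly$^*$-compact convex subset of $(H^{1,p})^*$, and consider
\[
  \mu\ \longmapsto\ \lambda_k(\mu).
\]
Since $\lambda_k$ is weakly$^*$ upper semi-continuous (Proposition~\ref{prop:upper-cont-and-bound}) and $\mathcal{C}^\star_{\G,\epsilon}$ is weakly$^*$-compact, the supremum of $\lambda_k$ over this truncated class is attained at some $\mu_\epsilon$; and because $L^\infty$ densities are dense and every $L^1$ density can be truncated (the discussion around \eqref{eq:L1-densities}), $\lambda_k(\mu_\epsilon)\to\lambda$ as $\epsilon\to 0$. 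Equivalently, working with the Lipschitz functional $\lambda_k^{-1}$, the point $\mu_\epsilon$ \emph{minimizes} $\lambda_k^{-1}$ over the closed convex set $\mathcal{C}^\star_{\G,\epsilon}$.

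Next I would extract the eigenmap. By Clarke's Multiplier Rule (Proposition~\ref{prop:multip-rule}) applied at the minimizer $\mu_\epsilon$, there is $\xi\in\partial_\circ\lambda_k^{-1}(\mu_\epsilon)$ with $\brt{\xi,\tilde\mu-\mu_\epsilon}\ge 0$ for all $\tilde\mu\in\mathcal{C}^\star_{\G,\epsilon}$. By Lemma~\ref{lem:subdiff-calc}, $\xi$ lies in the convex hull of squares of $L^2$-normalized $\lambda_k$-eigenfunctions, so one can write $\xi=\sum_{j=1}^n c_j\,\phi_j^2$ with $\phi_j\in E_k(\mu_\epsilon)$, $\int|d\phi_j|^2\,dv_g=1$, $c_j>0$, $\sum_j c_j=1$, where $n$ can be bounded independently of $\epsilon$ by the multiplicity bounds for $\lambda_k$ on a surface (this uses $\dim\Sigma=2$, and is where $n$ gets fixed). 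Setting $\Phi_\epsilon := (\sqrt{c_1}\,\phi_1,\dots,\sqrt{c_n}\,\phi_n)$ gives $|\Phi_\epsilon|^2=\xi$ and, since each $\phi_j$ solves \eqref{eq:weak-solution} with eigenvalue $\lambda_\epsilon:=\lambda_k(\mu_\epsilon)$, one gets $\Delta\Phi_\epsilon=\lambda_\epsilon\Phi_\epsilon\mu_\epsilon$ weakly, which is (1). The uniform $L^2$-gradient bound $\norm{d\Phi_\epsilon}_{L^2}^2=\sum_j c_j=1\le C$ is part of (2). To make $\Phi_\epsilon$ genuinely $\G$-equivariant I would average over the group: since $\mu_\epsilon$ is $\G$-invariant, the eigenspace $E_k(\mu_\epsilon)$ carries an orthogonal $\G$-representation, and one can choose the basis $\{\phi_j\}$ (and the coefficients $c_j$, using invariance of $\xi$) so that $\gamma^*\Phi_\epsilon=r(\gamma)\Phi_\epsilon$ for a suitable orthogonal representation $r$; concretely, decompose $E_k(\mu_\epsilon)$ into $\G$-irreducibles and pick orthonormal bases compatibly, or replace $\Phi_\epsilon$ by a $\G$-average of $\{\gamma^*\Phi_\epsilon\}$ which does not change the metric $|d\Phi_\epsilon|^2g$.

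Finally I would extract the structural consequences of the multiplier inequality, i.e.\ (3) and the bound $|\Phi_\epsilon|\le 1$. The variational inequality $\brt{\xi,\tilde\mu-\mu_\epsilon}\ge 0$ for all $\tilde\mu\in\mathcal{C}^\star_{\G,\epsilon}$ is an obstacle-type condition: testing against $\tilde\mu$ concentrated where $\xi=|\Phi_\epsilon|^2$ is small forces $\mu_\epsilon$ to saturate the constraint $\norm{\mu_\epsilon}_{L^\infty}=1/\epsilon$ on the set $\{|\Phi_\epsilon|^2<\max|\Phi_\epsilon|^2\}$, while on the set where $\mu_\epsilon<1/\epsilon$ one must have $|\Phi_\epsilon|^2$ equal to its essential maximum $M$. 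Normalizing $\Phi_\epsilon$ by $\sqrt M$ (which rescales $\xi$ but keeps $\sum c_j$-type bounds, after re-truncating $\mu_\epsilon$ accordingly and absorbing the constant into $C$) gives $|\Phi_\epsilon|\le 1$ with $|\Phi_\epsilon|=1$ a.e.\ on the complement of $\{\mu_\epsilon=1/\epsilon\}$; since $\mu_\epsilon$ is a probability measure, $v_g(\{\mu_\epsilon=1/\epsilon\})\le \epsilon\,\mu_\epsilon(\{\mu_\epsilon=1/\epsilon\})\le\epsilon$ (respectively $s_g(\cdot)\le\epsilon$ in the Steklov case, where $\mu_\epsilon$ lives on $\partial\Sigma$), which is exactly \eqref{it:almost-one}, together with $\epsilon\norm{\mu_\epsilon}_{L^\infty}\le 1$ by construction. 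The main technical obstacle is organizing this last step cleanly: one has to juggle the rescaling by $\sqrt M$, the re-truncation level, the normalization $\sum c_j=1$, and the group-equivariant choice of basis simultaneously, and to verify that the constant $C$ in the gradient bound can be taken uniform in $\epsilon$ (it can, since after rescaling $\norm{d\Phi_\epsilon}_{L^2}^2 = 1/M$ and $M$ is bounded below by comparison of $\lambda_k(\mu_\epsilon)$ with a fixed smooth metric, the bound $\lambda M\ge\text{const}$ coming from the weak equation). Everything else is a routine application of the already-established subdifferential calculus.
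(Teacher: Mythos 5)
Your overall scheme is the paper's: maximize $\lambda_k$ over the truncated class $B(\epsilon^{-1})\cap\mathcal{C}^\star_\G$ using weak$^*$ compactness and upper semicontinuity, apply Clarke's multiplier rule together with Lemma~\ref{lem:subdiff-calc} to produce $\xi=\sum_j c_j\phi_j^2$ and the eigenmap, and then read off the structure of $\mu_\epsilon$ from the variational inequality. The genuine gap sits precisely in the step you dismiss as bookkeeping. The inequality $\int\xi\,d(\tilde\mu-\mu_\epsilon)\geq 0$ only says that $\mu_\epsilon$ minimizes the linear functional $\mu\mapsto\int\abs{\Phi_\epsilon}^2d\mu$ over the truncated class; by itself (a ``bathtub'' argument) this gives a threshold $t\leq M^2$ with $\rho_\epsilon=\epsilon^{-1}$ a.e.\ on $\brc{\abs{\Phi_\epsilon}^2<t}$ and $\rho_\epsilon=0$ a.e.\ on $\brc{\abs{\Phi_\epsilon}^2>t}$, and it does \emph{not} force $t=M^2$, which is what your claim ``on $\brc{\mu_\epsilon<1/\epsilon}$ one must have $\abs{\Phi_\epsilon}=M$'' amounts to. To rule out $t<M^2$ one must show that $\mu_\epsilon$ charges every neighborhood of the maximum set of $\abs{\Phi_\epsilon}$, and this needs two analytic ingredients you never invoke: continuity of $\Phi_\epsilon$ (interior elliptic regularity when $\partial\Sigma=\varnothing$; ellipticity of the Dirichlet-to-Neumann operator for boundary continuity in the Steklov case), and a maximum-principle argument — if $\rho_\epsilon$ vanished on the open set $B=\brc{\abs{\Phi_\epsilon}>M-\delta}$, then $\Phi_\epsilon$ would be harmonic there, and the strong maximum principle for the subharmonic function $\abs{\Phi_\epsilon}^2$ (respectively Zaremba's/Hopf's boundary-point lemma in the Steklov case) forces $\abs{\Phi_\epsilon}\equiv M$. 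Moreover, your competitor $\tilde\mu$ must be an \emph{invariant probability} measure, so the admissible perturbation is a mass-preserving two-sided transfer (add $(\epsilon^{-1}-\rho_\epsilon)$-mass on $A=\brc{\abs{\Phi_\epsilon}<M-2\delta}$, remove the same amount from $B$), and its admissibility requires $\abs{\Phi_\epsilon}^2$ to be $\G$-invariant.

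That last point exposes the second soft spot: equivariance. The Clarke multiplier $\xi$ is not automatically $\G$-invariant, and choosing bases of $E_k(\mu_\epsilon)$ ``compatibly with the irreducibles'' does not by itself make $(\sqrt{c_j}\,\phi_j)_j$ equivariant; likewise the naive average $\frac{1}{\abs{\G}}\sum_\gamma\gamma^*\Phi_\epsilon$ in $\R^n$ can cancel and changes $\abs{\Phi_\epsilon}^2$ uncontrollably. The paper's fix is to stack the translates, $\Phi_\epsilon:=\frac{1}{\sqrt{\abs{\G}}}\sum_{\gamma\in\G} e_\gamma\otimes\gamma^*\tilde\Phi_\epsilon\in\R^{\abs{\G}}\otimes\R^n$, which is equivariant for (a multiple of) the regular representation, still solves the eigen-equation because $\gamma^*$ preserves $E_k(\mu_\epsilon)$ when $\mu_\epsilon$ and $g$ are invariant, and — crucially for the previous paragraph — has $\G$-invariant $\abs{\Phi_\epsilon}^2$, so the sets $A,B$ and the perturbed density are invariant. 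Your parenthetical claim that the averaging ``does not change the metric $\abs{d\Phi_\epsilon}^2g$'' is neither true nor needed. The remaining worries you list are non-issues: rescaling $\Phi_\epsilon$ by $M^{-1}$ requires no re-truncation of $\mu_\epsilon$, and the uniform gradient bound follows from $1=\norm{d\Phi_\epsilon}^2_{L^2}=\lambda_\epsilon\int\abs{\Phi_\epsilon}^2d\mu_\epsilon\leq\lambda_\epsilon M^2$, so $\norm{d(M^{-1}\Phi_\epsilon)}_{L^2}\leq\sqrt{\lambda_\epsilon}\leq C$.
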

\begin{proof}
  In the corresponding $L^\infty$ space, consider the ball
  $B(\epsilon^{-1})=B(\epsilon^{-1};0) \subset L^\infty$ of
  radius $\epsilon^{-1}$ centered at zero.
  If
  \begin{equation}
    \lambda_\epsilon := \sup \set{\lambda_k(\mu)}
    {\mu \in B(\epsilon^{-1};0)\cap \mathcal{C}^\star_{\G}},
  \end{equation}
  it is clear that $\lambda_\epsilon \to \lambda$ as $\epsilon \to 0$.
  At the same time, $B(\epsilon^{-1};0)\cap \mathcal{C}^\star_{\G}$ is a bounded
  weakly$^*$ closed subset of $L^\infty$, hence weakly$^*$ compact.
  Proposition~\ref{prop:upper-cont-and-bound} implies that
  the functional $\mu \to \lambda_k(\mu)$ is bounded on $\mathcal{C}^\star_{\G}$
  and upper semi-continuous under the weak$^*$ convergence. So,
  one easily finds an element $\mu_\epsilon \in \mathcal{C}^\star_{\G}$ realizing
  $\lambda_\epsilon = \lambda_k(\mu_\epsilon)$.

  The next step is to apply Lemma~\ref{lem:subdiff-calc} together
  with Proposition~\ref{prop:multip-rule},
  to obtain $\psi \in \partial_\circ \lambda_k^{-1}(\mu_\epsilon)$
  such that
  \begin{equation}
    \int \psi d(\mu - \mu_\epsilon) \geq 0
  \end{equation}
  for all $\mu \in B(\epsilon^{-1})\cap \mathcal{C}^\star_{\G}$.
  The expression for $\psi \in \partial_\circ \lambda_k^{-1}(\mu_\epsilon)$
  takes the following form
  \begin{equation}
    \psi = \sum_{i=1}^{n_\epsilon} \phi_i^2,
  \end{equation}
  where $\sum \int \abs{d\phi_i}^2 dv_g = 1$,
  and $n_\epsilon$ can be chosen so that $n_\epsilon\leq n = n(\Sigma,k)$, in view of the
  multiplicity bounds established in~\cite{Kokarev:2014:multiplicity-bounds}
  for the closed case ($\partial \Sigma = \varnothing$),
  and in~\cite{Karpukhin-Kokarev-Polterovich:2014:steklov-multiplicity-bounds} (or, for more regular densities, \cite{Jammes:2014:steklov-mult,Fraser-Schoen:2016:sharp-steklov-bounds})
  for the Steklov problem.

  Therefore, we can set
  \begin{equation}
    \tilde{\Phi}_\epsilon := (\phi_1, \cdots, \phi_{n})
    \quad and \quad
    \Phi_\epsilon := \frac{1}{\sqrt{\abs{\G}}}\sum_{\gamma\in\G}
    e_{\gamma}\otimes \gamma^*\tilde{\Phi}_\epsilon \in \R^{\abs{\G}} \otimes \R^n
  \end{equation}
  to see that $\abs{\gamma^* \Phi_\epsilon}^2 = \abs{\Phi_\epsilon}^2$, $\Phi_\epsilon$ is $\G$-equivariant,
  \begin{equation}\label{ineq:extremal-condition}
    \int \abs{\Phi_\epsilon}^2 d\mu_\epsilon
    \leq \int \abs{\Phi_\epsilon}^2 d\mu
  \end{equation}
  for any $\mu \in B(\epsilon^{-1})\cap \mathcal{C}^\star_{\G}$,
  and
  \begin{equation}\label{ineq:sup-norm-lower-bound}
    1 = \norm{d\Phi_\epsilon}_{L^2} =
  \lambda_\epsilon \int \abs{\Phi_\epsilon}^2 d\mu_\epsilon
  \leq \lambda_\epsilon \norm{\Phi_\epsilon}_{L^\infty}^2.
  \end{equation}
  To finish the proof, let us deal with two cases separately.

  \textbf{The case }$\mathbf{\partial \Sigma = \varnothing}$.
  One has $\mu_\epsilon = \rho_\epsilon \dv{g}$,
  $\mu = \rho \dv{g}$ for some $\rho_\epsilon, \rho \in L^\infty(\Sigma)\cap B(\epsilon^{-1})$.
  Then, by elliptic regularity, $\Phi_\epsilon \in H^{2,p}(\Sigma)$
  for any $p < \infty$, so $\Phi_\epsilon$ is continuous.
  We would like to show that
  inequality~\eqref{ineq:extremal-condition} implies
  \begin{equation}\label{eq:const-density}
    \rho_\epsilon \equiv \epsilon^{-1}
    \text{ a.e. on }
    \brc{x\in\Sigma \colon \abs{\Phi_\epsilon(x)} < M},
  \end{equation}
  where $M:= \norm{\Phi_\epsilon}_{L^\infty} \geq \lambda^{-\frac{1}{2}}$
  by~\eqref{ineq:sup-norm-lower-bound}. If we did so,
  one could check that the renormalized maps
  $\Psi_\epsilon := \frac{1}{M}\Phi_\epsilon$
  satisfy the conditions of the proposition.

  If~\eqref{eq:const-density} does not hold, then
  for any small enough $\delta > 0$, one has
  $(\epsilon^{-1}-\rho_\epsilon)\chi_A \not\equiv 0$, where
  $\chi_A$ is the characteristic function of the set
  \begin{equation}
    A:= \brc{x\colon \abs{\Phi_\epsilon(x)} < M - 2\delta}.
  \end{equation}
  Since $\Phi_\epsilon$ is continuous, the set
  \begin{equation}
    B := \brc{x\colon M - \delta < \abs{\Phi_\epsilon(x)}}
  \end{equation}
  is open, and $\rho_\epsilon \chi_B \neq 0$ as well,
  or else we would apply the maximum
  principle to the subharmonic function
  $\abs{\Phi_\epsilon}^2$ on $B$ and conclude that $\abs{\Phi_\epsilon} \equiv M$
  on $\Sigma$, which is connected. That allows us
  to take
  \begin{equation}
    \rho = \rho_\epsilon + t\brr{
      \frac{(\epsilon^{-1}-\rho_\epsilon)\chi_A}
        {\norm{(\epsilon^{-1}-\rho_\epsilon)\chi_A}}_{L^1}
      - \frac{\rho_\epsilon \chi_B}
        {\norm{\rho_\epsilon \chi_B}}_{L^1}
    } \in B(\epsilon^{-1})\cap \mathcal{C}^\star_{\G}
  \end{equation}
  for small enough $t$ ($\rho$ is $\G$-invariant since
  $\abs{\gamma^* \Phi_\epsilon} = \abs{\Phi_\epsilon}$).
  After plugging in $\mu = \rho \dv{g}$ into
  inequality~\eqref{ineq:extremal-condition}, one obtains
  \begin{equation}
    M-\delta \leq M - 2\delta,
  \end{equation}
  which is a contradiction.

  \textbf{The case }$\mathbf{\partial \Sigma \neq \varnothing}$.
  The idea is the same, but now, we are working on
  $\partial\Sigma$. One has
  $\mu_\epsilon = \rho_\epsilon \ds{g}$,
  $\mu = \rho \ds{g}$ for some $\rho_\epsilon, \rho \in
  L^\infty(\partial\Sigma)\cap B(\epsilon^{-1})$.
  Again, $\Phi$ is continuous
  on the boundary because the Dirichlet-to-Neumann operator
  is an elliptic pseudodifferential operator of order 1 on $\partial \Sigma$,
  so by $H^{k,p}$-regularity (see, e.g.,~\cite[Proposition 13.6.5]{Taylor:2023:pde-3}), $\Phi \in H^{1,p}(\partial\Sigma) \subset C(\partial\Sigma)$
  for any $p < \infty$.
  We would like to show that
  inequality~\eqref{ineq:extremal-condition} implies
  \begin{equation}
    \rho_\epsilon \equiv \epsilon^{-1}
    \text{ a.e. on }
    \brc{x \in \partial\Sigma \colon \abs{\Phi_\epsilon(x)} < M},
  \end{equation}
  and then $\frac{1}{M}\Phi_\epsilon$ would satisfy the conditions
  ($\frac{1}{M}\Phi_\epsilon< 1$ on $\Int \Sigma$
  by the maximum principle unless $\abs{\Phi}^2$ is a constant). The remaining step
  is to prove that
  $\rho_\epsilon \chi_B \neq 0$. If it is not the case,
  the continuity of $\Phi_\epsilon$ on the boundary
  implies $B \subset \partial\Sigma$ is open. One also has
  $\partial_\nu \Phi = 0$ on $B$, where $\partial_\nu$ is the
  outward-pointing normal to $\partial\Sigma$. But
  this contradicts with
  Zaremba's principle, which states that either
  $\abs{\Phi} \equiv M$, or
  $\partial_\nu \abs{\Phi}^2 > 0$
  at the points of maximum of $\abs{\Phi}^2$.
\end{proof}
\begin{remark}
  By analogous arguments involving Clarke subdifferentials and the maximum principle,
  one arrives at the same maximizing
  sequence $\brc{\Phi_\epsilon}$ through a different optimization problem.
  See~\cite{Karpukhin-Nadirashvili-Penskoi-Polterovich:2022:existence},
  one may consider
  $\int dV \to \max$
  over the set $\set*{V \in L^{\infty}}{V \geq 0, \ \nu_k(V) \geq 0}$,
  where $L^{\infty} \in \brc{L^{\infty}(\Sigma), L^{\infty}(\partial\Sigma)}$
  and $\nu_k(V)$ is the $k$-th eigenvalue of corresponding Schrödinger operator,
  \begin{equation}
    \Delta \phi - \phi V = \nu_k \phi
    \ \text{ on } L^2(\Sigma).
  \end{equation}
\end{remark}

\subsection{Convergence and regularity near stable points}
To treat the closed and Steklov cases simultaneously, we include $\partial\Sigma$ in the definition of $\Sigma$ and
consider a \emph{relatively} open set $\Omega \subset \Sigma$, meaning that
we allow $\Omega\cap\partial\Sigma$ to be nonempty.
Let us use the convention that the subscript $co$ means
\emph{relatively compactly supported}, e.g.
\begin{equation}
  H^1_{co}(\Omega) :=
\overline{\set{\phi \in C^\infty(\Sigma)}{\supp \phi \subset \Omega}}^{H^1}.
\end{equation}
That is,
$H^1_{co}(\Omega)$ coincides with $H^1_{0}(\Omega)$ if
$\Omega\cap\partial\Sigma = \varnothing$, and
$H^1_0(\Omega) \subsetneq  H^1_{co}(\Omega)$
if $\Omega\cap\partial\Sigma \neq \varnothing$.

Let $\mu$ be a Radon measure on $\Sigma$. Define
$\ind(\mu, \Omega) \in [0,\infty]$,
\begin{equation}
  \ind(\mu, \Omega) =  \max \brc{\dim V\colon V \subset C_{co}^\infty(\Omega)
  \text{ s.t. }Q_\mu|_V < 0},
\end{equation}
where
\begin{equation}
  Q_\mu(\phi) = \int \abs{d\phi}^2 \dv{g} - \int \phi^2 d\mu.
\end{equation}
Notice that $\supp \phi \cap \partial \Sigma$ can be nonempty
if $\Omega \cap \partial \Sigma \neq \varnothing$. If
$\Omega' \subset \Omega$, it is clear that
$\ind(\mu, \Omega') \leq \ind(\mu, \Omega)$.
Following the ideas from~\cite[Section~4.1]{Karpukhin-Nadirashvili-Penskoi-Polterovich:2022:existence},
see also~\cite[Section~3.2]{Karpukhin-Nahon-Polterovich-Stern:2021:stability},
let us introduce the following definition.
\begin{definition}
  A point $p\in \Sigma$ is said to be \emph{stable}
  for the sequence of measures
  $\brc{\mu_\epsilon}$
  if there exists a neighborhood $\Omega$ of $p$ such that up to a subsequence,
  \begin{equation}\label{good.pt}
    \ind(\mu_{\epsilon},\Omega) = 0
  \end{equation}
  The point $p$ is called \emph{unstable} otherwise.
\end{definition}

\begin{remark}\label{rem:size_of-bad}
If $\lambda_k(\mu_\epsilon) = 1$, there exist at most $k$
distinct \emph{unstable} points. Otherwise,
we could find $k+1$ disjoint open sets $\Omega_p\cap \Omega_q=\varnothing$ on which
$$
\ind(\mu_{\epsilon},\Omega_p) \geq 1.
$$
And the variational characterization of
$\lambda_k(\mu_\epsilon)$ would imply
that $\lambda_k(\mu_\epsilon) < 1$, which is
clearly a contradiction.
\end{remark}
\begin{remark}\label{rem:quadr-form-bound}
  It follows that for any $\phi\in C_{co}^{\infty}(\Omega)$
  in a neighborhood $\Omega$ of a stable point $p$, one has
  \begin{equation}\label{ineq:quadr-form-bound}
    \int \phi^2 d\mu_\epsilon \le
    \int \abs{d\phi}^2 dv_g.
  \end{equation}
  One can view
  $\mu_\epsilon \in \mathcal{B}[H^1_{co}(\Omega)]$ as uniformly bounded bilinear forms
  on $H^1_{co}(\Omega)$.
\end{remark}

Let us take a look at a sequence of eigenmaps $\Phi_\epsilon$
generated by Proposition~\ref{prop:max-sequence}. It is bounded
in $H^1$, so
passing to a subsequence, we have a $\G$-equivariant
map $\Phi: \Sigma\to \mathbb{B}^n$ and a $\G$-invariant Radon measure
$\mu$ such that $\mu_{\epsilon} \oset{w^*}{\to} \mu$
in $\mathcal{P}(\Sigma)$ and $\Phi_{\epsilon} \oset[0.2ex]{w}{\to}\Phi$
weakly in $H^{1}(\Sigma)$.
The \ref{it:almost-one}rd property of the proposition then implies
\begin{equation}\label{eq:limit-map-to-sphere}
  \begin{gathered}
    \abs{\Phi}^2 \equiv 1 \quad\text{if }
    \partial\Sigma = \varnothing\text{, or}
    \\\abs{\Phi}^2|_{\partial\Sigma} \equiv 1
      \quad\text{if } \partial\Sigma \neq \varnothing.
  \end{gathered}
\end{equation}
The proof of the next lemma is inspired by the notes of Daniel Stern~\cite{Stern:2024:private}.
Notice that the sequence $\brc{\Phi_\epsilon}$, as above, satisfy the conditions
of the lemma.
\begin{lemma}\label{lem:strong-conv}
  Let $\Sigma$ be a Riemannian manifold (possibly with boundary),
  ${\mu_\epsilon \overset{w^*}{\to} \mu}$ in
  $\mathcal{M}^+(\Sigma)$, and $\lambda_k(\mu_\epsilon)=1$.
  Denote by $D$ the (finite) collection of unstable points of
  $\mu_\epsilon$. Let
  $\Phi_\epsilon \oset[0.2ex]{w}{\to} \Phi$ in
  $H^1(\Sigma,\R^n)$
  such that
  \begin{enumerate}
    \item $\Delta\Phi_\epsilon = \Phi_\epsilon
    \mu_\epsilon$ weakly;
    \item\label{it:lower-than} $\begin{aligned}
      \limsup_{\epsilon\to0}\int
      (\abs{\Phi_\epsilon}^2 - \abs{\Phi}^2) \phi^2 d\mu_\epsilon \leq 0
      \quad\forall \phi \in C^\infty_{co}(\Sigma\setminus D).
    \end{aligned}$
  \end{enumerate}
  Then up to a subsequence, for any $\Omega \Subset \Sigma\setminus D$,
  \begin{equation}
    \Phi_\epsilon \to \Phi \text{ in } H^1(\Omega)
    \quad\text{and}\quad
    \mu_\epsilon \overset{w^*}{\to} \mu \text{ in }
    \mathcal{B}[H^1_{co}(\Omega)],
  \end{equation}
  i.e. $\int fg d\mu_\epsilon \to \int fg d\mu$ for any fixed pair
  $f,g \in H^1_{co}(\Omega)$.
\end{lemma}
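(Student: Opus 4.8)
The plan is to prove the two conclusions together by a single ``uniform ellipticity'' argument localized away from the bad set $D$. Fix $\Omega \Subset \Sigma\setminus D$ and choose an intermediate open set $\Omega \Subset \Omega' \Subset \Sigma\setminus D$ together with a cutoff $\chi \in C^\infty_{co}(\Omega')$ with $\chi \equiv 1$ on $\Omega$. By Remark~\ref{rem:quadr-form-bound}, after passing to a subsequence the forms $\mu_\epsilon$ are uniformly bounded on $H^1_{co}(\Omega')$, i.e. $\int \phi^2 d\mu_\epsilon \le \int |d\phi|^2 dv_g$ for all $\phi\in C^\infty_{co}(\Omega')$, hence for all $\phi\in H^1_{co}(\Omega')$. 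First I would record the consequence that the bilinear forms $(\phi,\psi)\mapsto \int \phi\psi\,d\mu_\epsilon$ on $H^1_{co}(\Omega')$ have operator norm bounded independently of $\epsilon$; by weak-$*$ compactness (for measures) and a diagonal argument over a countable dense set, a further subsequence makes $\int fg\,d\mu_\epsilon$ converge for every fixed $f,g\in H^1_{co}(\Omega')$, and the limit is $\int fg\,d\mu$ on the dense subclass of products of smooth compactly supported functions, whence everywhere by the uniform bound. This already gives the second assertion, $\mu_\epsilon \overset{w^*}\to \mu$ in $\mathcal{B}[H^1_{co}(\Omega)]$.

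For the $H^1$-convergence, the key identity is obtained by testing the weak equation $\Delta\Phi_\epsilon = \Phi_\epsilon\mu_\epsilon$ against $\chi^2(\Phi_\epsilon - \Phi)$, which is a legitimate test function supported in $\Omega'$. Expanding $\int \langle d\Phi_\epsilon, d(\chi^2(\Phi_\epsilon-\Phi))\rangle\,dv_g = \int \chi^2 \langle \Phi_\epsilon,\Phi_\epsilon-\Phi\rangle\,d\mu_\epsilon$ and moving the lower-order terms around, one gets
\begin{equation}
  \int \chi^2 |d(\Phi_\epsilon - \Phi)|^2\,dv_g
  = \int \chi^2\langle\Phi_\epsilon,\Phi_\epsilon - \Phi\rangle\,d\mu_\epsilon
    + (\text{terms with }d\chi\text{ and }d\Phi).
\end{equation}
The terms involving $d\chi$ pair a weakly convergent factor against one converging strongly in $L^2$ (namely $\Phi_\epsilon - \Phi \to 0$ in $L^2$ by Rellich), so they are $o(1)$; the term $\int \chi^2\langle d\Phi, d(\Phi_\epsilon-\Phi)\rangle\,dv_g$ is $o(1)$ by weak convergence in $H^1$. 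For the measure term I would split $\langle\Phi_\epsilon,\Phi_\epsilon-\Phi\rangle = |\Phi_\epsilon - \Phi|^2 + \langle\Phi,\Phi_\epsilon-\Phi\rangle$: the second piece $\int \chi^2\langle\Phi,\Phi_\epsilon-\Phi\rangle\,d\mu_\epsilon$ tends to $0$ using the already-established convergence of $\mu_\epsilon$ as a form on $H^1_{co}(\Omega')$ applied to the fixed pair $\chi\Phi$ and the weakly convergent pair $\chi(\Phi_\epsilon-\Phi)$ (more care is needed here — see below). The remaining piece is $\int \chi^2 |\Phi_\epsilon-\Phi|^2\,d\mu_\epsilon \le \int |d(\chi(\Phi_\epsilon-\Phi))|^2\,dv_g$ by the uniform form bound; expanding the right side and absorbing, one arrives at $\int \chi^2 |d(\Phi_\epsilon-\Phi)|^2 \le o(1) + \int |d\chi|^2|\Phi_\epsilon-\Phi|^2\,dv_g = o(1)$, which yields $\Phi_\epsilon \to \Phi$ in $H^1(\Omega)$.

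The hard part will be the cross term $\int \chi^2\langle\Phi,\Phi_\epsilon-\Phi\rangle\,d\mu_\epsilon \to 0$, and more precisely making rigorous the claim that the bilinear form $\mu_\epsilon$ evaluated on one fixed $H^1_{co}$ vector and one merely weakly convergent $H^1_{co}$ sequence tends to the expected limit; weak-$*$ convergence of forms does not automatically allow one argument to vary. The resolution is that the fixed slot produces, via Riesz representation on $H^1_{co}(\Omega')$, a fixed element $u_\epsilon := (\text{form }\mu_\epsilon)(\chi\Phi,\cdot)^\sharp$ with $u_\epsilon$ bounded in $H^1_{co}(\Omega')$ and, by the form convergence on fixed pairs plus the uniform bound, $u_\epsilon$ weakly convergent to the analogous element for $\mu$; pairing a weakly convergent sequence with a weakly convergent sequence is not enough, so instead I would test the equation more cleverly — against $\chi^2\Phi$ itself — to see $\int \chi^2\langle\Phi_\epsilon,\Phi\rangle d\mu_\epsilon$ converges, combine with hypothesis~\ref{it:lower-than} (which controls $\limsup \int(|\Phi_\epsilon|^2-|\Phi|^2)\chi^2 d\mu_\epsilon$), and use $|\Phi|^2\equiv 1$ on the relevant set together with $|\Phi_\epsilon|\le 1$ to conclude $\int \chi^2\langle\Phi_\epsilon,\Phi_\epsilon-\Phi\rangle d\mu_\epsilon = \int\chi^2(|\Phi_\epsilon|^2 - \langle\Phi_\epsilon,\Phi\rangle)d\mu_\epsilon \le \int\chi^2(|\Phi_\epsilon|^2-|\Phi|^2)d\mu_\epsilon + o(1) \le o(1)$. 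That inequality, fed back into the energy identity above, closes the estimate without ever needing bilinear weak$\times$weak continuity, and this is the step I expect to require the most care to state correctly.
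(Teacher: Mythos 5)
Your skeleton (localize at a stable point, use the uniform form bound of Remark~\ref{rem:quadr-form-bound}, test the equation against $\chi^2(\Phi_\epsilon-\Phi)$, kill the $d\chi$-terms by Rellich, and get the form convergence by density plus the uniform bound) is the same as the paper's, but the way you close the energy estimate has a genuine gap, in two places. First, the ``absorption'' in your second paragraph is vacuous: with the split $\langle\Phi_\epsilon,\Phi_\epsilon-\Phi\rangle=\abs{\Phi_\epsilon-\Phi}^2+\langle\Phi,\Phi_\epsilon-\Phi\rangle$, the stability bound gives $\int\chi^2\abs{\Phi_\epsilon-\Phi}^2d\mu_\epsilon\le\int\chi^2\abs{d(\Phi_\epsilon-\Phi)}^2dv_g+o(1)$ with coefficient exactly one, so absorbing it into the left-hand side cancels the Dirichlet term identically and leaves only $0\le(\text{cross term})+o(1)$ --- no control on $\int\chi^2\abs{d(\Phi_\epsilon-\Phi)}^2dv_g$. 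Second, your proposed resolution of the cross term goes in the wrong direction: the claimed bound $\int\chi^2\langle\Phi_\epsilon,\Phi_\epsilon-\Phi\rangle d\mu_\epsilon\le\int\chi^2(\abs{\Phi_\epsilon}^2-\abs{\Phi}^2)d\mu_\epsilon+o(1)$ amounts to $\langle\Phi_\epsilon,\Phi\rangle\ge\abs{\Phi}^2$ $\mu_\epsilon$-a.e., whereas under the extra assumptions you invoke ($\abs{\Phi}\equiv1$, $\abs{\Phi_\epsilon}\le1$ --- which are not hypotheses of the lemma and would narrow its scope) Cauchy--Schwarz gives the opposite inequality $\langle\Phi_\epsilon,\Phi\rangle\le\abs{\Phi}^2$. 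So the step you yourself flagged as the hard one is not closed, and it cannot be closed this way without circularity.

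The repair is purely algebraic and is exactly what the paper does (``multiply by $2$ and add''): use
\begin{equation}
  2\langle\Phi_\epsilon,\Phi_\epsilon-\Phi\rangle
  =\brr{\abs{\Phi_\epsilon}^2-\abs{\Phi}^2}+\abs{\Phi_\epsilon-\Phi}^2,
  \qquad
  2\langle d\Phi_\epsilon,d(\Phi_\epsilon-\Phi)\rangle
  =\brr{\abs{d\Phi_\epsilon}^2-\abs{d\Phi}^2}+\abs{d(\Phi_\epsilon-\Phi)}^2,
\end{equation}
so that twice the tested equation plus the stability inequality makes the terms $\int\chi^2\abs{\Phi_\epsilon-\Phi}^2d\mu_\epsilon$ and $\int\chi^2\abs{d(\Phi_\epsilon-\Phi)}^2dv_g$ cancel against each other, leaving
$\int\chi^2\brr{\abs{d\Phi_\epsilon}^2-\abs{d\Phi}^2}dv_g\le\int\chi^2\brr{\abs{\Phi_\epsilon}^2-\abs{\Phi}^2}d\mu_\epsilon+o(1)\le o(1)$
by hypothesis~\ref{it:lower-than}; together with weak lower semicontinuity of the Dirichlet energy this gives $\Phi_\epsilon\to\Phi$ in $H^1$ near the stable point. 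In this arrangement the troublesome cross term $\int\chi^2\langle\Phi,\Phi_\epsilon-\Phi\rangle d\mu_\epsilon$ never has to be estimated, no weak-times-weak continuity of the forms is needed, and no assumption beyond the two listed hypotheses is used.
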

\begin{proof}
  It is enough to show that there exists a
  neighborhood $W \Subset \Sigma\setminus D$ of a stable point $p$ such that the stated
  convergences take place.

  Let us choose a neighborhood $U \Subset \Sigma\setminus D$ and a subsequence $\brc{\mu_\epsilon}$
  from the definition of a stable point. Then
  Remark~\ref{rem:quadr-form-bound} implies
  that $\brc{\mu_\epsilon}$ is bounded in
  $\mathcal{B}[H^1_{co}(U)]$, hence precompact in the weak
  'operator' topology. By looking at the integrals
  $\int \phi^2 d\mu_\epsilon$ of smooth functions
  $\phi \in C^\infty_{co}(U)$, one sees that $\mu$ is the only
  accumulation point of $\brc{\mu_\epsilon}
  \subset\mathcal{B}[H^1_{co}]$.

  Fix a neighborhood $W \Subset U$ of $p$ and a function
  $\phi \in C^\infty_{co}(U)$ such that $\phi|_W \equiv 1$.
  Then $\Phi_{\epsilon}$ converges weakly to $\Phi$ in $H^{1}(U)$,
  and hence strongly in $L^2(U)$.
  For weakly convergent sequences, one has
  \begin{equation}
    \int_W \abs{d\Phi}^2\dv{g}
    \leq \liminf_{\epsilon\to0} \int_W \abs{d\Phi_\epsilon}^2\dv{g}.
  \end{equation}
  It remains show that
  \begin{equation}
    \limsup_{\epsilon\to0} \int \abs{d\Phi_\epsilon}^2\phi^2\dv{g}
    \leq \int \abs{d\Phi}^2\phi^2\dv{g}.
  \end{equation}
  That will imply $\lim_{\epsilon\to0}\norm{\Phi_\epsilon}_{H^1(W)} =
  \norm{\Phi}_{H^1(W)}$, and therefore,
  $\Phi_\epsilon \to \Phi$ in $H^1(W)$.

  Because of
  $L^2$ convergence, one has
  \begin{equation}
    \int \abs{d\phi}^2 \abs{\Phi_\epsilon - \Phi}^2 dv_g \to 0,
  \end{equation}
  and by the Cauchy–Schwarz inequality,
  \begin{gather}
    \abs{\int \brt{\phi d(\Phi_\epsilon - \Phi),
    (\Phi_\epsilon - \Phi) d\phi} dv_g}
    \leq \int \abs{\phi}\abs{d\Phi_\epsilon - d\Phi}
    \abs{\Phi_\epsilon - \Phi}\abs{d\phi}dv_g
    \\\to 0.
  \end{gather}
  since $\norm{d\Phi_\epsilon-d\Phi}_{L^2}$ is bounded and $\norm{\Phi_\epsilon - \Phi}_{L^2} \to 0$.
  Therefore by~\eqref{ineq:quadr-form-bound},
  \begin{equation}\label{ineq:from-quadr-form-bound}
    \begin{gathered}
      \int \abs{\Phi_\epsilon - \Phi}^2 \phi^2d\mu_\epsilon
      \leq \int \abs{\phi d(\Phi_\epsilon - \Phi) +
      (\Phi_\epsilon - \Phi)d\phi}^2 dv_g
      \\\leq \int \abs{d\Phi_\epsilon - d\Phi}^2 \phi^2dv_g
      +o(1)
    \end{gathered}
  \end{equation}
  as $\epsilon \to 0$. In a similar way,
  \begin{equation}
    \abs{\int \brt{d\Phi_\epsilon, (\Phi_\epsilon - \Phi)d\phi^2}dv_g}
    \leq \int \abs{d\phi^2} \abs{d\Phi_\epsilon}\abs{\Phi_\epsilon - \Phi}dv_g
    \to  0
  \end{equation}
  So, testing the equation $\Delta \Phi_\epsilon = \Phi_\epsilon \mu_\epsilon$
  against $(\Phi_\epsilon - \Phi)\phi^2$, one obtains
  \begin{equation}\label{ineq:from-eigen-func-eq}
    \int \brt{d\Phi_\epsilon, d(\Phi_\epsilon - \Phi)}\phi^2dv_g
    = \int\brt{\Phi_\epsilon, \Phi_\epsilon - \Phi}\phi^2d\mu_\epsilon
    +o(1).
  \end{equation}
  Now, one can
  multiply~\eqref{ineq:from-eigen-func-eq} by $2$ and sum
  it with~\eqref{ineq:from-quadr-form-bound} resulting
  in the inequality
  \begin{equation}
    \int \brr{\abs{d\Phi_\epsilon}^2 - \abs{d\Phi}^2}\phi^2dv_g
    \leq \int \brr{\abs{\Phi_\epsilon}^2 - \abs{\Phi}^2}\phi^2d\mu_\epsilon
    +o(1) \leq o(1),
  \end{equation}
  and the proof is complete.
\end{proof}
\begin{remark}\label{rem:h2-conv-simple}
  Lemma~\ref{lem:strong-conv} is designed to be applied to a maximizing sequence
  that approaches the limiting sphere-valued map \textit{from below},
  $\abs{\Phi_\epsilon}\nearrow 1=\abs{\Phi}$, which exists thanks
  to Proposition~\ref{prop:max-sequence} or~\cite*[Theorem~2.14]{Karpukhin-Nadirashvili-Penskoi-Polterovich:2022:existence}).
  It is also possible to prove $H^1$-convergence
  around \emph{stable} (a.k.a. \emph{good}) points for a maximizing sequence approaching
  the sphere-valued map \textit{from above}, $\abs{\Phi_\epsilon}\searrow 1=\abs{\Phi}$,
  as was done in~\cite[Claim~5]{Petrides:2018:exist-of-max-eigenval-on-surfaces,
  Petrides:2019:max-steklov-eigenval-on-surfaces}.
  However, it appears that the convergence \textit{from below} (the~\ref{it:lower-than}nd condition
  of Lemma~\ref{lem:strong-conv}) results in a shorter and simpler proof.
\end{remark}

\begin{lemma}\label{lem:regularity-in-good-pt}
  Let $\Sigma$ be a Riemannian surface,
  $\mu_\epsilon \overset{w^*}{\to} \mu \in \mathcal{M}^+(\Sigma)$
  and $\lambda_k(\mu_\epsilon) = 1$. Let
  $\Phi_\epsilon \overset{w}{\to} \Phi$ in
  $H^1(\Sigma,\R^n)$ with the properties~(1-2)
  of Lemma~\ref{lem:strong-conv}. Then
  \begin{equation}
    \Delta \Phi = \Phi \mu^c,
  \end{equation}
  where $\mu^c$ denotes the continuous part of $\mu$.
  Moreover,
  \begin{itemize}
    \item $\partial\Sigma = \varnothing$
    and $\abs{\Phi} \equiv 1$ on $\Sigma \implies$
    $\Phi \in C^\infty(\Sigma, \mathbb{S}^{n-1})$ is
    a smooth harmonic map and
    \begin{equation}
      \mu^c = \abs{d\Phi}_g^2\dv{g}.
    \end{equation}
    \item $\supp \mu \subset \partial\Sigma$
    and $\abs{\Phi} \equiv 1$ on $\partial\Sigma \implies$
    $\Phi \in C^\infty(\Sigma, \mathbb{B}^{n})$ is
    a smooth free boundary harmonic map, i.e.
    $\partial_\nu\Phi \perp T_{\Phi}\mathbb{S}^{n-1}$, and
    \begin{equation}
      \mu^c = \abs{\partial_\nu\Phi}\ds{g}.
    \end{equation}
  \end{itemize}
\end{lemma}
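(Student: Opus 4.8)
The plan is to pass to the limit in the weak equation for $\Phi_\epsilon$ on the complement of the finite set $D$ of unstable points, then remove $D$ by a capacity argument, and finally use the constraint $\abs{\Phi}\equiv 1$ (on $\Sigma$, resp.\ on $\partial\Sigma$) to pin down $\mu^c$ and invoke a standard harmonic map regularity theorem.

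First I would apply Lemma~\ref{lem:strong-conv}, whose hypotheses are exactly (1)--(2): up to a subsequence, $\Phi_\epsilon\to\Phi$ strongly in $H^1(\Omega)$ and $\mu_\epsilon\to\mu$ in $\mathcal{B}[H^1_{co}(\Omega)]$ for every $\Omega\Subset\Sigma\setminus D$. Together with the uniform bilinear bound of Remark~\ref{rem:quadr-form-bound} this gives $\int\phi^2\,d\mu\le\int\abs{d\phi}^2\,dv_g$ for $\phi\in H^1_{co}(\Omega)$; since points have zero $H^1$-capacity in dimension two, $\mu$ carries no atoms off $D$, so $\mu-\mu^c=\sum_{p\in D}\mu(\{p\})\delta_p$ and $\mu^c$ coincides with $\mu$ away from $D$. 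For a fixed $\psi\in C^\infty_{co}(\Sigma\setminus D)$ I would then pass to the limit in $\int\langle d\Phi_\epsilon,d\psi\rangle\,dv_g=\int\langle\Phi_\epsilon,\psi\rangle\,d\mu_\epsilon$: the left side converges by strong $H^1$ convergence on $\supp\psi$, and for the right side I split $\langle\Phi_\epsilon,\psi\rangle=\langle\Phi_\epsilon-\Phi,\psi\rangle+\langle\Phi,\psi\rangle$, bounding the first contribution by the uniform bilinear bound times $\norm{\Phi_\epsilon-\Phi}_{H^1(\supp\psi)}\to 0$ and treating the second by bilinear convergence of $\mu_\epsilon$. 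This yields $\int\langle d\Phi,d\psi\rangle\,dv_g=\int\langle\Phi,\psi\rangle\,d\mu^c$ for all $\psi\in C^\infty_{co}(\Sigma\setminus D)$.

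To upgrade this to all $\psi\in C^\infty(\Sigma)$ — hence $\Delta\Phi=\Phi\mu^c$ on $\Sigma$, in the Neumann sense when $\partial\Sigma\neq\varnothing$ — I would use the standard logarithmic cutoffs $\chi_\delta$ vanishing near $D$, equal to $1$ outside a shrinking neighborhood of $D$, with $\norm{d\chi_\delta}_{L^2}\to 0$, and test against $\psi\chi_\delta$. The error terms $\int\langle d\Phi,\psi\,d\chi_\delta\rangle\,dv_g$, $\int\langle d\Phi,(1-\chi_\delta)d\psi\rangle\,dv_g$ and $\int\langle\Phi,(1-\chi_\delta)\psi\rangle\,d\mu^c$ all vanish as $\delta\to 0$ — the first by Cauchy--Schwarz and $\norm{d\chi_\delta}_{L^2}\to 0$, the second because $d\Phi\in L^2$, the third because $\mu^c$ of a shrinking neighborhood of $D$ tends to $0$ ($\mu^c$ being finite and atom-free at $D$). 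For the identification of $\mu^c$ and the regularity: if $\partial\Sigma=\varnothing$ and $\abs{\Phi}\equiv 1$, testing $\Delta\Phi=\Phi\mu^c$ against $\psi\Phi$ for $\psi\in C^\infty_{co}(\Sigma\setminus D)$ (where $\mu^c$ is a bounded bilinear form) gives $\int\psi\,d\mu^c$ on the right and $\int\psi\abs{d\Phi}^2\,dv_g+\tfrac12\int\langle d\abs{\Phi}^2,d\psi\rangle\,dv_g=\int\psi\abs{d\Phi}^2\,dv_g$ on the left, the second term vanishing since $\abs{\Phi}^2\equiv 1$; hence $\mu^c=\abs{d\Phi}_g^2\,dv_g$ on $\Sigma\setminus D$, and since both measures are atom-free this holds on all of $\Sigma$, so $\Delta\Phi=\abs{d\Phi}^2\Phi$ is the harmonic map equation into $\mathbb{S}^{n-1}$ and $\Phi\in C^\infty(\Sigma,\mathbb{S}^{n-1})$ by Hélein's theorem~\cite{Helein:2002:harm-maps-and-moving-frames}. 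If instead $\supp\mu\subset\partial\Sigma$ and $\abs{\Phi}\equiv 1$ on $\partial\Sigma$, then $\Delta\Phi=0$ in $\Int\Sigma$, so $\Phi$ is smooth and harmonic there, and the weak identity $\int_\Sigma\langle d\Phi,d\psi\rangle\,dv_g=\int_{\partial\Sigma}\langle\Phi,\psi\rangle\,d\mu^c$ identifies the distributional normal derivative of this interior-harmonic map as the vector measure $\partial_\nu\Phi\,ds_g=\Phi\,\mu^c$; since $\abs{\Phi}\equiv 1$ on $\partial\Sigma$, $\partial_\nu\Phi$ is pointwise parallel to $\Phi$, i.e.\ $\partial_\nu\Phi\perp T_\Phi\mathbb{S}^{n-1}$ — the weak free boundary harmonic map condition — and by the regularity theory for (free boundary) harmonic maps from surfaces (reducing to the interior case by reflection across $\partial\Sigma$; cf.~\cite{Fraser-Schoen:2015:unique-free-disks,Karpukhin-Metras:2022:higher-dim}) one gets $\Phi\in C^\infty(\Sigma,\mathbb{B}^n)$, whence $\abs{\partial_\nu\Phi}>0$ by~\cite[Lemma~2]{Karpukhin-Metras:2022:higher-dim} and $\mu^c=\abs{\partial_\nu\Phi}\,ds_g$.

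The hard part will be the passage to the limit in Step~1: plain weak-$*$ convergence of the $\mu_\epsilon$ as measures does not suffice to pass to the limit in $\int\langle\Phi_\epsilon,\psi\rangle\,d\mu_\epsilon$ when $\Phi_\epsilon$ only converges weakly in $H^1$, so one genuinely needs the strong $H^1_{\mathrm{loc}}$ convergence off $D$ furnished by Lemma~\ref{lem:strong-conv} together with the uniform control of $\mu_\epsilon$ as bilinear forms on $H^1_{co}$. Everything else — the log-cutoff removal of $D$ and the appeal to the (free boundary) harmonic map regularity theorems in dimension two — is routine.
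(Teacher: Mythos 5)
Your proposal is correct and follows essentially the same route as the paper's proof: strong $H^1$ convergence off the unstable set from Lemma~\ref{lem:strong-conv} together with the uniform bilinear bound of Remark~\ref{rem:quadr-form-bound} to pass to the limit in the weak equation, removal of the finite set $D$ by a log-cutoff/capacity argument, identification of $\mu^c$ by testing against $\psi\Phi$ (closed case) resp.\ by noting that the Neumann data $\Phi\,\mu^c$ is parallel to $\Phi$ so tangential test fields give the weak free boundary condition (boundary case), and then Hélein's theorem resp.\ free boundary harmonic map regularity in dimension two. The only imprecision is the boundary-regularity citation: the smoothness of weakly free boundary harmonic maps is the content of Laurain--Petrides or Scheven (as cited in the paper), not of~\cite{Fraser-Schoen:2015:unique-free-disks} or~\cite[Lemma~2]{Karpukhin-Metras:2022:higher-dim}, though the statement you invoke is the correct one.
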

\begin{proof}
  Let us decompose $\mu$ into its continuous and atomic parts
  \begin{equation}\label{eq:meas-decomp}
    \mu=\mu^c+\sum_{p \in A(\mu)}w_p\delta_p,
  \end{equation}
  where $w_p > 0$ and $A(\mu)$ is the set of atoms of $\mu$.
  If $D$ is the collection of unstable points of $\brc{\mu_\epsilon}$,
  inequality \eqref{ineq:quadr-form-bound} implies that
  $A(\mu) \subset D$.

  Let us take $\phi, \psi \in C^\infty_{co}(\Sigma\setminus D)$
  such that $\psi \equiv 1$ on $\supp \phi$.
  By using the previous lemma,
  we see that
  \begin{multline}
    \int \langle d\Phi,d\phi\rangle dv_g=\lim_{\epsilon\to 0}
    \int \phi \psi \Phi_{\epsilon} d\mu_{\epsilon}=\lim_{\epsilon\to 0}
    \int \phi \brr{\psi\Phi_{\epsilon} - \psi \Phi}d\mu_\epsilon
    \\+\lim_{\epsilon\to 0}
    \int \phi \brr{\psi \Phi}d\mu_\epsilon = \int \phi \Phi d\mu,
  \end{multline}
  where  $\psi\Phi_{\epsilon} - \psi \Phi \to 0$ strongly
  in $H^1_{co}$ and $\mu_{\epsilon} \overset{w^*}{\to} \mu$
  as bilinear forms on $H^1_{co}$.
  Thus,
  \begin{equation}\label{eigen.map}
    \Delta \Phi = \Phi \mu^c \text{ on } \Sigma
  \end{equation}
  since discrete sets have zero capacity.

  If $\partial \Sigma =\varnothing$ and
  $\abs{\Phi} \equiv 1$ on $\Sigma$, one has $\Phi \cdot d\Phi = 0$, and
  after testing equation~\eqref{eigen.map} against $\varphi \Phi \in H^1_{co}$,
  we conclude that
  $$
  \mu^c=|d\Phi|^2 \dv{g}
  $$
  and
  \begin{equation}\label{eq:harmonic}
    \Delta \Phi = |d\Phi|^2 \Phi
  \end{equation}
  in the weak sense. Thus, $\Phi\colon \Sigma \to \mathbb{S}^{n-1}$
  is a weakly harmonic map, which is in fact smooth by
  Hélein's regularity theory~\cite{Helein:1991:weakly-harm}.

  If $\abs{\Phi} \equiv 1$ on $\partial\Sigma$ and
  $\supp \mu \subset \partial\Sigma$, we notice that
  $\Phi$ is harmonic in the interior, in particular
  $\abs{\Phi} \leq 1$. Then we
  take a map $Y\in H^1_{co}(\Sigma,\mathbb{R}^n)$ such that
  $Y(x) \in T_{\Phi(x)}\mathbb{S}^{n-1}$ for a.e.
  $x \in \partial\Sigma$,
  i.e. $\langle Y,\Phi\rangle|_{\partial\Sigma} = 0$.
  Equation~\eqref{eigen.map} implies
  $$
  \int \langle d\Phi,dY\rangle dv_g=
  \int_{\partial\Sigma} \langle Y,\Phi\rangle d\mu^c = 0,
  $$
  i.e. $\Phi:\Sigma\to \mathbb{B}^n$ is a weakly free boundary harmonic map, hence smooth
  by~\cite[Theorem~1.2]{Laurain-Petrides:2017:free-bndry-harmonic} or
  by by~\cite[Theorem~2.2]{Scheven:2006:regularity-free-bndry-harm-maps}
  (note that in dimension 2, the stationarity
  condition is unnecessary since the monotonicity property
  is trivially satisfied).
  At the same time, for harmonic functions, one has
  \begin{equation}
    \Delta \Phi = \partial_\nu \Phi \ds{g}.
  \end{equation}
  Comparing this equation with~\eqref{eigen.map} and multiplying
  both of them on $\Phi$, we obtain
  $$
  \mu^c= \langle \partial_\nu \Phi,\Phi\rangle \ds{g}
  \quad\text{and}\quad \partial_\nu\Phi \parallel \Phi.
  $$
\end{proof}

\subsection{Proof of Theorem~\ref{thm:main} and Theorem~\ref{thm:main-stek}}

Let us notice that for the theorems to be proved, it suffices to establish
the following equalities,
\begin{gather}
  \Lambda_k(\Sigma, [g];\G) = \max_{
    \substack{b\in\mathbb{N}_\G(\Sigma) \\ 0 \leq b \leq k}}
  \brc{\overline{\lambda}_{k-b}(\abs{d\Phi}_g^2 g) + 8\pi b},
  \\
  \mathfrak{S}_k(\Sigma, [g];G) =  \max_{
  \substack{b\in\mathbb{N}_\G(\Sigma) \\ 0 \leq b \leq k}}
  \brc{\overline{\sigma}_{k-b}(\abs{\partial_\nu\Phi} g) + 2\pi b}.
\end{gather}
for some smooth $\G$-equivariant (respectively, free boundary) harmonic map
to $\Sph^n$ (respectively, to $\mathbb{B}^n$). We treat the lower and upper bounds separately.

\subsubsection{Lower bound}

The lower bounds on $\Lambda_k(\Sigma,[g];\G)$ and $\mathfrak{S}_k(\Sigma,[g];\G)$,
i.e. the inequalities $\geq$ in the corresponding theorems,
were essentially proved by the gluing techniques described
in~\cite{Colbois-ElSoufi:2003:extremal}
and~\cite{Fraser-Schoen:2020:steklov-unions} so that the spectrum
convergences to the spectrum of the disjoint union, and we can attach
up to $k$ spheres (or disks). In order to obtain $\G$-invariant
metrics, it is enough to make sure that the set of
the gluing neighborhoods is itself $\G$-invariant.

Another approach is to proceed as follows. Fix a number $q \le k$ such that
$$
  q = \sum_i m_i \abs{\G\!\cdot\!p_i},
$$
where $m_i \ge 1$. Around each point $p = p_i$, consider
$m = m_i$ spheres $M_j\in\brc{\mathbb{S}^2, \mathbb{D}^2}$
(or disks if $p\in\partial\Sigma$). We use coordinates coming from
a stereographic projection of $\mathbb{S}^2$, and identify
the disk $\mathbb{D}^2 \approx \mathbb{S}^2_+$
with the upper-half sphere. We are going to
construct a sequence
of $L^\infty$ measures $\mu_\epsilon = \rho_\epsilon\dv{g}$
(or $\mu_\epsilon = \rho_\epsilon\ds{g}$) that concentrates
around $p$ telescopically creating $m$ spheres that are
sequentially attached to each other. Locally on the corresponding
balls $B(r;M_j)$, we have a chain of
'homotheties'
\begin{equation}
  B(r_\epsilon^0;\Sigma) \overset{\pi_\epsilon^1}{\to}
  B(\alpha_\epsilon^1 r_\epsilon^{0};M_1) \overset{\pi_\epsilon^2}{\to}
  \cdots \overset{\pi_\epsilon^m}{\to}
  B(\alpha_\epsilon^{m}r_\epsilon^{m-1};M_m)
\end{equation}
such that $\pi_\epsilon^j(x) = \alpha_\epsilon^j x$ and
$r^j_\epsilon < \alpha_\epsilon^{j}r_\epsilon^{j-1} $, where
$x\in B(r_\epsilon^{j-1};M_{j-1})$,
$r_\epsilon^{j} \searrow  0$ and
$\alpha_\epsilon^j r_\epsilon^{j-1}\nearrow \infty$. For a given set of measures
$\mu^j \in \mathcal{M}^+(M_j) \cap L^\infty$ such that $\mu^0$ is
$\G$-invariant and $\mu^i$, $i\geq 1$, are invariant with respect to the stabilizer
$\G_p$, we
choose the measures
$\mu_\epsilon$ so that
\begin{align}
  \mu_\epsilon = \mu^0 &\text{ on }
    \Sigma \setminus B(r_\epsilon^0;\Sigma),
  \\\mu^1_\epsilon := (\pi_\epsilon^1)_*[\mu_\epsilon] =
    \mu^1  &\text{ on } B(\alpha_\epsilon^1
    r_\epsilon^0;M_1) \setminus B(r_\epsilon^1;M_1)
  \\ &\cdots
  \\ \mu^m_\epsilon := (\pi_\epsilon^{m})_*[\mu^{m-1}_\epsilon]
    = \mu^m &\text{ on } B(\alpha_\epsilon^m
  r_\epsilon^{m-1};M_1) \setminus B(r_\epsilon^m;M_m).
\end{align}
We
average this sequence by the action of $\G$ to get the same behavior around
the other points in the corresponding orbits. Then
the variational characterization of $\lambda_k$ and
the conformal invariance of Dirichlet energy imply that
\begin{equation}
  \lambda_k(\mu_\epsilon) \geq \lambda_k(\tilde{\Sigma}_\epsilon,\mu^0 \sqcup \mu^1
  \sqcup\cdots\sqcup \mu^m),
\end{equation}
where
\begin{equation}
  \tilde{\Sigma}_\epsilon:=\Sigma \setminus B(r_\epsilon^0;\Sigma)\sqcup\bigsqcup_{j>0}
B(\alpha_\epsilon^j r_\epsilon^{j-1};M_j) \setminus B(r_\epsilon^j;M_j)
\to \Sigma \sqcup\bigsqcup_{j>0} M_j.
\end{equation}
One also has
\begin{equation}
  \lambda_k(\tilde{\Sigma}_\epsilon, \mu^0 \sqcup \mu^1
  \sqcup\cdots\sqcup \mu^m)
  \to \lambda_k(\mu^0 \sqcup \mu^1\sqcup\cdots\sqcup \mu^m)
\end{equation}
by~\cite[Proposition~4.8]{Girouard-Karpukhin-Lagace:2021:continuity-of-eigenval},
where we use harmonic extension operators into the removed domains.
Now, if $\mu^i$, $i\geq 1$, are the volume measures of $\Sph^2$ (resp. $\partial\Sph^2_+$),
we will get the desired inequality from below.

\subsubsection{Upper bound}
Recalling the equalities~\eqref{eq:meas-def}, we
start from a maximizing sequence of Proposition~\ref{prop:max-sequence}
and renormalize $\mu_\epsilon$, so that
$\lambda_\epsilon = \lambda_k(\mu_\epsilon) = 1$. Then we apply
Lemma~\ref{lem:strong-conv} and Lemma~\ref{lem:regularity-in-good-pt}
on a collection of domains $\cup_i \Omega_i$ exhausting $\Sigma\setminus D$
(the \ref{it:lower-than}d condition of Lemma~\ref{lem:strong-conv} is trivially satisfied
since $\abs{\Phi_\epsilon} \leq 1 = \abs{\Phi}$).
Thus, the weak limit $\Phi\in H^1(\Sigma, \mathbb{R}^n)$
of the maps $\Phi_{\epsilon}$ defines a (free boundary)
harmonic map
$$
\Phi: \Sigma\to \mathbb{S}^{n-1},
\quad\text{or}\quad
\Phi: \Sigma\to \mathbb{B}^{n}
$$
Moreover, the limiting measure $\mu$ has the form
$$
\mu=|d\Phi|^2dv_g+\sum_{p \in A(\mu)}w_p\delta_p,
\quad\text{or}\quad
\mu=|\partial_\nu \Phi|ds_g+\sum_{p \in A(\mu)}w_p\delta_p.
$$

Since the set of the atoms $A(\mu)$ must be
$\G$-invariant, it splits into the union of some orbits of $\G$. That
is,
$$
  A(\mu) = \bigsqcup_{p \in A(\mu)} \G\!\cdot\!p.
$$
If $A(\mu) \neq \varnothing$, we handle the bubbles
as it is described in Definition~\ref{def:bubble-tree}.
It has been shown, see~\eqref{eq:L1-densities}, that for a
$L^1$-measure $\tilde{\mu}$ either on $\Sph^2$ or
on $\partial \mathbb{D}^2$, one has
\begin{equation}
  \overline{\lambda}_{k}\brr{\mathbb{S}^2,\tilde{\mu}}
  \leq \Lambda_k(\mathbb{S}^2) = 8\pi k,
\end{equation}
see~\cite{Karpukhin-Nadirashvili-Penskoi-Polterovich:2021:eigenval-on-sphere},
and
\begin{equation}
  \overline{\lambda}_{k}\brr{\mathbb{D}^2,\tilde{\mu}}
  \leq \mathfrak{S}_k(\mathbb{D}^2) = 2\pi k,
\end{equation}
see~\cite{Girouard-Polterovich:2010:payne-schiffer}. By Corollary~\ref{cor:union-ineq} and Corollary~\ref{cor:st-union-ineq}
we have the upper bounds
\begin{equation}
  \Lambda_k(\Sigma,[g];\G) \le \overline{\lambda}_{k_0}\brr{[g],|d\Phi|^2dv_g} +
  \sum_{i=1}^b \overline{\lambda}_{k_i}\brr{\mathbb{S}^2,\mu_i}
\end{equation}
and
\begin{equation}
  \mathfrak{S}_k(\Sigma,[g];\G) \le \overline{\lambda}_{k_0}\brr{[g],|\partial_\nu \Phi|ds_g} +
  \sum_{i=1}^b \overline{\lambda}_{k_i}\brr{\mathbb{D}^2,\mu_i}
\end{equation}
provided $\mu_i \in L^1$, where $\sum_{i=0}^b k_i = k$ for some $k_0 \in \mathbb{N}$.
Note that $b \in \mathbb{N}_{\G}$ because the behavior of $\mu_\varepsilon$
around $\tilde{p} \in A(\mu)$ is
the same for all $\tilde{p} \in \G\!\cdot\!p$.
Thus, the following claim finishes the proof of the upper bound.
\begin{claim}
  If one starts from a maximizing sequence given
  by Proposition~\ref{prop:max-sequence}, one obtains
  $L^1$ bubble measures $\mu_i$.
\end{claim}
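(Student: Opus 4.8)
The plan is to push the two--piece structure of the maximizing sequence of Proposition~\ref{prop:max-sequence} through the blow--up procedure of Appendix~\ref{app:blow-up}. First I would record the shape of $\mu_\epsilon = \rho_\epsilon\dv{g}$ (or $\rho_\epsilon\ds{g}$). On $\{|\Phi_\epsilon| = 1\}$ one has $\rho_\epsilon = \abs{d\Phi_\epsilon}_g^2$ almost everywhere: since $\Phi_\epsilon\in W^{2,q}$ solves $\Delta\Phi_\epsilon = \rho_\epsilon\Phi_\epsilon$, one computes $\Delta\abs{\Phi_\epsilon}^2 = 2\rho_\epsilon\abs{\Phi_\epsilon}^2 - 2\abs{d\Phi_\epsilon}_g^2$, and the left--hand side vanishes a.e.\ on the level set $\{\abs{\Phi_\epsilon}^2 = 1\}$; on the complement $\rho_\epsilon$ equals the ceiling value, so $\rho_\epsilon\leq C\epsilon^{-1}$ there (analogously, $\rho_\epsilon = \abs{\partial_\nu\Phi_\epsilon}$ on the part of $\partial\Sigma$ where $\abs{\Phi_\epsilon}=1$). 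Together with the bound $v_g(\{|\Phi_\epsilon|<1\})\leq\epsilon$ this gives
  \begin{equation}
    \mu_\epsilon\mres\{|\Phi_\epsilon|=1\} = \abs{d\Phi_\epsilon}_g^2\dv{g},
    \qquad
    \mu_\epsilon\mres\{|\Phi_\epsilon|<1\} \leq C\epsilon^{-1}\dv{g}\quad\text{with}\quad v_g(\{|\Phi_\epsilon|<1\})\leq\epsilon.
  \end{equation}
  Both the eigenmap equation and $\abs{d\Phi_\epsilon}_g^2\dv{g}$ are conformally invariant in dimension two, so this decomposition is preserved by the conformal rescalings used in the bubble tree: each node produces a pair $(\Phi_\epsilon^{(i)},\mu_\epsilon^{(i)})$ with $|\Phi_\epsilon^{(i)}|\leq 1$, whose measure is $\abs{d\Phi_\epsilon^{(i)}}^2\,dv$ on $\{|\Phi_\epsilon^{(i)}|=1\}$ and has a ``plateau'' density $\leq C\delta_\epsilon^2\epsilon^{-1}$ on $\{|\Phi_\epsilon^{(i)}|<1\}$, where $\delta_\epsilon\to 0$ is the blow--up scale at that node.

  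Next I would pass to the limit on each bubble $M_i\in\{\Sph^2,\mathbb{D}^2\}$. Away from the unstable points of $\mu_\epsilon^{(i)}$ --- of which there are at most $k$ by Remark~\ref{rem:size_of-bad} --- Lemma~\ref{lem:strong-conv} gives $\Phi_\epsilon^{(i)}\to\Phi^{(i)}$ in $H^1_{loc}$, hence $\abs{d\Phi_\epsilon^{(i)}}^2\,dv \rightharpoonup \abs{d\Phi^{(i)}}^2\,dv$, and Lemma~\ref{lem:regularity-in-good-pt} identifies $\Phi^{(i)}$ as a smooth (free boundary) harmonic map, so $\abs{d\Phi^{(i)}}^2\,dv$ is smooth. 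For the plateau part, on a fixed ball $B_R$ in bubble coordinates its mass is $\leq C\epsilon^{-1}\,v_g\brr{\{|\Phi_\epsilon|<1\}\cap B_{R\delta_\epsilon}} \leq C\min\brr{R^2\delta_\epsilon^2\epsilon^{-1},\,1}$; passing to a subsequence, one of three things happens: if $\delta_\epsilon^2 = o(\epsilon)$ the plateau leaves no trace on compact sets; if $\delta_\epsilon^2\epsilon^{-1}$ stays bounded, the plateau has bounded density and its weak$^*$ limit lies in $L^\infty$; if $\delta_\epsilon^2\epsilon^{-1}\to\infty$ the plateau sits on a set of bubble--area $\leq\epsilon\delta_\epsilon^{-2}\to 0$ with mass $\leq C$, so it converges to finitely many atoms. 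In every case the limiting bubble measure $\mu_i$ is (smooth) $+$ ($L^\infty$) $+$ (finitely many atoms), with the atoms located at the unstable points.

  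Finally I would run the induction. The remaining atoms of each $\mu_i$ are resolved by descending to the children nodes. This terminates: every nontrivial sphere bubble carries at least a fixed amount $\epsilon_0 > 0$ of Dirichlet energy, while the total energy is uniformly bounded, so only finitely many Dirichlet bubbles occur; and a plateau concentration, once it is viewed at the scale $\delta_\epsilon\asymp\sqrt\epsilon$ at which the set $\{|\Phi_\epsilon|<1\}$ is spread out, has density $\leq C$ and therefore produces no further concentration. Hence the bubble tree is finite, every leaf carries a measure of the form (smooth) $+$ ($L^\infty$), which is $L^1$, and the constant density on the plateau does not leak $\lambda_k$--capacity across the necks, so that $\sum_{i=0}^b k_i = k$ as required by Corollary~\ref{cor:union-ineq} and Corollary~\ref{cor:st-union-ineq}.

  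The main obstacle is exactly the bookkeeping of the plateau under rescaling: one has to verify that a blow--up scale with $\delta_\epsilon\ll\sqrt\epsilon$ carries no nontrivial bubble (the plateau, of total $v_g$--measure $\leq\epsilon$, simply disperses there), so that at every node exactly one of the three alternatives above occurs, and that no spectral weight escapes into the necks joining the bubbles, so that the partition $\sum k_i = k$ is exact rather than merely $\sum k_i\leq k$. Everything else is a routine combination of the regularity lemmas of Section~\ref{sec:exis-and-reg} with the tree construction of Appendix~\ref{app:blow-up}.
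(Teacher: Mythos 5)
Your overall architecture is the paper's: rescale the maximizing sequence by $T_\epsilon(x)=(x-p_\epsilon)/\alpha_\epsilon$, track the two properties of Proposition~\ref{prop:max-sequence} (density ceiling $\rho_\epsilon\le\epsilon^{-1}$ and $v_g(\{|\Phi_\epsilon|<1\})\le\epsilon$) through the conformal rescaling, and split according to whether $\alpha_\epsilon^2/\epsilon$ stays bounded or tends to infinity, invoking Lemma~\ref{lem:strong-conv} and Lemma~\ref{lem:regularity-in-good-pt} on the bubble. However, your third alternative contains a genuine gap. From ``the plateau has mass $\le C$ and sits on a set of bubble-area $\le\epsilon\delta_\epsilon^{-2}\to 0$'' you conclude that it ``converges to finitely many atoms.'' That inference is false as a statement about measures: a sequence of measures of bounded mass supported on sets of vanishing area can converge weakly$^*$ to a singular \emph{continuous} measure (e.g.\ the uniform measure on a shrinking annulus converges to arc-length on a circle). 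Since the claim to be proved is precisely that the continuous part $\mu_i$ of the limiting bubble measure is $L^1$, a possible singular-continuous residue of the plateau is exactly what must be excluded, and your argument does not exclude it. The paper's proof uses the vanishing-area bound differently: it shows that the rescaled sequence again satisfies the conditions of Proposition~\ref{prop:max-sequence} (with $\tilde\epsilon=\epsilon\alpha_\epsilon^{-2}\to0$), so that the limit map is sphere-valued a.e.\ on the bubble, and then Lemma~\ref{lem:regularity-in-good-pt} applied to the \emph{full} rescaled measure identifies its continuous part as $|d\tilde\Phi|^2\,dv$, which is smooth; this is what rules out any non-atomic singular contribution. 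You have all the ingredients at hand (you even state the area bound), but the step as written is a non sequitur and needs to be replaced by this identification.

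Two smaller points. In the Steklov case your identity $\rho_\epsilon=|\partial_\nu\Phi_\epsilon|$ on $\{|\Phi_\epsilon|=1\}\cap\partial\Sigma$ is not justified at finite $\epsilon$: the equation only gives $\rho_\epsilon=\langle\partial_\nu\Phi_\epsilon,\Phi_\epsilon\rangle\le|\partial_\nu\Phi_\epsilon|$, since $\partial_\nu\Phi_\epsilon$ need not be parallel to $\Phi_\epsilon$ before the limit; fortunately the Dirichlet/plateau decomposition of the density is not needed at all — the ceiling bound, the smallness of $\{|\Phi_\epsilon|<1\}$, and the eigenmap equation suffice, which is all the paper uses. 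Also, the worries about necks and the exactness of $\sum_i k_i=k$ belong to Corollaries~\ref{cor:union-ineq} and~\ref{cor:st-union-ineq} and the tree construction of Appendix~\ref{app:blow-up}, not to this Claim, which only concerns the integrability of the $\mu_i$.
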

\begin{proof}
  Recall from Appendix~\ref{app:blow-up} that $\mu_i$
  are the continuous parts of the
  weak$^*$ limits
  of $\tilde{\mu}_\epsilon := (T_\epsilon)_*[\mu_\varepsilon]$,
  \begin{equation}
    T_\epsilon(x) = \frac{x - p_\epsilon}{\alpha_\epsilon}.
  \end{equation}
  The idea of the proof is the same as
  in~\cite[Proposition~5.5]{Karpukhin-Nadirashvili-Penskoi-Polterovich:2022:existence},
  and it is to show that the measures $\tilde{\mu}_\epsilon$
  have properties~(1–3) of Proposition~\ref{prop:max-sequence} on
  any bounded ball of $\R^2$ (or $\R^2_+$). The push-forward equation
  \begin{equation}
    \Delta_{\mathbb{S}^2} \tilde{\Phi}_\epsilon =
    \tilde{\Phi}_\epsilon \tilde{\mu}_\epsilon,
    \text{  where  }
    \tilde{\Phi}_\epsilon(x) = \Phi_\epsilon(\alpha_\epsilon x + p_\epsilon),
    \ x \in B(R;0)
  \end{equation}
  obviously holds if we are using isothermal coordinates
  on $\Sigma$.
  Then the norms $||d\tilde{\Phi}_\epsilon||_{L^2[B(R;0)]}$ are conformally
  invariant, hence uniformly bounded. So one can extract a
  weakly converging subsequences
  $\tilde{\Phi}_\epsilon \oset{w}{\to} \tilde{\Phi}$ and
  $\tilde{\mu}_\epsilon \to \tilde{\mu}$ and
  apply Lemma~\ref{lem:strong-conv} and Lemma~\ref{lem:regularity-in-good-pt}
  with $\Sigma = B(R;0)$ to show the regularity of $\tilde{\mu}$.

  Since $T_\epsilon[B(r_\epsilon)]$
  exhaust $\R^2$,
  only the \ref{it:almost-one}d property of Proposition~\ref{prop:max-sequence}
  is not obvious. Notice that
  \begin{equation}
    T_\epsilon^* g_{\mathbb{S}^2} =
    \brr{\frac{\alpha_\epsilon}
      {\alpha_\epsilon^2 + (x - p_\epsilon)^2}}^2 g_{\R^2}.
  \end{equation}
  \textbf{The case }$\mathbf{\partial \Sigma = \varnothing}$.
  Then $\mu_\epsilon = \rho_\epsilon \dv{g}$ and
  $\tilde{\mu}_\epsilon = \tilde{\rho}_\epsilon \dv{\Sph^2}$, and one has
  \begin{equation}
    T_\epsilon^*\tilde{\rho}_\epsilon =
    \brr{1 + \brr{\frac{x - p_\epsilon}{\alpha_\epsilon}}^2}^2
    \alpha_\epsilon^2 \rho_\epsilon\brr{\frac{\dv{g}}{\dv{\R^2}}},
  \end{equation}
  where $\frac{\dv{g}}{\dv{\R^2}} \to 1$ on $B(r_\epsilon)$.
  On a bounded ball $\frac{x - p_\epsilon}{\alpha_\epsilon} =: y  \in B(R;0) \subset \R^2$, we have
  \begin{equation}
    \tilde{\rho}_\epsilon(y) \leq c_2 \alpha_\epsilon^2 \rho_\epsilon(x),
  \end{equation}
  where $c_2 = c_2(R)$.
  If one denotes $\tilde{\epsilon} := \epsilon \alpha^{-2}_\epsilon$,
  then there are two options.
  \begin{enumerate}
    \item The sequence $\tilde{\epsilon}\in \brc{\tilde{\epsilon}_i}$ is bounded
    from below,
    i.e. $\tilde{\epsilon} \geq c > 0$.
    Then
    \begin{equation}
      \norm{\tilde{\mu}_\epsilon}_{L^\infty}\leq c_2 \tilde{\epsilon}^{-1}
    \norm{\epsilon \mu_\epsilon}_{L^\infty} \leq c_3(R)
    \end{equation}
    since
    $\norm{\epsilon \mu_\epsilon}_{L^\infty}$ is bounded by
    Proposition~\ref{prop:max-sequence}.
    Thus, $\forall R > 0 \,$ $\tilde{\mu}_\epsilon \overset{w^*}{\to} \tilde{\mu}$
    in $L^\infty[B(R,0)]$ $\implies
    \tilde{\mu} \in L^1(\Sph^2)$.
    \item Up to a subsequence, $\tilde{\epsilon}\to0$.
    Thus, the sequences $\tilde{\mu}_{\tilde{\epsilon}}:=\tilde{\mu}_{\epsilon}$
    and $\tilde{\Phi}_{\tilde{\epsilon}}:=\tilde{\Phi}_\epsilon$
    have the properties on $B(R,0)$ similar to those of
    Proposition~\ref{prop:max-sequence} because
    \begin{equation}
      \begin{split}
        &v_{\Sph^2}(\{x \in B(R,0) \colon
        |\tilde{\Phi}_{\tilde{\epsilon}}(x)| < 1\})
      \\ &= c_2 \alpha_\epsilon^{-2} v_{g}\brr{\brc{x \in B(\alpha_\epsilon R, p_\epsilon) \colon
      |\Phi_{\epsilon}(x)| < 1}}
      \\ &\leq c_2 \alpha_\epsilon^{-2} \epsilon
      \leq c_2 \tilde{\epsilon} \to 0.
      \end{split}
    \end{equation}
    Each $B(R,0)$ has a at most $k$ unstable points, or else
    we can argue as in Remark~\ref{rem:size_of-bad}.
    So, by~Lemma~\ref{lem:strong-conv} and Lemma~\ref{lem:regularity-in-good-pt},
    the continuous part of $\tilde{\mu}$ is smooth in that case.
  \end{enumerate}
  \textbf{The case }$\mathbf{\partial \Sigma \neq \varnothing}$
  is handled similarly. Now, $\mu_\epsilon = \rho_\epsilon \ds{g}$ and
  $\tilde{\mu}_\epsilon = \tilde{\rho}_\epsilon \ds{\Sph^2_+}$. The estimate
  is slightly different,
  \begin{equation}
    \tilde{\rho}_\epsilon(y) \leq c_1 \alpha_\epsilon \rho_\epsilon(x).
  \end{equation}
  And again, independently on wether $\tilde{\epsilon} := \epsilon \alpha^{-1}_\epsilon
  \to 0$ or not, one proves that $\tilde{\mu}\in L^1(\partial \mathbb{D}^2)$.
\end{proof}

\section{Proofs of applications}\label{sec:proof-of-app}

\subsection{On computations of \texorpdfstring{$\mathfrak{S}_k(\mathbb{D}^2;\G)$}{Lg}}

We will need a couple of results concerning
free boundary harmonic maps $\Phi\colon \mathbb{D}^2 \to \mathbb{B}^n$.
Such maps are weakly conformal hence free boundary branched minimal
immersions. The argument goes as follows. One wants to prove that
the holomorphic function
$f(z):= z^2\brt{\partial_z \Phi, \partial_z\Phi}$ is zero. In fact,
it is enough to show that it is constant since $f(0)=0$.
The free boundary condition $\partial_r \Phi \bot  \partial_{\theta}\Phi$ implies
that $f$ is real on $\partial \mathbb{D}^2$, Then the symmetry
principle and Liouville's theorem lead to the result.
\begin{proposition}[Theorem~2.1 in~\cite{Fraser-Schoen:2015:unique-free-disks}]
  \label{prop:disk-and-its-cover}
  Let $\Phi\colon \mathbb{D}^2 \to \mathbb{B}^n$ be a smooth
  free boundary harmonic map. Then $\Phi(\mathbb{D}^2)$ is
  an equatorial plane disk, so $\Phi\colon \mathbb{D}^2 \to
  \mathbb{D}^2 \subset \mathbb{B}^n$ is a branched covering of
  the unit disk.
\end{proposition}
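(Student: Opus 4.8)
The plan is to prove the statement in three stages: (i) upgrade ``harmonic'' to ``conformal harmonic'' by a reflection argument for the Hopf differential; (ii) deduce that the resulting branched minimal surface is totally geodesic; (iii) read off the branched-covering description. Stages (i) and (iii) are short; stage (ii) is where the real content lies, and I expect it to be the main obstacle.

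\emph{Stage (i): weak conformality.} Since $\mathbb{B}^n$ carries the flat metric, $\Phi$ harmonic means each component is a harmonic function on $\Int\mathbb{D}^2$, so $q(z):=\brt{\partial_z\Phi,\partial_z\Phi}$ (complex-bilinear inner product) satisfies $\partial_{\bar z}q=2\brt{\partial_{\bar z}\partial_z\Phi,\partial_z\Phi}=\tfrac12\brt{\Delta\Phi,\partial_z\Phi}=0$, i.e.\ $q$ is holomorphic. Put $f(z):=z^2q(z)$. Then $f$ is holomorphic on $\mathbb{D}^2$, $f(0)=0$ (as $q$ is bounded near $0$), and $f$ extends smoothly to $\overline{\mathbb{D}^2}$ because $\Phi$ is smooth up to the boundary. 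In polar coordinates, $f=\tfrac14\brr{r^2\abs{\Phi_r}^2-\abs{\Phi_\theta}^2}-\tfrac{ir}{2}\brt{\Phi_r,\Phi_\theta}$, so the free-boundary relation $\Phi_r\perp\Phi_\theta$ on $\partial\mathbb{D}^2$ forces $f$ to be real there. By the Schwarz reflection principle $z\mapsto\overline{f(1/\bar z)}$ extends $f$ holomorphically across $\partial\mathbb{D}^2$ to $\mathbb{C}\cup\brc{\infty}$; the extension is bounded (because $f$ is bounded on $\overline{\mathbb{D}^2}$), hence constant by Liouville, hence $\equiv f(0)=0$. Thus $q\equiv0$, i.e.\ $\abs{\Phi_x}^2=\abs{\Phi_y}^2$ and $\Phi_x\cdot\Phi_y=0$: $\Phi$ is weakly conformal, hence a (branched) minimal immersion, and the free-boundary condition says its image $\Sigma:=\Phi(\mathbb{D}^2)$ meets $\mathbb{S}^{n-1}$ orthogonally along $\partial\Sigma$.

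\emph{Stage (ii): $\Sigma$ is totally geodesic.} This is the substantive point, equal in content to~\cite[Theorem~2.1]{Fraser-Schoen:2015:unique-free-disks}, and I expect it to be the main obstacle. One must show the branched minimal immersion is totally geodesic, equivalently --- writing $\Phi=\Phi(0)+2\operatorname{Re}\int_0^z\partial_z\Phi$ --- that the holomorphic Gauss map $z\mapsto[\partial_z\Phi(z)]$ into the complex quadric is constant, equivalently that the Gauss curvature $K$ of the induced metric vanishes identically. Gauss--Bonnet is suggestive but, on its own, not decisive: the free-boundary condition forces the geodesic curvature of $\partial\Sigma$ in $\Sigma$ to be $\equiv1$, and a short computation using $\partial_r\Phi\parallel\Phi$ together with conformality gives $\operatorname{length}(\partial\Sigma)=2\Area(\Sigma)$, whence $\int_\Sigma K\,dA=2\pi-2\Area(\Sigma)$; since $K\le0$ for a minimal surface in flat space, flatness becomes equivalent to the sharp identity $\Area(\Sigma)=\pi$. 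Establishing $K\equiv0$ (equivalently this equality) genuinely requires Fraser--Schoen's analysis, which I would invoke (or carefully transcribe) rather than reprove from scratch.

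\emph{Stage (iii): branched covering.} Granting that $\Sigma$ lies in a $2$-plane $P$, the orthogonality of $\Sigma$ and $\mathbb{S}^{n-1}$ along $\partial\Sigma$ forces $0\in P$, so $P\cap\mathbb{B}^n$ is an equatorial disk and $\Phi$ is a conformal harmonic map $\mathbb{D}^2\to P\cap\mathbb{B}^n\cong\mathbb{D}^2$. A conformal harmonic map between planar domains is holomorphic or antiholomorphic --- and consistently one of the two, since the branch locus is discrete and $\mathbb{D}^2$ minus a discrete set is connected. Finally $\abs{\Phi}\to1$ as $\abs{z}\to1$, so the map $\mathbb{D}^2\to\mathbb{D}^2$ is proper, hence a finite Blaschke product (or the conjugate of one), i.e.\ a branched covering of the unit disk, which is the assertion.
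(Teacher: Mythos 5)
Your proposal is correct and follows essentially the same route as the paper: the paper's own treatment consists of exactly your Stage (i) argument (the holomorphic function $z^2\brt{\partial_z\Phi,\partial_z\Phi}$ is real on $\partial\mathbb{D}^2$ by the free boundary condition, extends by the symmetry principle, and vanishes by Liouville), while the totally geodesic step — your Stage (ii) — is, as in your write-up, simply cited from Fraser--Schoen's Theorem~2.1 rather than reproved. Your Stage (iii) merely fills in the deduction (equatorial plane through the origin, $\pm$holomorphicity, properness, Blaschke product) that the paper states without comment, and it is fine.
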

Consider a smooth free boundary harmonic map $\Phi\colon
\mathbb{D}^2 \to \mathbb{D}^2$. Since it is
conformal, we can view it as a holomorphic function (after the appropriate choice of orientations on $\mathbb{D}^2$),
and one has $\abs{\partial_r \Phi} = \abs{\partial_\theta \Phi}$.
Hence, we see that the induced boundary density equals
\begin{equation}\label{eq:conf-condition}
  \rho = \brt{\partial_r \Phi, \Phi} =
  \abs{\partial_r \Phi} = \abs{\partial_\theta \Phi}.
\end{equation}
By $\sigma_k(\rho)$ we denote the $k$-th steklov eigenvalue
with the density $\rho d\theta$, i.e. $\sigma_k(\rho) = \lambda_k(\rho d\theta)$
in the measure-theoretical sense.
\begin{lemma}\label{lem:best-cover-disk}
  Let $\Phi\colon \mathbb{D}^2 \to \mathbb{D}^2$ be a holomorphic
  branched covering. Clearly, it solves the Steklov problem with the
  density $\rho = \abs{\partial_\theta \Phi}$ and $\sigma = 1$. For any
  $k$ such that $\sigma_k\brr{\abs{\partial_\theta \Phi}} = 1$, one has
  \begin{equation}
    \overline{\sigma}_k\brr{\abs{\partial_\theta \Phi}} \leq
    \overline{\sigma}_k(1),
  \end{equation}
  and the equality is achieved if and only if $\Phi$ is a conformal automorphism
  of the unit disk.
\end{lemma}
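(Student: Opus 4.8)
The plan is to convert the inequality into a lower bound on the number of small Steklov eigenvalues of the boundary measure $\rho\,d\theta$ and to read that bound off from the branched–covering geometry. Write $d:=\deg\Phi$; being a proper holomorphic self-map of $\mathbb{D}^2$, the map $\Phi$ is a finite Blaschke product of degree $d$, and $\Phi|_{\partial\mathbb{D}^2}$ is an immersion of winding number $d$, so $\partial_\theta\arg\Phi=\abs{\partial_\theta\Phi}=\rho$ on $\partial\mathbb{D}^2$ and $\int_{\partial\mathbb{D}^2}\rho\,d\theta=2\pi d$. Hence, whenever $\sigma_k(\rho)=1$ one has $\overline{\sigma}_k(\rho)=\sigma_k(\rho)\int_{\partial\mathbb{D}^2}\rho\,d\theta=2\pi d$, while $\overline{\sigma}_k(1)=2\pi\floor{\frac{k+1}{2}}$; so the asserted inequality is equivalent to $d\le\floor{\frac{k+1}{2}}$, i.e.\ to $k\ge 2d-1$. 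Since $\sigma_j(\rho)$ is nondecreasing in $j$, it therefore suffices to prove $\sigma_{2d-2}(\rho)<1$, equivalently $\ind\brr{\rho\,d\theta,\,\mathbb{D}^2}\ge 2d-1$ in the notation of Section~\ref{sec:exis-and-reg}: there is a $(2d-1)$-dimensional subspace $V\subset C^\infty(\mathbb{D}^2)$ on which $Q_{\rho\,d\theta}$ is negative definite.

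To build $V$ I would exploit the $d$ sheets — equivalently the $d-1$ branch points, counted with multiplicity — of $\Phi$. The constant function gives one negative direction, $Q_{\rho\,d\theta}(1)=-2\pi d<0$; and since $Q_{\rho\,d\theta}(v\circ\Phi)=d\,Q_{1}(v)$ for a function $v$ on the target disk, pullbacks from the target account for exactly that one direction, so the remaining $2(d-1)$ must genuinely involve the branching. Here I would use that $\rho\,d\theta=\Phi^{*}(d\theta)$ presents $\mathbb{D}^2$ as a $d$-sheeted branched cover of the flat unit disk: near each branch point a local fractional power of $\Phi$ provides two competitors whose Dirichlet energy is logarithmically small against the boundary mass of the sheets on which they are supported, and patching these to globally defined functions — or, equivalently, cutting $\partial\mathbb{D}^2$ at the $d$ preimages of a regular value into arcs of $\rho$-mass $2\pi$, each mapped diffeomorphically onto $\partial\mathbb{D}^2$, and transplanting two standard-disk competitors onto each arc — yields, after deleting the single linear relation, the missing $2(d-1)$ directions. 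A cleaner alternative is to verify the bound directly for the model $\Phi(z)=z^{d}$, where $\rho\equiv d$ and the Steklov spectrum of $\rho\,d\theta$ is the explicit set $\brc{0,\tfrac1d,\tfrac1d,\tfrac2d,\tfrac2d,\dots}$, so $\sigma_{2d-2}(\rho)=\tfrac{d-1}{d}<1$, and then to transport this through the connected family of degree-$d$ Blaschke products, using that $\ind(\rho\,d\theta,\mathbb{D}^2)$ can only drop at a parameter where some eigenvalue crosses $1$ while $1$ always stays in the spectrum (it is the eigenvalue of the coordinate functions of $\Phi$). Making the negative-definiteness of the patched family precise — controlling the Dirichlet energies of the cutoffs and the cross terms — is where the conformal structure really enters, and is the step I expect to be the main obstacle.

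Finally, for the equality case, $\overline{\sigma}_k(\rho)=\overline{\sigma}_k(1)$ forces $k\in\brc{2d-1,2d}$ and, by the preceding step, that $\rho\,d\theta$ has exactly $2d-1$ eigenvalues below $1$; pushing this rigidity back through the variational inequalities above (each competitor must then be a genuine Steklov eigenfunction), together with the rigidity in the Weinstock--Fraser--Schoen inequality and Proposition~\ref{prop:disk-and-its-cover}, pins $\Phi$ down to a conformal automorphism of the unit disk.
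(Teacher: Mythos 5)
Your reduction is exactly the paper's: since $\overline{\sigma}_k\brr{\abs{\partial_\theta\Phi}} = 2\pi d$ whenever $\sigma_k\brr{\abs{\partial_\theta\Phi}}=1$, and $\overline{\sigma}_k(1)=2\pi\floor*{\frac{k+1}{2}}$, the lemma is equivalent to the index bound $\ind\Phi\geq 2d-1$ (with $\geq 2d+1$ needed for the strictness statement). But the production of a $(2d-1)$-dimensional subspace on which $Q_\Phi<0$ is precisely where your proposal has a genuine gap, and neither of your sketched routes closes it. The transplantation route fails because the natural per-sheet competitors are not negative directions: the first Steklov eigenfunctions of the standard disk have Rayleigh quotient exactly $1$, so after restricting to an arc or sheet and inserting cutoffs you obtain $Q\geq 0$ plus a positive error, never $Q<0$; and localized fractional powers near interior branch points are worse, since interior branch points carry no boundary measure, so such functions have positive Dirichlet energy and essentially no mass against $\rho\,d\theta$, giving $Q>0$. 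The deformation route also fails: connectedness of the space of degree-$d$ Blaschke products together with the fact that $1$ stays in the spectrum (via $\operatorname{Re}\Phi_t$, $\operatorname{Im}\Phi_t$) does not prevent \emph{other} eigenvalues from crossing $1$ along the path, so the index $2d-1$ of the model $z^d$ (which you compute correctly) does not transport to a general $\Phi$; in effect this argument assumes the conclusion.

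The device you are missing, and which the paper uses, is the family of proper subproducts $\Phi'\subsetneq\Phi$ of the Blaschke product: each $\Phi'$ is again conformal and solves the Steklov problem with density $\abs{\partial_\theta\Phi'}$, and since $\abs{\partial_\theta\Phi}=\partial_\theta\arg\Phi$ is additive over factors with each factor contributing a strictly positive amount on $\partial\mathbb{D}^2$, one gets $\abs{\partial_\theta\Phi'}<\abs{\partial_\theta\Phi}$ pointwise; hence for $u=\operatorname{Re}\Phi'$ or $u=\operatorname{Im}\Phi'$ one has $Q_\Phi(u)=\int_{\partial\mathbb{D}^2}\brr{\abs{\partial_\theta\Phi'}-\abs{\partial_\theta\Phi}}u^2\,d\theta<0$. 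Linear independence (distinct pole configurations) and elementary counting give $\ind\Phi\geq 2\prod_i(m_i+1)-3\geq 2d-1$, and $\geq 2d+1$ when $\Phi$ is not an automorphism, which is also what settles the equality case; your treatment of equality (rigidity ``pushed back through the variational inequalities'' plus Weinstock--Fraser--Schoen) is both vague and dependent on the unproven index bound, so it does not stand on its own.
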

\begin{proof}
For a holomorphic branched covering $\Phi\colon
\mathbb{D}^2 \to \mathbb{D}^2$ of the closed unit disk, we define
  \begin{equation}
    \ind \Phi =  \max \brc{\dim V\colon V \subset C^\infty(\mathbb{T})
    \text{ s.t. }Q_\Phi|_V < 0},
  \end{equation}
  where $\mathbb{T} = \partial \mathbb{D}^2$,
  \begin{equation}
    Q_\Phi(u) = \int_{\mathbb{T}} \brr{u\partial_r \hat{u} - \abs{\partial_r \Phi} u^2} d\theta,
  \end{equation}
  and $\hat{u}$ is the harmonic extension of $u \in
  C^\infty(\mathbb{T})$ inside $\mathbb{D}^2$.
  From the definition of the index, it's clear that the equality
  $\sigma_k(\abs{\partial_r \Phi}) = 1$ implies
  \begin{equation}\label{ineq:index-number}
    \ind \Phi \leq k
  \end{equation}
  since we count eigenvalues starting from $k=0$.

  A holomorphic map of the unit disk to itself
  that extends continuously to the boundary
  $\Phi\colon \mathbb{T} \to \mathbb{T}$ is called a finite
  Blaschke product, and is in fact expressed as
  \begin{equation}
    \Phi(z) = e^{i\theta_0} \prod_{i=1}^s
    \brr{\frac{z - a_i}{1 - \overline{a}_i z}}^{m_i},
  \end{equation}
  where $m_i \geq 0$, $d = \deg \Phi|_{\mathbb{T}} = \sum m_i$ and $\abs{a_i} < 1$.
  Notice that any subproduct $\Phi'$ of $\Phi$ is itself a (finite)
  Blaschke product. In that case, we write
  \begin{equation}
    \Phi' \subset \Phi.
  \end{equation}

  First, by similar reasoning as for equation~\eqref{eq:conf-condition},
  any $\Phi' \subset \Phi$ is conformal,
  $\partial_\theta \Phi'|_\mathbb{T} \in T \mathbb{T}$, so
  $\partial_r \Phi' \parallel \Phi'$ and
  $\abs{\partial_\theta \Phi'} = \abs{\partial_r \Phi'}$.
  Therefore, $\Phi'$ solves the appropriate Steklov problem
  \begin{equation}
    \partial_r \Phi' = \abs{\partial_\theta \Phi'} \Phi'.
  \end{equation}
  In particular,
  \begin{equation}\label{eq:sigma_k}
    \overline{\sigma}_k(\abs{\partial_r \Phi}) = 1 \cdot
    \int_{\mathbb{T}} \abs{\partial_\theta \Phi'} d\theta
     = 2\pi d'.
  \end{equation}

  Second, the holomorphic derivative $\partial_z \Phi'$
  is never zero on $\mathbb{T}$. Otherwise, $\Phi'$ would have
  a branched point and would
  map some points of the interior of the disk outside of it.
  Since $\Phi'$ is holomorphic, $\arg \Phi'$ rotates
  counterclockwise, so
  \begin{equation}
    \partial_\theta \arg \Phi'|_{\mathbb{T}} > 0.
  \end{equation}

  Let us decompose $\Phi$ into two of
  its subproducts $\Phi = \Phi_1 \Phi_2$. From the discussion
  above, it follows that
  \begin{equation}
    \abs{\partial_\theta \Phi} =
    \partial_\theta \arg \Phi =
    \partial_\theta \arg \Phi_1 + \partial_\theta \arg \Phi_2
    = \abs{\partial_\theta \Phi_1} + \abs{\partial_\theta \Phi_2}
  \end{equation}
  on $\mathbb{T}$. Let us set either $u = \operatorname{Re} \Phi'$
  or $u=\operatorname{Im}\Phi'$ for $\Phi' \subset \Phi$ and calculate
  \begin{equation}
    Q_\Phi(u) = \int_{\mathbb{T}} \brr{\abs{\partial_\theta \Phi'}
    - \abs{\partial_\theta \Phi}}u^2 d\theta < 0.
  \end{equation}
  Hence
  \begin{equation}
    \ind \Phi \geq 2 \abs{\brc{\Phi' \colon \Phi' \subsetneq \Phi}} - 1
  \end{equation}
  Different $\Phi'$ are linearly independent since each of them has
  a unique configuration of poles. By elementary combinatorics,
  \begin{equation}\label{ineq:cover-index}
    \ind \Phi \geq - 3 + 2\prod_{i=1}^s \brr{m_i + 1} =
    -1 + 2d + 2\sum_{i < j } m_i m_j + \cdots
    \geq 2d - 1.
  \end{equation}
  Since $\ind z^d$ is just the number of eigenvalues of the
  unit disk~\eqref{eq:disk-spec} less than $d$,
  we see that $\ind z^d = 2d - 1$ and
  $\overline{\sigma}_{2d-1}(1)  = 2\pi d$. Combining
  these facts with~\eqref{eq:sigma_k} and~\eqref{ineq:index-number},
  we obtain
  \begin{equation}\label{ineq:proof-cover}
    \overline{\sigma}_k(\abs{\partial_r \Phi}) =
    2\pi d = \overline{\sigma}_{2d-1}(1) \leq
    \overline{\sigma}_{k}(1).
  \end{equation}

  If $\Phi$ is not an automorphism of the disk,
  then $s \geq 2$ in~\eqref{ineq:cover-index}, so one has
  $\sum_{i < j} m_i m_j \geq 1$ and consequently
  \begin{equation}
    \ind \Phi \geq 2d + 1,
  \end{equation}
  which implies the strict inequality in~\eqref{ineq:proof-cover}.
\end{proof}

\subsubsection{The proof of Theorem~\ref{thm:symmetric-disk}}
  First of all, if the maximum for $\overline{\sigma}_k$ is
  achieved on a $L^\infty$ density $\rho$, i.e. if
  $\overline{\sigma}_k(\rho) = \mathfrak{S}_k(\mathbb{D}^2;\G)$,
  the proofs of Proposition~\ref{prop:max-sequence} and
  Lemma~\ref{lem:regularity-in-good-pt} imply
  that $\rho$ comes from a free boundary harmonic map. Then
  Proposition~\ref{prop:disk-and-its-cover} and Lemma~\ref{lem:best-cover-disk}
  show that up to a conformal automorphism $\rho$ is a constant,
  say $\rho \equiv 1$.
  Since $\overline{\sigma}_k(1) = 2\pi\floor*{\frac{k+1}{2}}$,
  as long as the maximum is achieved, it equals
  \begin{equation}\label{eq:smooth-max-disk}
    2\pi\floor*{\frac{k+1}{2}}.
  \end{equation}
  Then we inductively prove that
  \begin{equation}\label{eq:disk-induction-state}
    \mathfrak{S}_{mn + r}(\mathbb{D}^2;\G) = 2\pi\brr{mn + \floor*{\frac{r+1}{2}}},
  \end{equation}
  where $r < n$.

  Equation~\eqref{eq:disk-induction-state} is true
  when $m = 0$, as one easily sees from Corollary~\ref{cor:stek-non-babl}.
  Suppose we proved it for all $m < m'$. Since in our case,
  $\mathbb{N}_{\G} = n\mathbb{N}$, Theorem~\ref{thm:main-stek} implies
  that either
  \begin{multline}
    \mathfrak{S}_{m'n + r}(\mathbb{D}^2;\G)
    \\=
    \max_{0 \leq m < m'} \brr{\mathfrak{S}_{mn + r}(\mathbb{D}^2;\G)
    + 2\pi n (m'- m)}
    = 2\pi \brr{m'n + \floor*{\frac{r+1}{2}}},
  \end{multline}
  where the last equality follows from the induction assumption,
  or the supremum is achieved on a smooth density and
  $\mathfrak{S}_{m'n + r}(\mathbb{D}^2;\G)$ is strictly greater
  than $2\pi \brr{m'n + \floor*{\frac{r+1}{2}}}$. In the latter case,
  we have a contradiction with equation~\eqref{eq:smooth-max-disk}
  because it is impossible that
  \begin{equation}
    \floor*{\frac{m'n + r+1}{2}} >  m'n + \floor*{\frac{r+1}{2}}.
  \end{equation}

\newpage
\subsection{On computations of \texorpdfstring{$\Lambda_k(\mathbb{S}^2;\G)$}{Lg}}
Let us recall some basic facts about
harmonic maps between spheres. First, any harmonic map
$\Phi\colon \mathbb{S}^2 \to \mathbb{S}^n$ is weakly conformal (i.e.
$\Phi^* g_{\Sph^n} = \lambda(z) g_{\Sph^2}$, $\lambda \geq 0$) hence branched minimal
immersion. To see this, we notice that $\Sph^2$
has no nonzero holomorphic quadratic differentials, but for harmonic maps,
$\partial_z \bar{\partial}_z\Phi \parallel \Phi$, and therefore,
$\brt{\partial_z \Phi, \partial_z\Phi}dz^2 = 0$.
\begin{proposition}[\cite{Calabi:1967:min-imersions}, \cite{Barbosa:1975:min-imersions}]
  \label{prop:harm-sheres}
  Let $\Phi\colon \mathbb{S}^2 \to \mathbb{S}^n$ be a smooth harmonic
  map. Suppose, that $\Phi$ is linearly full, i.e. it does not lie in
  any smaller subsphere $\mathbb{S}^{n-1} \subset \mathbb{S}^n$. Then
  \begin{itemize}
    \item $n$ is even, say $n = 2m$;
    \item $\Area (\Phi^* g_{\Sph^n}) = 4\pi d$, and $d \in \mathbb{N}$
    is called the harmonic degree of $\Phi$;
    \item $d \geq \frac{1}{2}m(m+1)$.
  \end{itemize}
\end{proposition}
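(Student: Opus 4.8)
The plan is to recall Calabi's analysis of the \emph{harmonic sequence} attached to $\Phi$, which underlies both cited references; since we already know $\Phi$ is weakly conformal (hence a branched minimal immersion with $\langle\partial_z\Phi,\partial_z\Phi\rangle\equiv 0$), this is exactly their setting, so I only indicate the main steps.

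\emph{Step 1 (the harmonic sequence; $n$ is even).} View $\Phi$ as a map into the unit sphere of $\R^{n+1}\subset\C^{n+1}$, extend the Euclidean pairing $\C$-bilinearly, and fix a local holomorphic coordinate $z$ on $\Sph^2$ away from the finite branch set of $\Phi$. Put $\psi_0:=\Phi$ and let $\psi_{k+1}$ be the component of $\partial_z\psi_k$ that is Hermitian-orthogonal to $\mathrm{span}\{\psi_0,\dots,\psi_k\}$. Using conformality $\langle\partial_z\Phi,\partial_z\Phi\rangle=0$ and the harmonic map equation $\Phi_{z\bar z}=-\abs{\partial_z\Phi}^2\Phi$, one checks that each $\psi_k$, after removing its zeros at branch points, spans a smooth complex line bundle $f_k\to\Sph^2$; that the $f_k$ are mutually Hermitian-orthogonal; and --- this is the isotropy of harmonic maps from $\Sph^2$, coming from the vanishing of the relevant holomorphic differentials in genus $0$ --- that $\langle f_i,f_j\rangle=0$ for every pair $(i,j)\neq(0,0)$, while $\langle f_0,f_0\rangle\equiv1$. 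Linear fullness forces the sequence to be finite and, together with its conjugate, to span $\C^{n+1}$: writing the terms as $f_0,\dots,f_m$ and $W:=f_1\oplus\cdots\oplus f_m$, the bilinear isotropy of $W$ and the Hermitian orthogonality give $W\cap\overline W=0$ and $f_0\perp W\oplus\overline W$, so $\C^{n+1}=f_0\oplus W\oplus\overline W$ and $n+1=1+2m$. Thus $n=2m$, the first assertion.

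\emph{Step 2 (area as a degree).} For a conformal map, $\Area(\Phi^*g_{\Sph^n})$ is a fixed multiple of the Dirichlet energy $E(\Phi)$. The ``higher fundamental forms'' $A_k:=\pi_{f_{k+1}}\circ\partial_z$ are holomorphic sections of $f_k^\ast\otimes f_{k+1}\otimes T^{1,0}\Sph^2$, with effective zero divisors of degrees $r_k\ge0$. Applying Chern--Weil to the Chern connections of the $f_k$ and telescoping over $k$ rewrites $\tfrac1{4\pi}\Area(\Phi^*g_{\Sph^n})$ as an integer combination of first Chern numbers, hence as a nonnegative integer $d$ --- the harmonic degree. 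This is the second assertion.

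\emph{Step 3 (the bound $d\ge\tfrac12 m(m+1)$).} The same computation produces Plücker-type relations among the $\deg f_k$ and the $r_k\ge0$, and the isotropy of Step~1 makes the flag $f_0\subset f_0\oplus f_1\subset\cdots$ self-dual. Solving these relations expresses $d$ as a fixed positive quantity plus a nonnegative contribution of the ramification numbers $r_k$; the minimal case $r_k\equiv0$ is the harmonic sequence of the rational normal curve of degree $2m$ in $\mathbb{CP}^{2m}$, for which one computes $d=\tfrac12 m(m+1)$. Hence $d\ge\tfrac12 m(m+1)$, with equality precisely when every $r_k=0$, i.e.\ when $\Phi$ is congruent to the standard (Veronese) minimal immersion $\Sph^2\to\Sph^{2m}$ by degree-$m$ spherical harmonics. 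The main obstacle is the bookkeeping in Steps~2 and~3: constructing the fundamental forms $A_k$ and the Plücker relations rigorously across the branch points, and organizing the nonnegative ramification terms so that they telescope cleanly to $\tfrac12 m(m+1)$. All of this is carried out in the cited work of Calabi and Barbosa (see also Eells--Wood), so once conformality of $\Phi$ is available --- as it is here --- it suffices to invoke those references.
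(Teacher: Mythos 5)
Your outline is correct and matches how the paper itself handles this statement: the paper simply quotes it from Calabi and Barbosa (having only established weak conformality via the vanishing of holomorphic quadratic differentials on $\Sph^2$), and your harmonic-sequence/isotropy sketch is precisely the argument of those references. Since the remaining bookkeeping (fundamental forms across branch points, Pl\"ucker relations, the Veronese equality case) is correctly deferred to the cited works, nothing further is needed.
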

Let $Harm_d^f(\Sph^2, \Sph^{2m})$ be the space of all (smooth)
linearly full harmonic
maps between $\Sph^2$ and $\Sph^{2m}$ with area $4\pi d$. Then

\begin{proposition}[\cite{Barbosa:1975:min-imersions}]
  \label{prop:min-degree-maps}
  The pairs
  $\brc{\pm \Phi} \subset Harm_d^f(\Sph^2, \Sph^{2m})$ are in one-to-one
  correspondence with linearly full totally isotropic holomorphic
  curves $\xi\colon\Sph^2 \to \mathbb{CP}^{2m}$. The curve
  $\xi$ is referred as the directrix curve of $\Phi$. Moreover
  \begin{itemize}
    \item $\Phi$ is proportional to $\xi_{m-1}\wedge \overline{\xi}_{m-1}$,
     where
    $\xi_{m-1} = \xi\wedge\partial_z\xi\wedge\cdots\wedge\partial_z^{m-1}\xi.$
    \item $\Phi, \tilde{\Phi}\in Harm_d^f(\Sph^2, \Sph^{2m})$ are isometric
    if and only if their directrix curves $\xi, \tilde{\xi}$ are isometric.
    \item If $d_0 = \frac{1}{2}m(m+1)$, the group $SO(\mathbb{C}^{2m+1})$
    acts freely and transitively on $Harm_{d_0}(\Sph^2, \Sph^{2m})$
    by $\xi \mapsto A \xi$. Thus,
    \begin{equation}
      SO(\mathbb{C}^{2m+1}) \approx Harm_{d_0}(\Sph^2, \Sph^{2m})/\brc{\pm 1}.
    \end{equation}
  \end{itemize}
\end{proposition}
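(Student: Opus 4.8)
The plan is to reconstruct the classical dictionary of Calabi and Barbosa between linearly full harmonic $2$-spheres in $\Sph^{2m}$ and linearly full totally isotropic holomorphic curves in $\mathbb{CP}^{2m}$, and then to read the three listed statements off from it. \textbf{Step 1 (from $\Phi$ to $\xi$).} I would start from the facts recalled before Proposition~\ref{prop:harm-sheres}: a linearly full harmonic $\Phi\colon\Sph^2\to\Sph^{2m}\subset\C^{2m+1}$ is weakly conformal, hence a branched minimal immersion, and satisfies $\Delta\Phi=|d\Phi|^2\Phi$, so in a local holomorphic coordinate $z$ one has $\langle\partial_z\Phi,\partial_z\Phi\rangle=0$ (for the $\C$-bilinear extension of the Euclidean product) and $\partial_{\bar z}\partial_z\Phi\parallel\Phi$. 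To $\Phi$ I attach the flag of its higher fundamental forms: after dividing out the isolated common zeros of the meromorphic $\C^{2m+1}$-valued fields obtained by iterating $\partial_z$ and projecting them to stay orthogonal to $\Phi$ and its conjugates, one gets a \emph{holomorphic} flag of osculating subbundles $\mathcal F_0\subset\mathcal F_1\subset\cdots$. Conformality, combined with $\partial_{\bar z}\partial_z\Phi\parallel\Phi$, forces each $\mathcal F_k$ to be totally isotropic for the nondegenerate $\C$-bilinear form; since a maximal isotropic subspace of $\C^{2m+1}$ has dimension $m$, the flag stabilizes, and fullness of $\Phi$ makes it reach a rank-$m$ maximal isotropic subbundle $\mathcal F_{m-1}$ — equivalently, a holomorphic curve $\psi\colon\Sph^2\to SO(2m+1)/U(m)$ into the isotropic Grassmannian, i.e.\ the holomorphic twistor lift of $\Phi$. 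Its Calabi directrix is the desired linearly full totally isotropic holomorphic curve $\xi\colon\Sph^2\to\mathbb{CP}^{2m}$ with $\xi_{m-1}=\mathcal F_{m-1}$, and contracting with the bilinear form recovers $\Phi$ (up to sign) as the real unit vector orthogonal to $\mathcal F_{m-1}\oplus\overline{\mathcal F_{m-1}}$ — in the wedge notation of the statement, $\Phi$ proportional to $\xi_{m-1}\wedge\overline{\xi}_{m-1}$.

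\textbf{Step 2 (the inverse construction and bijectivity).} Conversely, given a linearly full totally isotropic holomorphic $\xi$, I would form the associated curve $\xi_{m-1}=\xi\wedge\partial_z\xi\wedge\cdots\wedge\partial_z^{m-1}\xi$ into the maximal isotropic Grassmannian, check that off a divisor $\xi_{m-1}(z)$ and its conjugate are transverse (so their span is $2m$-dimensional), and normalize the real vector spanning the orthocomplement of that span to produce $\Phi_\xi\colon\Sph^2\to\Sph^{2m}$, defined up to the antipodal map. That $\Phi_\xi$ is harmonic is a direct computation: differentiating $\Phi_\xi\propto\xi_{m-1}\wedge\overline{\xi}_{m-1}$, holomorphicity of $\xi$ kills the spurious $\partial_{\bar z}$-contributions and total isotropy forces $\Delta\Phi_\xi\parallel\Phi_\xi$. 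One then verifies that $\Phi\mapsto\xi$ and $\xi\mapsto\Phi_\xi$ are mutually inverse on $\{\pm\Phi\}$-classes; the one point to nail down is that the osculating flag reconstructed from $\Phi_\xi$ is the associated flag of $\xi$, which reduces to a uniqueness statement for the holomorphic ODE these flags satisfy. The two-to-one phrasing ``$\{\pm\Phi\}\leftrightarrow\xi$'' is exactly the projective indeterminacy of $\xi_{m-1}\wedge\overline{\xi}_{m-1}$, which on the sphere side is the antipodal ambiguity.

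\textbf{Step 3 (degree, isometry, minimal degree).} The area identity $\Area(\Phi^*g_{\Sph^{2m}})=4\pi d$ from Proposition~\ref{prop:harm-sheres} is matched to $\xi$ through the Plücker/ramification formulas for the associated curves: integrating the curvature of $\mathcal F_{m-1}$ expresses $d$ in terms of the degree and total ramification of $\xi$. For the isometry claim, an ambient isometry of $\Sph^{2m}$ carries $\Phi$ to an isometric harmonic map and $\xi$ to an isometric totally isotropic curve; conversely, by Calabi's rigidity a holomorphic curve in projective space is determined up to a unitary motion by its induced metric, and the totally isotropic and reality constraints cut this group down, so isometric directrices differ by such a motion and hence so do the corresponding harmonic maps — giving ``$\Phi\simeq\tilde{\Phi}\iff\xi\simeq\tilde{\xi}$''. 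Finally, when $d=d_0=\frac12 m(m+1)$ the associated curves of $\xi$ attain the minimal possible degrees, which forces $\xi$ to be, in suitable coordinates, the standard rational normal totally isotropic curve $z\mapsto[1:z:\cdots:z^{2m}]$ (with the binomial normalization making all associated curves isotropic); since $SO(\C^{2m+1})$ acts transitively on the generic complete isotropic flags such a curve generates, with trivial stabilizer, the induced action $\xi\mapsto A\xi$ on $Harm_{d_0}(\Sph^2,\Sph^{2m})$ is free and transitive, which yields $SO(\C^{2m+1})\approx Harm_{d_0}(\Sph^2,\Sph^{2m})/\{\pm1\}$.

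\textbf{Main obstacle.} The crux is Steps~1--2: showing that the higher fundamental forms of a harmonic $\Sph^2\to\Sph^{2m}$ can be organized into a holomorphic flag and that isotropy propagates all the way to a maximal isotropic $m$-plane bundle (so the holomorphic twistor lift, and hence the directrix, exists and is totally isotropic), and then that the reverse construction genuinely lands on harmonic maps and inverts the first. This is where conformality and the precise form of the harmonic-map equation are used essentially; by comparison, the degree bookkeeping and the transitivity and freeness in the minimal-degree case are routine once the dictionary is in place.
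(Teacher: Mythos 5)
The paper does not prove this proposition at all: it is imported verbatim from Calabi and Barbosa (the citation \cite{Barbosa:1975:min-imersions} is the proof), so there is no internal argument to compare against. Your sketch correctly reconstructs the classical directrix/twistor route of those references — isotropy of the osculating data of a harmonic $\Phi\colon\Sph^2\to\Sph^{2m}$ forced by the absence of nonzero holomorphic differentials of positive degree on $\Sph^2$, the resulting maximal isotropic holomorphic flag and its directrix curve, recovery of $\Phi$ as the real line orthogonal to $\xi_{m-1}\oplus\overline{\xi}_{m-1}$, Calabi rigidity for the isometry statement, and degree $2m$ (rational normal, unramified) in the minimal case $d_0=\tfrac12 m(m+1)$ giving the free transitive $SO(\C^{2m+1})$-action — so the strategy and the facts invoked are exactly those of the cited source. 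Be aware, though, that as written it is an outline rather than a proof: the inductive vanishing $\brt{\partial_z^a\Phi,\partial_z^b\Phi}\equiv 0$ for all $a,b\ge 1$, the harmonicity and inversion of the construction $\xi\mapsto\Phi_\xi$, the quantitative link between the induced metrics of $\Phi$ and of $\xi$ (needed for the ``isometric iff isometric'' equivalence, not just one direction), and the fact that the projective equivalence to the standard isotropic rational normal curve can be taken \emph{orthogonal} (uniqueness of the compatible quadric) are asserted, not carried out; also note the osculating flag of $\Phi$ is the conjugate of the flag of $\xi$ (a harmless convention you should fix consistently). Given that the paper itself delegates all of this to Barbosa, your proposal is an adequate account of where the proof lives, but none of the listed steps would be redundant if one actually had to write it out.
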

The next proposition is a consequence of~\cite[Theorem~9]{Calabi:1953:isometric-imbedding},
see also (4.12) in~\cite{Griffiths:1974:lie-groups-and-moving-frames}.
\begin{proposition}
  \label{prop:curve-ridgidity}
  Let $\xi,\zeta\colon\mathbb{S}^2\to\mathbb{CP}^n$ be two
  isometric linearly full holomorphic curves, i.e. $\xi^*g_{FS}=\zeta^*g_{FS}$,
  where $g_{FS}$ is a Fubini–Study metric on $\mathbb{CP}^n$. Then
  there exists a unique $U \in PSU(\mathbb{CP}^n)$ such that
  $\zeta = U \xi$.
\end{proposition}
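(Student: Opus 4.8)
The plan is to prove the classical Calabi rigidity theorem for holomorphic curves via the diastasis function. Since $\mathbb{S}^2 = \mathbb{CP}^1$, write $\xi = [p_0:\cdots:p_n]$ and $\zeta = [q_0:\cdots:q_n]$ with $p_i,q_i$ polynomials, and after dividing out common factors assume the $p_i$ (resp.\ the $q_i$) have no common zero; linear fullness is then equivalent to $p_0,\dots,p_n$ (resp.\ $q_0,\dots,q_n$) being linearly independent over $\mathbb{C}$. Form the Hermitian kernels
\[
  h(z,w) = \sum_{i=0}^n p_i(z)\overline{p_i(w)},
  \qquad
  k(z,w) = \sum_{i=0}^n q_i(z)\overline{q_i(w)},
\]
which are polynomial in $z$, anti-polynomial in $w$, and strictly positive on the diagonal. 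Then $\log h(z,z)$ and $\log k(z,z)$ are local Kähler potentials for $\xi^*g_{FS}$ and $\zeta^*g_{FS}$, and Calabi's diastasis of these metrics is obtained by polarizing them:
\[
  D_\xi(z,w) = \log\frac{h(z,z)\,h(w,w)}{\abs{h(z,w)}^2},
  \qquad
  D_\zeta(z,w) = \log\frac{k(z,z)\,k(w,w)}{\abs{k(z,w)}^2}.
\]

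First I would record that the diastasis depends only on the Kähler metric: it is the unique real-analytic symmetric function vanishing to second order on the diagonal whose $\partial\bar\partial$ recovers the Kähler form, and it is insensitive to the pluriharmonic ambiguity in the choice of potential. Hence $\xi^*g_{FS} = \zeta^*g_{FS}$ forces $D_\xi = D_\zeta$ near the diagonal, and then on all of $\mathbb{S}^2\times\mathbb{S}^2$ by analytic continuation, using that $\mathbb{S}^2$ is simply connected and all the data are algebraic. This yields the identity
\[
  \frac{\abs{h(z,w)}^2}{h(z,z)\,h(w,w)} = \frac{\abs{k(z,w)}^2}{k(z,z)\,k(w,w)}
  \qquad\text{for all }z,w.
\]

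Next I would isolate the conformal factor. Fix $w_0$ where neither lift vanishes and set $\varphi(z) := h(z,w_0)/k(z,w_0)$, a rational function of $z$. Putting $w = w_0$ in the identity gives $h(z,z) = c\,\abs{\varphi(z)}^2\,k(z,z)$ for a constant $c>0$; substituting this back yields $\abs{h(z,w)} = c\,\abs{\varphi(z)}\,\abs{\varphi(w)}\,\abs{k(z,w)}$ for all $z,w$. Writing $\varphi = P/Q$ in lowest terms and setting $w = z$, the equality $\abs{Q(z)}^2 h(z,z) = c\,\abs{P(z)}^2 k(z,z)$ of functions whose zeros are exactly those of $Q$, respectively of $P$ (since $h,k>0$ on the diagonal), with the matching multiplicities, forces $P$ and $Q$ to have the same zeros; coprimality then makes both constant. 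Thus $\varphi$ is constant and, after rescaling the lift of $\zeta$, $h \equiv k$, i.e.\ $\brt{f(z),f(w)} = \brt{g(z),g(w)}$ for all $z,w$, where $f = (p_0,\dots,p_n)$ and $g = (q_0,\dots,q_n)$. Two maps into $\mathbb{C}^{n+1}$ with the same Gram kernel differ by a unitary of their spans: the assignment $f(z)\mapsto g(z)$ extends to a well-defined linear isometry $U$ on the linear span of $\{f(z)\colon z\in\mathbb{S}^2\}$, which is all of $\mathbb{C}^{n+1}$ by linear fullness of $\xi$, so $U\in U(n+1)$; multiplying by a suitable scalar we may take $U\in SU(n+1)$ and obtain $\zeta = [g] = [Uf] = U\xi$ with $U\in PSU(\mathbb{CP}^n)$. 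For uniqueness, if $U_1\xi = U_2\xi$ then $U_2^{-1}U_1$ fixes $\xi(\mathbb{S}^2)$ pointwise; since the image of a linearly full curve is not contained in any hyperplane, the only such element of $PSU(\mathbb{CP}^n)$ is the identity, so $U_1 = U_2$.

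The main obstacle is the middle step: justifying that equality of the two real-analytic pullback metrics implies equality of the diastasis functions — including the globalization over $\mathbb{S}^2\times\mathbb{S}^2$ — together with the zero-counting argument that rules out a non-constant conformal factor. This is the heart of Calabi's theorem; the reduction to polynomial kernels at the start and the Gram-matrix and uniqueness arguments at the end are essentially bookkeeping.
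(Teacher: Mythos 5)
The paper does not actually prove Proposition~\ref{prop:curve-ridgidity}: it is invoked as a special case of Calabi's rigidity theorem, with references to Calabi's Theorem~9 and Griffiths. Your proposal therefore takes a genuinely different, self-contained route: you specialize Calabi's diastasis argument to genus zero, where the lifts are polynomial vectors and the potentials are $\log h(z,z)$, $\log k(z,z)$ for the sesquiholomorphic kernels $h,k$, so that all continuation issues become algebra (real-analytic continuation of a polynomial identity on $\C^2$, then zero-counting for the coprime pair $P,Q$). The argument is correct, and it has the extra virtue that positivity of the pullback forms is never used, so branch points of the curves (where $\xi^*g_{FS}$ degenerates) cause no trouble: only real-analyticity and equality of $\partial\bar\partial$ of the potentials enter, and the diastasis normalization kills the pluriharmonic ambiguity exactly as you say. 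Two steps deserve to be spelled out. First, after the zero-counting and rescaling you have the diagonal identity $h(z,z)=k(z,z)$; to pass to the full kernel identity $h\equiv k$ (rather than merely $\abs{h}=\abs{k}$) invoke polarization: $h-k$ is holomorphic in $z$, antiholomorphic in $w$, and vanishes on the diagonal, hence vanishes identically — this is what the Gram-kernel construction of $U$ actually requires. Second, in the uniqueness step the fixed-point set of a nontrivial element of $PSU(n+1)$ is the disjoint union of the projectivized eigenspaces of a unitary lift, and these need not be hyperplanes; but each is a proper projective subspace, and the connected image $\xi(\mathbb{S}^2)$, contained in this finite disjoint union of closed sets, must lie in a single one, which contradicts linear fullness unless that eigenspace is all of $\C^{n+1}$, i.e.\ the element is the identity. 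With these two standard remarks added, your proof is a complete substitute for the citation: the paper's appeal to Calabi buys brevity and generality, while your argument gives an elementary, purely algebraic verification in exactly the case (rational curves) that the paper uses.
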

\begin{lemma}\label{lem:equiv-degree}
  If $\Phi \in Harm_d^f(\Sph^2, \Sph^{2m})$ is $\G$-equivariant,
  where $\G \subset O(\R^3)$,
  one has
  \begin{equation}
    2d \in m(m+1) + \mathbb{N}_\G.
  \end{equation}
\begin{proof}
  A harmonic
  map $\Phi \in Harm_d^f(\Sph^2, \Sph^{2m})$ corresponds to
  linearly full totally isotropic (anti-)holomorphic
  curves
  \begin{equation}
    \xi, \overline{\xi}\colon\Sph^2 \to \mathbb{CP}^{2m}.
  \end{equation}
  At the same time, equations~(6.4)–(6.6) of~\cite{Barbosa:1975:min-imersions} imply
  \begin{equation}
    2d = \frac{\Area \Phi}{2\pi} = m(m-1) + \sum_{s=1}^m\sum_{j=1}^s
    \sigma_{m-j},
  \end{equation}
  where $\sigma_{k} = \sum_{z\in\mathbb{S}^2}\sigma_{k}(z)$ is
  the degree of the ramification divisor of $\xi_k$, i.e.
  \begin{equation}
    \xi_{k}^* (g_{FS}) = \abs{z - z_0}^{2\sigma_{k}(z_0)} h(z) dz\otimes d\bar{z}
  \end{equation}
  and $h(z_0)\neq 0$.

  Let us take $\gamma \in \G_+ = \G \cap SO(\R^3)$. Then $\gamma^*\Phi = \Phi \circ \gamma$
  corresponds to the holomorphic curve $\gamma^* \xi$ because $\gamma^*\Phi$ is proportional to $\gamma^*\xi\wedge
  \overline{\gamma^*\xi}$. An easy computation shows that
  \begin{equation}
    (\gamma^* \xi)_k =  (\partial_z\gamma)^k\gamma^* \xi_k.
  \end{equation}
  Since $\Phi$ and $\gamma^*\Phi$ are isometric, it follows from
  Propositions~\ref{prop:min-degree-maps} and~\ref{prop:curve-ridgidity}
  that $\gamma^* \xi = U_\gamma \xi$, so
  \begin{equation}
    (\gamma^* \xi)_k =  (\wedge^k U_\gamma) \xi_k.
  \end{equation}
  Combining these two equations, we see  that $\sigma_k(\gamma(z)) = \sigma_k(z)$.

  If $\gamma \in \G \setminus \G_+$, then $\gamma^*\xi$ becomes
  anti-holomorphic, and $\overline{\gamma^*\xi}$ becomes holomorphic.
  In that case, the directrix curve of
  $\gamma^*\Phi$ is $\overline{\gamma^*\xi}$, and the arguments as above show
  \begin{equation}
    \overline{(\bar{\partial}_z\gamma)^k\gamma^* \xi_k} = \brr{\overline{\gamma^*\xi}}_k
    = (\wedge^k U_\gamma) \xi_k.
  \end{equation}
  Again, we have $\sigma_k(\gamma(z)) = \sigma_k(z)$. After recalling
  the definition of $\mathbb{N}_\G$~\eqref{eq:N_G}, we thus obtain
  $\sigma_k \in \mathbb{N}_\G$, which finishes the proof.
\end{proof}

\end{lemma}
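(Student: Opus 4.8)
The plan is to read the harmonic degree $d$ off from the ramification of the directrix curve of $\Phi$ and to show that every piece of this ramification data is a $\G$-invariant divisor on $\Sph^2$, hence supported on full $\G$-orbits. By Proposition~\ref{prop:min-degree-maps}, the pair $\brc{\pm\Phi}$ corresponds to a linearly full totally isotropic holomorphic directrix $\xi\colon\Sph^2\to\mathbb{CP}^{2m}$ with $\Phi$ proportional to $\xi_{m-1}\wedge\overline{\xi}_{m-1}$, where $\xi_k=\xi\wedge\partial_z\xi\wedge\cdots\wedge\partial_z^{k}\xi$ is the $k$-th associated curve; write $D_k=\sum_z\sigma_k(z)\brs{z}$ for its ramification divisor, so that $\xi_k^{*}g_{FS}=\abs{z-z_0}^{2\sigma_k(z_0)}h(z)\,dz\,d\bar z$ with $h(z_0)\neq 0$, and set $\sigma_k=\deg D_k$. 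Barbosa's area computation (equations~(6.4)--(6.6) of~\cite{Barbosa:1975:min-imersions}) then expresses $2d=\frac{1}{2\pi}\Area\Phi$ as a fixed nonnegative integer combination of the $\sigma_k$ added to the minimal value $m(m+1)$ of $2d$ (Proposition~\ref{prop:harm-sheres}). Since $\mathbb{N}_\G$ is closed under addition, it suffices to prove that $\sigma_k\in\mathbb{N}_\G$ for every $k$.

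The heart of the matter is that each $D_k$ is $\G$-invariant, i.e.\ $\sigma_k(\gamma z)=\sigma_k(z)$ for all $\gamma\in\G$. Fix first $\gamma\in\G_+=\G\cap SO(\R^3)$; acting on $\Sph^2\cong\mathbb{CP}^1$ it is a biholomorphism with nowhere-vanishing $\partial_z\gamma$. As $\gamma$ is an isometry of the round sphere and $\Phi$ is $\G$-equivariant, $\gamma^{*}\Phi=r(\gamma)\Phi$ is again a harmonic map, isometric to $\Phi$, whose directrix is $\gamma^{*}\xi=\xi\circ\gamma$; thus $\gamma^{*}\xi$ and $\xi$ are isometric linearly full holomorphic curves, and Propositions~\ref{prop:min-degree-maps} and~\ref{prop:curve-ridgidity} give a unique $U_\gamma\in PSU(\mathbb{CP}^{2m})$ with $\xi\circ\gamma=U_\gamma\circ\xi$. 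Passing to $k$-th associated curves, the right-hand side becomes $(\wedge^{k+1}U_\gamma)\circ\xi_k$, while by the chain rule the left-hand side becomes $(\partial_z\gamma)^{N_k}\,(\xi_k\circ\gamma)$ for some integer $N_k\geq 0$ (every lower-order term drops out of the wedge); since $\partial_z\gamma$ never vanishes, $\xi_k\circ\gamma$ and $(\wedge^{k+1}U_\gamma)\circ\xi_k$ coincide as curves into $\mathbb{P}(\wedge^{k+1}\mathbb{C}^{2m+1})$. Because $\wedge^{k+1}U_\gamma$ is a Fubini--Study isometry, $(\xi_k\circ\gamma)^{*}g_{FS}=\xi_k^{*}g_{FS}$, and reading off the order of vanishing gives $\sigma_k(\gamma z)=\sigma_k(z)$. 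For $\gamma\in\G\setminus\G_+$ the action on $\mathbb{CP}^1$ is anti-holomorphic, so $\gamma^{*}\xi$ is anti-holomorphic and the directrix of $\gamma^{*}\Phi$ is the holomorphic curve $\overline{\gamma^{*}\xi}$; running the same argument with the nowhere-vanishing $\bar\partial_z\gamma$ in place of $\partial_z\gamma$, together with one complex conjugation, yields $\overline{\xi_k\circ\gamma}=(\wedge^{k+1}U_\gamma)\circ\xi_k$ projectively, and since conjugation is also a Fubini--Study isometry we again obtain $\sigma_k(\gamma z)=\sigma_k(z)$.

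It then follows that each $D_k$ is a nonnegative integer combination of $\G$-orbits, hence $\sigma_k=\deg D_k\in\mathbb{N}_\G$; substituting into Barbosa's formula and using once more that $\mathbb{N}_\G$ is a sub-semigroup of $\mathbb{N}$ gives $2d-m(m+1)\in\mathbb{N}_\G$. The main obstacle I expect is the rigidity step of the second paragraph: one has to be sure that $\gamma^{*}\Phi$ is genuinely harmonic and isometric to $\Phi$, that its directrix is $\gamma^{*}\xi$ (respectively $\overline{\gamma^{*}\xi}$ for orientation-reversing $\gamma$) by the characterization of Proposition~\ref{prop:min-degree-maps}, and that passing from the curve identity $\xi\circ\gamma=U_\gamma\circ\xi$ to the associated curves introduces only a nowhere-vanishing holomorphic (respectively anti-holomorphic) scalar, which leaves the ramification divisor untouched. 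The remaining ingredient — pulling the constant $m(m+1)$ and the integer weights out of the Plücker/area identities — is routine once one knows the minimal totally isotropic directrix has unramified associated curves.
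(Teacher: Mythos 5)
Your argument is essentially the paper's own proof: both pass to the directrix curve, quote Barbosa's area identity to write $2d$ as $m(m+1)$ plus a nonnegative integer combination of the total ramification degrees $\sigma_k$ of the associated curves, use that $\gamma^*\Phi=r(\gamma)\Phi$ is isometric to $\Phi$ together with Propositions~\ref{prop:min-degree-maps} and~\ref{prop:curve-ridgidity} to get $\gamma^*\xi=U_\gamma\xi$ (with a complex conjugation for orientation-reversing $\gamma$), and conclude that each ramification divisor is $\G$-invariant, hence every $\sigma_k$ is a sum of orbit sizes and lies in $\mathbb{N}_\G$. The only differences are bookkeeping: you leave the nowhere-vanishing Jacobian factor as an unspecified power $(\partial_z\gamma)^{N_k}$, and you state Barbosa's formula with the constant $m(m+1)$, which is indeed the normalization needed for the stated conclusion (the paper's displayed constant $m(m-1)$ must be read accordingly).
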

\begin{definition}
  By the (spectral) index of a harmonic map $\Phi\colon (\Sigma,[g]) \to \mathbb{S}^n$ we mean the index of
  the corresponding quadratic form
  \begin{equation}
    \ind \Phi =  \max \brc{\dim V\colon V \subset C^\infty(\Sigma)
    \text{ s.t. }Q_\Phi|_V < 0},
  \end{equation}
  where
  \begin{equation}
    Q_\Phi(u) = \int \brr{\abs{du}_g^2 - \abs{d\Phi}_g^2 u^2} dv_g.
  \end{equation}
  We also define
  \begin{equation}
    \nul \Phi = \dim \ker \brr{\Delta_g - \abs{d\Phi}^2_g}.
  \end{equation}
  Note that the definitions of $Q_\Phi$, $\ind \Phi$ and $\nul \Phi$
  depend only on $[g]$. Also, $\Sph^2$ has the unique conformal class only up to a diffeomorphism,
  so we consider only this conformal class (denoted by $[g_{\Sph^2}]$).
\end{definition}

If we combine Propositions 3.12 and 3.16 of~\cite{Karpukhin:2021:index-of-min-shperes},
we immediately get
\begin{proposition}
  \label{prop:ind-est}
  For any $\Phi \in Harm_d^f(\Sph^2, \Sph^{2m})$ one has
  \begin{equation}
    \ind \Phi \geq 2d + 2 - \floor{\sqrt{8d+1}}_{odd},
  \end{equation}
  where $\floor{x}_{odd}$ denotes the greatest odd integer lower than $x$.
\end{proposition}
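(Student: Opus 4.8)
The plan is to combine the general index bound for harmonic spheres from~\cite{Karpukhin:2021:index-of-min-shperes} with the degree inequality of Proposition~\ref{prop:harm-sheres}, eliminating the target dimension by an elementary estimate. First, unwinding the definition of $\ind\Phi$: by conformal invariance of the Dirichlet integral in two dimensions, $Q_\Phi(u)=\int\brr{\abs{du}_h^2-u^2}\,\dv{h}$, where $h:=\abs{d\Phi}_g^2\,g$ is the (conically singular) metric induced by $\Phi$, of total area $\int_{\Sph^2}\abs{d\Phi}_g^2\,\dv{g}=8\pi d$. Thus $\ind\Phi$ is the number of eigenvalues of $\Delta_h$ strictly below $1$. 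The harmonic map equation $\Delta_g\Phi=\abs{d\Phi}_g^2\,\Phi$ reads $\Delta_h\Phi^i=\Phi^i$ for every coordinate, so, $\Phi$ being linearly full, the value $1$ is an eigenvalue of $\Delta_h$ of multiplicity at least $2m+1$.

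The substantive ingredient is the lower bound
\[
  \ind\Phi\ \ge\ 2d+2-(2m+1)\qquad\text{for }\Phi\in Harm_d^f(\Sph^2,\Sph^{2m}),
\]
which is precisely what Propositions~3.12 and~3.16 of~\cite{Karpukhin:2021:index-of-min-shperes} provide and which I would simply quote. In outline, it comes from the twistor description of Proposition~\ref{prop:min-degree-maps}: out of the holomorphic derivatives of $\Phi$ (equivalently, the osculating curves $\xi_0,\dots,\xi_{2m}$ of the directrix $\xi$) one builds a space of functions on which $Q_\Phi$ is negative definite, and one counts the independent ones through the ramification degrees $\sigma_k$ of the $\xi_k$ — the same data entering the area formula in the proof of Lemma~\ref{lem:equiv-degree} — after subtracting off the $2m+1$ directions of eigenvalue exactly $1$. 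Reproducing this is the only place where real work is hidden; it is what the cited propositions package.

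It remains to eliminate $m$. Proposition~\ref{prop:harm-sheres} gives $d\ge\tfrac12 m(m+1)$, hence $8d+1\ge(2m+1)^2$, so $2m+1$ is an odd integer with $2m+1\le\sqrt{8d+1}$, and therefore $2m+1\le\floor{\sqrt{8d+1}}_{odd}$. Substituting,
\[
  \ind\Phi\ \ge\ 2d+2-(2m+1)\ \ge\ 2d+2-\floor{\sqrt{8d+1}}_{odd},
\]
which is the assertion (the last inequality is an equality exactly when $d=\tfrac12 m(m+1)$). The one genuine obstacle is the index estimate of the preceding paragraph; the reduction to it and the concluding arithmetic are routine.
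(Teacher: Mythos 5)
Your proof is correct and takes essentially the paper's route: the paper likewise obtains the estimate by combining Propositions~3.12 and~3.16 of~\cite{Karpukhin:2021:index-of-min-shperes} (the bound $\ind \Phi \geq 2d+2-(2m+1)$ for linearly full maps) with the degree bound $d \geq \frac{1}{2}m(m+1)$ of Proposition~\ref{prop:harm-sheres} to eliminate $m$. Only your parenthetical claim that the last inequality is an equality ``exactly when $d=\frac{1}{2}m(m+1)$'' is inaccurate — one has $2m+1=\floor{\sqrt{8d+1}}_{odd}$ whenever $\frac{1}{2}m(m+1)\leq d<\frac{1}{2}(m+1)(m+2)$ — but this does not affect the argument.
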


For a group $\G \subset SO(\R^3) \approx PSU(\C^2)$, consider
its preimage $2\G \subset SU(\C^2)$, i.e. the corresponding binary
group. Then one has the natural representations of $2\G$ on
every symmetric power $S^k\C^2$. Since $PSU(\C^2) = SU(\C^2)/\brc{\pm 1}$,
the representations of $2\G$ for $k = 2m$ are, in fact, mapped
through the representations of $\G$. Later on, it will be useful  to
understand the structure of some particular representations.

\begin{lemma}\label{lem:rep-info}
  Fix an element of the maximal order $\gamma_0 \in \G \subset PSU(\C^2)$ and
  choose a unitary basis
  $\C^2 = Span\brt{v_0,v_1}$ consisting of eigenvectors of
  $\gamma_0$. Then one has the following.
  \begin{itemize}
    \item If $\G = O$, then $S^4\C^2 = \mathbf{3}\oplus\mathbf{2}$,
    where $\mathbf{2}=Span\brt{v_0^4 + v_1^4, v_0^2v_1^2}$.
    The action of $T \subset O$ splits $\mathbf{2} = \mathbf{1}\oplus\mathbf{1'}$
    into two (non-isomorphic) $1$-dimensional representations leaving the same vector
    subspace $\mathbf{3}$.
    \item If $\G = I$, then $S^4\C^2 = \mathbf{5}$ and
    $S^6\C^2 = \mathbf{3}\oplus\mathbf{4}$. As a representation of
    $2I$, one has $S^7\C^2 = \mathbf{2}\oplus\mathbf{6}$, where
    $\mathbf{2} = Span\brt{v_0^7 - 7v_0^2v_1^5, v_1^7 + 7v_0^5v_1^2}$
    up to permuting $v_0$ and $v_1$.
  \end{itemize}
\end{lemma}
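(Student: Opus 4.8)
The plan is to regard $S^k\C^2$ as the restriction to the finite group of the irreducible $SU(\C^2)$-representation of dimension $k+1$, whose character on an element projecting to a rotation by angle $\theta$ in $SO(\R^3)$ equals $\chi_k(\theta)=\sum_{j=0}^{k}e^{i(k-2j)\theta/2}$. The conjugacy classes of $O\cong S_4$, of $I\cong A_5$, and of the binary group $2I$ are described entirely by the rotation angles of the relevant polyhedron (classes at $\theta\in\{0,\pi/2,2\pi/3,\pi\}$ for $O$; at $\theta\in\{0,2\pi/5,4\pi/5,2\pi/3,\pi\}$ for $I$; the same angles together with $-\Id$ and its products for $2I$), so one evaluates $\chi_k$ on each class and matches it against the character tables. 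This gives at once the abstract decompositions $S^4\C^2|_{O}=\mathbf{3}\oplus\mathbf{2}$, $S^4\C^2|_{I}=\mathbf{5}$, $S^6\C^2|_{I}=\mathbf{3}\oplus\mathbf{4}$, and $S^7\C^2|_{2I}=\mathbf{2}\oplus\mathbf{6}$; the last one because $-\Id$ acts on $S^7\C^2$ by $-1$, so only spinorial irreducibles occur, and the competing options of type $\mathbf{2}\oplus\mathbf{2}\oplus\mathbf{4}$ are excluded by the values of $\chi_7$ on the order-$10$ and order-$6$ classes.

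For the explicit shape of the two-dimensional summand when $\G=O$, I would avoid a brute-force diagonalization. The irreducible $\mathbf{2}$ of $S_4$ is pulled back along the quotient $S_4\to S_3=S_4/K$, where $K\cong(\mathbb{Z}_2)^2$ is the Klein four subgroup, realized inside $O$ as the three half-turns about mutually perpendicular face axes; hence $\mathbf{2}$ is exactly the space of $K$-fixed vectors of $S^4\C^2$, while the complementary summand $\mathbf{3}$ (the standard-minus-trivial representation of $S_4$) has no $K$-fixed vector. Since $\gamma_0$ is the quarter-turn about one face axis, $\gamma_0^2\in K$ acts on the monomial basis $v_0^av_1^b$ by $i^{a-b}$ and so fixes $\brt{v_0^4,\,v_0^2v_1^2,\,v_1^4}$, while a perpendicular half-turn in $K$ normalizing $\brt{\gamma_0}$ acts by $v_0\mapsto iv_1$, $v_1\mapsto iv_0$ and hence interchanges $v_0^4$ and $v_1^4$ and fixes $v_0^2v_1^2$; intersecting the fixed sets gives $(S^4\C^2)^{K}=\brt{v_0^4+v_1^4,\,v_0^2v_1^2}=\mathbf{2}$. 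Restricting further to $T=A_4\supset K$, the quotient $T/K=\mathbb{Z}_3$ acts on $\mathbf{2}$ through the standard $2$-dimensional representation of $S_3$, which decomposes over $\mathbb{Z}_3$ into its two non-isomorphic characters, and $\mathbf{3}|_{A_4}$ stays irreducible, so the complementary $3$-dimensional subspace is unchanged; this is the whole assertion for $O$.

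For $\G=I$ the decompositions $S^4\C^2=\mathbf{5}$ and $S^6\C^2=\mathbf{3}\oplus\mathbf{4}$ follow from the character computation alone (one may double-check that $S^6\C^2$ has no $I$-invariant vector, since the lowest-degree $I$-invariant forms on $\R^3$ have degrees $2,6,10,15$ and there is no invariant cubic; together with $\dim=7$ this forces the decomposition once one has fixed which $3$-dimensional irreducible is called $\mathbf{3}$). For $S^7\C^2$ as a $2I$-representation, $\gamma_0$ has order $5$ in $I$, so the chosen lift acts on $\C^2$ by $\mathrm{diag}(\zeta,\zeta^{-1})$ with $\zeta=e^{i\pi/5}$, and the monomials $v_0^av_1^b$ are $\gamma_0$-eigenvectors with eigenvalues $\zeta^{a-b}$; comparing with the $\gamma_0$-eigenvalues on the $2$-dimensional summand (read off from its character) forces $\mathbf{2}$ to be spanned by one vector of $\brt{v_0^7,\,v_0^2v_1^5}$ and one of $\brt{v_0^5v_1^2,\,v_1^7}$. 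The normalizing half-turn $w\colon v_0\mapsto v_1,\ v_1\mapsto -v_0$ of $2I$ carries the first eigenline onto the second, sending $v_0^7-7v_0^2v_1^5$ to $v_1^7+7v_0^5v_1^2$, so only the ratio $\beta/\alpha$ in $\alpha v_0^7+\beta v_0^2v_1^5$ remains free; requiring the span to be invariant under a further generator of $2I$ (an element projecting to a $2\pi/3$-rotation), equivalently projecting $v_0^7$ onto the $\mathbf{2}$-isotypic component, pins down $\beta/\alpha=-7$.

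The step I expect to be the main obstacle is this last one: extracting the coefficient $-7$, and, more broadly, keeping the labels $\mathbf{3}$ versus $\mathbf{3}'$ and $\mathbf{2}$ versus $\mathbf{2}'$ consistent with one fixed embedding $\G\subset SO(\R^3)$ — that is, a fixed orientation of the polyhedron — while matching characters. Neither is deep: each amounts to writing a single non-diagonal generator in the chosen basis and doing a short linear-algebra computation. Everything else is forced by character theory and the $K$-fixed-subspace observation.
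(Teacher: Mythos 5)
Your proposal is correct, and it takes a genuinely different route from the paper for the decompositions while converging to essentially the paper's argument for the hard explicit step. The paper gets the abstract splittings via the McKay correspondence for $2T,2O,2I$ (inductively tensoring with the canonical two-dimensional representation and counting dimensions), whereas you read them off directly from the $SU(2)$ characters $\chi_k(\theta)$ evaluated on the rotation classes; both are standard, your values check out, and your parity argument for why only spinorial summands occur in $S^7\C^2$ is fine. For the explicit $\mathbf{2}\subset S^4\C^2$ your identification of $\mathbf{2}$ with the fixed space of the Klein four-group $K$ of face half-turns is a cleaner mechanism than the paper's, which instead pins $v_0^2v_1^2$ down via the eigenvalues $\pm1$ of $\gamma_0|_{\mathbf{2}}$ and then applies the explicit quarter-turn $z\mapsto\frac{z-1}{z+1}$ to produce $v_0^4+v_1^4$; your computation of the $K$-fixed monomial combinations is correct, and the restriction statement for $T$ follows as you say. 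For $S^7\C^2$ your skeleton coincides with the paper's: match the $\gamma_0$-eigenvalues $\zeta^{\pm3}$ of the non-canonical $\mathbf{2}$ against the monomial eigenlines $\langle v_0^7,v_0^2v_1^5\rangle$ and $\langle v_1^7,v_0^5v_1^2\rangle$, use a normalizing half-turn to relate the two lines, then one more generator to fix the ratio. Two caveats. First, the only quantitative claim of the lemma, $\beta/\alpha=-7$, is asserted rather than derived in your write-up: to execute it you must write a second, non-diagonal generator of $2I$ explicitly in the basis $v_0,v_1$ (the paper takes $\tilde{\gamma}_1$ from Toth's list) and impose that the image of $\alpha v_0^7+\beta v_0^2v_1^5$ has no component along a monomial outside the two eigenlines (the paper uses the $v_0^6v_1$-coefficient); this is a short computation, but it is the step that actually produces $-7$, so it cannot be waved through. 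Second, your half-turn $w\colon v_0\mapsto v_1,\ v_1\mapsto -v_0$ (and, in the $O$ case, the perpendicular half-turn you place in $K$) lies in the chosen copy of the group only after normalizing the embedding, equivalently the phases of $v_0,v_1$; this is precisely the freedom behind the lemma's proviso ``up to permuting $v_0$ and $v_1$'' (it also toggles the sign of the coefficient $7$), so it should be fixed once and for all, as the paper does by its explicit choice of generators.
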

\begin{proof}
  To obtain the decomposition of $S^k\C^2$ into irreducible
  representations, we are going to use the McKay
  correspondence~\cite*{McKay:1980:correspondence}. Namely, consider
  the following graphs.
  \begin{itemize}
    \item For $2T$,
    \begin{equation}
      \begin{tikzcd}
        \mathbf{1} \arrow[r,dash] & \mathbf{2} \arrow[r,dash] & \mathbf{3} & \mathbf{2} \arrow[l,dash] & \mathbf{1} \arrow[l,dash] \\
        && \circled{$\mathbf{2}$} \arrow[u,dash] \\
        && \mathbf{1} \arrow[u,dash]
      \end{tikzcd}
    \end{equation}
    \item For $2O$,
    \begin{equation}
      \begin{tikzcd}
        \mathbf{1} \arrow[r,dash] & \circled{$\mathbf{2}$} \arrow[r,dash] & \mathbf{3} \arrow[r,dash] &
        \mathbf{4} \arrow[l,dash] & \mathbf{3} \arrow[l,dash] & \mathbf{2} \arrow[l,dash] & \mathbf{1} \arrow[l,dash]\\
        &&& \mathbf{2} \arrow[u,dash]
      \end{tikzcd}
    \end{equation}
    \item For $2I$,
    \begin{equation}
      \begin{tikzcd}
        \mathbf{2} \arrow[r,dash] & \mathbf{4} \arrow[r,dash] & \mathbf{6} &
          \mathbf{5} \arrow[l,dash] & \mathbf{4} \arrow[l,dash] & \mathbf{3} \arrow[l,dash]
          & \circled{$\mathbf{2}$} \arrow[l,dash] & \mathbf{1} \arrow[l,dash]\\
        && \mathbf{3} \arrow[u,dash]
      \end{tikzcd}
    \end{equation}
  \end{itemize}
  The set of vertices $V$ of the graphs consists of all the irreducible
  non-isomorphic representations of
  $2\G \subset SU(\C^2)$, where the numbers denote the dimensions of the representations.
  The vertex $\circled{$\mathbf{2}$}$ corresponds to the canonical representation on $\C^2$.
  The edges $E$ connect the vertices in a way that
  \begin{equation}
    \mathbf{x}\otimes\circled{$\mathbf{2}$} = \bigoplus_{(\mathbf{x}, \mathbf{y})\in E} \mathbf{y}
  \end{equation}
  for any $\mathbf{x} \in V$.

  The McKay graphs presented above will help us inductively decompose the representations
  $S^k\C^2$ by the following procedure. Suppose we know the decomposition of
  $S^{k-1}\C^2$. Then we use the graphs to compute $S^{k-1}\C^2 \otimes \C^2$.
  On the other hand, $S^{k-1}\C^2 \otimes \C^2 = S^{k}\C^2\oplus(S^{k}\C^2)^{\bot}$,
  where $\dim (S^{k}\C^2)^{\bot} = k-1$ and $\dim S^{k}\C^2 = k+1$. By
  simple dimension counting, we will be able to obtain the expression for $S^{k}\C^2$.

  Thus, for the representations of $O \approx S_4$, we obtain
  $S^3\C^2 = \mathbf{4}$ and $S^3\C^2\otimes \C^2 =
  \mathbf{3}\oplus\mathbf{2}\oplus\mathbf{3'}$, so
  $S^4\C^2 = \mathbf{3}\oplus\mathbf{2}$. To find
  $\mathbf{2} \subset S^4\C^2$, we
  proceed as follows. We may think that under a stereographic projection the element $\gamma_0$
  acts as $z \mapsto i z$. Another element $\gamma_1 \in O$ (the rotation around $y$ axis)
  is given by the formula $z \mapsto \frac{z - 1}{z + 1}$. Since $\mathbf{2}$
  is the unique $2$-dimensional representation of $O$, it is known that
  $\gamma_0|_{\mathbf{2}}$ has eigenvalues $\brc{\pm 1}$, see, e.g.,~\cite[\S~2.3]{Fulton-Harris:1991:representation-theory}.
  Therefore, $v_0^2v_1^2 \in \mathbf{2}$. To find another vector, we expand
  $\gamma_1(v_0^2v_1^2) = (v_0 + v_1)^2(v_1-v_0)^2$ and see that
  $v_0^4 + v_1^4 \in \mathbf{2}$ as well.

  For the representation of $T$ on the same $S^4\C^2$,
  we obtain $S^4\C^2 = \mathbf{3}\oplus\mathbf{1}\oplus\mathbf{1'}$.
  Since $T \subset O$, the irreducible representations of $O$ split
  into irreducible representations of $T$. Therefore, $\mathbf{2}=
  \mathbf{1}\oplus\mathbf{1'}$, where $\mathbf{2}$ is the representation
  of $O$ from above.

  When $\G = I$ and $2\G = 2I \approx SL_2(\mathbb{F}_5)$, we have $S^4\C^2 = \mathbf{5}$, $S^6\C^2 = \mathbf{3}\oplus\mathbf{4}$,
  and $S^7\C^2 = \mathbf{2}\oplus\mathbf{6}$. Consider the elements $\tilde{\gamma}_0,
  \tilde{\gamma}_1 \in 2I$,
  \begin{equation}
    \tilde{\gamma}_0 = \begin{pmatrix}
      \epsilon^{-2} & 0\\
      0 & \epsilon^{2}
    \end{pmatrix}
    ,\quad
    \tilde{\gamma}_1 = \frac{1}{\sqrt{5}}\begin{pmatrix}
      -(\epsilon-\epsilon^{-1}) & \epsilon^2-\epsilon^{-2}\\
      \epsilon^2-\epsilon^{-2} & \epsilon-\epsilon^{-1}
    \end{pmatrix}
  \end{equation} where
  $\epsilon = e^{\frac{2\pi}{5}i}$; see, e.g., (1.2.22) in~\cite*{Toth:2002:finite-mobius-groups}.
  The representation
  $\mathbf{2}$ is the composition of the canonical representation
  $\circled{$\mathbf{2}$}$ with an outer-automorphism interchanging the conjugacy
  classes
  $C(\tilde{\gamma}_0)$ and $C(\tilde{\gamma}_0^2)$, see, e.g.,~\cite[\S~5.2]{Fulton-Harris:1991:representation-theory}.
  Hence, $\tilde{\gamma}_0|_{\mathbf{2}}$ has the eigenvalues
  $\epsilon, \epsilon^{-1}$. As a consequence,
  \begin{equation}
    \mathbf{2} = Span
  \brt{\alpha_0 v_0^7 + \beta_0 v_0^2 v_1^5, \alpha_1 v_1^7 + \beta_1 v_1^2 v_0^5}
  \end{equation}
  for some $\alpha_i, \beta_i \in \C$. By looking
  at the term of $\tilde{\gamma}_1(\alpha_0 v_0^7 + \beta_0 v_0^2 v_1^5)$
  with $v_0^6v_1$ and using the invariance of the representation $\mathbf{2}$, we obtain
  \begin{multline}
    0 = 7\alpha_0(\epsilon^2-\epsilon^{-2})(\epsilon-\epsilon^{-1})^6
    \\+ \beta_0\brs{5(\epsilon-\epsilon^{-1})^3(\epsilon^2-\epsilon^{-2})^4
    - 2(\epsilon^2-\epsilon^{-2})^6(\epsilon-\epsilon^{-1})}.
  \end{multline}
  So,
  \begin{equation}
    \frac{\beta_0}{\alpha_0} = -\frac{7}
    {5\brr{\frac{\epsilon^2-\epsilon^{-2}}{\epsilon-\epsilon^{-1}}}^3
    - 2\brr{\frac{\epsilon^2-\epsilon^{-2}}{\epsilon-\epsilon^{-1}}}^5}
    = -7.
  \end{equation}
  In a similar way, by looking
  at the term of $\tilde{\gamma}_1(\alpha_1 v_1^7 + \beta_1 v_1^2 v_0^5)$
  with $v_1^6v_0$, we deduce that $\frac{\beta_1}{\alpha_1} = 7$.
\end{proof}

For $\Lambda_k(\Sph^2;\G)$ with $k\geq 4$, one needs to study
the space of $\G$-equivariant harmonic maps in higher dimensional
spheres. The simplest case is to look for
$\G$-equivariant $\Phi\in Harm_{d_0}^f(\Sph^2,\Sph^{2m})$,
where $d_0 = \frac{1}{2}m(m+1)$.
The map $\Phi_0\in Harm_{d_0}^f(\Sph^2,\Sph^{2m})$ given by spherical
harmonic is certainly an example of a $\G$-equivariant map
for any group $\G \subset O(\R^3)$ since spherical harmonics induce a
metric proportional to the round one.
\begin{remark}
  Because of~\eqref{eq:spher-harmonics}, one has
  $\nul \Phi_0 = \dim \ker (\Delta_{\Sph^2} - \abs{d\Phi_0}^2_{g_{\Sph^2}}) = 2m+1,$
  and the index is $\ind \Phi_0 = \sum_{i = 0}^{m-1}(2i+1) = m^2$.
  At the same time, given $\gamma \in O(\R^3)$, one can see that
  $\gamma^* \Phi^i_0 \in \ker (\Delta_{\Sph^2} - \abs{d\Phi_0}^2_{g_{\Sph^2}})$,
  so $\gamma^* \Phi^i_0$ is a linear combination of the coordinate functions
  $\brc{\Phi^i_0}$.
\end{remark}
Denote by $r_0\colon O(\R^3) \to O(\mathbb{R}^{2m+1})$ the representation
induced by $\Phi_0$.
\begin{proposition}
  Let $\G \subset SO(\R^3)$ be a finite group and $r_0$ be as above. Then
  the restricted representation $r_0\colon \G \to SO(\mathbb{R}^{2m+1})$
  is isomorphic to the representation
  on the $2m$-th symmetric power $S^{2m}\mathbb{C}^2 \approx \mathbb{C}^{2m+1}$ via
  the isomorphism $SO(\R^3) \approx
  PSU(\mathbb{C}^2)$ through a stereographic projection.

  Isometrically different classes $[\Phi] \subset Harm_{d_0}^f(\Sph^2,\Sph^{2m})$
  of $\G$-equivariant harmonic maps
  are in one-to-one correspondence with (Hermitian) self-adjoint
  operators of the form $D \in SO(\mathbb{C}^{2m+1})$ such that
  $D \in \operatorname{End}(r_0)$ and $D = \bar{A}^t A$ for some
  $A \in SO(\mathbb{C}^{2m+1})$.
\end{proposition}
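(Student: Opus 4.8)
The plan is to push everything through the directrix correspondence of Proposition~\ref{prop:min-degree-maps}, using $\Phi_0$ as a basepoint, and then to read off the $\G$-equivariance and isometry conditions as conditions on the ``phase'' $D=\overline A^{t}A$ of the transformation $A\in SO(\C^{2m+1})$ that carries the directrix of $\Phi_0$ to that of $\Phi$. For the first claim: the component functions of $\Phi_0$ are (a rescaled orthonormal basis of) the degree-$m$ spherical harmonics $\mathcal H_m$, which is the unique irreducible $(2m{+}1)$-dimensional representation of $SO(\R^3)$; under a stereographic projection $\Sph^2\approx\mathbb{CP}^1=\mathbb P(\C^2)$ and the induced isomorphism $SO(\R^3)\approx PSU(\C^2)$, the irreducible $SO(\R^3)$-modules are exactly the even symmetric powers of the standard $SU(\C^2)$-module, so $\mathcal H_m\cong S^{2m}\C^2$, and restricting to $\G$ gives the first statement. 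I record for later that each $r_0(\gamma)$, $\gamma\in\G$, is a real orthogonal matrix of determinant $1$, hence lies in $SO(\C^{2m+1})$ and commutes with complex conjugation $\sigma$, and that $\gamma^{*}\Phi_0=r_0(\gamma)\Phi_0$ translates to $\gamma^{*}\xi_0=r_0(\gamma)\xi_0$ on directrices, where $\xi_0$ is the directrix of $\Phi_0$.

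Next I would set up the bijection. By Proposition~\ref{prop:min-degree-maps} the directrices of maps in $Harm_{d_0}^{f}(\Sph^2,\Sph^{2m})$ form a free transitive $SO(\C^{2m+1})$-set, so every pair $\{\pm\Phi\}$ has directrix $A\xi_0$ for a unique $A\in SO(\C^{2m+1})$; and from $\Phi\propto\xi_{m-1}\wedge\overline{\xi}_{m-1}$ one checks that replacing $\xi$ by $B\xi$ with $B\in SO(\C^{2m+1})$ produces the map obtained by letting $B$ act on $\Sph^{2m}\subset\R^{2m+1}$ (complexified), so in particular $\gamma^{*}(A\xi_0)=A\,r_0(\gamma)\xi_0$ for constant $A$. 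If $\Phi$ (directrix $A\xi_0$) admits an orthogonal representation $r$ with $\gamma^{*}\Phi=r(\gamma)\Phi$, then comparing directrices and using freeness forces $r(\gamma)=A\,r_0(\gamma)\,A^{-1}$ in $SO(\C^{2m+1})$; conversely this formula always matches directrices, is a homomorphism, and (after a short check of the $\pm$-sign ambiguity between a linearly full map and its directrix) satisfies $\gamma^{*}\Phi=r(\gamma)\Phi$. Thus $\Phi$ is $\G$-equivariant iff $A\,r_0(\gamma)\,A^{-1}\in O(\R^{2m+1})$ for every $\gamma$. Conjugating by $\sigma$ and using $\sigma r_0(\gamma)\sigma=r_0(\gamma)$, this is equivalent to $A^{-1}\overline A$ commuting with all $r_0(\gamma)$; since $A^{-1}=A^{t}$ gives $A^{-1}\overline A=(\overline A^{t}A)^{t}=D^{t}$ with $D:=\overline A^{t}A$, and since $\operatorname{End}(r_0|_\G)$ is stable under transpose (because $r_0(\gamma)^{t}=r_0(\gamma^{-1})$), equivariance is equivalent to $D\in\operatorname{End}(r_0|_\G)$.

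Finally I would pass to isometry classes. A direct computation shows $D=\overline A^{t}A=A^{*}A$ is Hermitian, positive definite, and lies in $SO(\C^{2m+1})$ (using $\overline A^{t}=\overline A^{-1}$), and conversely every positive Hermitian $D\in SO(\C^{2m+1})$ equals $\overline A^{t}A$ with $A:=D^{1/2}\in SO(\C^{2m+1})$ (here $\overline D=D^{-1}$ forces $\overline{D^{1/2}}=D^{-1/2}$). On the other hand, by Proposition~\ref{prop:min-degree-maps} two maps in $Harm_{d_0}^{f}$ are isometric iff their directrices are; combined with Proposition~\ref{prop:curve-ridgidity} and the total isotropy of directrices, an isometry between $A\xi_0$ and $A'\xi_0$ is realized by a unitary element of $SO(\C^{2m+1})$, hence a real one, so $A'=OA$ with $O\in SO(\R^{2m+1})$. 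Since $\overline{(OA)}^{t}(OA)=\overline A^{t}A$, the rule $[\Phi]\mapsto D$ is well defined on isometry classes; conversely $\overline A^{t}A=\overline{A'}^{t}A'$ forces $A'A^{-1}$ to be unitary and in $SO(\C^{2m+1})$, hence real, so the rule is injective; and it is surjective since the map with directrix $A\xi_0$ realizes any $D=\overline A^{t}A$ lying in $\operatorname{End}(r_0|_\G)$. This is precisely the asserted correspondence.

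The main obstacle I anticipate is the isometry step: extracting from the rigidity of holomorphic curves (Proposition~\ref{prop:curve-ridgidity}) that two isometric \emph{totally isotropic} directrices differ by a genuinely \emph{real} orthogonal transformation, not merely a unitary one, and then keeping careful track of how the reality of $A\,r_0(\gamma)\,A^{-1}$ and of $A'A^{-1}$ translate, via the polar-decomposition-type identities $A^{-1}\overline A=D^{t}$ and $D=A^{*}A$, into the single clean condition $D\in\operatorname{End}(r_0|_\G)$. The $\pm$-sign bookkeeping when passing from a harmonic map to its directrix will also need a small but careful argument.
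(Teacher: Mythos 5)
Your proposal is correct and follows essentially the same route as the paper: parametrize directrix curves as $A\xi_0$ with $A \in SO(\mathbb{C}^{2m+1})$, characterize $\G$-equivariance by reality (equivalently, unitarity) of $A r_0(\gamma) A^{-1}$, identify isometry classes with the right cosets $SO(\mathbb{R}^{2m+1})\cdot A$ via Propositions~\ref{prop:min-degree-maps} and~\ref{prop:curve-ridgidity}, and pass to $D=\bar{A}^t A$, with only cosmetic differences (you get the first claim from the classical $SO(\R^3)\approx PSU(\C^2)$ representation theory of spherical harmonics, and you verify $D\in\operatorname{End}(r_0)$ by the explicit conjugation computation where the paper uses the invariant forms $\brt{\cdot,\cdot}$ and $\brt{\cdot,\cdot}_A$). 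One small quibble: your aside that postcomposing the directrix with a general $B\in SO(\mathbb{C}^{2m+1})$ amounts to letting $B$ act on $\Sph^{2m}$ is literally true only for real $B$, but the identity $\gamma^*(A\xi_0)=A\,r_0(\gamma)\xi_0$ you draw from it holds simply because $A$ is a constant matrix, so the argument is unaffected.
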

\begin{proof}
  For an arbitrary harmonic map $\Phi \in Harm_{d_0}^f(\Sph^2,\Sph^{2m})$
  and $\gamma \in \G$, it is clear that $\gamma^* \Phi$, is another harmonic map. If
  $\xi$ is the directrix curve of $\Phi$, it follows that
  $\gamma^* \xi$ is the directrix curve of $\gamma^* \Phi$. Then
  Propositions~\ref{prop:min-degree-maps} imply that $\gamma^* \xi$
  can also be expressed as $\gamma^* \xi = r(\gamma) \xi$ for some
  $r(\gamma) \in SO(\mathbb{C}^{2m+1})$. Thus, every
  $\xi$ gives rise to a (right) representation
  \begin{equation}\label{eq:directrix-rep}
    r_{\xi}\colon \G \to SO(\mathbb{C}^{2m+1}).
  \end{equation}
  Now, let $\xi_0$ be the directrix curve of the map $\Phi_0$ given by spherical
  harmonics. Then $r_{\xi_0}(\gamma) = r_0(\gamma)$ since $r_0$ is real and
  $r_0(\gamma) \Phi_0$ is proportional to $(\wedge^m r_0(\gamma))\xi_m \wedge \overline{(\wedge^m r_0(\gamma))\xi_m}$.

  To prove the first statement of the proposition
  we proceed as follows.
  The group $\G$ acts on $\Sph^2$, which is isomorphic
  to $\mathbb{CP}^1$ via a stereographic projection. Under this isomorphism,
  $\G$ becomes a subgroup of $PSU(\mathbb{C}^2) \approx
  \quot{SU(\mathbb{C}^2)}{\brc{\pm 1}}$. So, the action of
  $\G$ is well defined on even symmetric powers
  $S^{2m}\mathbb{C}^2$.
  Given a linearly full holomorphic curve $\xi$ of degree $2m$, one can view it as
  the projectivization of an invertible operator
  $\xi \in hom(S^{2m}\mathbb{C}^2,\mathbb{C}^{2m+1})$. The
  action of $\G$ turns into the multiplication to the right,
  $\gamma^* \xi = \xi r'(\gamma)$. The degree
  of the directrix curves of harmonic maps
  in $Harm_{d_0}^f(\Sph^2, \Sph^{2m})$ is exactly $2m$
  (see~\cite[Proposition~6.15]{Barbosa:1975:min-imersions}).
  Therefore, after recalling the definition of $r_\xi$
  in~\eqref{eq:directrix-rep}, one obtains
  \begin{equation}
    r_\xi = \xi r' \xi^{-1}.
  \end{equation}

  To prove the second statement,
  note that $r_{\xi} = A r_{\xi_0} A^{-1}$ since $\xi  = A \xi_0$ for some
  $A \in SO(\mathbb{C}^{2m+1})$ by Proposition~\ref{prop:min-degree-maps}.
  One sees from Proposition~\ref{prop:curve-ridgidity} that $\G$-invariant
  metrics induced by $\Phi \in Harm_{d_0}(\Sph^2,\Sph^{2m})$ correspond to
  unitary representations $r_{\xi}$, i.e.
  \begin{equation}\label{eq:unitary-real}
    r_{\xi}\colon \G \to SO(\mathbb{C}^{2m+1}) \cap U(\mathbb{C}^{2m+1}) = SO(\mathbb{R}^{2m+1}).
  \end{equation}
  It follows from the same propositions that
  isometrically different harmonic maps
  correspond to the right cosets
  \begin{equation}
    SO(\mathbb{R}^{2m+1}) \cdot A, \quad \text{where } A \in SO(\mathbb{C}^{2m+1}),
  \end{equation}
  i.e. the directrix curves
  $BA\xi_0$ and $A \xi_0$ are isometric if and only if $B \in SO(\mathbb{R}^{2m+1})$.
  Therefore, all the unitary representations are among those of the form
  $Ar_0A^{-1}$. If we set $\brt{x,y}_A := \brt{Ax,Ay}$, where
  $\brt{\cdot, \cdot}$ is the standard Hermitian form on
  $\mathbb{C}^{2m+1}$, we see that for a unitary representation
  $Ar_0A^{-1}$, one has
  \begin{equation}
    \brt{r_0(\gamma)x,y}_A = \brt{Ar_0(\gamma)A^{-1}Ax,Ay} =
    \brt{Ax,Ay} = \brt{x,y}_A
  \end{equation}
  for all $x,y \in \mathbb{C}^{2m+1}$, and vice versa, if $r_0$
  preserves $\brt{\cdot,\cdot}_A$, the representation $Ar_0A^{-1}$ is unitary.
  Notice that the map $A \mapsto D := \bar{A}^t A$
  is one-to-one on the (right) cosets $A' \in
  SO(\mathbb{R}^{2m+1})\cdot A$ and
  $D \in SO(\mathbb{C}^{2m+1})$. In fact,
  \begin{equation}
    (\bar{A}^t A)^t (\bar{A}^t A)= A^t(\bar{A}\bar{A}^t) A = \Id.
  \end{equation}
  Because the forms $\brt{\cdot, \cdot}$, and $\brt{\cdot, \cdot}_A$
  are preserved by $r_0$, one can check that the self-adjoint
  operator $D$ is an endomorphism of the representation
  $r_0$.
\end{proof}
\begin{corollary}\label{cor:no-new-metrics-S4}
  Let $(\G, m) \in \brc{T, O}\times \brc{2} \cup \brc{I}\times \brc{2,3}$. Then
  up to a conformal automorphism of $\Sph^2$,
  the round metric is the only $\G$-invariant metric coming from
  $Harm_{d_0}^f(\Sph^2,\Sph^{2m})$.
\end{corollary}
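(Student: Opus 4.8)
The plan is to feed everything into the proposition just proved, which puts the isometry classes of $\G$-invariant metrics coming from $Harm_{d_0}^f(\Sph^2,\Sph^{2m})$ in bijection with the positive-definite self-adjoint operators $D\in SO(\C^{2m+1})$ that lie in $\operatorname{End}(r_0)$, with the round metric corresponding to $D=\Id$. Thus the task is to determine all such $D$ in each of the four cases, and to show that $D=\Id$ is forced in all of them except possibly $(\G,m)=(T,2)$, where one must further identify the remaining freedom with conformal automorphisms of $\Sph^2$.

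The mechanism I would use is the following elementary observation. Fix a real $Q$-orthonormal basis of $\R^{2m+1}\subset\C^{2m+1}$; in it, $D\in SO(\C^{2m+1})$ reads $D^t=D^{-1}$, while self-adjointness for the standard Hermitian form reads $D^t=\bar D$, so together $D\bar D=\Id$, and if moreover $D$ is a \emph{real} matrix then $D^2=\Id$, whence positivity forces $D=\Id$. By Lemma~\ref{lem:rep-info} the representation $r_0=S^{2m}\C^2|_\G$ is multiplicity-free in every case at hand, so Schur's lemma gives $\operatorname{End}(r_0)=\bigoplus_j\C\,P_j$, where $P_j$ is the ($h$-orthogonal) projection onto the isotypic summand $W_j$. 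Such a $P_j$ is a real matrix exactly when $W_j$ is fixed by complex conjugation, i.e.\ when $W_j$ is self-dual; in that case $P_j^t=P_j$ and $Q$ restricts nondegenerately to $W_j$, whereas for a conjugate pair $W,\overline W$ one gets $P_W^t=P_{\overline W}$ with $Q$ isotropic on each factor and pairing the two.

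With this, the cases are short. For $(I,2)$, $r_0=S^4\C^2=\mathbf 5$ is irreducible, so $D=c\,\Id$ and the observation gives $D=\Id$. For $(O,2)$ and $(I,3)$, $r_0=\mathbf 3\oplus\mathbf 2$ resp.\ $r_0=\mathbf 3\oplus\mathbf 4$, and every irreducible representation of $O\approx S_4$ and of $I\approx A_5$ is self-dual; hence each $P_j$ is $Q$-symmetric, so $D^t=D$, so $D=\Id$, and the metric is round. For $(T,2)$, $r_0=\mathbf 3\oplus\mathbf 1\oplus\mathbf 1'$ with $\mathbf 1,\mathbf 1'$ the complex-conjugate pair of nontrivial characters of $T\approx A_4$; the summand $\mathbf 3$ is still self-dual, so $D|_{\mathbf 3}=\Id$, but on $\mathbf 1\oplus\mathbf 1'$ the constraints $D\in SO(\C^5)$, $D$ positive definite only yield $D|_{\mathbf 1}=t$, $D|_{\mathbf 1'}=t^{-1}$ with $t>0$, i.e.\ a genuine one-parameter family $D_t$. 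To close this case one must show that the induced metrics $|d\Phi_t|_g^2 g$ are, up to a conformal automorphism of $\Sph^2$, the round metric. This last point is the only part of the argument that is not purely representation-theoretic bookkeeping, and it is where I expect the real work: it requires passing from the operator $D_t$ to the explicit geometry of the corresponding Veronese-type maps $\Phi_t\in Harm_3^f(\Sph^2,\Sph^4)$, using the explicit description of $\mathbf 1,\mathbf 1'\subset S^4\C^2$ from Lemma~\ref{lem:rep-info} together with the rigidity statements of Proposition~\ref{prop:min-degree-maps} and Proposition~\ref{prop:curve-ridgidity}.
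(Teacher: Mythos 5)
For the three cases $(O,2)$, $(I,2)$, $(I,3)$ your argument is correct and is essentially the paper's own proof: the paper likewise feeds Lemma~\ref{lem:rep-info} and Schur's lemma into the proposition preceding the corollary, writes $D$ as a positive multiple of the identity on each summand, and concludes from $D^tD=\Id$ that $D=\Id$. Your explicit justification of the step $D^t=D$ in these cases --- every irreducible of $S_4$ and of $A_5$ is self-dual, so the isotypic summands are $Q$-orthogonal and the isotypic projections are $Q$-symmetric --- is precisely the point the paper leaves implicit.

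The case $(\G,m)=(T,2)$ is a genuine gap in your proposal, and you have located it exactly. The family $D_t=P_{\mathbf 3}+t\,P_{\mathbf 1}+t^{-1}P_{\mathbf 1'}$, $t>0$ (with $P$ the isotypic projections), really does satisfy every hypothesis of the classification: it is Hermitian and positive, commutes with $r_0$, and $D_t^tD_t=\Id$ because $\mathbf 1,\mathbf 1'$ are a conjugate pair of $Q$-isotropic lines interchanged by the $Q$-transpose, so $P_{\mathbf 1}^t=P_{\mathbf 1'}$. Your proof stops there, deferring the identification of the metrics induced by the corresponding maps $\Phi_t$ with Möbius pullbacks of the round metric; but this deferred step cannot be carried out as proposed. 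If $\Phi_t^*g_{\Sph^{4}}$ were equal to $\gamma^*\brr{c\,g_{\Sph^2}}$ for a Möbius transformation $\gamma$, then comparing areas gives $c=3$, so $\Phi_t$ and $\Phi_0\circ\gamma$ would induce the same metric, and Propositions~\ref{prop:min-degree-maps} and~\ref{prop:curve-ridgidity} together with linear fullness of $\xi_0$ would force $D_t=S^4(\gamma)^*S^4(\gamma)=S^4(\gamma^*\gamma)$, whose spectrum has the form $\brc{s^{4},s^{2},1,s^{-2},s^{-4}}$; this agrees with $\brc{1,1,1,t,t^{-1}}$ only when $t=1$. So, granting the classification proposition, the metrics of $\Phi_t$ for $t\neq1$ are $T$-invariant yet not round up to a conformal automorphism, and no bookkeeping with Veronese-type maps will show otherwise. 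Note that the paper's proof does not resolve this either: it asserts that in all cases $\operatorname{End}(r_0)$ is spanned by the identities of the summands and that $D^t=D$, which fails for $T$ for exactly the reason you identified. What the application in Theorem~\ref{thm:equiv-eigenval-sphere} actually has available is invariance under the full groups $T_d$ or $T_h$, whose orientation-reversing elements act antiholomorphically and interchange $\mathbf 1$ and $\mathbf 1'$, presumably forcing $t=1$; the corollary should be proved in that setting (or with some further argument), but as stated for $\G=T$ it follows neither from your proposal nor from the paper's own proof.
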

\begin{proof}
  From Lemma~\ref{lem:rep-info}, we see that in all the cases,
  the representation $r_0\colon\Gamma \to SO(\mathbb{R}^{2m+1})
  \subset SO(\mathbb{C}^{2m+1})$
  decomposes $r_0 = \oplus_{i} r_i$
  into the direct sum of non-isomorphic irreducible (real) representations
  $r_i$. By Schur's lemma,
  \begin{equation}
    D \in hom(r_0,r_0) = \oplus_{i} hom(r_i,r_i) =
  Span \brc{\Id_i \colon r_i \to r_i}.
  \end{equation}
  So, $D = diag(\cdots d_i \cdots)$, and $d_i > 0$ since $D$ is self-adjoint.
  Then $\Id = D^t D = D^2 $ implies $D = \Id$.
\end{proof}

One also needs to study harmonic maps
$\Phi\colon\Sph^2\to\Sph^2$. Since such maps are weakly conformal, they are
holomorphic
if we choose the appropriate orientation on $\Sph^2$. Then
the harmonic degree is simply the degree of $\Phi$ considered
as a holomorphic map.

\begin{lemma}\label{lem:sph-corner-cases}
  Let $\G \in \brc{T,O}$ and $\Phi_1\colon \Sph^2\to \mathbb{CP}^1$ be a
  $\G$-equivariant holomorphic map of degree $4$. Then $\ind \Phi_1 = 7$
  and up to a unitary transformation
  of $\mathbb{CP}^1$, $\Phi_1 = [z^4 + 1 : \sqrt{2}z^2]$.

  Let $\Phi_2\colon \Sph^2\to \mathbb{CP}^1$ be a
  $I$-equivariant holomorphic map of degree $7$. Then $\ind \Phi_2 = 13$ and
  up to a unitary transformation of $\mathbb{CP}^1$,
  $\Phi_2 = [z^7 - 7z^2 : 7z^5 + 1]$.
\end{lemma}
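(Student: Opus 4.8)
The plan is to treat the two assertions of the lemma separately: the shape of the map comes from representation theory via the ``binary form'' picture, and the value of the index comes from analysing the pullback metric as a branched cover of the round sphere. Recall first that a degree-$d$ holomorphic map $\Phi\colon\mathbb{CP}^1\to\mathbb{CP}^1$ is, in homogeneous source coordinates, $[p_1:p_2]$ with $p_1,p_2$ binary $d$-forms having no common zero; post-composing with a Möbius transformation replaces the ordered pair $(p_1,p_2)$ by another ordered basis of the $2$-plane $W=\langle p_1,p_2\rangle\subset S^d\mathbb{C}^2$, and conversely a basepoint-free $2$-plane $W$ determines a degree-$d$ map, unique up to a target Möbius transformation. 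Such a $\Phi_W$ is $\G$-equivariant exactly when $W$ is invariant under the induced action of the binary group $2\G\subset SU(2)$ on $S^d\mathbb{C}^2$, and the representation $r$ is then the action of $\G$ on $\mathbb{P}(W)$. Moreover the common zero locus of a $2$-plane has at most $d-1$ points, whereas a $2\G$-invariant finite subset of $\mathbb{CP}^1$ is a union of $\G$-orbits, the smallest of size $4$ for $\G\in\{T,O\}$ and of size $12$ for $\G=I$; since $d-1\in\{3,6\}$ is strictly smaller, every invariant $2$-plane is automatically basepoint-free and yields an honest degree-$d$ equivariant map.

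Now apply Lemma~\ref{lem:rep-info}. For $\G\in\{T,O\}$ one has $S^4\mathbb{C}^2=\mathbf 3\oplus\mathbf 2$ as an $O$-module, with $\mathbf 3$ remaining irreducible under $T$; by Schur's lemma the only $2$-dimensional invariant subspace, for either group, is $\mathbf 2=\langle v_0^4+v_1^4,\,v_0^2v_1^2\rangle$, because a $2$-plane can neither be a proper subspace of the irreducible $\mathbf 3$, nor equal $\mathbf 3$, nor be a line. Hence $\Phi_1=[z^4+1:z^2]$ up to a target Möbius transformation, and requiring in addition that $r$ take values in $SO(\mathbb{R}^3)=PSU(2)$, i.e. that the chosen basis of $\mathbf 2$ be orthonormal for the $\G$-invariant Hermitian product, a short computation pins down $\Phi_1=[z^4+1:\sqrt 2\,z^2]$ up to a unitary transformation of $\mathbb{CP}^1$. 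The case $\G=I$, $d=7$ is identical: Lemma~\ref{lem:rep-info} gives $S^7\mathbb{C}^2=\mathbf 2\oplus\mathbf 6$ as a $2I$-module with $\mathbf 6$ irreducible, the unique invariant $2$-plane is $\mathbf 2=\langle v_1^7-7v_0^5v_1^2,\ v_0^7+7v_0^2v_1^5\rangle$ for the appropriate labelling of $v_0,v_1$, and the same normalization yields $\Phi_2=[z^7-7z^2:7z^5+1]$.

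It remains to compute the index. Since the coordinate functions of $\Phi_i\colon\Sph^2\to\Sph^2\subset\mathbb{R}^3$ are eigenfunctions of $|d\Phi_i|^2 g$ with eigenvalue $1$, $\ind\Phi_i$ is the number of eigenvalues of this metric that are $<1$; and $|d\Phi_i|^2 g=2\,\Phi_i^*g_{\Sph^2}$ has constant curvature $1$ with $2d-2$ conical singularities of angle $4\pi$ at the simple branch points of $\Phi_i$ (the octahedron vertices for $\Phi_1$, the icosahedron vertices for $\Phi_2$). For $\Phi_1$ the deck group of the covering $\Phi_1$ is the Klein four-group $V_4$ generated by $z\mapsto -z$ and $z\mapsto 1/z$, of order $4=\deg\Phi_1$, so $\Phi_1$ is a regular branched cover of the round sphere and the spectrum of $|d\Phi_1|^2 g$ splits over the four characters of $V_4$: the trivial character contributes the round-sphere spectrum (only the constant is $<1$), and each nontrivial character contributes the spectrum of a Laplacian on the round sphere twisted with half-integer monodromy at the three branch values, each having two eigenvalues $<1$, giving $\ind\Phi_1=1+2+2+2=7$. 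An alternative route, which also settles $\Phi_2$: the identities $\ind(z^d)=2d-1$ and $\nul(z^d)=3$ follow from the explicit spindle spectrum, one verifies $\nul\Phi_i=3$, and since a jump of $\ind$ along the connected family $Harm_d(\Sph^2,\Sph^2)$ of degree-$d$ holomorphic self-maps forces a jump of $\nul$, constancy of the nullity gives $\ind\Phi_i=\ind(z^d)$, namely $7$ and $13$. A third possibility is to obtain the lower bound $\ind\Phi_i\ge 2d-1$ from $2d-1$ explicit test functions with $Q_{\Phi_i}<0$, as in Lemma~\ref{lem:best-cover-disk}, and the matching upper bound from the variational characterization of $\lambda_{2d-1}$ together with the description of the nullspace.

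The classification is essentially formal once Lemma~\ref{lem:rep-info} is available; the real work is the index. Producing enough negative directions is routine, but ruling out further ones, equivalently showing $\lambda_{2d-1}(|d\Phi_i|^2 g)\ge 1$, is the delicate point, since it requires controlling the twisted/branched part of the spectrum of the round sphere (through the Galois structure of $\Phi_1$, or through the rigidity argument identifying $\ind\Phi_i$ with $\ind(z^d)$ for both maps). The crucial technical input for the latter is the equality $\nul\Phi_i=3$: that $|d\Phi_i|^2 g$ has no accidental eigenfunctions at eigenvalue $1$ beyond the three ambient coordinate functions.
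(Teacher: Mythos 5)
Your classification of the equivariant maps is essentially the paper's argument: view $\Phi$ as the projectivization of an element of $\hom(S^d\C^2,\C^2)$ (equivalently, a $2\G$-invariant $2$-plane in $S^d\C^2$), invoke Lemma~\ref{lem:rep-info} and Schur's lemma to identify the unique invariant $2$-plane, and normalize unitarily to get $[z^4+1:\sqrt2\,z^2]$ and $[z^7-7z^2:7z^5+1]$. That part is fine.

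The index computation, however, has a genuine gap. You correctly isolate the crucial input --- $\nul\Phi_i=3$, i.e.\ no eigenfunctions at eigenvalue $1$ beyond the three ambient coordinate functions --- but you never prove it; ``one verifies'' is precisely the step that carries the weight of the lemma. The paper proves it via Theorem~B of~\cite{Ejiri:1994:nonfull-min-surface-in-s4}, which gives $\nul\Phi=3+2\dim_\C\ker d_{[p\wedge q]}\Psi$ for the ramification-divisor (Wronskian) map $\Psi([p\wedge q])=[\mathcal{R}(p,q)]$ on $G(2,d+1)$, and then checks by an explicit linear-independence computation of the polynomials $\mathcal{R}(p,\delta q)-\mathcal{R}(q,\delta p)$ in a coordinate chart that this kernel vanishes for both maps. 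Your proposed passage from $\nul\Phi_i=3$ to $\ind\Phi_i=2d-1$ is also not sound as stated: knowing the nullity only at the two endpoints $\Phi_i$ and $z^d$ does not give ``constancy of the nullity'' along a connecting path in the family of degree-$d$ holomorphic maps, and the index can jump exactly at parameters where the nullity exceeds $3$; the paper sidesteps this by citing~\cite[Theorem~23]{Montiel-Ros:1991:s2-harm-maps}, which converts minimal nullity directly into $\ind\Phi=2d-1$ with no deformation needed. Finally, the character-splitting count for $\Phi_1$ (each nontrivial character of $V_4$ contributing exactly two negative directions) is asserted, not proved --- it is equivalent to the twisted spectral bound you would have to establish anyway --- and it cannot cover $\Phi_2$, which is not a regular cover. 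So the essential content of the lemma, the upper bound $\ind\Phi_i\le 2d-1$ through the nullity computation, is missing from the proposal.
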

\begin{proof}
  If $\Phi\colon\Sph^2\to\mathbb{CP}^1$ is
  a $\G$-equivariant map, by Proposition~\ref{prop:curve-ridgidity}, one has
  $\gamma^*\Phi = r(\gamma)\Phi$,
  where $r\colon\G\to PSU(\C^2)$. On the other hand,
  $\phi$ is the projectivization of a polynomial. So, up to multiplication
  by a constant, one can view $\Phi$ as an element of $hom(S^d\C^2, \C^2)$.
  Therefore,
  \begin{equation}
    \Phi \in hom(S^d\C^2, \tilde{r})
  \end{equation}
  is a homomorphism of representations of $2\G$, where $\tilde{r}$
  is induced by $r$.

  \textbf{The case $\mathbf{\G \in \brc{T,O}}$.} From Lemma~\ref{lem:rep-info},
  one has $S^4\C^2 = \mathbf{3}\oplus\mathbf{2}$ for $\G = O$, and
  $\mathbf{2} = \mathbf{1}\oplus\mathbf{1'}$ for $\G = T$.
  Then Schur's lemma tells us that
  $\tilde{r}\approx\mathbf{2}$ or $\tilde{r}\approx\mathbf{1}\oplus\mathbf{1'}$.
  Fix a Hermitian basis $v_0, v_1 \in \C^2$. Let $z_0, z_1 \in
  (\C^2)^*$ be the coordinates of a point in that basis and $z = \frac{z_0}{z_1}$ be
  the coordinate on $\mathbb{CP}^1$.
  Then up to a unitary transformation, the holomorphic map $\Phi$ that has $\mathbf{3}$
  as its kernel and unitarily maps $\mathbf{2}$ (or $\mathbf{1}\oplus\mathbf{1'}$)
  onto $\C^2$ has the form
  \begin{equation}
    \Phi(z) = [z^4+1 : \sqrt{2}z^2].
  \end{equation}
  since $\mathbf{2} = \mathbf{1}\oplus\mathbf{1'} = Span\brt{v_0^4 + v_1^4, v_0^2v_1^2}$.

  To calculate the index, we will prove that
  $\nul \Phi = 3$. Then by~\cite[Theorem~23]{Montiel-Ros:1991:s2-harm-maps},
  we will have $\ind \Phi = 2d - 1$.

  Any holomorphic map $\Phi\colon \Sph^2 \to\mathbb{CP}^1$
  of degree $d$ is given by a pair polynomials
  $\Phi(z) = [p(z):q(z)]$ without common roots such that
  $\max \brc{\deg p, \deg q} = d$. Thus, up to a linear transformation
  of $\mathbb{CP}^1$, the space of
  all such holomorphic maps is parametrized by a certain
  open set of the Grassmannian $G(2, d+1)$. For a pair of polynomials of
  degree $\leq d$,
  define the following map,
  \begin{equation}
    \mathcal{R}(p,q) = p'(z)q(z) - p(z)q'(z).
  \end{equation}
  One has $\deg \mathcal{R}(p,q) \leq 2d - 2$, and the zeros of $\mathcal{R}(p,q)$
  correspond to the ramification points
  of $\Phi$. Since $\mathcal{R}$ is bilinear
  and antisymmetric, the map $\Psi\colon G(2, d+1) \to \mathbb{CP}^{2d-2}$
  given by
  \begin{equation}
    \Psi([p\wedge q]) = [\mathcal{R}(p,q)]
  \end{equation}
  is correctly defined.  Theorem~B of~\cite{Ejiri:1994:nonfull-min-surface-in-s4}
  then implies that
  \begin{equation}
    \nul \Phi = 3 + 2\dim_{\C} \ker d_{[p\wedge q]}\Psi.
  \end{equation}

  In our case, we may set $p(z) = z^4 + 1$ and $q(z) = z^2$,
  and let us choose the coordinate chart $U \subset G(2,5)$,
  \begin{equation}
    U  = \set*{(p+ \delta p)\wedge (q + \delta q)}{\delta p, \delta q
  \in Span\brt{1, z, z^3}}.
  \end{equation}
  Since
  \begin{equation}
    \mathcal{R}(p+ \delta p,\ q + \delta q) = \mathcal{R}(p,q) +
    \mathcal{R}(p,\delta q) - \mathcal{R}(q,\delta p)
    + \mathcal{R}(\delta p, \delta q),
  \end{equation}
  we see that
  \begin{equation}
    d_{[p\wedge q]}\Psi(\delta p, \delta q) =
    d\pi[\mathcal{R}(p,\delta q) - \mathcal{R}(q,\delta p)],
  \end{equation}
  where $\pi\colon \C^{2d-1} \to \mathbb{CP}^{2d-2}$ is the canonical projection.
  Since
  \begin{align}
    \mathcal{R}(z^4+1, z^2) &= 4z^5 - 2z(z^4+1) = 2(z^5-z),&
    \\ \mathcal{R}(z^4+1, z^k) &= (4-k)z^{k+3} - kz^{k-1},
    &\quad k \in \brc{0,1,3}
    \\ \mathcal{R}(z^2, z^k) &= (2-k)z^{k+1},
    &\quad k \in \brc{0,1,3}
  \end{align}
  are linearly independent, we conclude that $\ker d_{[p\wedge q]}\Psi = \brc{0}$.

  \textbf{The case $\mathbf{\G = I}$.}
  From Lemma~\ref{lem:rep-info}, $S^7\C^2 = \mathbf{2}\oplus\mathbf{6}$ as the representation of $2I$,
  so there exists only one $2$-dimensional $I$-invariant subspace, and it
  is expressed as $Span\brt{v_0^7 - 7v_0^2v_1^5, v_1^7 + 7v_0^5v_1^2}$.
  Hence, $\Phi_2 = [z^7 - 7z^2 : 7z^5 + 1]$, and we choose the coordinate
  chart $U \subset G(2, 8)$,
  \begin{equation}
    U  = \set*{(p+ \delta p)\wedge (q + \delta q)}{\delta p, \delta q
  \in Span\brt{1, z, z^2, z^3, z^4, z^6}}.
  \end{equation}
  Again, the polynomials
  \begin{align}
    \mathcal{R}(z^7 - 7z^2,\ 7z^5 + 1) &=14(z^{11} + 11z^6 - z),&
    \\ \mathcal{R}(z^7 - 7z^2,\ z^k) &= (7-k)z^{k+6} - 7(2-k)z^{k+1},
    &\quad 0\leq k \leq 6,\ k\neq 5
    \\ \mathcal{R}(7z^5 + 1,\ z^k) &= 7(5-k)z^{k+4} - kz^{k-1},
    &\quad 0\leq k \leq 6,\ k\neq 5
  \end{align}
  are linearly independent, so $\ker d_{[p\wedge q]}\Psi = \brc{0}$.
  \begin{remark}
    Note that $[z^5-z]$ and $[z^{11} + 11z^6 - z]$ are unique lines of polynomials
    whose roots are the vertices of the (fixed) octahedron and icosahedron correspondingly,
    and these vertices are the branching points of $\Phi$.
  \end{remark}
\end{proof}

\subsubsection{Proof of Theorem~\ref{thm:equiv-eigenval-sphere}}
  The group $\G \in \brc{T, T_h, T_d, O, O_h, I, I_h}$ is the symmetry
  group of the corresponding Platonic solid $P$, or in the case of
  $\G = T_h$, we set $P$ to be a pyritohedron.

  One can think that $P$ is inscribed in
  $\mathbb{S}^2$. Then the rays emanating from the origin project
  $P$ onto $\mathbb{S}^2$ so that every point of $\mathbb{S}^2$
  belongs to a image of a vertex, an edge or a face. The group
  $\G$ acts transitively on the vertices, edges and faces, except
  for the case of a pyritohedron where vertices and edges
  split up into two types orbits of $(8,12)$ and $(6,24)$ elements
  correspondingly.
  Thus, one has
  \begin{itemize}
    \item $\G \in \brc{T, T_d}$,
    \begin{equation}\label{eq:N_G-thetrahedron}
      \mathbb{N}_\G = \set*{4\alpha+6\beta}{\alpha,\beta \in \mathbb{N}};
    \end{equation}
    \item $\G \in \brc{T_h, O, O_h}$,
    \begin{equation}
      \mathbb{N}_\G = \set*{6\alpha+8\beta}{\alpha,\beta \in \mathbb{N}};
    \end{equation}
    \item $\G \in \brc{I, I_h}$,
    \begin{equation}
      \mathbb{N}_\G = \set*{12\alpha+20\beta+30\gamma}{\alpha,\beta,\gamma \in \mathbb{N}}.
    \end{equation}
  \end{itemize}

  Notice that
  any maximizing $L^\infty$-measure $\mu$, i.e.
  $\overline{\lambda}_k(\mu) = \Lambda_k(\Sph^2;\G)$, has the form
  $\abs{d\Phi}^2_{g} \dv{g}$ for some $\Phi \in Harm_d^f(\Sph^2, \Sph^{2m})$
  as it follows from the proofs of Proposition~\ref{prop:max-sequence} and
  Lemma~\ref{lem:regularity-in-good-pt}. Then
  \begin{equation}
    \Phi^* g_{\Sph^n} = \frac{1}{2}\abs{d\Phi}^2_{g} g,
    \quad
    \lambda_k(\abs{d\Phi}^2_{g} g) = 1
  \end{equation}
  for $g = g_{\Sph^2}$ and by Proposition~\ref{prop:harm-sheres},
  \begin{equation}
    \Lambda_k(\mathbb{S}^2;\G) = \overline{\lambda}_k(\abs{d\Phi}^2_{g})
    = 1 \cdot \int \abs{d\Phi}^2_{g} dv_g = 8 \pi d.
  \end{equation}
  Since $\lambda_k(\abs{d\Phi}^2_{g} g) = 1$, by the definition of
  $\ind \Phi$, one also has
  \begin{equation}\label{eq:sph-max-equality}
    \ind \Phi \leq k.
  \end{equation}

  \paragraph{The case $\mathbf{\G \in \brc{T, T_d}}$.}
  The expression~\eqref{eq:N_G-thetrahedron} of $\mathbb{N}_\G$ implies that
  $k \in \mathbb{N}_\G \cup (\mathbb{N}_\G + 1)$ for any $k\geq 4$. That
  means $\Lambda_k(\Sph^2;\G) = 8\pi k = \Lambda_k(\Sph^2)$ and
  there is no maximizing metric since otherwise, Proposition~\ref{prop:harm-sheres}
  would imply the existence of $\Phi \in Harm_d^f(\Sph^2,\Sph^{2m})$ such that
  \begin{equation}
    8\pi k = \Lambda_k(\Sph^2;\G) = 8\pi d.
  \end{equation}
  On the other hand, if $d \geq 2$,
  one always has $\floor{\sqrt{8d+1}}_{odd} \leq d+1$. Hence,
  $\ind \Phi \geq d+1$ by Proposition~\ref{prop:ind-est},
  which leads to the contradiction $k = d < d+1 \leq k$.

  To deal with $k \leq 3$, we apply
  Corollary~\ref{cor:non-babl} to obtain a map
  $\Phi \in Harm_d^{f}(\Sph^2, \Sph^{2m})$ satisfying~\eqref{eq:sph-max-equality}.
  Then by Proposition~\ref{prop:ind-est}, one has $d \leq 2$, but
  the case $d= 2$ is excluded by Lemma~\ref{lem:equiv-degree}. So, $\Phi$ is an automorphism
  of the sphere. The $\G$-equivariance of $\Phi$ and Schur's lemma means that
  it is isomorphism of the representations of $\G$, whence it is not hard
  to conclude that $\Phi \in O(\R^3)$.

  \paragraph{The case $\mathbf{\G \in \brc{T_h, O, O_h}}$.}
  As in the previous case, $k \in \mathbb{N}_\G \cup (\mathbb{N}_\G + 1)$
  when $k \in [6,9] \cup [12, \infty)$, so $\Lambda_k(\Sph^2;\G) = 8\pi k$.
  For $1\leq k \leq 5$, Corollary~\ref{cor:non-babl} produces a map
  $\Phi \in Harm_d^{f}(\Sph^2, \Sph^{2m})$ satisfying~\eqref{eq:sph-max-equality}.
  Therefore, $\ind \Phi \leq 5$ and
  by Proposition~\ref{prop:ind-est}, we obtain $d \leq 5$.
  Using Lemma~\ref{lem:equiv-degree}, Proposition~\ref{prop:ind-est},
  and the inequality~\eqref{eq:sph-max-equality},
  we see that
  \begin{itemize}
    \item $(m,d) = (1,1)$ for $1\leq k \leq 3$,
    \item $(m,d) = \brc{(1,1), (2,3)}$ for $k  = 4$,
    \item $(m,d) \in \brc{(1,1), (2,3), (1, 4)}$ for $k=5$.
  \end{itemize}
  The possibility $(1, 4)$ is excluded by Lemma~\ref{lem:sph-corner-cases},
  so the maximal value of $d$ is achieved on $(m,d) = (2,3)$.
  Then Corollary~\ref{cor:no-new-metrics-S4} applied
  to $\G_+$ implies
  that there are no new metric besides the round one.

  For $k \in \brc{10,11}$, there is no maximizing metric. Otherwise,
  $\ind \Phi \leq 11$ and $\frac{1}{8\pi} \Lambda_k(\Sph^2;\G) =
  d \leq 8$ by Proposition~\ref{prop:ind-est}. But we can do better
  on a disjoint union of $9 = 8 + 1$ (or $7 = 6 + 1$) spheres,
  so Theorem~\ref{thm:main} implies
  \begin{equation}
    \Lambda_k(\Sph^2;\G) = 8\pi \cdot 9
    = 8\,\overline{\lambda}_1(g_{\Sph^2}) + \overline{\lambda}_{k-8}(g_{\Sph^2})
    = 6\,\overline{\lambda}_1(g_{\Sph^2}) + \overline{\lambda}_{k-6}(g_{\Sph^2}).
  \end{equation}

  \paragraph{The case $\mathbf{\G \in \brc{I, I_h}}$.}
  Since $60 = \operatorname{lcm}(12,20,30)$, it is clear that
  $k \in \mathbb{N}_\G \cup (\mathbb{N}_\G + 1)$ when $k \geq 60$.
  For $1 \leq k \leq 11$, by Corollary~\ref{cor:non-babl},
  there exists a maximizer $\Phi$ with
  $\ind \Phi \leq 11$, which leads to $d \leq 8$ by Proposition~\ref{prop:ind-est}.
  Using Lemma~\ref{lem:equiv-degree} and the index estimates again, we see that
  \begin{itemize}
    \item $(m,d) = (1,1)$ for $1\leq k \leq 3$,
    \item $(m,d) = \brc{(1,1), (2,3)}$ for $4 \leq k \leq 8$,
    \item $(m,d) \in \brc{(1,1), (2,3), (3,6), (1, 7)}$ for $k \in \brc{9,10,11}$.
  \end{itemize}
  The possibility $(1, 7)$ is excluded by Lemma~\ref{lem:sph-corner-cases}, and
  Corollary~\ref{cor:no-new-metrics-S4} tells us
  that there are no new metric besides the round one.

  For $13 \leq k$, there is no maximizing metric. Otherwise,
  $\frac{1}{8\pi} \Lambda_k(\Sph^2;\G) = d \geq 13$ since we can
  always attach at least $12$ spheres (bubbles). Then one verifies that
  $\floor{\sqrt{8d + 1}}_{odd} \leq d - 4$, and Proposition~\ref{prop:ind-est}
  implies
  \begin{equation}
    k \leq 2d + 2 - d + 4 = d + 6.
  \end{equation}
  On the other hand,
  we can always decompose $k = k' + s$, where $s \leq 7$,
  $k' = \max(\mathbb{N}_\G \cap [0, k])$, and attach $k$
  bubbles resulting in the estimate
  \begin{equation}
    \frac{\Lambda_k(\Sph^2;\G)}{8\pi}  = d \leq k' + s - 6
    < k' + \frac{\overline{\lambda}_s(g_{\Sph^2})}{8\pi}
  \end{equation}
  since $\max_{s\leq 7}(s - \frac{1}{8\pi}\overline{\lambda}_s(g_{\Sph^2}))
   = 4 < 6$. Thus, the disjoint union of $k'+1$ spheres produce
   a greater value of $\overline{\lambda}_k$.

  We prove by induction that $\Lambda_k(\Sph^2;\G) = 8\pi k' + \overline{\lambda}_{k-k'}(g_{\Sph^2})$.
  We suppose that $k \notin \mathbb{N}_\G \cup (\mathbb{N}_\G + 1)$, or else
  $\Lambda_{k}(\Sph^2;\G) = 8\pi k$ and the statement is obvious.
  Applying the induction assumption to Theorem~\ref{thm:main}, one obtains
  $b\in\mathbb{N}_\G \cap [k - 11,\ k]$ such that
  \begin{equation}
    \Lambda_{k}(\Sph^2;\G) = 8\pi b + \Lambda_{k-b}(\Sph^2; \G).
  \end{equation}
  If $b < k'$, we see that $k' - b \geq 4$ since $k \notin \mathbb{N}_\G \cup (\mathbb{N}_\G + 1)$
  and $\mathbb{N}_\G$ consists of even numbers. Then we arrive at the contradiction
  $8\pi k' + \overline{\lambda}_{k-k'}(g_{\Sph^2}) \leq 8\pi b + \overline{\lambda}_{k-b}(g_{\Sph^2})$
  since
  \begin{equation}
    k'-b \leq \frac{1}{8\pi} \max_{\substack{k-b\leq 11 \\ k-k' \leq 11}}
    \brc{\overline{\lambda}_{k-b}(g_{\Sph^2})
      - \overline{\lambda}_{k-k'}(g_{\Sph^2})} = 6 - 3 < 4.
  \end{equation}

\newpage
\appendix\section{Bubbling analysis}\label{app:blow-up}

Let $\brr{M,g}$ be a compact Riemannian manifold possibly with
a nonempty boundary $\partial M$ and $\dim M = m \geq 2$.
Denote by $\mathcal{M}^+_c(M)$ the space of all (positive) continuous Radon measures
on $M$. Consider a sequence $\brc{\mu_\varepsilon} \subset \mathcal{M}^+_c(M)$
such that $\mu_\varepsilon \oset{w^*}{\to}\mu$. Notice that $\mu$ is decomposable into its continuous and atomic parts,
\begin{equation}
  \mu = \mu^c + \sum_{p \in A(\mu)} w_p \delta_{p},
\end{equation}
where $A(\mu) = \brc{p \in M \colon \mu(\brc{p}) > 0}$ and $w_p  = \mu(\brc{p})$.

\begin{definition}\label{def:bubble-tree}
  Fix $\delta > 0$. We say that a converging sequence
  $\brc{\mu_\varepsilon} \subset \mathcal{M}^+_c(M)$,
  $\mu_\varepsilon \oset{w^*}{\to}\mu$,
  has a $\delta$-\emph{bubbling property} if up to a subsequence,
  there exists
  a finite collection of maps
  $T^i_\epsilon \colon U_i \to \R^m$, where
  $U_i \subset M$ is a coordinate chart around $p_i \in A(\mu)$
  with the following properties.
  \begin{enumerate}
    \item The maps $T^i_\epsilon$ have the form
      \begin{equation}
        T^i_\epsilon(x) = \frac{x - p^i_\epsilon}{\alpha^i_\epsilon},
        \quad\text{ where } p^i_\epsilon \to p_i, \ \alpha^i_\epsilon \searrow 0.
      \end{equation}
    \item Either for any $R > 0$ and small enough $\epsilon$, the balls
      $B(\alpha^i_\epsilon R,p^i_\epsilon)$ and
      $B(\alpha^j_\epsilon R,p^j_\epsilon)$ do not intersect,
      or one of them is contained in the other, say
      $B(\alpha^i_\epsilon R,p^i_\epsilon) \subset B(\alpha^j_\epsilon R,p^j_\epsilon)$ and
      $(T^j_\epsilon)^{-1}[B(\alpha^i_\epsilon R,p^i_\epsilon)]$ converges to a point in $\R^m$,
      in other words,
      \begin{equation}\label{eq:dif-rate-condition}
        \frac{\operatorname{dist}(p^i_\epsilon,p^j_\epsilon)}{\alpha^i_\epsilon + \alpha^j_\epsilon}
        +\frac{\alpha^i_\epsilon}{\alpha^j_\epsilon}
        +\frac{\alpha^j_\epsilon}{\alpha^i_\epsilon} \to \infty.
      \end{equation}
    \item $(T^i_\epsilon)_* [\mu_\epsilon] \overset{w^*}{\to} \mu_i \in \mathcal{M}^+(\R^m)$ and
      \begin{equation}\label{ineq:alomst-no-loss}
        (1-\delta)\mu(M) \leq \mu^c(M) + \sum_i \mu_i^c(\R^m) \leq  \mu(M).
      \end{equation}
  \end{enumerate}
  Here the convergence $(T_\epsilon)_* [\mu_\epsilon] \overset{w^*}{\to}
  \mu$ means
  \begin{equation}
    \int_{\R^m} \phi(x') d\mu(x') = \int_M \phi(\alpha_\epsilon x + p_\epsilon)
    d\mu_\epsilon(x),\quad \forall \phi \in C_0(\R^m).
  \end{equation}
\end{definition}

\begin{remark}\label{rem:bubbling-cons}
  The limiting measure decomposition in Definition~\ref{def:bubble-tree}
  is similar to the
  one discussed in~\cite{Petrides:2022:max-eigenval-higher-dim}.
  The proof that any sequence of measures with bounded eigenvalues,
  $\lambda_k(\mu_\epsilon)\geq c> 0$,
  exhibits the \emph{$\delta$-bubbling property} (see Proposition~\ref{prop:bubble-tree-existence})
  appears to be simpler than analogous constructions
  in~\cite{Karpukhin-Nadirashvili-Penskoi-Polterovich:2022:existence,
  Petrides:2018:exist-of-max-eigenval-on-surfaces,
  Petrides:2019:max-steklov-eigenval-on-surfaces,
  Karpukhin-Stern:2024:new-character-of-conf-eigenval}.
\end{remark}

\subsection{Bubbling construction}
\label{sec:bubl-tree-cons}
Let $\brc{\mu_\varepsilon}\subset \mathcal{M}^+_c(M)$ such that
$\mu_\varepsilon \overset{w^*}{\to}\mu$. Fix $\delta > 0$ and consider a point $p \in A(\mu)$,
$w := w_p$.
\begin{claim}
  Up to a subsequence of $\brc{\mu_\varepsilon}$, one can choose
  balls $B(r^2_\epsilon) \subset
  B(r_\epsilon)$ around $p$ such that
  \begin{equation}\label{eq:ball-choices}
    w - \varrho_\epsilon < \mu_\varepsilon[B(r^2_\epsilon)]
    \le \mu_\varepsilon[B(r_\epsilon)] < w + \varrho_\epsilon,
    \quad r_\epsilon, \varrho_\epsilon \to 0.
  \end{equation}
\end{claim}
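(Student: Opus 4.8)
The plan is a routine diagonal extraction from the weak$^*$ convergence $\mu_\varepsilon\overset{w^*}{\to}\mu$ together with the fact that $\mu(\{p\})=w$; there is no genuine obstacle, only some bookkeeping. First I would select a decreasing sequence of radii $\rho_j\searrow 0$ with $\rho_j<1$ such that all of the spheres $\{x:\dist(x,p)=\rho_j\}$ and $\{x:\dist(x,p)=\rho_j^2\}$ have zero $\mu$-measure. This is possible because the spheres $\{\dist(\cdot,p)=\rho\}$, $\rho>0$, are pairwise disjoint, so at most countably many of them, and at most countably many of the $\{\dist(\cdot,p)=\rho^2\}$, carry positive $\mu$-mass. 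Since $M$ is compact we have $\mu(M)<\infty$, and $\bigcap_j B(\rho_j;p)=\{p\}=\bigcap_j B(\rho_j^2;p)$, so continuity from above gives $\mu[B(\rho_j;p)]\to w$ and likewise $\mu[B(\rho_j^2;p)]\to w$ as $j\to\infty$.

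Next, for each fixed $j$ the balls $B(\rho_j;p)$ and $B(\rho_j^2;p)$ are $\mu$-continuity sets, so the portmanteau theorem (the total masses converge, $\mu_\varepsilon(M)\to\mu(M)$, since the constant function $1$ lies in $C(M)$) yields $\mu_\varepsilon[B(\rho_j;p)]\to\mu[B(\rho_j;p)]$ and $\mu_\varepsilon[B(\rho_j^2;p)]\to\mu[B(\rho_j^2;p)]$ as $\varepsilon\to0$. I would then fix $j_0$ with $|\mu[B(\rho_j;p)]-w|<1/j$ and $|\mu[B(\rho_j^2;p)]-w|<1/j$ for all $j\ge j_0$, and using these limits choose thresholds $\varepsilon_{j_0}>\varepsilon_{j_0+1}>\cdots\searrow0$ so that $|\mu_\varepsilon[B(\rho_j;p)]-\mu[B(\rho_j;p)]|<1/j$ and $|\mu_\varepsilon[B(\rho_j^2;p)]-\mu[B(\rho_j^2;p)]|<1/j$ whenever $\varepsilon<\varepsilon_j$.

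Finally, for $\varepsilon$ in the window $\varepsilon_{j+1}\le\varepsilon<\varepsilon_j$, which covers all sufficiently small $\varepsilon$, I would set $r_\varepsilon:=\rho_j$, $r_\varepsilon^2:=\rho_j^2$ and $\varrho_\varepsilon:=3/j$ (for the finitely many larger $\varepsilon$ the choice is irrelevant). Then $r_\varepsilon,\varrho_\varepsilon\to0$ as $\varepsilon\to0$, the inclusion $B(r_\varepsilon^2)\subset B(r_\varepsilon)$ and the middle inequality $\mu_\varepsilon[B(r_\varepsilon^2)]\le\mu_\varepsilon[B(r_\varepsilon)]$ are automatic from $\rho_j^2\le\rho_j$ and monotonicity of $\mu_\varepsilon$, and combining the estimates above gives $w-\varrho_\varepsilon<w-2/j<\mu_\varepsilon[B(r_\varepsilon^2)]\le\mu_\varepsilon[B(r_\varepsilon)]<w+2/j<w+\varrho_\varepsilon$, which is exactly \eqref{eq:ball-choices}.

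The step I expect to require the most attention is the diagonal bookkeeping in the last paragraph: arranging the nested thresholds $\varepsilon_j$ so that the two-sided bounds hold for \emph{every} $\varepsilon$ in the tail of the sequence while simultaneously $r_\varepsilon\to0$ and $\varrho_\varepsilon\to0$; everything else is standard. Note that a subsequence of $\{\mu_\varepsilon\}$ is not actually needed for this claim, although one may of course pass to one for consistency with the rest of the bubbling construction.
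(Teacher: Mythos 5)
Your argument is correct and follows essentially the same route as the paper: transfer the concentration of $\mu$ at $p$ to $\mu_\varepsilon$ via weak$^*$ convergence, then diagonalize in $\varepsilon$ to get $r_\varepsilon,\varrho_\varepsilon\to0$. The only cosmetic difference is that you work with $\mu$-continuity spheres and the full portmanteau convergence (using $\mu_\varepsilon(M)\to\mu(M)$), whereas the paper invokes the one-sided semicontinuity inequalities ($\liminf$ on the open ball $B(r^2)$, $\limsup$ on $\overline{B(r)}$, with the auxiliary ball $B(2r)$); your bookkeeping is slightly more explicit, and your remark that no genuine subsequence extraction is needed is accurate.
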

\begin{proof}
  Fix $\varrho > 0$ and let us choose $r > 0$ small enough so that
  $w \leq \mu[B(2r)] < w + \varrho$. By the properties of
  the weak$^*$ convergence, one has
  \begin{equation}
    \mu[B(r^2)] \leq \liminf \mu_\epsilon\brs{B(r^2)}
    \quad\text{and}\quad
    \limsup \mu_\epsilon[\overline{B(r)}]
      \leq \mu[\overline{B(r)}].
  \end{equation}
  So, for a given $\varrho > 0$, there exists a measure $\mu_\epsilon$ satisfying
  the statement of the claim.
\end{proof}

We are going to work in a
coordinate chart $U \subset M$ centered around $p$, where
$U \subset \mathbb{R}^m$
or $\mathbb{R}^m_{+}$ if $p \in \partial M$.
Consider the inverse $\pi \colon \mathbb{R}^m \to
\mathbb{S}^m \setminus \brc{N}$ of the stereographic projection
and define the map
\begin{equation}
  \Gamma_\varepsilon \colon {(0, 1]\times\mathbb{S}^m\setminus\brc{N}} \to \mathbb{B}^{m+1},
\end{equation}
\begin{equation}
  \Gamma_\varepsilon (\alpha, \tilde{q}) = \frac{1}{\mu_\varepsilon[B(r_\epsilon)]}
  \int_{B(r_\epsilon)} \pi \brr{\frac{x - q}{\alpha}} d\mu_\varepsilon(x),
\end{equation}
under the identification $\tilde{q} = \pi(q)$.
By using the Lebesgue dominated convergence theorem, it is not hard to see that $\Gamma_\varepsilon$
extends to a continuous map $\Gamma_\varepsilon \colon {[0, 1] \times \mathbb{S}^m}
  \to \mathbb{B}^{m+1}$
and
\begin{equation}
  \abs{\Gamma_\varepsilon(1, \tilde{q}) - \tilde{q}} \le C r_\epsilon \le \frac{1}{2}
\end{equation}
  for small enough $\varepsilon$ because of our choice of the balls in~\eqref{eq:ball-choices}. It means
$\Gamma_\varepsilon|_{\brc{1}\times \mathbb{S}^m}$ is homotopic to the identity map
$\Id_{\mathbb{S}^m}$ and, therefore, has a non-zero degree.
At the same time, $\Gamma_\varepsilon|_{\brc{0}\times \mathbb{S}^m}$
maps everything to the north pole $\brc{N}$.
By topological reasons, $\Gamma_\varepsilon$ must be surjective on
$\Int \mathbb{B}^{m+1}$. In particular, there exists
a point $\brr{\alpha_\varepsilon, p_\varepsilon} \in
  {(0, 1] \times \mathbb{R}^m}$ such that
\begin{equation}
  \Gamma_\varepsilon\brr{\alpha_\varepsilon, \tilde{p}_\varepsilon}
  = (0, \delta - 1) \in \R^m\times\R,
\end{equation}
i.e. close enough to the south pole $(0,-1)$. Let us set
 $T^p_\epsilon \colon U \to \R^m$,
\begin{equation}
  T^p_\epsilon(x) = \frac{x - p_\epsilon}{\alpha_\epsilon},
\end{equation}
for some neighborhood $U$ of $p$.
Then it follows that the measures $\mu_{p,\varepsilon} \in \mathcal{M}^+_c(\mathbb{S}^m)$,
\begin{equation}
  \mu_{p,\varepsilon} :=
  (\pi\circ T^p_\epsilon)_*[\mu_\varepsilon \mres {B(r_\varepsilon)}],
\end{equation}
satisfy the equation
\begin{equation}\label{eq:balance}
  \frac{1}{\mu_{p,\epsilon}(\Sph^m)}\int_{\mathbb{S}^m} x d\mu_{p,\varepsilon} = (0, \delta - 1)
  \in \R^m\times\R.
\end{equation}
Then $\mu_{p,\varepsilon}(\mathbb{S}^m) \to w$ and up to a subsequence,
$\mu_{p,\varepsilon} \overset{w^*}{\to} \mu_p$.
The measure $\mu_p$ still
satisfies~\eqref{eq:balance}. In particular, it implies that the support of
$\mu_p$ has
at least two points, i.e.
\begin{equation}\label{ineq:blow-up-alternative}
  \text{either } \mu_p^c \neq 0 \text{ or } \abs{A(\mu_p)} \ge 2
  \text { on } \R^m\cup\brc{\infty} \approx \Sph^m
\end{equation}
Let us decompose
\begin{equation}
  \mu_p = \mu_p^c + w_N \delta_{N} + \sum_{p' \in A(\mu_p)\cap \R^m} w_{p'}\delta_{p'}.
\end{equation}
Because of~\eqref{eq:balance}, we see that
$\frac{w_N  - (w - w_N)}{w} \leq \delta - 1$,
so $w_N \leq \frac{1}{2}w\delta$ and therefore
\begin{equation}
  w\geq \mu_p(\R^m) \geq w(1 - \delta).
\end{equation}
\begin{remark}\label{rem:supp-on-bndry}
  Notice that if $\supp \mu_\epsilon \subset \partial M$
  (the case of the Steklov problem), then $p_\epsilon \in
  \mathbb{R}^{m-1}$, $T^p_\epsilon \colon U \to \R^m_+$ and
  $\supp \mu_{p,\epsilon} \subset \mathbb{S}^{m-1} \subset \mathbb{S}_{+}^{m}$.
\end{remark}

\begin{claim}
  The sets $T^p_\epsilon[B(r_\epsilon)]$ exhaust
  $\mathbb{R}^m$.
\end{claim}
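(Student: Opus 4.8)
\noindent\emph{Proof sketch.} The plan is to reduce the statement to a comparison between two quantities attached to the blow-up, namely $\rho_\epsilon := r_\epsilon/\alpha_\epsilon$ and $c_\epsilon := (p-p_\epsilon)/\alpha_\epsilon \in \mathbb{R}^m$. Since $T^p_\epsilon(x) = \alpha_\epsilon^{-1}(x-p) + c_\epsilon$, one gets $T^p_\epsilon[B(r_\epsilon)] = B(\rho_\epsilon;c_\epsilon)$ and $T^p_\epsilon[B(r_\epsilon^2)] = B(r_\epsilon\rho_\epsilon;c_\epsilon)$, so the assertion ``$T^p_\epsilon[B(r_\epsilon)]$ exhausts $\mathbb{R}^m$'' is equivalent to $\rho_\epsilon - |c_\epsilon| \to \infty$. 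First I would record two mass estimates for the pushforwards $\nu_\epsilon := (T^p_\epsilon)_*[\mu_\epsilon\mres B(r_\epsilon)]$, which are supported in $B(\rho_\epsilon;c_\epsilon)$, have mass $m_\epsilon := \mu_\epsilon[B(r_\epsilon)] \to w$, and satisfy $\pi_*\nu_\epsilon = \mu_{p,\epsilon}$ (here $\pi$ is the inverse stereographic projection). Taking the last coordinate in the balance identity~\eqref{eq:balance} and using $\frac{|z|^2-1}{|z|^2+1} = 1 - \frac{2}{|z|^2+1}$ turns it into $\int_{\mathbb{R}^m}\frac{|z|^2}{|z|^2+1}\,d\nu_\epsilon = \tfrac{\delta}{2}m_\epsilon$; since the integrand is $\ge \tfrac12$ on $\{|z|\ge1\}$, this yields $\nu_\epsilon(\{|z|\ge1\}) \le \delta m_\epsilon$, hence a fixed positive fraction of the mass stays in the unit ball, $\nu_\epsilon(B(1;0)) \ge (1-\delta)m_\epsilon$. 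Second, the choice~\eqref{eq:ball-choices} of the nested balls gives $\nu_\epsilon\bigl(\mathbb{R}^m \setminus B(r_\epsilon\rho_\epsilon;c_\epsilon)\bigr) = \mu_\epsilon\bigl[B(r_\epsilon)\setminus B(r_\epsilon^2)\bigr] < 2\varrho_\epsilon \to 0$.

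With these in hand the argument is in two steps. Step 1, that $\rho_\epsilon\to\infty$: if not, after passing to a subsequence along which $\rho_\epsilon$ is bounded and $c_\epsilon$ converges in $\mathbb{R}^m\cup\{\infty\}$, we have $r_\epsilon\rho_\epsilon\to0$, so all but an $o(1)$ portion of the mass of $\nu_\epsilon$ lies in $B(r_\epsilon\rho_\epsilon;c_\epsilon)$, a ball whose radius tends to $0$ and whose centre tends to some $c$; pushing forward by $\pi$ (with $\pi(\infty):=N$) then forces $\mu_p = w\,\delta_{\pi(c)}$, a single Dirac mass, contradicting the already-established fact that $\supp\mu_p$ has at least two points (cf.~\eqref{ineq:blow-up-alternative}). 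Step 2, the conclusion: combining the two mass estimates gives $\nu_\epsilon\bigl(B(1;0)\cap B(r_\epsilon\rho_\epsilon;c_\epsilon)\bigr) \ge (1-\delta)m_\epsilon - 2\varrho_\epsilon$, which is $>0$ for small $\epsilon$ because $m_\epsilon\to w>0$, $\varrho_\epsilon\to0$ and $\delta<1$; hence these two balls overlap, so $|c_\epsilon| < 1 + r_\epsilon\rho_\epsilon$, and therefore $\rho_\epsilon - |c_\epsilon| > (1-r_\epsilon)\rho_\epsilon - 1 \to \infty$ by Step 1 and $r_\epsilon\to0$. For the Steklov case ($\supp\mu_\epsilon\subset\partial M$) I would invoke Remark~\ref{rem:supp-on-bndry}: then $p_\epsilon,c_\epsilon\in\mathbb{R}^{m-1}$, the $\nu_\epsilon$ live on $\mathbb{R}^{m-1}$, and $T^p_\epsilon$ carries half-balls to half-balls, so the identical computation shows $T^p_\epsilon[B(r_\epsilon)]$ exhausts $\mathbb{R}^m_+$.

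I expect the only genuinely delicate point to be Step 1 — excluding bounded $\rho_\epsilon$ — where one must keep careful track of the (already fixed) limit $\mu_p$ under repeated passage to subsequences and make the ``shrinking support $\Rightarrow$ Dirac limit'' implication precise via upper semicontinuity of measures on the compact space $\mathbb{S}^m$. I would also flag that the auxiliary ball $B(r_\epsilon^2)$ of~\eqref{eq:ball-choices} is indispensable: the balance identity alone only gives $\rho_\epsilon - |c_\epsilon| > -1$, and it is precisely the near-total concentration of $\mu_\epsilon$ inside $B(r_\epsilon^2)$ (hence of $\nu_\epsilon$ inside $B(r_\epsilon\rho_\epsilon;c_\epsilon)$) that upgrades this to $\rho_\epsilon - |c_\epsilon| \to +\infty$ once $\rho_\epsilon\to\infty$.
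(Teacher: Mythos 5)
Your proof is correct and takes essentially the same route as the paper: the same reduction to $(r_\epsilon-\abs{p_\epsilon})/\alpha_\epsilon\to\infty$, the same use of~\eqref{eq:ball-choices} to confine all but $2\varrho_\epsilon$ of the mass to $T^p_\epsilon[B(r_\epsilon^2)]$, and the same contradiction with~\eqref{eq:balance} (via~\eqref{ineq:blow-up-alternative}) to rule out bounded $r_\epsilon/\alpha_\epsilon$. The only cosmetic difference is that you extract the concentration of $(T^p_\epsilon)_*[\mu_\epsilon]$ near the origin quantitatively from the last coordinate of the balance identity, whereas the paper argues by separating hyperplanes.
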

\begin{proof}
  It is obvious that $p_\varepsilon \in B(r_\epsilon)$. Then
  it is enough to prove that
  $\frac{r_\epsilon - \abs{p_\varepsilon}}{\alpha_\varepsilon}
  \to \infty$.
  Since the measures $\mu_\epsilon$ concentrate on $B(r^2_\epsilon)$, we see that
  the origin $\brc{0} \in \R^m$ must be an accumulation point of the sets
  \begin{equation}
    B(\alpha^{-1}_\epsilon r^2_\epsilon)-\alpha^{-1}_\epsilon p_\epsilon,
  \end{equation}
  or else we would separate the origin and the sets
  by hyperplanes $H_\epsilon$ such that $\dist(H_\epsilon, 0) \geq c_1$,
  and the center of mass of $\mu_{p,\epsilon}$ would not lie
  on the last coordinate axis. In other words, the equation~\eqref{eq:balance}
  would not be possible for small $\epsilon$.

  In particular, this means
  \begin{equation}
    \alpha^{-1}_\epsilon\abs{ p_\epsilon} \leq \alpha^{-1}_\epsilon r^2_\epsilon
    +o(1).
  \end{equation}
  Therefore
  \begin{equation}
    \alpha^{-1}_\epsilon(r_\epsilon - \abs{p_\varepsilon})
    \geq \alpha^{-1}_\epsilon r_\epsilon(1 - r_\epsilon)
    - o(1),
  \end{equation}
  and obviously $\alpha^{-1}_\epsilon r_\epsilon \to \infty$,
  or else
  $B(\alpha^{-1}_\epsilon r^2_\epsilon)-\alpha^{-1}_\epsilon p_\epsilon$
  would converge to a point contradicting~\eqref{eq:balance}.
\end{proof}
Since $T^p_\epsilon[B(r_\epsilon)]$ exhaust $\mathbb{R}^m$,
it is clear that
\begin{equation}
  (T^p_\epsilon)_*[\mu_\epsilon] \overset{w^*}{\to} \mu_p
  \text{ on } \R^m.
\end{equation}
Now, consider the following
logarithmic cut-off function $\eta_{r,R} \colon M \to [0,1]$,
\begin{equation}\label{eq:log-cut-off}
  \eta_r^R(x) = \begin{dcases}
    0,                                       & \quad x \in M \setminus B(R,p)      \\
    \frac{\ln \frac{\abs{x}}{R}}{\ln \frac{r}{R}}, & \quad x \in B(R,p) \setminus B(r,p) \\
    1,                                       & \quad x \in B(r,p)
  \end{dcases},
\end{equation}
\begin{equation}\label{eq:cut-off-estim}
  \int \abs{d\eta^R_r}^m_g dv_g \sim \brr{\frac{1}{\log \frac{R}{r}}}^{m-1},
\end{equation}
where $ m = \dim M$.

\paragraph{Finiteness of the bubbling tree.}
For every $t > 0$, we are going to construct a sequence of
test functions with the Rayleigh quotient~\eqref{eq:variat-character}
converging to $0$ as $t\to0$.
For every atom $p \in A(\mu) \subset M$ we can pick
$\phi^p_\epsilon := \eta_{r^2_t}^{r_t}$ for some small enough $r_t$
to see that
\begin{equation}\label{eq:vanish-prop}
  \lim_{\epsilon\to 0}\int (\phi^p_t)^2 d\mu_\epsilon \geq \frac{1}{2}w_p
  \text{  and  } \int \abs{d\phi^p_t}^2 dv_g < t.
\end{equation}
At the same time, if $\mu^c \neq 0$, one has an extra
test function
\begin{equation}\label{eq:vanish-prop-2}
  \phi^c_t = 1 -\sum_{p \in A(\mu)} \eta^{\sqrt{r_t}}_{r_t},
  \quad \lim_{\epsilon\to 0}\int(\phi^c_t)^2 d\mu_\epsilon \geq \frac{1}{2}\mu^c(M)
\end{equation}
Then we can decompose every $\phi^p_t$
into
\begin{equation}\label{eq:atom-decomp}
  \phi^p_t =
  \phi^p_t\brr{\phi^{p,\infty}_{t}\circ T^p_\epsilon}
  +\phi^p_t\brr{\phi^{p,c}_{t}\circ T^p_\epsilon}
  +\sum_{p' \in A(\mu_p)\cap\R^m}
  \phi^p_t\brr{\phi^{p'}_t \circ T^p_\epsilon},
\end{equation}
where all the summands have disjoint compact supports,
satisfy~\eqref{eq:vanish-prop} or~\eqref{eq:vanish-prop-2} with $w_{p'}$ and
\begin{itemize}
  \item $\phi^{p,\infty}_{t}  = 0$ if $w_\infty := w_{N} = 0$,
  or else
  we set $\phi^{p,\infty}_{t} := 1 - \eta^{R^3_t}_{R^2_t}$,
  for some large enough $R_t$.
  \item $\phi^{p,c}_{t} = 0$ if
  $\mu_p^c = 0$, or else we set
  $\phi^{p,c}_{t} := \eta^{R^2_t}_{R_t}
  - \sum_{p' \in A(\mu_p)} \eta^{\sqrt{r_t}}_{r_t}$.
\end{itemize}
The estimates~\eqref{eq:vanish-prop} and~\eqref{eq:vanish-prop-2}
are possible because of the equality
\begin{equation}
  I_t := \lim_{\epsilon\to0}\int_{M}
  \abs{d(\phi^{p,\infty}_{t}\circ T^p_\epsilon)}_g^2
  dv_g = \brr{\lim_{\epsilon\to0} \alpha_\epsilon^{m-2}} \int_{\R^m}
  \abs{d\phi^{p,\infty}_{t}}^2 dx,
\end{equation}
i.e. either $m> 2$ and $I$ is trivially zero, or $m=2$ and
one can make $I_t$ arbitrarily small by~\eqref{eq:cut-off-estim}.
Then we can blow up the
atoms of $\mu_p$ on $\R^m$ so that
\begin{equation}
  T^{i}_\epsilon := (T^{p'}_\epsilon\circ T^p_\epsilon)_*[\mu_\epsilon]
  \overset{w^*}{\to} \mu_{p'} \text{ on } \R^m, \text{ where }
  p' \in A(\mu_p).
\end{equation}
Notice that
\begin{equation}\label{eq:composition}
  T^{p'}_\epsilon\circ T^p_\epsilon =
\frac{x - p_\epsilon - \alpha_\epsilon p'_\epsilon}
{\alpha_\epsilon\alpha'_\epsilon}=:
\frac{x - \tilde{p}_\epsilon}
{\tilde{\alpha}_\epsilon}
\end{equation}
for some $\tilde{p}_\epsilon \in M$. Iteratively applying the
procedure described above, we arrive at
the following proposition.

\begin{proposition}\label{prop:bubble-tree-existence}
  Let $\brc{\mu_\varepsilon} \subset \mathcal{M}^+_c(M)$,
  $\mu_\varepsilon \overset{w^*}{\to} \mu$,
  and $\lambda_k(\mu_\epsilon) \geq c > 0$. Then for any $\delta > 0$,
  $\brc{\mu_\varepsilon}$ has the $\delta$-bubbling property
  with at most $k$ maps $\brc{T^i_\epsilon}$. If
  $\supp \mu_\epsilon \subset \partial M$, one has
  $T^i_\epsilon \colon B(1,p_i) \to \R^m_+$.
\end{proposition}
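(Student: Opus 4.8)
The plan is to iterate the single-atom blow-up construction carried out in Section~\ref{sec:bubl-tree-cons} and to control the total number of blow-ups by the eigenvalue bound. First I would organize the construction as a tree: the root is $(M,\mu)$, and for each atom $p\in A(\mu)$ the construction of Section~\ref{sec:bubl-tree-cons} produces a map $T^p_\epsilon$, a blown-up measure $\mu_p$ on $\Sph^m$ (equivalently on $\R^m$, since the mass at $N$ satisfies $w_N\le\tfrac12 w_p\delta$), and the alternative~\eqref{ineq:blow-up-alternative}: either $\mu_p^c\neq 0$, or $\abs{A(\mu_p)}\ge 2$. I declare $p$ a child-producing node and recurse on each atom $p'\in A(\mu_p)\cap\R^m$, using the composed dilations~\eqref{eq:composition}, which indeed have the required form $T^{i}_\epsilon(x)=(x-\tilde p_\epsilon)/\tilde\alpha_\epsilon$ with $\tilde\alpha_\epsilon\searrow 0$ and $\tilde p_\epsilon\to p$. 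The separation condition~\eqref{eq:dif-rate-condition} between two maps produced at the same or different nodes follows from the standard fact that, at each blow-up, the atoms of $\mu_p$ that become new nodes are carried off to distinct points of $\R^m$ (their rates differ, or their centres separate after rescaling) — this is exactly what the balance equation~\eqref{eq:balance} forces, and I would spell it out at the level of two nodes as in the second Claim of Section~\ref{sec:bubl-tree-cons}.

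The mass accounting gives termination of each branch. At every step the inequality $w_p\ge \mu_p(\R^m)\ge (1-\delta)w_p$ together with~\eqref{ineq:blow-up-alternative} shows that either we have peeled off a nontrivial continuous part $\mu_p^c$, or the atomic mass has split into at least two strictly smaller atoms. Summing the continuous parts collected along the whole tree and comparing with $\mu(M)$ yields the double inequality~\eqref{ineq:alomst-no-loss}; the lower bound $(1-\delta)\mu(M)$ comes from the fact that at each generation at most a factor $\delta$ of the remaining atomic mass is lost to the point $N$, and one chooses the $\delta$ at level $j$ to be, say, $\delta 2^{-j}$ so the geometric series of losses is $\le\delta\,\mu(M)$. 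So the only thing that could prevent the tree from being finite is an infinite chain of nodes each of which merely redistributes atomic mass without ever producing a continuous part.

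This is where the eigenvalue bound $\lambda_k(\mu_\epsilon)\ge c>0$ enters, and it is the main obstacle. The point is that each node of the tree that survives to produce its own blow-up carries a test function: for an atom $p$ at generation zero one uses the logarithmic cut-off $\phi^p_t=\eta_{r_t^2}^{r_t}$ with $\int\abs{d\phi^p_t}^2\,dv_g<t$ and $\liminf_\epsilon\int(\phi^p_t)^2 d\mu_\epsilon\ge\tfrac12 w_p$ by~\eqref{eq:vanish-prop}, and for nodes deeper in the tree one uses the decomposition~\eqref{eq:atom-decomp}, whose summands have pairwise disjoint compact supports and each of which again has Dirichlet energy $<t$ (using that in dimension $m=2$ the prefactor $\alpha_\epsilon^{m-2}=1$ so one still controls the energy via~\eqref{eq:cut-off-estim}, and for $m>2$ the scaled energy is literally zero). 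If the tree had $k+1$ distinct mass-carrying nodes, the corresponding $k+1$ test functions would, for $t$ small and then $\epsilon$ small, span a $(k+1)$-dimensional subspace on which the Rayleigh quotient is $<c$, contradicting $\lambda_k(\mu_\epsilon)\ge c$ via the variational characterization~\eqref{eq:variat-character}. Hence the tree has at most $k$ nodes, i.e.\ at most $k$ maps $T^i_\epsilon$, and in particular every branch terminates. Finally, the Steklov refinement is immediate from Remark~\ref{rem:supp-on-bndry}: if $\supp\mu_\epsilon\subset\partial M$ then each $p_\epsilon$ lies in $\R^{m-1}$, each $T^p_\epsilon$ maps into $\R^m_+$, and the blown-up measures are supported on $\Sph^{m-1}\subset\Sph^m_+$; this property is stable under the composition~\eqref{eq:composition}, so it passes to all nodes of the tree. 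The one genuinely delicate bookkeeping step is checking that the test functions attached to nodes lying on a common chain of the tree (parent and descendant) have disjoint supports after composing the cut-offs with the dilations — this is handled exactly by the three-summand decomposition~\eqref{eq:atom-decomp} and the choice of the radii $r_t\ll R_t$ at each scale, which I would carry out inductively on the depth of the tree.
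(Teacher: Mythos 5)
Your proposal follows essentially the same route as the paper: iterate the single-atom blow-up of Section~\ref{sec:bubl-tree-cons}, note that by~\eqref{ineq:blow-up-alternative} and~\eqref{eq:atom-decomp} each iteration strictly increases the number of disjointly supported test functions with energy $<t$ and mass bounded below, so more than $k$ nodes would force $\lambda_k(\mu_\epsilon)<t<c$ via~\eqref{eq:variat-character}, and the Steklov case follows from Remark~\ref{rem:supp-on-bndry}. Your explicit per-level choice $\delta 2^{-j}$ to secure~\eqref{ineq:alomst-no-loss} is just a harmless way of making precise a bookkeeping step the paper leaves implicit.
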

\begin{proof}
  Pick $t < c$. Arguing inductively as in the beginning
  of the paragraph, at each iteration step $n$, we have a collection of
  $k_n$ test functions $\brc{\phi_i} \subset C(M)\cap H^1(M)$ with disjoint supports
  satisfying~\eqref{eq:vanish-prop} or~\eqref{eq:vanish-prop-2}. Because
  of~\eqref{ineq:blow-up-alternative} and~\eqref{eq:atom-decomp},
  one has
  \begin{equation}
    k_{n+1} > k_n.
  \end{equation}
  So, once $k_n > k$, by variational characterization
  we will have $c \leq \lambda_k(\mu_\epsilon) < t$,
  which contradicts
  the assumptions of the proposition.

  Therefore, the iteration process stops at some point and one
  can check that at every step, the
  conditions~\eqref{eq:dif-rate-condition}, \eqref{ineq:alomst-no-loss},
  and~\eqref{eq:composition} are satisfied. Remark~\ref{rem:supp-on-bndry}
  justifies the last statement.
\end{proof}

\subsection{The case of \texorpdfstring{$\dim M = 2$}{Lg}}

Let us use the convention
$\overline{\lambda}_k(\mu) = 0$ if $\mu = 0$. We use isothermal
coordinates since $\dim M = 2$.
\begin{corollary}\label{cor:union-ineq}
  Let $\brr{M,g}$ be a closed Riemannian surface, $\partial M = \varnothing$, and
  $\brc{\mu_\epsilon} \subset \mathcal{M}^+_c(M)$ be a
  sequence of (continuous) Radon measures such that
  $\mu_\epsilon \oset{w^*}{\to} \mu \neq 0$ and
  $\lambda_k(\mu_\epsilon) \geq c > 0$.
  Denote by $\mu^c$ the continuous part of $\mu$. Then for any
  $\delta > 0$,
  there exist at most $k$ measures $\brc{\tilde{\mu}_i}_{1\leq i \leq b}
  \subset \mathcal{M}^+_c(\mathbb{S}^2)$,
  and numbers $k_i \in \mathbb{N}$ such that
  \begin{equation}
    \mu(M)(1 - \delta) \leq \mu^c(M) + \sum_{i= 1}^b \tilde{\mu}_i(\mathbb{S}^2) \leq \mu(M)
  \end{equation}
  and
  \begin{equation}
    \limsup_{\epsilon\to 0}\overline{\lambda}_k(\mu_\epsilon)
    \leq (1 + \delta) \brr{\overline{\lambda}_{k_0}\brr{[g],\mu^c} +
    \sum_{i=1}^b \overline{\lambda}_{k_i}\brr{\mathbb{S}^2,\tilde{\mu}_i}},
  \end{equation}
  where $k_0 + \sum_{i = 1}^b k_i = k$ for some $k_0 \in \mathbb{N}$.
\end{corollary}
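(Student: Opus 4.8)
The plan is to feed $\brc{\mu_\epsilon}$ into the bubbling machinery of Proposition~\ref{prop:bubble-tree-existence} and then turn the resulting decomposition into a spectral bound by transplanting test functions. First I would pass to a subsequence along which, simultaneously, $\overline\lambda_k(\mu_\epsilon)\to\limsup_\epsilon\overline\lambda_k(\mu_\epsilon)$, $\lambda_k(\mu_\epsilon)\to L$, and $\brc{\mu_\epsilon}$ enjoys the $\delta$-bubbling property, giving blow-up maps $T^i_\epsilon$, $1\le i\le b\le k$, with $(T^i_\epsilon)_*[\mu_\epsilon]\overset{w^*}{\to}\mu_i$ on $\R^2$ and
\[
  (1-\delta)\mu(M)\le \mu^c(M)+\sum_{i=1}^b\mu_i^c(\R^2)\le\mu(M).
\]
I then set $\tilde\mu_i:=\mu_i^c$, regarded as a continuous measure on $\Sph^2\cong\R^2\cup\brc{\infty}$ (it has no atom at $\infty$), so that the mass inequality of the corollary holds verbatim; I also write $\tilde\mu_0:=\mu^c$ and let $(\Sigma^*,\mu^*)$ be the disjoint union of $(M,\mu^c)$ and of those spheres $(\Sph^2,\tilde\mu_i)$ carrying positive mass. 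Note $L\in(0,\infty)$: positivity from $\lambda_k(\mu_\epsilon)\ge c$, finiteness from Proposition~\ref{prop:upper-cont-and-bound} together with $\mu_\epsilon(M)\to\mu(M)>0$; since $\overline\lambda_k(\mu_\epsilon)=\lambda_k(\mu_\epsilon)\mu_\epsilon(M)$, one gets $\limsup_\epsilon\overline\lambda_k(\mu_\epsilon)=L\mu(M)$.

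The core is a transplanting estimate: for any indices $(k_j)$ and finite-dimensional subspaces nearly realizing the variational eigenvalues $\lambda_0(\tilde\mu_j),\dots,\lambda_{k_j}(\tilde\mu_j)$ on each piece $j$, one builds a test space on $(M,\mu_\epsilon)$ of dimension $\sum_j(k_j+1)$ whose Rayleigh quotient is $\le\max_j\lambda_{k_j}(\tilde\mu_j)+o(1)$. This is the construction already used for the lower bound in Section~\ref{sec:exis-and-reg} and in the proof that the bubble measures are $L^1$: on the central component $M$ one keeps the test functions, truncated by a logarithmic cut-off near each $p_i$; on a bubble $\Sph^2$ one pulls the test functions back by $T^i_\epsilon$---a homothety in isothermal coordinates, hence conformal, so it preserves $\abs{d\phi}^2\dv{g}$ in dimension two---truncated in the neck $B(r_\epsilon)\setminus B(r^2_\epsilon)$ and near the deeper bubble points; and one throws in one logarithmic ``bump'' $\psi_j\equiv 1$ on the bulk of each piece and $\equiv 0$ on the necks, playing the role of the constant on that piece. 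By~\eqref{eq:cut-off-estim} all cut-off energies tend to $0$; by the nested choice of radii~\eqref{eq:ball-choices} the $\mu_\epsilon$-mass of every neck tends to $0$; and applying Lemmas~\ref{lem:strong-conv} and~\ref{lem:regularity-in-good-pt} to the rescaled sequences $(T^i_\epsilon)_*[\mu_\epsilon]$ exactly as in the $L^1$-bubble claim, one gets $\int\phi^2\,d\mu_\epsilon\to\int\phi^2\,d\mu^*$ on each (rescaled) piece. The transplanted functions have asymptotically disjoint supports and each $\psi_j\equiv 1$ on the supports of the $j$-th batch, so every cross term in the Dirichlet form vanishes or is $o(1)$, and the Rayleigh quotient of any linear combination converges to the corresponding one on $(\Sigma^*,\mu^*)$, which is $\le\max_j\lambda_{k_j}(\tilde\mu_j)$. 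Feeding this space into~\eqref{eq:variat-character} gives $\limsup_\epsilon\lambda_{\,\sum_j(k_j+1)-1}(\mu_\epsilon)\le\max_j\lambda_{k_j}(\tilde\mu_j)$.

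With this in hand I would first observe that $N:=\#\brc{\text{eigenvalues of }(\Sigma^*,\mu^*)\text{ that are }<L}\le k$: if $N\ge k+1$ then $\lambda_k(\Sigma^*,\mu^*)<L$, and choosing $(k_j)$ with $\sum_j(k_j+1)=k+1$ that realizes $\lambda_k(\Sigma^*,\mu^*)=\max_j\lambda_{k_j}(\tilde\mu_j)$ (the elementary decomposition of eigenvalues of a disjoint union), the transplanting estimate produces a $(k+1)$-dimensional test space of Rayleigh quotient $<L$, so $\lambda_k(\mu_\epsilon)<L$ for small $\epsilon$, contradicting $\lambda_k(\mu_\epsilon)\to L$. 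Then I set $k_j:=\#\brc{l:\lambda_l(\tilde\mu_j)<L}$ for each piece $j$ with $\tilde\mu_j\neq 0$ (so $\lambda_{k_j}(\tilde\mu_j)\ge L$, and $k_j\ge 1$ since $\lambda_0=0<L$), $k_j:=0$ otherwise, and distribute the remaining $k-N\ge 0$ units to $k_0$; this keeps $\lambda_{k_0}(\mu^c)\ge L$ and gives $k_0+\sum_{i=1}^b k_i=k$. Since pieces with $\tilde\mu_j=0$ contribute nothing to either side (convention $\overline\lambda_\bullet(0)=0$),
\[
  \overline\lambda_{k_0}\brr{[g],\mu^c}+\sum_{i=1}^b\overline\lambda_{k_i}\brr{\Sph^2,\tilde\mu_i}
  =\lambda_{k_0}(\mu^c)\mu^c(M)+\sum_{i=1}^b\lambda_{k_i}(\tilde\mu_i)\tilde\mu_i(\Sph^2)
  \ge L\Big(\mu^c(M)+\sum_{i=1}^b\tilde\mu_i(\Sph^2)\Big)\ge L(1-\delta)\mu(M).
\]
Together with $\limsup_\epsilon\overline\lambda_k(\mu_\epsilon)=L\mu(M)$ this is the asserted inequality with $\tfrac1{1-\delta}$ in place of $1+\delta$; running the whole argument with the bubble-tree parameter $\tfrac\delta2$ (so that $\tfrac1{1-\delta/2}\le 1+\delta$) in place of $\delta$ completes the proof.

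The hard part will be the transplanting estimate---in particular the convergence of $\int\phi^2\,d(T^i_\epsilon)_*[\mu_\epsilon]$ on bounded balls of $\R^2$, which is exactly where the $H^1$-convergence and the regularity of the blown-up measures (Lemmas~\ref{lem:strong-conv} and~\ref{lem:regularity-in-good-pt}, applied as in the $L^1$-bubble claim) are genuinely needed, and where one must handle the logarithmic necks and the removal of the deeper atoms of $\mu_i$ carefully. Once that estimate is in place, the counting $N\le k$ (which crucially uses the $b+1$ bump functions, not just the transplanted eigenfunctions) and the choice of the $k_j$ are routine bookkeeping, modulo the usual care with eigenvalue multiplicities.
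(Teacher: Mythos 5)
Your proposal is correct and follows essentially the same route as the paper: pass to a subsequence, invoke the $\delta$-bubbling property of Proposition~\ref{prop:bubble-tree-existence}, take the continuous parts $\tilde\mu_i=\mu_i^c$, transplant near-optimal test functions between $M$ and the disjoint union using the conformal invariance of $\int\abs{d\phi}^2\,dv_g$ together with logarithmic cut-offs~\eqref{eq:log-cut-off} near the atoms and necks, and finish with disjoint-union spectral bookkeeping and the mass bound~\eqref{ineq:alomst-no-loss}. The one genuine difference is the final bookkeeping: you set $L=\lim\lambda_k(\mu_\epsilon)$, show via the transplanting estimate that the disjoint union has at most $k$ eigenvalues below $L$, and take $k_j$ to be the eigenvalue counts below $L$, whereas the paper first proves $\limsup\lambda_k(\mu_\epsilon)\le\lambda_k(\tilde\mu)+\delta'$ and then rescales each $\tilde\mu_i$ by a constant $c_i\ge 1$ so that $\lambda_k(\tilde\mu)$ becomes the $k_i$-th eigenvalue of $c_i\tilde\mu_i$; the two schemes are interchangeable, and yours trades the rescaling trick for the counting argument. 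One correction: the convergence $\int\phi^2\,d(T^i_\epsilon)_*[\mu_\epsilon]\to\int\phi^2\,d\mu_i$, which you single out as the hard part requiring Lemmas~\ref{lem:strong-conv} and~\ref{lem:regularity-in-good-pt}, neither needs nor admits those lemmas here: the corollary's hypotheses provide no eigenmap sequence $\Phi_\epsilon$ solving $\Delta\Phi_\epsilon=\Phi_\epsilon\mu_\epsilon$, so their assumptions are unavailable. That convergence is simply the weak$^*$ convergence $(T^i_\epsilon)_*[\mu_\epsilon]\overset{w^*}{\to}\mu_i$ built into Definition~\ref{def:bubble-tree}, tested against compactly supported continuous functions; since your transplanted test functions vanish near $A(\mu_i)$ and near infinity, the limit equals $\int\phi^2\,d\tilde\mu_i$, exactly as in the paper. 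With that substitution the step you flagged is immediate and the rest of your argument goes through.
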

\begin{proof}
  Fix $\delta > 0$ such that  $\brr{1-\delta}^{-1} < 1 + \delta$.
  Then by Proposition~\ref{prop:bubble-tree-existence},
  the sequence has the $\delta$-bubbling property.
  Let $\tilde{\mu}_i = \mu^c_i$, be the continuous
  part of the bubble measure $\mu_i$. Set also
  $\tilde{\mu}_0 := \mu^c$ and denote
  \begin{equation}
    \tilde{\mu} = \tilde{\mu}_0 \sqcup \cdots \sqcup \tilde{\mu}_b.
  \end{equation}

  By the variational characterization of $\lambda_k\brr{\tilde{\mu}}$,
  one can find $k+1$ linearly independent functions
  $\tilde{\phi} \in \brc{\tilde{\phi}_0,\cdots, \tilde{\phi}_k} \subset C^\infty(\tilde{M})$ on
  the disjoint union $(\tilde{M}, \tilde{\mu}) := (M,\mu^c)
  \sqcup (\Sph^2,\tilde{\mu}_1) \sqcup \cdots \sqcup (\Sph^2,\tilde{\mu}_b)$
  such that
  \begin{equation}
    \int_{\tilde{M}} \tilde{\phi}^2 d\tilde{\mu} = 1
    \text{  and  }
    \int_{\tilde{M}} \abs{d\tilde{\phi}}^2_{\tilde{g}} dv_{\tilde{g}}
    < \lambda_k\brr{\tilde{\mu}} +  \delta',
  \end{equation}
  where $\delta'$ will be chosen later.
  By using logarithmic cut-offs~\eqref{eq:log-cut-off},
  one can think that $\tilde{\phi}|_M \in
  {C^\infty_0(M\setminus A(\mu))}$ and
  $\tilde{\phi}|_{\Sph_i} \in
  C^\infty_0(\R^2\setminus A(\mu_i))$. Then by using the maps
  $T^i_\epsilon \colon U_i \to \Sph^2$,
  one can descend the functions $\tilde{\phi}$ to
  the $C^\infty_0$-functions $\brc{\phi_0,\cdots, \phi_k}$ on $M$.
  When $\epsilon$ is small enough, they are linearly
  independent as well. Since
  $(T^i_\epsilon )_*[\mu_\epsilon] \overset{w^*}{\to} \mu_i$,
  and the expression $\abs{d\phi}^2_{g} dv_{g}$ is conformally
  invariant, one has
  \begin{equation}
    \int_{M} \phi^2 d\mu \to 1
    \text{  and  }
    \int_{M} \abs{d\phi}^2_{g} dv_{g}
    < \lambda_k\brr{\tilde{\mu}} +  \delta',
  \end{equation}
  which implies
  \begin{equation}
    \limsup_{\epsilon\to 0}\lambda_k(\mu_\epsilon) \le
    \lambda_k\brr{\tilde{\mu}} + \delta'.
  \end{equation}

  Let us recall that the spectrum of a disjoint union of
  manifolds is the union of the spectra of
  the connected components. In particular, if it happens
  that $\lambda_{k_i-1}(\tilde{\mu}_i) < \lambda_k\brr{\tilde{\mu}} < \lambda_{k_i}(\tilde{\mu}_i)$
  for some $k_i \in \mathbb{N}$, one can rescale $\tilde{\mu}_i$
  by $c_i \geq 1$, so that
  $\lambda_{k_i}(c_i\tilde{\mu}_i) = \lambda_k\brr{\tilde{\mu}}$.
  In all other cases, we set $c_i = 1$ and $\lambda$ is
  already an eigenvalue of $\tilde{\mu}_i$, and
  we choose $k_i$ in such a way that equation
  $\sum k_i = k$ is satisfied. It is possible since $\lambda_k\brr{\tilde{\mu}}$
  remains the $k$-th eigenvalue of $c_0\tilde{\mu}_0 \sqcup \cdots \sqcup c_b\tilde{\mu}_b$.
  Thus, we see that
  \begin{equation}
   \begin{gathered}
    \limsup_{\epsilon\to 0}\overline{\lambda}_k(\mu_\epsilon)
    \leq \delta'\mu(M) + \mu(M)\lambda_k\brr{\tilde{\mu}}
    = \delta'\mu(M) + \frac{\mu(M)}{\tilde{\mu}(M)}\tilde{\mu}(M)\lambda_k\brr{\tilde{\mu}}
    \\\leq\delta'\mu(M) + \brr{1-\delta}^{-1}
    \sum_i \lambda_k\brr{\tilde{\mu}} \int d(c_i \tilde{\mu}_i)
    < \delta'\mu(M) + (1+\delta)\sum_i \overline{\lambda}_{k_i}(\tilde{\mu}_i).
   \end{gathered}
  \end{equation}
  since $\brr{1-\delta}^{-1} < 1+\delta$. Now, we choose $\delta'$
  so that $\delta'\mu(M)$ is absorbed by the second term.
\end{proof}

\begin{corollary}\label{cor:st-union-ineq}
  Let $\brr{M,g}$ be a compact Riemannian surface with boundary and
  $\brc{\mu_\epsilon} \subset \mathcal{M}^+_c(\partial M)$ be a
  sequence of (continuous) Radon measures such that
  $\mu_\epsilon \oset[0.32ex]{w^*}{\to} \mu \neq 0$ and
  $\lambda_k(\mu_\epsilon) \geq c > 0$.
  Denote by $\mu^c$ the continuous part of $\mu$. Then
  there exist at most $k$ measures $\brc{\tilde{\mu}_i}_{1\leq i \leq b} \in \mathcal{M}^+_c(\partial\mathbb{D}^2)$,
  and numbers $k_i \in \mathbb{N}$ such that
\begin{equation}
  \mu(\partial M) - \delta \leq\mu^c(\partial M) +
    \sum_{i=1}^b \tilde{\mu}_i(\partial \mathbb{D}^2) \leq \mu(\partial M)
\end{equation}
  and
  \begin{equation}
    \limsup_{\epsilon\to 0}\overline{\lambda}_k(\mu_\epsilon)
    \le (1+\delta)\brr{\overline{\lambda}_{k_0}\brr{[g],\mu^c} +
    \sum_{i=1}^b \overline{\lambda}_{k_i}\brr{\mathbb{D}^2,\tilde{\mu}_i}},
  \end{equation}
  where $k_0 + \sum_{i=1}^b k_i = k$ for some $k_0 \in \mathbb{N}$.
\end{corollary}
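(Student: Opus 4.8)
The plan is to run the argument of Corollary~\ref{cor:union-ineq} essentially verbatim, with the round sphere $\Sph^2$ replaced by the disk $\mathbb{D}^2 \approx \Sph^2_+$ and with the extra bookkeeping that every measure in sight now lives on a boundary. Fix $\delta > 0$ with $(1-\delta)^{-1} < 1 + \delta$. Since $\lambda_k(\mu_\epsilon) \geq c > 0$, Proposition~\ref{prop:bubble-tree-existence} applies: up to a subsequence, $\{\mu_\epsilon\}$ has the $\delta$-bubbling property with at most $k$ blow-up maps $T^i_\epsilon$. Because $\supp \mu_\epsilon \subset \partial M$, the last assertion of that proposition together with Remark~\ref{rem:supp-on-bndry} forces $T^i_\epsilon \colon B(1,p_i) \to \R^2_+$ and $\supp\mu_{p_i,\epsilon} \subset \Sph^1 \subset \Sph^2_+$; hence each bubble measure $\mu_i$, and in particular its continuous part $\tilde{\mu}_i := \mu_i^c$, is supported on $\partial \mathbb{D}^2$ under the identification $\mathbb{D}^2 \approx \Sph^2_+$. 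Iterating the blow-up never moves an atom off the boundary, so the whole bubble tree stays within the Steklov picture. Writing $\tilde{\mu}_0 := \mu^c$ and forming the disjoint union $(\tilde{M},\tilde{\mu}) := (M,\mu^c) \sqcup (\mathbb{D}^2,\tilde{\mu}_1) \sqcup \cdots \sqcup (\mathbb{D}^2,\tilde{\mu}_b)$, the mass estimate~\eqref{ineq:alomst-no-loss} gives the claimed sandwich $\mu(\partial M) - \delta \leq \mu^c(\partial M) + \sum_i \tilde{\mu}_i(\partial\mathbb{D}^2) \leq \mu(\partial M)$ (shrinking the bubbling parameter slightly if $\mu(\partial M) > 1$).

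Next I would transplant test functions. Pick a $(k+1)$-dimensional subspace of $C^\infty(\tilde{M})$ on which the Rayleigh quotient~\eqref{eq:variat-character} is bounded by $\lambda_k(\tilde{\mu}) + \delta'$, with basis $\tilde{\phi}_0,\dots,\tilde{\phi}_k$; the auxiliary parameter $\delta'$ will be fixed at the very end. Using the logarithmic cut-offs~\eqref{eq:log-cut-off} I may assume each $\tilde{\phi}_j$ is supported away from $A(\mu)$ on the $M$-component and away from $A(\mu_i)$ on each disk component (but still meeting the relevant boundaries, since the Steklov measures live there), the Dirichlet-energy cost of this truncation tending to zero by~\eqref{eq:cut-off-estim} --- this is precisely the point at which $\dim M = 2$ is used. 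Pulling these functions back through the maps $T^i_\epsilon$, which is legitimate because in isothermal coordinates the density $|d\phi|^2_g\,dv_g$ is conformally invariant, yields smooth functions $\phi_0,\dots,\phi_k$ on $M$, linearly independent for small $\epsilon$, with $\int_M \phi_j^2\,d\mu_\epsilon \to \int \tilde{\phi}_j^2\,d\tilde{\mu}$ (because $(T^i_\epsilon)_*[\mu_\epsilon] \overset{w^*}{\to} \mu_i$) and $\int_M |d\phi_j|^2_g\,dv_g < \lambda_k(\tilde{\mu}) + \delta'$. The variational characterization then gives $\limsup_{\epsilon\to 0}\lambda_k(\mu_\epsilon) \leq \lambda_k(\tilde{\mu}) + \delta'$.

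Finally I would convert this into the asserted $\overline{\lambda}$ estimate exactly as in Corollary~\ref{cor:union-ineq}. The spectrum of $(\tilde{M},\tilde{\mu})$ is the union of the spectra of its components, so after rescaling each component measure by some $c_i \geq 1$ --- which leaves each $\overline{\lambda}_{k_i}$ unchanged and keeps $\lambda_k(\tilde{\mu})$ the $k$-th eigenvalue of the union --- one can arrange that $\lambda_k(\tilde{\mu})$ is the $k_i$-th eigenvalue of $(\mathbb{D}^2, c_i\tilde{\mu}_i)$, respectively the $k_0$-th of $(M, c_0\mu^c)$, with $k_0 + \sum_i k_i = k$. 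Then, as in that proof,
\begin{equation*}
  \limsup_{\epsilon\to 0}\overline{\lambda}_k(\mu_\epsilon)
  \leq \delta'\mu(\partial M) + (1-\delta)^{-1}\sum_i \overline{\lambda}_{k_i}(\tilde{\mu}_i)
  < \delta'\mu(\partial M) + (1+\delta)\brr{\overline{\lambda}_{k_0}\brr{[g],\mu^c} + \sum_{i=1}^b \overline{\lambda}_{k_i}\brr{\mathbb{D}^2,\tilde{\mu}_i}},
\end{equation*}
and picking $\delta'$ small enough absorbs the first summand. The only genuinely new ingredient compared with the closed case is keeping the supports on the boundary throughout the bubbling tree; I expect this to be the one place requiring care, and it is already covered by Remark~\ref{rem:supp-on-bndry} (every iterated blow-up $T^{p'}_\epsilon\circ T^p_\epsilon$ of a boundary atom is again a dilation of $\R^2_+$), so no serious obstacle should arise.
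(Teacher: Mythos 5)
Your proposal is correct and follows essentially the same route as the paper: the paper's own proof simply repeats the argument of Corollary~\ref{cor:union-ineq}, invoking Remark~\ref{rem:supp-on-bndry} to keep all bubble measures supported on the boundary circle and noting that the blow-up maps send half-balls to half-spheres $\Sph^2_+ \approx \mathbb{D}^2$, exactly as you do. Your extra remarks (the mass sandwich with $\mu(\partial M)-\delta$, conformal invariance of the Dirichlet energy, and the rescaling/absorption of $\delta'$) match the paper's treatment in the closed case.
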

\begin{proof}
  The proof is the same as in the previous corollary. By
  Remark~\ref{rem:supp-on-bndry}, the bubbling construction
  produces measures $\mu_{i}$ with supports
  on the geodesic circle
  $\overline{\pi(\mathbb{R}\times\brc{0})}$ if we initially have
  $\supp \mu_\epsilon \subset \partial M$, and
  the map $T^i_\epsilon \colon B(1, p^i) =
  B_+(r_\epsilon, p) \to \mathbb{S}^2_{+} \approx
  \mathbb{D}^2$ maps half balls to half spheres, which
  are conformally equivalent to disks.
\end{proof}

\medskip
\printbibliography
\end{document}